\documentclass[11pt]{amsart}

\usepackage{amssymb,amscd,amsthm, verbatim,amsmath,color,fancyhdr, mathrsfs}
\usepackage{graphicx}
\usepackage{tikz}
\usepackage{tikz-cd}
\usepackage{bm}
\usetikzlibrary{cd}
\usetikzlibrary{shapes.geometric,positioning} 
\usepackage{appendix}
\usepackage[colorlinks=true,allcolors=black]{hyperref}
\makeatletter \def\l@subsection{\@tocline{2}{0pt}{1pc}{5pc}{}} \def\l@subsection{\@tocline{2}{0pt}{2pc}{6pc}{}} \makeatother

\usepackage[letterpaper, left=2.5cm, right=2.5cm, top=2.5cm,
bottom=2.5cm,dvips]{geometry}

\newtheorem{thm}{Theorem}[subsection]
\newtheorem{prop}[thm]{Proposition}
\newtheorem{lem}[thm]{Lemma}

\newtheorem{cor}[thm]{Corollary}

\newtheorem{exmp}[thm]{Example}
\theoremstyle{definition}
\newtheorem{defn}[thm]{Definition}

\theoremstyle{remark}
\newtheorem*{remark}{Remark}

\newcommand\textcat[1]{\textnormal{\textbf{#1}}}
\newcommand\texthom[1]{\underline{\textnormal{#1}}}
\title[Representability of Derived Moduli Stacks of Solutions of PDEs]{A Bornological Perspective on the Representability of Derived Moduli Stacks of Solutions to PDEs }

\author{Rhiannon Savage}
\address{Rhiannon Savage, Department of Mathematics, University College London, London, UK, WC1H  0AY
 and the Heilbronn Institute for Mathematical Research, Bristol, UK}
\email{rhiannon.savage@ucl.ac.uk}
\urladdr{https://www.rhiannonsavage.co.uk}

\date{}

\begin{document}
\begin{abstract}
    Proving representability of derived moduli stacks of solutions to non-linear elliptic partial differential equations generally requires significant analytic machinery. In this paper, we instead show that representability naturally follows from an Artin-Lurie style representability theorem. This necessitates the development of a new model for derived differential geometry using an extension of $\mathcal{C}^\infty$-rings that we call $\mathcal{C}^\infty$-bornological rings. This new theory embeds into the theory of derived bornological geometry recently proposed by Ben-Bassat, Kelly, and Kremnizer. 
\end{abstract}
\maketitle
\vskip.5in

\tableofcontents

\section{Introduction}
\addtocontents{toc}{\setcounter{tocdepth}{-1}}

\subsection*{Motivation}

Geometric stacks are a class of stacks which are glued together from affines in a compatible way. Key examples are algebraic stacks and Deligne-Mumford stacks. A main motivation for defining geometric stacks is to endow certain moduli stacks with useful geometric structures. For example, in \cite{lurie_survey_2009}, Lurie shows that the derived moduli stack of elliptic curves is a $1$-geometric derived Deligne-Mumford stack. 

In recent years, there has been a great deal of interest in proving representability of the derived moduli stack of solutions to non-linear elliptic PDEs as a geometric stack. Given a system of non-linear elliptic partial differential equations on a compact manifold, one can express the moduli space of solutions locally as a \textit{Kuranishi chart} \cite{kuranishi_new_1965}, i.e. the zero set of a smooth map $f:\mathbb{R}^n\rightarrow{\mathbb{R}^m}$. We can consider the concept of a Kuranishi atlas on such a moduli space, see work by Fukaya and Ono \cite{fukaya_arnold_1999}. Traditionally, proving representability of the associated moduli functors as derived manifolds (or $d$-manifolds) involves gluing together these Kuranishi atlases, which is in general quite a complicated procedure.

There have recently been a number of alternative simpler approaches. In \cite{pardon_representability_2023}, Pardon considers this problem in a constructive way using non-linear elliptic Fredholm analysis. His definition of derived smooth manifolds is obtained from the usual category of smooth manifolds by formally adjoining finite limits modulo preserving finite transverse limits \cite[Definition 3.5]{pardon_representability_2023}. In \cite{steffens_representability_2024}, Steffens takes a higher categorical approach to prove representability of the moduli stack by a \textit{derived $\mathcal{C}^\infty$-scheme} which is locally of finite presentation and quasi-smooth. These objects can be considered to be glued together from Kuranishi spaces \cite[Remark 1.0.1]{steffens_representability_2024}.

In this paper, we use an alternative approach and show that representability of the derived moduli stack of solutions follows from a generalisation of the Artin-Lurie representability theorem. This theorem is stated and proved in a previous paper \cite{savage_representability_2024}, and holds for suitably defined \textit{derived bornological geometry frameworks}, in the sense of Ben-Bassat, Kelly, and Kremnizer \cite{ben-bassat_perspective_2024}. This result exhibits the power of this new theory of derived geometry and the associated higher categorical tools in solving challenging problems in different areas of geometry.  

\subsection*{Ind-Banach Spaces and Complete Bornological Spaces}

The main objects of interest to us in derived bornological geometry are Ind-Banach spaces and complete bornological spaces. We introduce these briefly here and state the key properties of them we will use in this paper. 

Suppose that $k$ is a non-trivially valued field. Then, we can take the free filtered cocompletion $\textnormal{Ind}(\textnormal{Ban}_k)$ of the category $\textnormal{Ban}_k$ of Banach spaces over $k$. We refer the reader to Appendix \ref{indobjectappendix} for more details on Ind and SInd objects. We note that the category $\textnormal{Ind}(\textnormal{Ban}_k)$ is not a concrete category. However, we can take the full subcategory $\textnormal{Ind}^m(\textnormal{Ban}_k)$ of essentially monomorphic objects which is concrete. This category is equivalent to the category $\textnormal{CBorn}_k$ of \textit{complete bornological spaces} described in Appendix \ref{bornologyappendix}.

We note that the functor $\varinjlim:\textnormal{Ind}(\textnormal{Ban}_k)\rightarrow{\textnormal{CBorn}_k}$ defines an equivalence of derived categories, see \cite[Theorem 1.3]{henrard_left_2023}), 
\begin{equation*}
    \textnormal{D}(\textnormal{CBorn}_k)\simeq\textnormal{D}(\textnormal{Ind}(\textnormal{Ban}_k))
\end{equation*}Moreover, there is an equivalence between the abelian left hearts \cite[Definition 1.2.18]{schneiders_quasi-abelian_1999}
\begin{equation*}
    \textnormal{LH}(\textnormal{CBorn}_k)\simeq\textnormal{LH}(\textnormal{Ind}(\textnormal{Ban}_k))
\end{equation*}

The categories $\textnormal{IndBan}_k$ and $\textnormal{CBorn}_k$ are bicomplete exact categories with small compact projective generating sets. Fix a Grothendieck universe $V_\aleph$ with $\aleph$ a strongly inaccessible cardinal. Consider the class of cardinals $\kappa$ such that there exists a Banach space $V$ containing a bounded unit disk of cardinality $\kappa$ whose underlying set lies in $V_\aleph$. This is a subset of $\aleph$. We consider the small subcategory $\textnormal{Lin}_k\subseteq \textnormal{Ind}(\textnormal{Ban}_k)$ whose objects are all of the form $\ell^1(\kappa):=\{(x_k)_{k\in\kappa}\mid x_k\in k,\sum_{k\in\kappa} |x_k|<\infty\}$ for $\kappa<\aleph$, and whose morphisms are bounded linear maps. Then, we note that the underlying set of $\textnormal{Lin}_k$ is a set of compact projective generators for $\textnormal{Ind}(\textnormal{Ban}_k)$ and $\textnormal{CBorn}_k$. Moreover, 

\begin{thm}\cite[c.f. Lemma A.1.1]{ben-bassat_perspective_2024}\cite[c.f. Proposition 1.3.0.3]{savage_stacks_2025} \phantomsection\label{siftedcborncompletion}
 There are equivalences of categories
\begin{equation*}
    \textnormal{LH}(\textnormal{CBorn}_k)\simeq\textnormal{LH}(\textnormal{Ind}(\textnormal{Ban}_k))\simeq \textnormal{SInd}(\textnormal{Lin}_k)
\end{equation*}  
\end{thm}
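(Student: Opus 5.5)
The first equivalence $\textnormal{LH}(\textnormal{CBorn}_k)\simeq\textnormal{LH}(\textnormal{Ind}(\textnormal{Ban}_k))$ was recorded above, via \cite[Theorem 1.3]{henrard_left_2023}. It follows from the derived equivalence induced by $\varinjlim$, together with the fact that this functor is exact and preserves the left $t$-structures, whose hearts are the left hearts. So the substantive claim is the second equivalence $\textnormal{LH}(\textnormal{Ind}(\textnormal{Ban}_k))\simeq\textnormal{SInd}(\textnormal{Lin}_k)$. Here $\textnormal{SInd}$ denotes the free sifted cocompletion, i.e. the category of finite-product-preserving presheaves of sets on $\textnormal{Lin}_k$. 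I would prove it through a general recognition principle for abelian categories generated by compact projectives.

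\textbf{Step 1.} Show that the objects $\ell^1(\kappa)$ are compact projective in $\textnormal{LH}(\textnormal{Ind}(\textnormal{Ban}_k))$, and not merely in the exact category. Projectivity in the quasi-abelian category passes to the left heart: projectives of a quasi-abelian category embed as projectives of its left heart (Schneiders). Compactness passes because filtered colimits in the left heart of an elementary quasi-abelian category are computed compatibly with those in $\textnormal{Ind}(\textnormal{Ban}_k)$.

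\textbf{Step 2.} Show that $\textnormal{Lin}_k$ generates. Every object of the left heart is a cokernel of a monomorphism $E_1\to E_0$ in $\textnormal{Ind}(\textnormal{Ban}_k)$, and each $E_i$ is a quotient of a coproduct of $\ell^1(\kappa)$'s. Hence every object of the heart is a quotient of a coproduct of objects of $\textnormal{Lin}_k$. Also, $\textnormal{Lin}_k$ is closed under finite coproducts, since $\ell^1(\kappa)\oplus\ell^1(\lambda)\cong\ell^1(\kappa\sqcup\lambda)$.

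\textbf{Step 3.} Apply the standard theorem (e.g. Lurie HTT 5.5.8.25 at the $1$-categorical level, or Adámek--Rosický--Vitale). A cocomplete category with a small full subcategory $\mathcal{C}$ of compact projective objects, closed under finite coproducts and generating under sifted colimits, is equivalent to $\textnormal{SInd}(\mathcal{C})$. The equivalence is $X\mapsto\textnormal{Hom}(-,X)|_{\mathcal{C}}$. One checks that this functor is fully faithful and preserves sifted colimits, the latter because the objects of $\mathcal{C}$ are compact projective. It remains to see that every object is a sifted colimit of objects of $\mathcal{C}$. For this I would write each object as a reflexive coequalizer of frees, using the canonical resolution, and note that filtered colimits of finite coproducts give arbitrary coproducts. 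Additivity is automatic on both sides.

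The main obstacle I expect is Step 1, specifically compactness in the left heart. The issue is that $\ell^1(\kappa)$ is compact in $\textnormal{Ind}(\textnormal{Ban}_k)$ only relative to colimits of the right kind, and $\textnormal{Hom}(\ell^1(\kappa),-)$ must commute with filtered colimits and coequalizers in the heart. For this I would invoke the description of $\textnormal{LH}(\textnormal{Ind}(\textnormal{Ban}_k))$ as $\textnormal{Ind}$ of a suitable category, as in \cite{ben-bassat_perspective_2024} and \cite{savage_stacks_2025}. Failing that, I would check directly that filtered colimits in the heart are exact, so that they are computed by colimits of presentations.
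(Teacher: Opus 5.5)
Your argument for the second equivalence is correct and matches what the paper relies on. The paper gives no proof beyond the citations and the preceding remark that $\textnormal{Lin}_k$ is a set of compact projective generators. Your Steps 1--3 are the standard argument behind the cited lemma: compact projectivity of the $\ell^1(\kappa)$ in $\textnormal{LH}(\textnormal{Ind}(\textnormal{Ban}_k))$, generation, and the recognition theorem for cocomplete categories generated under sifted colimits by a finite-coproduct-closed set of compact projectives. Additivity then identifies $\textnormal{Fun}^\times(\textnormal{Lin}_k^{op},\textnormal{Set})$ with additive functors.

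One supporting claim in your treatment of the first equivalence is false. The functor $\varinjlim:\textnormal{Ind}(\textnormal{Ban}_k)\to\textnormal{CBorn}_k$ is a left adjoint (to the dissection functor) and is not exact. Here is a counterexample.
Let $S$ be the left shift on $\ell^1(\mathbb{N})$ and $v=(2^{-j})_{j\geq 0}$, so $Sv=v/2$.
The maps $\mathbb{R}\to\ell^1(\mathbb{N})$, $1\mapsto 2^{-n}v$, define a levelwise strict monomorphism from the constant object $\mathbb{R}$ into the Ind-object $\ell^1(\mathbb{N})\xrightarrow{S}\ell^1(\mathbb{N})\xrightarrow{S}\cdots$.
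However, the colimit of this system in $\textnormal{CBorn}_\mathbb{R}$ is $0$.
Vectors supported in the first $m$ coordinates die after $m$ steps, so for any class $[x]$ and $t\in\mathbb{R}$ we have $t[x]=[t(x-x^{(m)})]$, where $x^{(m)}$ is the truncation.
Choosing $m$ large puts this in the image of the unit ball, so every line in the bornological colimit is bounded and separation kills it.
Hence $\varinjlim$ sends this strict monomorphism to $\mathbb{R}\to 0$.

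The derived equivalence is therefore $\mathbb{L}\varinjlim$. Your t-exactness argument should be run with its exact right adjoint $\textnormal{CBorn}_k\to\textnormal{Ind}(\textnormal{Ban}_k)$, or you can simply quote the left-heart equivalence from the literature, as the paper does. The same example shows why you were right to prove compactness in $\textnormal{LH}(\textnormal{Ind}(\textnormal{Ban}_k))$, where filtered colimits are exact, rather than by way of $\textnormal{CBorn}_k$: in $\textnormal{CBorn}_\mathbb{R}$, $\textnormal{Hom}(\mathbb{R},-)$ does not commute with this filtered colimit.
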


\subsection*{Derived Bornological Geometry}

In recent years, there have been numerous suggestions of suitable frameworks within which to define derived analytic geometry and derived differential geometry. Any such theory must define a collection of derived affine objects. In work of To\"en and Vezzosi, \cite{toen_homotopical_2008}, the derived affine objects are objects in the opposite category of simplicial commutative rings. In a series of papers (\cite{bambozzi_dagger_2016},\cite{bambozzi_stein_2018},\cite{bambozzi_sheafyness_2024},\cite{ben-bassat_perspective_2024},\cite{ben-bassat_non-archimedean_2017},\cite{ben-bassat_frechet_2023},\cite{kelly_analytic_2022}) spanning almost a decade, Bambozzi, Ben-Bassat, Kelly, Kremnizer, and Mukherjee propose that the appropriate derived affine objects for derived  bornological geometry are objects in the opposite $(\infty,1)$-category of \textit{simplicial commutative Ind-Banach algebras}, equivalently \textit{simplicial commutative complete bornological algebras}. These are simplicial commutative algebra objects in the categories $\textnormal{Ind}(\textnormal{Ban}_k)$ and $\textnormal{CBorn}_k$ respectively. They show that this theory of derived bornological geometry provides an appropriate setting to define derived analytic geometry.

Although the theory of derived bornological geometry is a relatively new theory, there is already a wealth of new developments and applications. In particular, there is a version of the Hochschild-Kostant-Rosenberg Theorem \cite{kelly_analytic_2022}, there are six functor formalisms for rigid analytic sheaves \cite{soor_six-functor_2024}, there are derived blow ups \cite{ben-bassat_blow-ups_2023}, and there are developments in the theory of analytic geometry over $\mathbb{F}_1$ \cite{bambozzi_analytic_2019}.

In a recent paper \cite[Theorem 5.6.1]{savage_representability_2024}, we showed that we can obtain a representability theorem which holds in this new model of derived bornological geometry.  Suppose that we have some subcategory $\mathcal{A}$ of derived bornological affines, endowed with a suitable topology $\bm\tau|_\mathcal{A}$ and a class $\textcat{P}$ of distinguished maps, and that several conditions are satisfied such that we have a \textit{representability context} \cite[Definition 5.3.1]{savage_representability_2024}. The aim of the following representability theorem is to give conditions under which derived stacks on the site $(\mathcal{A},\bm\tau|_{\mathcal{A}})$ are $n$-geometric in some sense relative to $\mathcal{A}$, i.e. they are built up from affines in $\mathcal{A}$ in a compatible way. 

\begin{thm}[Representability Theorem]\cite[Theorem 5.6.1]{savage_representability_2024} The following conditions are equivalent for a stack $\mathcal{F}\in\textcat{Stk}(\mathcal{A},\bm\tau|_\mathcal{A})$.
\begin{enumerate}
    \item $\mathcal{F}$ is an $n$-geometric stack,
    \item $\mathcal{F}$ satisfies the following three conditions:
    \begin{enumerate}
\item\makeatletter\def\@currentlabel{a}\makeatother The truncation $t_0(\mathcal{F})$ is an $n$-geometric stack,
        \item \makeatletter\def\@currentlabel{b}\makeatother $\mathcal{F}$ has an obstruction theory relative to $\mathcal{A}$,
        \item \makeatletter\def\@currentlabel{c}\makeatother $\mathcal{F}$ is nilcomplete with respect to $\mathcal{A}$. 
    \end{enumerate}
\end{enumerate}
\end{thm}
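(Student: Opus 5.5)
The plan is to follow the classical strategy of Lurie and of To\"en--Vezzosi (HAG II, Appendix C), adapted to the representability context $(\mathcal{A},\bm\tau|_{\mathcal{A}},\textcat{P})$ of \cite[Definition 5.3.1]{savage_representability_2024}. The proof is by induction on $n$, and the two implications are handled separately.

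For (1)$\Rightarrow$(2), the tools are the stability properties of $n$-geometric stacks and the axioms of a representability context. I would first check the three conditions for representables $\textnormal{Spec}(A)$ with $A\in\mathcal{A}$:
\begin{enumerate}
\item truncation sends affines in $\mathcal{A}$ to affines;
\item the cotangent complex $\mathbb{L}_A$ exists and is connective, giving an obstruction theory;
\item nilcompleteness, $\textnormal{Map}(A,B)\simeq\varprojlim_k \textnormal{Map}(A,\tau_{\le k}B)$, holds because Postnikov towers in $\mathcal{A}$ converge. I would assume this is one of the context axioms.
\end{enumerate}
Next I would pass to $n$-geometric stacks by induction. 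Choose an $n$-atlas $\coprod U_i\to\mathcal{F}$ by maps in $\textcat{P}$. Then:
\begin{enumerate}
\item $t_0$ commutes with the colimit presentation and preserves $\textcat{P}$-morphisms, so $t_0(\mathcal{F})$ is $n$-geometric;
\item the cotangent complex glues by descent along the atlas, using that the relative cotangent complex of a map in $\textcat{P}$ behaves well (perfect, or at least a dualisable connective module). Infinitesimal cartesianness follows from the corresponding property of the $U_i$ together with the fact that square-zero extensions lift along $\textcat{P}$-maps;
\item nilcompleteness descends because Postnikov limits commute with the finite limits of the Čech nerve, combined with the inductive hypothesis on the $(n-1)$-geometric diagonal.
\end{enumerate}

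For (2)$\Rightarrow$(1), I would again induct on $n$. First show that the diagonal $\mathcal{F}\to\mathcal{F}\times\mathcal{F}$ is $(n-1)$-representable. Base-changing to affines yields stacks that inherit (a)--(c), using that the truncation of the diagonal is the diagonal of $t_0(\mathcal{F})$, so induction applies. Then construct an atlas. Given an atlas $\coprod \textnormal{Spec}(A_i^0)\to t_0(\mathcal{F})$, lift each $A_i^0$ inductively along the Postnikov tower of $\mathcal{F}$. At stage $k$, the obstruction theory expresses $\tau_{\le k+1}$ as a square-zero extension of $\tau_{\le k}$ by a derivation into $\pi_{k+1}[k+2]$. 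One uses the cotangent complex to extend $\textnormal{Spec}(A_i^{k})\to\mathcal{F}$ to $\textnormal{Spec}(A_i^{k+1})$, arranging the relative cotangent complex to vanish in the appropriate degrees, so that the map remains in $\textcat{P}$, which is formally smooth-like. Set $A_i=\varprojlim_k A_i^k$, which must lie in $\mathcal{A}$ by a convergence axiom of the context. Nilcompleteness then produces the map $\textnormal{Spec}(A_i)\to\mathcal{F}$. It remains to check that this map lies in $\textcat{P}$ and that the family is an epimorphism. Surjectivity can be tested on $t_0$, using that $\bm\tau$-covers are detected on truncations. Membership in $\textcat{P}$ would follow from a criterion in the context: a map whose truncation is in $\textcat{P}$ and whose relative cotangent complex is of the right type (strong/flat) is in $\textcat{P}$.

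The main obstacle is the lifting step. In the bornological setting, square-zero extensions and derivations must be taken with respect to complete bornological modules, with the $\textnormal{SInd}(\textnormal{Lin}_k)$ description of Theorem~\ref{siftedcborncompletion} ensuring that homotopy groups and Postnikov towers behave as in the algebraic case. Convergence of $\varprojlim A_i^k$ inside $\mathcal{A}$ is also not automatic there. I would first try to isolate exactly these points as axioms of a representability context, namely:
\begin{enumerate}
\item existence and compatibility of Postnikov towers in $\mathcal{A}$;
\item a Postnikov-tower convergence condition;
\item a deformation-theoretic characterisation of $\textcat{P}$ via cotangent complexes.
\end{enumerate}
With these in hand, the Toën--Vezzosi argument goes through essentially verbatim.
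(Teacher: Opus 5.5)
The paper does not reprove this theorem. It quotes it from \cite[Theorem 5.6.1]{savage_representability_2024} and only records the strategy. For the forward direction, $n$-geometric stacks have an obstruction theory by the obstruction conditions, cf.\ \cite[Theorem 4.4.1]{savage_representability_2024}, and are nilcomplete. For the converse, one lifts an $n$-atlas of $t_0(\mathcal{F})$ using the obstruction theory and nilcompleteness. Your outline is this To\"en--Vezzosi strategy, but two steps are missing or wrong.

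\emph{First, your induction for (2)$\Rightarrow$(1) has no base case.} Already for $n=0$, showing the diagonal is $(-1)$-representable means applying the theorem with $n=-1$ to $\mathcal{F}\times_{\mathcal{F}\times\mathcal{F}}X$. Your atlas construction cannot supply that case, since it produces a $\textcat{P}$-atlas, not an equivalence $\textnormal{Spec}(A)\simeq\mathcal{F}$. This is also exactly the case the paper uses: Corollary~\ref{repsolutionstack} concludes $(-1)$-geometricity. The missing argument runs as follows.
\begin{enumerate}
\item Let $x_0\in\mathcal{F}(A^0)$ correspond to $t_0(\mathcal{F})\simeq\textnormal{Spec}(A^0)$. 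Check, using that $t_0(\mathcal{F})$ is representable and that $\mathcal{F}$ is infinitesimally cartesian on discrete square-zero extensions, that $\mathbb{L}_{\textnormal{Spec}(A^0)/\mathcal{F}}$ is $2$-connective.
\item Inductively set $A^{k+1}=A^k\oplus_d\Omega(N[k+2])$, where $N=\pi_{k+2}(\mathbb{L}_{\textnormal{Spec}(A^k)/\mathcal{F}})$ and $d$ is the composite $\mathbb{L}_{A^k}\to\mathbb{L}_{\textnormal{Spec}(A^k)/\mathcal{F}}\to N[k+2]$. The point $x_k$ extends to $A^{k+1}$ because $d$ vanishes on $\mathbb{L}_{\mathcal{F},x_k}$, and the new relative cotangent complex is $(k+3)$-connective.
\item Nilcompleteness gives $u:\textnormal{Spec}(A)\to\mathcal{F}$ with $A=\varprojlim_k A^k$ and $\mathbb{L}_{\textnormal{Spec}(A)/\mathcal{F}}\simeq 0$.
\item $u$ is an equivalence. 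For each test algebra $B$, write $B_{\le k+1}=B_{\le k}\oplus_d\Omega M$. Both sides are infinitesimally cartesian and the relative cotangent complex vanishes, so the comparison map on $B_{\le k+1}$ is an equivalence once it is on $B_{\le k}$. Start from $t_0(u)$ being an equivalence and conclude with nilcompleteness of both sides.
\end{enumerate}
This also corrects your description of the lifting step. There is no ``Postnikov tower of $\mathcal{F}$'' to lift along; the square-zero extensions are manufactured from the relative cotangent complex. You must also check that these modules lie in $\textcat{M}$ and that the extensions stay in $\mathcal{A}$, since the obstruction theory is only relative to $\mathcal{A}$.

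\emph{Second, your proof that $n$-geometric stacks are nilcomplete does not work as stated.} $\mathcal{F}$ is a colimit (a sheafified geometric realisation) of its \v{C}ech nerve, not a finite limit of it. $\varprojlim_k$ commutes with neither geometric realisations nor sheafification. The $(n-1)$-geometric diagonal only identifies the fibres of $\mathcal{F}(B)\to\varprojlim_k\mathcal{F}(B_{\le k})$. To show that a compatible family $(x_k)$ is hit, you must do three things:
\begin{enumerate}
\item lift $x_0$ to the atlas after a $\bm\tau$-cover of $\textnormal{Spec}(\pi_0 B)$;
\item lift that cover to a cover of $\textnormal{Spec}(B)$, which is where the strongness axioms of the context enter;
\item lift step by step along the square-zero extensions $B_{\le k+1}\to B_{\le k}$, using infinitesimal cartesianness of $\mathcal{F}$.
\end{enumerate}
At each stage of the last step, with current lift $u_k$, the obstruction lies in $\pi_0\textnormal{Map}(\mathbb{L}_{U/\mathcal{F},u_k},\pi_{k+1}(B)[k+2])$. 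It vanishes by the cotangent-complex characterisation of maps in $\textcat{P}$. The same lifting of local sections, and not merely of covers, is what you need to justify your claim that surjectivity of the lifted atlas can be tested on $t_0$.
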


In Corollary \ref{mappingstacksimplified}, we prove the following simplification of this theorem for the case of mapping stacks. 

\begin{cor}\label{mapstackrepresentable}
    Suppose that $X=\textnormal{Spec}(A)\in\mathcal{A}^\heartsuit$ is such that $X\rightarrow{*}$ is flat and in $\textcat{P}$. Suppose that $\mathcal{G}$ is in $\textcat{Stk}(\mathcal{A},\bm\tau|_\mathcal{A})_{/X}$. Under the following conditions, $\texthom{Map}_{\textcat{Stk}(\mathcal{A},\bm\tau|_\mathcal{A})_{/X}}(X,\mathcal{G})$ is an $n$-geometric stack,
    \begin{enumerate}
        \item The truncation $t_0(\texthom{Map}_{\textcat{Stk}(\mathcal{A},\bm\tau|_\mathcal{A})_{/X}}(X,\mathcal{G}))$ is an $n$-geometric stack, 
        \item $\mathcal{G}$ is $n$-geometric and the cotangent complex of the morphism $\mathcal{G}\rightarrow{X}$ is perfect.
    \end{enumerate}

    Moreover, if $\mathbb{L}_X$ is perfect, then $\mathbb{L}_{\texthom{Map}_{\textcat{Stk}(\mathcal{A},\bm\tau|_{\mathcal{A}})_{/X}}(X,\mathcal{G})}$ is also perfect. 
\end{cor}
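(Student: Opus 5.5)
The plan is to apply the Representability Theorem \cite[Theorem 5.6.1]{savage_representability_2024} to $\mathcal{F}:=\texthom{Map}_{\textcat{Stk}(\mathcal{A},\bm\tau|_\mathcal{A})_{/X}}(X,\mathcal{G})$. Condition (a) is exactly hypothesis (1). So it remains to show two things: that $\mathcal{F}$ has an obstruction theory relative to $\mathcal{A}$, and that $\mathcal{F}$ is nilcomplete. We also need to identify its cotangent complex in order to get the perfectness statement.

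\emph{Nilcompleteness.} For $U=\textnormal{Spec}(B)\in\mathcal{A}$ we have
\begin{equation*}
\mathcal{F}(U)\simeq \textnormal{Map}_{/X}(X\times U, \mathcal{G}).
\end{equation*}
Since $X\to *$ is flat and in $\textcat{P}$, the product $X\times U$ is again affine in $\mathcal{A}$, with ring $A\hat{\otimes}^{\mathbb{L}}B$. Flatness of $A$ gives
\begin{equation*}
\pi_*(A\hat\otimes^{\mathbb L} B)\cong \pi_0(A)\hat\otimes\pi_*(B).
\end{equation*}
Consequently the Postnikov tower of $A\hat\otimes B$ is $A\hat\otimes$ applied to the Postnikov tower of $B$, and $A\hat\otimes B\simeq\lim_n A\hat\otimes \tau_{\le n}B$. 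The value $\mathcal{F}(U)$ is therefore computed by $\mathcal{G}$ on this tower. Since $\mathcal{G}$ is $n$-geometric it is nilcomplete; this is the easy direction (1)$\Rightarrow$(2) of the Representability Theorem. Nilcompleteness of $\mathcal{F}$ follows.

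\emph{Obstruction theory.} This splits into three steps.
\begin{enumerate}
\item Infinitesimal cohesiveness: square-zero extensions $B'\to B$ by $M$ are pulled back along $A\hat\otimes-$ to square-zero extensions of $A\hat\otimes B$, again using flatness. $\mathcal{G}$ sends these to pullbacks, so $\mathcal{F}$ does too.
\item Existence of a global cotangent complex: let $p:X\times\mathcal{F}\to\mathcal{F}$ be the projection and $ev:X\times\mathcal{F}\to\mathcal{G}$ the evaluation. The candidate is
\begin{equation*}
\mathbb{L}_{\mathcal{F}}:=p_!\,ev^*\mathbb{L}_{\mathcal{G}/X},
\end{equation*}
where $p_!$ is the left adjoint of $p^*$ on quasi-coherent (perfect) modules. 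To construct $p_!$ on perfect modules, use flatness and that $X\to *$ lies in $\textcat{P}$ (smooth-type), so that $p_*$ of the dual preserves perfect objects; then set $p_!(E)=(p_*E^\vee)^\vee$. Here perfectness of $\mathbb{L}_{\mathcal{G}/X}$ is essential.
\item Universal property: for $x:U\to\mathcal{F}$ and $M$ a connective $B$-module, check
\begin{equation*}
\textnormal{Der}_{x}(U,M)\simeq\textnormal{Map}_{B}(x^*\mathbb{L}_{\mathcal{F}},M).
\end{equation*}
The left side equals $\textnormal{Map}_{X\times U}(ev^*\mathbb{L}_{\mathcal{G}/X},p^*M)$, by adjunction and base change for $p$ along $U$. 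Base change holds because the projection is flat.
\end{enumerate}

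Perfectness then follows. If $\mathbb{L}_X$ is perfect, the exact sequence $ev^*\mathbb{L}_{\mathcal{G}/X}$, $\mathbb{L}_{X\times\mathcal{F}}$, $\mathbb{L}_{X\times\mathcal{F}/\mathcal{F}}$ shows that $\mathbb{L}_\mathcal{F}$ is a retract/pushforward of a perfect complex. Alternatively, $p_!$ preserves perfect objects directly.

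I expect the main obstacle to be step 2: constructing $p_!$ and proving base change in the bornological setting, since the completed tensor product and its duality need care. My first attempt would reduce to the affine case $\mathcal{G}$ affine over $X$ and use that $A$ is flat with perfect $\mathbb{L}_X$ to get a dualisable pushforward. I would then extend to $n$-geometric $\mathcal{G}$ by induction on $n$ using atlases, since cotangent complexes and obstruction theories are local.
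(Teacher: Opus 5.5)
Your overall strategy matches the paper's. The nilcompleteness and infinitesimal-cartesianness arguments are exactly the proof of Theorem~\ref{mappingstackngeometricthm} with source $X$: flatness of $A$ gives $(B\otimes^{\mathbb{L}}A)_{\leq k}\simeq B_{\leq k}\otimes^{\mathbb{L}}A$ and makes $(B\oplus_d\Omega M)\otimes^{\mathbb{L}}A$ a square-zero extension of $B\otimes^{\mathbb{L}}A$, and $\mathcal{G}$ is nilcomplete and infinitesimally cartesian because it is $n$-geometric.

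\textbf{The gap is in your Step 2.} For $p_!:=(-)^\vee\circ p_*\circ(-)^\vee$ to be left adjoint to $p^*$ on perfect objects (Lemma~\ref{perfectprojectionformula}), you need $p_*$ to preserve perfect objects. You claim this follows from flatness and $X\to *$ lying in $\textcat{P}$, but it does not. At a point $U=\textnormal{Spec}(B)\to\mathcal{F}$ one has $p_*\mathcal{O}_{X\times U}\simeq A\otimes^{\mathbb{L}}B$, which for $U=*$ is $A$ itself. Nothing in a representability context makes $\textcat{P}$-maps finite or proper; in the paper, $X\to *\in\textcat{P}$ only guarantees $X\times U\in\mathcal{A}$. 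Your fallback fails as well: $A=\mathcal{C}^\infty(\mathbb{R}^n)$ is flat with $\mathbb{L}_X$ free of rank $n$, yet it is not a perfect $\mathbb{R}$-module, so perfectness of $\mathbb{L}_X$ says nothing about $p_*$. Concretely, let $E$ be the pullback of $ev^*\mathbb{L}_{\mathcal{G}/X}$ to $X\times U$. The derivation functor at $u$ is $M\mapsto\textnormal{Map}_B(B,p_*(E^\vee)\otimes_B M)$, and without dualisability of $p_*(E^\vee)$ it need not be corepresentable. So no cotangent complex has been produced. Your perfectness paragraph rests on the same claim. Moreover, the sequence you write there is not a transitivity sequence, so the hypothesis on $\mathbb{L}_X$ is never actually used.

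\textbf{How the paper avoids this.} It obtains the cotangent complex from Proposition~\ref{mappingcotangentcomplex} with source $\mathcal{F}=X$. There the relevant family is $X\times_X\texthom{Map}\to\texthom{Map}$, formed in the slice over $X$, and this map is the identity. Hence hypotheses (2) and (3) of that proposition hold trivially and $\pi_+=(-)^{\vee\vee}$. This gives $\mathbb{L}_{\texthom{Map}/X}\simeq(ev^*\mathbb{L}_{\mathcal{G}/X})^{\vee\vee}$, which is perfect because $\mathbb{L}_{\mathcal{G}/X}$ is. The absolute $\mathbb{L}_{\texthom{Map}}$ is then the fibre of $\mathbb{L}_{\texthom{Map}/X}\to\mathbb{L}_X[1]$, and this is exactly where perfectness of $\mathbb{L}_X$ enters.

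If you keep the nontrivial projection $X\times\mathcal{F}\to\mathcal{F}$ suggested by your formula for $\mathcal{F}(U)$, you need perfect pushforward and base change along it (condition (3) of Proposition~\ref{mappingcotangentcomplex}). That is an extra properness/dualisability hypothesis on $X$ which the corollary does not supply, and your argument cannot be closed without it.
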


\subsection*{$\mathcal{C}^\infty$-Bornological Rings}

In order to apply the representability theorem to prove representability of the derived moduli stack of solutions to non-linear elliptic PDEs, we need to develop new foundations for derived differential geometry as derived bornological geometry. In differential geometry, one works with manifolds, which can be realised as objects in the opposite category of $\mathcal{C}^\infty$-rings. A $\mathcal{C}^\infty$-ring can be described as an algebra $A$ over the reals such that the multiplication $\mathbb{R}\times\mathbb{R}\rightarrow{\mathbb{R}}$ determines the multiplication on $A$ and such that any smooth map $\mathbb{R}^n\rightarrow{\mathbb{R}^m}$ lifts to a map $A^n\rightarrow{A^m}$. Equivalently, $\mathcal{C}^\infty$-rings can be described as objects in the following category. 
\begin{defn}
    Let $\textnormal{CartSp}$ denote the category whose objects are manifolds of the form $\mathbb{R}^n$ for some $n\in\mathbb{N}$ and whose morphisms are smooth maps. Then, the \textit{category $\mathcal{C}^\infty\textnormal{Ring}$ of $\mathcal{C}^\infty$-rings} is defined to be
    \begin{equation*}
        \mathcal{C}^\infty\textnormal{Ring}:=\textnormal{SInd}(\textnormal{CartSp}^{op})=\textnormal{Fun}^\times(\textnormal{CartSp},\textnormal{Set})
    \end{equation*}
\end{defn}

\begin{remark}
    Alternatively, these can be described as algebras over a Lawvere theory generated by $\mathbb{R}$. 
\end{remark}

By considering a natural notion of a \textit{simplicial $\mathcal{C}^\infty$-ring} one can obtain notions of derived manifolds \cite{spivak_derived_2010}, \cite{carchedi_universal_2019}, and derived $\mathcal{C}^\infty$-affine schemes \cite{steffens_representability_2024}, the category of which we will denote by $\mathcal{C}^\infty\textcat{DAff}$. By \cite[Lemma 5.3.91]{ben-bassat_perspective_2024}, these derived objects embed naturally into the $(\infty,1)$-category of simplicial commutative complete bornological algebras. 

As described in Section \ref{defnasmappingstack}, the derived moduli stack of solutions to non-linear elliptic PDEs can be described as a derived mapping stack of sections. A key ingredient in our representability theorem is requiring that the truncated underived mapping stack needs to be representable. In general, this problem is almost as hard as proving the derived mapping stack is representable. A clear method of attack is to use some abstract categorical properties to show that the stack is representable. However, one quickly runs into problems in working with the category of $\mathcal{C}^\infty$-rings since it is not closed symmetric monoidal. Therefore, we have no well defined notion of a dual object and so we cannot turn problems involving mapping spaces into problems involving tensor products. A solution to this problem would be to find some adjunction 
\begin{equation*}
    L:\textnormal{C}\leftrightarrows{\mathcal{C}^\infty\textnormal{Ring}:R}
\end{equation*}where $\textnormal{C}$ is some suitable closed symmetric monoidal category and the right adjoint is strong monoidal.

However, it seems that no such category $\mathcal{C}$ should exist. Morally, the adjunction should be forgetting the smooth structure in some sense and taking any $\mathcal{C}^\infty$-ring to its underlying space. We note that $\mathcal{C}^\infty(\mathbb{R})\hat{\otimes}\mathcal{C}^\infty(\mathbb{R})\simeq\mathcal{C}^\infty(\mathbb{R}^2)$ as commutative Fr\'echet algebras \cite[Theorem 51.6]{treves_topological_1967}, where here we use the complete projective tensor product. Therefore, we would need this monoidal structure on $\mathcal{C}^\infty$-rings to transfer to one in $\textnormal{C}$ under $R$. This naturally leads us to consider categories $\textnormal{C}$ endowed with the complete projective tensor product, such as Fr\'echet spaces, complete bornological spaces etc. The category of $\mathcal{C}^\infty$-rings, which only sees the smooth structure on finite dimensional manifolds, does not capture the behaviour of smooth functions on such spaces. 

We recall from Theorem \ref{siftedcborncompletion}, that the category $\textnormal{Lin}_\mathbb{R}$, whose objects are of the form $\ell^1(\kappa)$ for $\kappa<\aleph$ and whose morphisms are bounded linear maps, provides a set of compact projective generators for $\textnormal{CBorn}_\mathbb{R}$.  By expanding the category $\textnormal{CartSp}$ to include all the manifolds of the form $\ell^1(\kappa)$ for $\kappa<\aleph$ along with suitably defined smooth maps between them, we can define a suitable extension of the category of $\mathcal{C}^\infty$-rings. We call these objects \textit{$\mathcal{C}^\infty$-bornological rings}. 

\begin{defn}
    Let $\mathcal{C}_{bb}^\infty$ denote the category whose objects are the same objects as $\textnormal{Lin}_\mathbb{R}$ but whose morphisms are smooth and bounded on bounded subsets. The category of \textit{$\mathcal{C}^\infty$-bornological rings}, denoted $\mathcal{C}^\infty\textnormal{BornRing}$, is the category
    \begin{equation*}
        \mathcal{C}^\infty\textnormal{BornRing}:=\textnormal{SInd}(\mathcal{C}^{\infty,op}_{bb})=\textnormal{Fun}^\times(\mathcal{C}^{\infty}_{bb},\textnormal{Set})
    \end{equation*}
\end{defn}

In Section \ref{Cinfinitybornsection}, we prove several important properties of $\mathcal{C}^\infty$-bornological rings. In particular, we have the following desired adjunction.
\begin{thm}[Corollary \ref{importantadjunction}] There is an adjunction
\begin{equation*}
    L:\textnormal{LH}(\textnormal{CBorn}_\mathbb{R})\leftrightarrows{\mathcal{C}^\infty\textnormal{BornRing}}:R
\end{equation*}with $R$ monoidal. 
\end{thm}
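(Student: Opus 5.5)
The plan is to obtain the adjunction from the universal property of $\textnormal{SInd}$ applied to a functor between the generating categories. By Theorem \ref{siftedcborncompletion}, $\textnormal{LH}(\textnormal{CBorn}_\mathbb{R})\simeq\textnormal{SInd}(\textnormal{Lin}_\mathbb{R})\simeq\textnormal{Fun}^\times(\textnormal{Lin}_\mathbb{R}^{op},\textnormal{Set})$. By definition, $\mathcal{C}^\infty\textnormal{BornRing}=\textnormal{Fun}^\times(\mathcal{C}^\infty_{bb},\textnormal{Set})$. So both sides are categories of product-preserving presheaves (models of multi-sorted Lawvere-type theories) on small categories with finite products. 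Any product-preserving functor between the theories then induces an adjunction between the model categories: restriction along it is the right adjoint, and left Kan extension, i.e.\ the sifted-colimit-preserving extension, is the left adjoint.

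The functor I would use is the inclusion $\iota:\textnormal{Lin}_\mathbb{R}^{op}\to\mathcal{C}^\infty_{bb}$. Concretely this means identifying $\ell^1(\kappa)$ with its dual description via the Lawvere-theory convention $\ell^1(\kappa)\leftrightarrow$ "free object on $\kappa$ generators". A bounded linear map is smooth and bounded on bounded subsets, so $\iota$ is well defined, bijective on objects and faithful. It preserves finite products, since $\ell^1(\kappa)\times\ell^1(\lambda)\cong\ell^1(\kappa\sqcup\lambda)$ in both categories. I would check that this product is the categorical product in $\mathcal{C}^\infty_{bb}$ by verifying that a map into a product is smooth and bounded iff its components are.

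The adjunction is then as follows. Set $R=\iota^*$, so that $R(F)=F\circ\iota$, the underlying linear (complete bornological) structure. This lands in product-preserving functors because $\iota$ preserves products. Set $L=\textnormal{Lan}_\iota$, computed as the unique sifted-colimit-preserving extension of $\ell^1(\kappa)\mapsto\mathcal{C}^\infty_{bb}(\ell^1(\kappa),-)$. The standard fact for algebraic theories is that restriction along a product-preserving functor has a left adjoint on $\textnormal{Fun}^\times$, because left Kan extensions of representables are representable and $\textnormal{Fun}^\times$ is closed under sifted colimits. That gives $L\dashv R$.

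For monoidality of $R$, the monoidal structure on $\textnormal{SInd}(\textnormal{Lin}_\mathbb{R})$ is the Day-convolution-type extension of the completed projective tensor product, with $\ell^1(\kappa)\hat\otimes\ell^1(\lambda)\cong\ell^1(\kappa\times\lambda)$. On $\mathcal{C}^\infty\textnormal{BornRing}$ I would take the coproduct (the $\mathcal{C}^\infty$ analogue of tensor product, generated by $\ell^1(\kappa)\times\ell^1(\lambda)$ on representables). The task is to show $R(A\sqcup B)\simeq R(A)\hat\otimes R(B)$. Since both sides commute with sifted colimits in each variable (sifted colimits are computed pointwise in $\textnormal{Fun}^\times$, so $R$ preserves them), it suffices to check the claim on representables. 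On representables it is the bornological analogue of $\mathcal{C}^\infty(\mathbb{R})\hat\otimes\mathcal{C}^\infty(\mathbb{R})\simeq\mathcal{C}^\infty(\mathbb{R}^2)$, i.e.
\[
\mathcal{C}^\infty_{bb}(\ell^1(\kappa),\mathbb{R})\hat\otimes\mathcal{C}^\infty_{bb}(\ell^1(\lambda),\mathbb{R})\simeq\mathcal{C}^\infty_{bb}(\ell^1(\kappa)\times\ell^1(\lambda),\mathbb{R}).
\]
This step is the main obstacle. The finite-dimensional version uses nuclearity (Trèves, Theorem 51.6), which fails for $\ell^1(\kappa)$ with $\kappa$ infinite. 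I would first attempt a weaker but sufficient statement: monoidality up to the natural comparison map, checked only on the generating finite-dimensional pieces, with the general case following by the sifted-colimit argument. If that proves too weak, I would fall back to defining the monoidal structure on $\mathcal{C}^\infty\textnormal{BornRing}$ so that $R$ is strong monoidal by construction, and then verify associativity and unit coherence.
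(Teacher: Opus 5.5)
\textbf{There is a genuine gap: the variance of your restriction functor is wrong, and this is exactly where the infinite-dimensional case differs from $\textnormal{CartSp}$.} The adjunction step fails as written, because there is no product-preserving functor $\textnormal{Lin}^{op}\to\mathcal{C}^\infty_{bb}$ that is the identity on objects.

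Bounded linear maps are smooth, so the inclusion of Lemma \ref{iotapresevfiniteprod} is covariant, $\iota:\textnormal{Lin}\to\mathcal{C}^\infty_{bb}$. Restriction along it lands in $\textnormal{Fun}^\times(\textnormal{Lin},\textnormal{Set})=\textnormal{SInd}(\textnormal{Lin}^{op})$, not in $\textnormal{SInd}(\textnormal{Lin})\simeq\textnormal{LH}(\textnormal{CBorn}_\mathbb{R})$. For $\textnormal{CartSp}$ this mismatch is invisible because $(\mathbb{R}^n)^*\cong\mathbb{R}^n$ and you can transpose; that is all your ``free object on $\kappa$ generators'' convention does.

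It breaks for infinite $\kappa$. The transpose of a bounded map $\ell^1(\kappa)\to\ell^1(\mu)$ is a map $\ell^\infty(\mu)\to\ell^\infty(\kappa)$, and it need not send $\ell^1(\mu)$ into $\ell^1(\kappa)$: the transpose of $x\mapsto\sum_k x_k$ is $y\mapsto(y,y,\dots)$. Put differently, $A(\ell^1(\kappa))$ is a set of summable $\kappa$-families and is covariant in $\ell^1(\kappa)$. The underlying space $R(A)$, as an object of $\textnormal{SInd}(\textnormal{Lin})$, must instead send $\ell^1(\kappa)$ contravariantly to $\textnormal{Hom}(\ell^1(\kappa),R(A))$, the bounded $\kappa$-families.

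This is why the paper (Theorem \ref{colimitpreservingcinfinity}) does three things you omit:
\begin{enumerate}
\item It inserts the duality $(-)^\vee:\textnormal{Lin}\to\textnormal{CBorn}_\mathbb{R}^{op}$, setting $L(\ell^1(\kappa))=\ell^1(\kappa)^\vee\simeq\ell^\infty(\kappa)$. This is a formal sifted colimit of representables, not the representable at $\ell^1(\kappa)$.
\item It extends this to a colimit-preserving functor via Lemma \ref{siftedcocompletionproperties}.
\item It obtains $R(A)=\textnormal{Hom}(L(-),A)$ from the adjoint functor theorem.
\end{enumerate}
That $R$ is not a restriction functor: its value at $\ell^1(\kappa)$ is $\varprojlim_i A(\ell^1(\kappa_i))$. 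So the sifted-colimit preservation your monoidality argument relies on is not automatic.

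\textbf{Your monoidality plan also overreaches.} Sifted colimits of the finite-dimensional representables $\mathbb{R}^n$ only produce the image of $\mathcal{C}^\infty\textnormal{Ring}$ (Lemma \ref{fullyfaithfulcinf}). For infinite $\kappa$, $\ell^1(\kappa)$ is compact projective and not of this form, since its identity would have to factor smoothly through some $\mathbb{R}^n$. So checking on finite-dimensional pieces cannot give the general case. Redefining the monoidal structure on $\mathcal{C}^\infty\textnormal{BornRing}$ is not an option either, since it is the one extended from $\mathcal{C}^{\infty,op}_{bb}$.

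The paper in fact only proves that $R$ is monoidal on $\mathcal{C}^\infty\textnormal{Ring}$ (Proposition \ref{Rmonoidalstructure}), so the infinite-dimensional tensor identity you flag as the main obstacle is never needed. What is needed, and absent from your plan, is an explicit computation of $R(A)$ for $A$ a sifted colimit of the $\mathbb{R}^{n_j}$. The paper evaluates $R(A)(\ell^1(\kappa))$ using:
\begin{itemize}
\item the free convenient space $\lambda\mathbb{R}^n\simeq\mathcal{C}^\infty(\mathbb{R}^n)^\vee$ (Theorem \ref{freeconvenientspace});
\item nuclearity and reflexivity of $\mathcal{C}^\infty(\mathbb{R}^n)$ (Corollary \ref{strongdualisablenucref});
\item commutation of $\hat\otimes$ with limits against nuclear spaces.
\end{itemize}
This essentially identifies $R(A)$ with the underlying space of $\mathcal{C}^\infty(A)$. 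Only then does it apply the nuclear identity $\mathcal{C}^\infty(A\otimes B)\simeq\mathcal{C}^\infty(A)\hat\otimes\mathcal{C}^\infty(B)$.
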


We can also define the following derived version. The $(\infty,1)$-category of \textit{derived $\mathcal{C}^\infty$-bornological rings}, denoted $\mathcal{C}^\infty\textcat{DBornRing}$, is the $(\infty,1)$-category 
    \begin{equation*}
       \mathcal{C}^\infty\textcat{DBornRing}:=\mathcal{P}_\Sigma(\mathcal{C}_{bb}^{\infty,op})=\textcat{Fun}^\times(\mathcal{C}_{bb}^\infty,\infty\textcat{Grpd})
    \end{equation*}
The following theorem shows that these derived $\mathcal{C}^\infty$-bornological rings correspond to derived affines in a derived bornological geometry setting. 
\begin{thm}[Theorem \ref{fullyfaithfulembed}] The category of derived $\mathcal{C}^\infty$-bornological rings embeds as a full subcategory of the $(\infty,1)$-category of simplicial commutative complete bornological algebras. 
\end{thm}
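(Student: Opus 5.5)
The plan is to realise both $(\infty,1)$-categories as categories of product-preserving presheaves, and to obtain the embedding by left Kan extension along a fully faithful functor of generators that preserves finite coproducts. We have
\begin{equation*}
\mathcal{C}^\infty\textcat{DBornRing}=\mathcal{P}_\Sigma(\mathcal{C}_{bb}^{\infty,op}).
\end{equation*}
By the derived analogue of Theorem \ref{siftedcborncompletion}, simplicial commutative complete bornological algebras can be presented as $\mathcal{P}_\Sigma(\textnormal{Sym}(\textnormal{Lin}_\mathbb{R}))$. Here $\textnormal{Sym}(\textnormal{Lin}_\mathbb{R})$ is the full subcategory on the free commutative algebras $\textnormal{Sym}(\ell^1(\kappa))$, which are compact projective generators since $\textnormal{Lin}_\mathbb{R}$ generates $\textnormal{CBorn}_\mathbb{R}$ and $\textnormal{Sym}$ is left adjoint to a forgetful functor preserving sifted colimits. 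This reduces the theorem to producing a fully faithful functor
\begin{equation*}
j:\mathcal{C}_{bb}^{\infty,op}\rightarrow \textcat{sCAlg}(\textnormal{CBorn}_\mathbb{R})
\end{equation*}
that lands in compact projective objects and preserves finite coproducts. Then $\mathcal{P}_\Sigma(j)$ preserves sifted colimits, and by the standard criterion (Lurie, HTT 5.5.8.22) it is fully faithful.

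The natural candidate is $j(\ell^1(\kappa))=\mathcal{C}^\infty_{bb}(\ell^1(\kappa))$, the algebra of smooth functions bounded on bounded sets, equipped with its natural complete bornology. A morphism $f:\ell^1(\kappa)\to\ell^1(\lambda)$ in $\mathcal{C}^\infty_{bb}$ goes to the pullback $f^*$. The steps are as follows.

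\textbf{(i)} Check that $f^*$ is a bounded algebra map.

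\textbf{(ii)} Show faithfulness and fullness. Every bounded algebra morphism $\mathcal{C}^\infty_{bb}(\ell^1(\lambda))\to\mathcal{C}^\infty_{bb}(\ell^1(\kappa))$ should be $f^*$ for a unique $f$. The proposed argument is a "Milnor's exercise" type result for these bornological algebras: bounded characters correspond to points, and $f$ is recovered by composing with the coordinate functionals, with boundedness forcing $f$ to be bounded on bounded sets.

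\textbf{(iii)} Show coproduct preservation,
\begin{equation*}
\mathcal{C}^\infty_{bb}(\ell^1(\kappa)\times\ell^1(\lambda))\simeq\mathcal{C}^\infty_{bb}(\ell^1(\kappa))\hat\otimes\mathcal{C}^\infty_{bb}(\ell^1(\lambda)),
\end{equation*}
and show that this derived tensor product is discrete. This extends the finite-dimensional statement cited from Tr\`eves, using nuclearity-type or approximation properties, and flatness should follow from the same argument.

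\textbf{(iv)} Show the image consists of compact objects.

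Step (iv) is where I expect the real obstacle. The objects $\mathcal{C}^\infty_{bb}(\ell^1(\kappa))$ are not free algebras $\textnormal{Sym}(\ell^1(\kappa))$, so they are not obviously compact projective in simplicial commutative complete bornological algebras. The fallback is Lurie's criterion in its weaker form: instead of compact projectivity, show directly that the composite $\mathcal{P}_\Sigma(\mathcal{C}^{\infty,op}_{bb})\to\textcat{sCAlg}$ is fully faithful by checking that the right adjoint, which restricts an algebra $B$ to $\ell^1(\kappa)\mapsto \textnormal{Map}(j(\ell^1(\kappa)),B)$, commutes with sifted colimits on the essential image. Concretely, I would try to write $\mathcal{C}^\infty_{bb}(\ell^1(\kappa))$ as a localisation or completion of $\textnormal{Sym}(\ell^1(\kappa))$ that is homotopy epimorphic, mirroring the argument of \cite[Lemma 5.3.91]{ben-bassat_perspective_2024} for the finite-dimensional $\mathcal{C}^\infty$ case. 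Mapping spaces out of it would then be subspaces of mapping spaces out of the free algebra and inherit commutation with sifted colimits. If that lemma's proof generalises verbatim from $\mathbb{R}^n$ to $\ell^1(\kappa)$, I would invoke it. Otherwise this homotopy-epimorphism claim is the key technical lemma to prove.
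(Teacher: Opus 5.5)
Your fallback is, in outline, the route the paper takes. The paper does not claim the image is compact. Instead it factors through the free algebra along $\Phi:\textnormal{Sym}(\ell^1(\kappa))\to\mathcal{C}^\infty_{\ell^1}(\ell^1(\kappa))$ and proves $\Phi$ is a homotopy epimorphism (Lemma \ref{symhomotopyepil1}, via a Hadamard-type contracting homotopy on the Koszul complex).

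There are two differences from the paper, and the second is a genuine gap.
\begin{itemize}
\item \emph{Choice of algebra.} The paper deliberately replaces your $\mathcal{C}^\infty_{bb}(\ell^1(\kappa))$ by the smaller algebra $\mathcal{C}^\infty_{\ell^1}(\ell^1(\kappa))$ of Definition \ref{ourl1algebra}. For $\mathcal{C}^\infty_{bb}$, the natural construction in your step (ii) only obviously lands in $\ell^\infty(\kappa)$. For $\mathcal{C}^\infty_{\ell^1}$, step (ii) comes from weak realcompactness of $\ell^1(\kappa)$ (Lemma \ref{symalgebrabij}). Your step (iii) cannot go through nuclearity, since $\mathcal{C}^\infty(\ell^1(\kappa))$ is not nuclear for infinite $\kappa$; the paper uses a minimality argument instead (Lemma \ref{tensorl1algebra}).
\item \emph{The closing inference.} A homotopy epimorphism only makes $\textnormal{Map}(\mathcal{C}^\infty(\ell^1(\kappa)),-)\to\textnormal{Map}(\textnormal{Sym}(\ell^1(\kappa)),-)$ an inclusion of path components. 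A union of components of a sifted-colimit-preserving functor need not preserve sifted colimits, because nothing controls which components of $\textnormal{Map}(\textnormal{Sym}(\ell^1(\kappa)),\varinjlim_i D_i)$ lift. So "mapping spaces inherit commutation with sifted colimits" does not follow.
\end{itemize}
The paper closes the second point with a surjectivity statement, Lemma \ref{compositionequivsym}: every map $\textnormal{Sym}(\ell^1(\kappa))\to\mathcal{C}^\infty_{\ell^1}(\ell^1(\mu))$ should come from a morphism $\ell^1(\mu)\to\ell^1(\kappa)$ of $\mathcal{C}^\infty_{bb}$. Combined with compact projectivity of $\textnormal{Sym}(\ell^1(\kappa))$, this makes the composite (\ref{equationcomposite}) an equivalence. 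That forces the monomorphism to be surjective on $\pi_0$, hence an equivalence, hence the first map too.

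However, for infinite $\kappa$ neither that surjectivity nor the homotopy epimorphism holds, for your algebra or for the paper's. The reason is that differentials of smooth functions on $\ell^1(\kappa)$ live in $\ell^\infty(\kappa)$, while $\textnormal{Sym}(\ell^1(\kappa))$ has generators only in $\ell^1(\kappa)$.
\begin{itemize}
\item \emph{Surjectivity fails.} The map $a\mapsto(\sum_k a_k)\,\textnormal{id}_{\mathbb{R}}$ is a bounded linear map $\ell^1(\mathbb{N})\to\mathcal{C}^\infty(\mathbb{R})$, and $\textnormal{id}_{\mathbb{R}}\in\mathcal{C}^\infty_{\ell^1}(\mathbb{R})$ by Condition (\ref{algebra1}). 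The only candidate preimage is $\psi(t)=(t,t,t,\dots)$, which is not $\ell^1$-valued. The summability in Lemma \ref{compositionequivsym} is justified ``by Condition (\ref{algebra2})'', but that is a closure property and places no constraint on families in the algebra.
\item \emph{The homotopy epimorphism fails.} Let $C$ be either algebra and $s=\sum_k x_k\in\ell^\infty(\kappa)\subseteq C$. Define $\delta(f,g)=[(\partial_k f(0))_{k\in\kappa}]\,g(0)\in\ell^\infty(\kappa)/c_0(\kappa)$. This pairing is bounded and $\textnormal{Sym}(\ell^1(\kappa))$-balanced, because $\Phi(v)(0)=0$ and $(\partial_k\Phi(v)(0))_k=v\in c_0(\kappa)$. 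So it factors through $C\hat\otimes_{\textnormal{Sym}(\ell^1(\kappa))}C$, and hence through $\pi_0(C\otimes^{\mathbb{L}}_{\textnormal{Sym}(\ell^1(\kappa))}C)$. It is nonzero on $s\otimes 1-1\otimes s$, which multiplies to $0$. Hence $\Phi$ is not a homotopy epimorphism.
\item \emph{Where the paper's homotopy breaks.} In the proof of Lemma \ref{symhomotopyepil1}, take $f(x,y)=\sum_k x_k$. Then $h_0(f)=\sum_k Df(e_k)\otimes e_k=\sum_k 1\otimes e_k$, which does not converge in $C\hat\otimes\ell^1(\kappa)$.
\end{itemize}

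So the step you flagged as the key technical lemma is exactly where the argument breaks. It generalises verbatim only for finite $\kappa$, which is \cite[Lemma 5.3.91]{ben-bassat_perspective_2024}. For infinite $\kappa$ you need a genuinely new ingredient. One option is a different choice of generating free algebras. Another is a direct description of which components of $\textnormal{Map}(\textnormal{Sym}(\ell^1(\kappa)),-)$ lift, together with a proof that this description is stable under sifted colimits.
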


In future work, we will study these $\mathcal{C}^\infty$-rings in more detail. In particular, we would like to define a suitable category $\textcat{DBMfd}$ of infinite dimensional manifolds as a subcategory of derived $\mathcal{C}^\infty$-bornological affines which are finitely presented in some sense, and then show that there is an equivalence of $(\infty,1)$-categories between the category $\mathcal{C}^\infty\textcat{DBornRing}$ and the category of finite limit preserving functors from $\textcat{DBMfd}$ to $\infty\textcat{Grpd}$. 

\subsection{Representability of the Derived Moduli Stack of Solutions to Non-Linear Elliptic PDEs}

In Section \ref{section3}, we show that if we define the $(\infty,1)$-category $\textcat{DBAff}$ to be the opposite category of derived $\mathcal{C}^\infty$-bornological rings, then we can define a suitable derived bornological geometry context for derived smooth geometry. Here we have a topology $\bm\tau_{\mathcal{C}^\infty}$, the $\mathcal{C}^\infty$-localisation topology, and we have a class of open immersions maps $\textcat{open}_{\mathcal{C}^\infty}$. This geometry context is a representability context, as shown in Corollary \ref{tuplediffgeometryrep} and, hence, we can apply the Representability Theorem. Suppose that we have some classical derived $\mathcal{C}^\infty$-affine $X\in\mathcal{C}^\infty\textcat{DAff}$. Suppose that $Y\in\mathcal{C}^\infty\textcat{DAff}$ is a system of derived non-linear elliptic partial differential equations, in the sense of Definition \ref{pdedefn}.

\begin{defn}[Definition \ref{solnstackdefn}]
    The \textit{derived moduli stack of solutions to $Y$}, denoted $\textcat{Sol}_X(Y)$, is the mapping stack of sections
    \begin{equation*}
        \textcat{Sol}_X(Y):=\texthom{Map}_{\textcat{Stk}(\mathcal{C}^\infty\textcat{DAff},\bm\tau_{\mathcal{C}^\infty}|_{\mathcal{C}^\infty\textcat{DAff}})_{/X_{dR}}}(X_{dR},Y)
    \end{equation*}
\end{defn}

Using Corollary \ref{mapstackrepresentable}, we obtain the following main result.

\begin{thm}[c.f. Corollary \ref{repsolutionstack}]
    Suppose that $Y=\textnormal{Spec}(B)$ and $X=\textnormal{Spec}(A)$, with $A$ and $B$ finitely presented regular $\mathcal{C}^\infty$-rings. Then, $\textcat{Sol}_X(Y)$ is representable by an object in $\mathcal{C}^\infty\textcat{DBAff}$. 
\end{thm}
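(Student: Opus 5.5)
The plan is to deduce the statement from Corollary \ref{mapstackrepresentable}, applied in the representability context $(\textcat{DBAff},\bm\tau_{\mathcal{C}^\infty},\textcat{open}_{\mathcal{C}^\infty})$ of Corollary \ref{tuplediffgeometryrep}. We work relative to the base $X_{dR}$, with $\mathcal{G}=Y$ regarded as a stack over $X_{dR}$ through the structure map that exhibits $Y$ as a system of PDEs in the sense of Definition \ref{pdedefn}. The first step is to put $X_{dR}$ in the form required by the corollary. Since $A$ is a finitely presented regular $\mathcal{C}^\infty$-ring, I would describe $X_{dR}$ affinely as $\textnormal{Spec}$ of a discrete $\mathcal{C}^\infty$-bornological ring, namely the reduced or de Rham quotient of $A$ viewed inside $\mathcal{C}^\infty\textnormal{BornRing}$ via Theorem \ref{fullyfaithfulembed}. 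So $X_{dR}\in\mathcal{A}^\heartsuit$. I would then check that $X_{dR}\to *$ is flat and lies in $\textcat{P}$. Flatness should follow from regularity of $A$ together with the monoidality of $R$ in Corollary \ref{importantadjunction}, since tensoring reduces to the complete projective tensor product of nuclear Fréchet spaces, which is exact. Membership in $\textcat{P}$ should follow because regular finitely presented $\mathcal{C}^\infty$-rings are built from $\mathcal{C}^\infty(\mathbb{R}^n)$ by $\mathcal{C}^\infty$-localisations, which lie in $\textcat{open}_{\mathcal{C}^\infty}$.

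Next I would verify condition (2) of Corollary \ref{mapstackrepresentable}. Here $Y=\textnormal{Spec}(B)$ is affine, hence $0$-geometric and in particular $n$-geometric. Because $B$ is finitely presented, $\mathbb{L}_B$ is computed from a presentation $\mathcal{C}^\infty(\mathbb{R}^m)/(f_1,\dots,f_k)$ as a two-term complex of finite free modules, so it is perfect. The same holds for $\mathbb{L}_A$, and hence for $\mathbb{L}_{X_{dR}}$ after transport. The relative cotangent complex $\mathbb{L}_{Y/X_{dR}}$ then sits in a cofibre sequence with the pullback of $\mathbb{L}_{X_{dR}}$ and $\mathbb{L}_Y$, and is therefore perfect. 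I would also use here that the cotangent complex in the bornological setting agrees with the $\mathcal{C}^\infty$ one on finitely presented objects, which is the point of the embedding Theorem \ref{fullyfaithfulembed}.

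The main obstacle is condition (1): showing that the truncation $t_0(\textcat{Sol}_X(Y))$ is representable. For this I would use the adjunction of Corollary \ref{importantadjunction} with $R$ monoidal. At the discrete level, a section of $Y\to X_{dR}$ amounts to a map of $\mathcal{C}^\infty$-bornological rings out of $B$ into $\mathcal{O}(X_{dR})\hat{\otimes} C$ for test objects $C$, and these are to be compatible over $A$. Since $B$ is finitely presented, such maps are determined by finitely many elements satisfying finitely many smooth equations. Closedness of the complete projective tensor product in $\textnormal{CBorn}_\mathbb{R}$ then turns maps into $\mathcal{O}(X_{dR})\hat{\otimes}C$ into maps out of a dual $\mathcal{O}(X_{dR})^\vee$ into $C$. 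This dual exists and behaves well because $\mathcal{O}(X_{dR})$ is nuclear Fréchet, hence dualisable in the bornological sense. Applying the left adjoint $L$ produces an explicit $\mathcal{C}^\infty$-bornological ring, namely a quotient of $L$ of a free object on $(\mathcal{O}(X_{dR})^\vee)^{m}$ by the imposed equations, which corepresents $t_0(\textcat{Sol}_X(Y))$. The delicate points are the dualisability and the fact that $R$ commutes with the relevant tensor products.

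Once conditions (1) and (2) hold, Corollary \ref{mapstackrepresentable} gives that $\textcat{Sol}_X(Y)$ is $n$-geometric. Since the truncation is affine and the obstruction theory is perfect and concentrated in nonnegative amplitude relative to an affine, I expect to get $n=0$. This means $\textcat{Sol}_X(Y)$ is representable by an object of $\mathcal{C}^\infty\textcat{DBAff}$, with perfect cotangent complex by the final clause of the corollary.
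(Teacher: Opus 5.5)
There is a genuine gap at the first step: $X_{dR}$ is not affine. It is therefore not an object of $\mathcal{A}^\heartsuit$, and Corollary \ref{mapstackrepresentable} cannot be applied with base $X_{dR}$. Since $A$ is finitely presented regular, $\mathcal{C}^\infty(A)\simeq\mathcal{C}^\infty(M)$ has no nilpotents. So the reduced quotient of $A$ is $A$ itself and gives back $X$, not $X_{dR}$: already $X(\mathbb{R}[\epsilon]/\epsilon^2)$ is the tangent bundle, while $X_{dR}(\mathbb{R}[\epsilon]/\epsilon^2)=X(\mathbb{R})$. No other affine works either when $\dim M>0$. Since $X_{dR}(C\oplus N)\simeq X_{dR}(C)$ for square-zero extensions, a representing $\textnormal{Spec}(D)$ would force every map $D\to\mathcal{C}^\infty(\mathbb{R})$ to have vanishing derivative at every point, hence be constant. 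But $X_{dR}(\mathcal{C}^\infty(\mathbb{R}))=X(\mathcal{C}^\infty(\mathbb{R}))$ contains non-constant curves. The same problem breaks your truncation argument. It needs an algebra $\mathcal{O}(X_{dR})$, nuclear Fr\'echet and reflexive, with $X_{dR}\times\textnormal{Spec}(C)\simeq\textnormal{Spec}(\mathcal{O}(X_{dR})\hat\otimes C)$, and no such algebra exists. Separately, your claim that $X\to *$ lies in $\textcat{open}_{\mathcal{C}^\infty}$ is false. The map $\mathbb{R}\to\mathcal{C}^\infty(\mathbb{R}^n)$ is not a homotopy epimorphism, since $\mathcal{C}^\infty(\mathbb{R}^n)\hat\otimes\mathcal{C}^\infty(\mathbb{R}^n)\simeq\mathcal{C}^\infty(\mathbb{R}^{2n})$, and closed submanifolds arise as quotients, not localisations. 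The paper avoids this by using instead the hypothesis of Corollary \ref{mappingstacksimplified} that derived $\mathcal{C}^\infty$-bornological rings are closed under tensor products.

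The missing idea is to decompose $X_{dR}$ rather than use it as a base. By Lemma \ref{coequaliserxdr}, $X_{dR}$ is the coequaliser of $\varinjlim_k\textnormal{Spec}(A_k)\rightrightarrows\textnormal{Spec}(A)$, where $A_k$ is the $k$-th order neighbourhood of the diagonal. Mapping out of this colimit expresses $\textcat{Sol}_X(Y)$ as a limit of mapping stacks $\texthom{Map}_{/X'}(X',Y)$ over the affine bases $X'=\textnormal{Spec}(A)$ or $X'=\textnormal{Spec}(A_k)$. Since representables are closed under limits, it suffices to show each piece is representable, and this is where Corollary \ref{mappingstacksimplified} is applied. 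Your adjunction-and-duality computation is exactly Lemma \ref{firststeptruncation} and does work for these $X'$, but it needs $\mathcal{C}^\infty(X')$ to be nuclear Fr\'echet and reflexive. For $X'=\textnormal{Spec}(A_k)$ this is the real analytic input of Lemma \ref{bornfinpresAk}: $A_k$ is bornologically finitely presented, i.e. $I^k+J$ is closed in $\mathcal{C}^\infty(\mathbb{R}^{2n})$, which the paper proves by Taylor expanding along the diagonal. Likewise, perfectness of the relative cotangent complex is needed for each $Y\to X'$, and in Corollary \ref{repsolutionstack} this is supplied by the formally perfect hypothesis. Finally, each piece should come out $(-1)$-geometric directly, since its truncation is affine. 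Concluding ``$0$-geometric'', as you do, would not give representability.
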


\subsection*{Acknowledgements}

I am deeply grateful to my PhD supervisor Kobi Kremnizer for suggesting this project and helping guide it in its early stages. It has also benefited from conversations with David Ben-Zvi, Jack Kelly, Devarshi Mukherjee, Arun Soor, Pelle Steffens, amongst others.  

\subsection*{Funding}

This work was supported by the Additional Funding Programme for Mathematical Sciences, delivered by EPSRC (EP/V521917/1) and the Heilbronn Institute for Mathematical Research. This research was partially conducted during the author's PhD at the University of Oxford, funded by an EPSRC studentship [EP/W523781/1 - no. 2580843].

\addtocontents{toc}{\setcounter{tocdepth}{2}}
\section{A Representability Theorem for Stacks in Derived Geometry Contexts}\label{representabilitytheoremsection}

In \cite{savage_representability_2024}, we showed that we can obtain a representability theorem for derived stacks which holds in any suitably defined \textit{representability context}. These should be suitably defined \textit{derived geometry contexts} which satisfy several extra properties describing the compatibility between the classical and the derived geometry. This paper is intended to be a follow up paper to \cite{savage_representability_2024}. For ease of exposition, we will not redefine everything in full but we will frequently refer back to \cite{savage_representability_2024} for necessary notation, definitions, and theorems. 

\subsection{Derived Geometry Contexts} The formalism of these derived geometry contexts begins with the definition of Raksit \cite{raksit_hochschild_2020} of a \textit{derived algebraic context} $(\mathcal{C},\mathcal{C}_{\geq 0},\mathcal{C}_{\leq 0},\mathcal{C}^0)$, which consists of a stable $(\infty,1)$-category $\mathcal{C}$ equipped with a $t$-structure and a generating set $\mathcal{C}^0$ of compact projectives. In such a context, one can define a category $\textcat{DAlg}^{cn}(\mathcal{C})$ of derived (connective) algebra objects as algebra objects relative to the $\textcat{LSym}$-monad. We can then define the category $\textcat{DAff}^{cn}(\mathcal{C}):=\textcat{DAlg}^{cn}(\mathcal{C})^{op}$ of \textit{derived affine objects}. 

\begin{exmp}By \cite[c.f. Theorems 3.1.41, 3.1.42]{ben-bassat_perspective_2024}, there is a derived algebraic context
 \begin{equation*}
     (\textcat{Ch}(\textnormal{Ind}(\textnormal{Ban}_k)),\textcat{Ch}_{\geq 0}(\textnormal{Ind}(\textnormal{Ban}_k)),\textcat{Ch}_{\leq 0}(\textnormal{Ind}(\textnormal{Ban}_k)),\textcat{L}^H(\textnormal{Lin}_k))
 \end{equation*}where $\textcat{Ch}(-)$ denotes the $(\infty,1)$-category associated with the category of chain complexes equipped with the projective model structure, and $\textcat{L}^H(-)$ denotes the $(\infty,1)$-category associated with a category with weak equivalences. Moreover, we can transfer the $t$-structure to a $t$-structure on $\textcat{Ch}(\textnormal{CBorn}_k)$ and obtain an equivalent derived algebraic context 
 \begin{equation*}
     (\textcat{Ch}(\textnormal{CBorn}_k),\textcat{Ch}_{\geq 0}(\textnormal{CBorn}_k),\widetilde{\textcat{Ch}}_{\leq 0}(\textnormal{CBorn}_k),\textcat{L}^H(\textnormal{Lin}_k))
 \end{equation*}We note that the transferred non-positive part $\widetilde{\textcat{Ch}}_{\leq 0}(\textnormal{CBorn}_k)$ is not ${\textcat{Ch}}_{\leq 0}(\textnormal{CBorn}_k)$. From now on, we will shorten notation for these derived algebraic contexts to $\underline{\textcat{Ch}}(\textnormal{Ind}(\textnormal{Ban}_k))$ and $\underline{\textcat{Ch}}(\textnormal{CBorn}_k)$. In these situations, we have that 
 \begin{equation*}
     \textcat{DAlg}^{cn}(\textcat{Ch}(\textnormal{E}))\simeq \textcat{L}^H(\textnormal{Comm}(\textnormal{sInd}(\textnormal{Ban}_k)))\simeq \textcat{L}^H(\textnormal{Comm}(\textnormal{sCBorn}_k))
 \end{equation*}where $\textnormal{Comm}(\textnormal{sInd}(\textnormal{Ban}_k))$ and $\textnormal{Comm}(\textnormal{sCBorn}_k)$ denote the categories of simplicial commutative Ind-Banach algebras and simplicial commutative bornological algebras respectively. 
\end{exmp}

As described in \cite[Definition 3.6.2]{savage_representability_2024}, a \textit{derived geometry context} is a tuple \begin{equation*}(\mathcal{C},\mathcal{C}_{\geq 0},\mathcal{C}_{\leq 0},\mathcal{C}^0,\bm{\tau},\textcat{P},\mathcal{A},\textcat{M})
\end{equation*}which consists of a derived algebraic context $(\mathcal{C},\mathcal{C}_{\geq 0},\mathcal{C}_{\leq 0},\mathcal{C}^0)$ along with a topology $\bm\tau$ on $\textcat{DAff}^{cn}(\mathcal{C})$, a collection of maps $\textcat{P}$, a subcategory $\mathcal{A}$ of `distinguished affines', and a compatible collection $\textcat{M}$ of $A$-modules for $\textnormal{Spec}(A)\in\mathcal{A}$. We require this tuple to satisfy several conditions detailed in \cite{savage_representability_2024}. 

These derived geometry contexts aim to describe the necessary data for defining useful geometric objects and concepts. By embedding $\mathcal{A}$ in the larger category $\textcat{DAff}^{cn}(\mathcal{C})$, we can use the stronger and more versatile properties of $\textcat{DAff}^{cn}(\mathcal{C})$ to study $\mathcal{A}$. As described in \cite{savage_representability_2024}, within such a context we can define geometric stacks, square-zero extensions, cotangent complexes, and obstruction theories. 

In many of our examples of interest, topologies will consist of maps which are \textit{homotopy monomorphisms}, in the following sense. 

\begin{defn}\phantomsection\label{homotopyepi}
    Suppose that $f:A\rightarrow{B}$ is a morphism in $\textcat{DAlg}^{cn}(\mathcal{C})$. Then, $f$ is a \textit{homotopy epimorphism} if the map $B\otimes_A^\mathbb{L}B\rightarrow{B}$ is an equivalence. The corresponding morphism in $\textcat{DAff}^{cn}(\mathcal{C})$ is called a \textit{homotopy monomorphism}. 
\end{defn}

\begin{exmp}
    In \cite[Corollary 6.8.6]{savage_representability_2024}, we show that there is a derived geometry context modelling derived complex analytic geometry
    \begin{equation*}
        (\underline{\textcat{Ch}}(\textnormal{Ind}(\textnormal{Ban}_\mathbb{C})),\bm{hm}^{fin}_{\textcat{DSt}},{\textcat{fP}^{\bm{hm}}_{\textcat{DSt}}},\textcat{DSt}^{op},\textcat{Mod}^{coad,cn})
    \end{equation*}where we consider some category $\textcat{DSt}$ of derived Stein algebras along with a collection of coadmissible modules $\textcat{Mod}^{coad,cn}$. The topology has covers which are homotopy monomorphisms and the class of maps ${\textcat{fP}^{\bm{hm}}_{\textcat{DSt}}}$ are formally perfect. 
\end{exmp}

Suppose that $(\mathcal{C},\mathcal{C}_{\geq 0},\mathcal{C}_{\leq 0},\mathcal{C}^0,\bm{\tau},\textcat{P},\mathcal{A},\textcat{M})$ is a derived geometry context. For clarity, we will recall the definition of an $n$-geometric stack from \cite{savage_representability_2024}. For the purposes of this paper we only consider $n$-geometricity relative to $\mathcal{A}$. 
\begin{defn}
\begin{enumerate}
    \item A stack $\mathcal{F}$ in $\textcat{Stk}(\mathcal{A},\bm{\tau}|_{\mathcal{A}})$ is \textit{$(-1)$-geometric relative to $\mathcal{A}$} (or \textit{a representable stack}) if it is of the form $\mathcal{F}\simeq \textnormal{Map}_{\mathcal{A}}(-,X)$ for some $X\in\mathcal{A}$,
    \item A morphism of stacks $f:\mathcal{F}\rightarrow{\mathcal{G}}$ in $\textcat{Stk}(\mathcal{A},\bm{\tau}|_{\mathcal{A}})$ is \textit{$(-1)$-representable} if, for any map $X\rightarrow{\mathcal{G}}$, with $X\in\mathcal{A}$ a representable stack, the pullback $\mathcal{F}\times_\mathcal{G} X$ is $(-1)$-geometric,
    \item  A morphism of stacks $f:\mathcal{F}\rightarrow{\mathcal{G}}$ in $\textcat{Stk}(\mathcal{A},\bm{\tau}|_{\mathcal{A}})$ is \textit{in} $(-1)\textcat{-P}|_{\mathcal{A}}$ if it is $(-1)$-representable and, for any map $X\rightarrow{\mathcal{G}}$, with $X\in\mathcal{A}$ a representable stack, the induced map of $(-1)$-geometric stacks $\mathcal{F}\times_\mathcal{G}X\rightarrow{X}$ is represented by a morphism in $\textcat{P}|_\mathcal{A}$.
\end{enumerate}
\end{defn}

Now, for $n\geq 0$, we can inductively build up notions of higher geometric stacks by glueing together representables as follows. 

\begin{defn}\phantomsection\label{geometricstackdefinition}

\begin{enumerate}
    \item Let $\mathcal{F}$ be a stack in $\textcat{Stk}(\mathcal{A},\bm{\tau}|_{\mathcal{A}})$. An \textit{$n$-atlas} for $\mathcal{F}$ is a set of morphisms $\{U_i\rightarrow{\mathcal{F}}\}_{i\in I}$ such that each $U_i$ is $(-1)$-geometric, each map $U_i\rightarrow{\mathcal{F}}$ is in $(n-1)\textcat{-P}|_\mathcal{A}$, and there is an epimorphism of stacks 
        \begin{equation*}
            \coprod_{i\in I} U_i\rightarrow{\mathcal{F}}
        \end{equation*}in $\textcat{Stk}(\mathcal{A},\bm{\tau}|_{\mathcal{A}})$, 
    \item A stack $\mathcal{F}$ in $\textcat{Stk}(\mathcal{A},\bm{\tau}|_{\mathcal{A}})$ is \textit{$n$-geometric} if the diagonal morphism $\mathcal{F}\rightarrow{\mathcal{F}\times\mathcal{F}}$ is $(n-1)$-representable and $\mathcal{F}$ admits an $n$-atlas,
    \item A morphism of stacks $f:\mathcal{F}\rightarrow{\mathcal{G}}$ in $\textcat{Stk}(\mathcal{A},\bm{\tau}|_{\mathcal{A}})$ is \textit{$n$-representable} if, for any map $X\rightarrow{\mathcal{G}}$ with $X\in\mathcal{A}$ a representable stack, the pullback $\mathcal{F}\times_\mathcal{G} X$ is $n$-geometric, 
    \item A morphism of stacks $f:\mathcal{F}\rightarrow{\mathcal{G}}$ in $\textcat{Stk}(\mathcal{A},\bm{\tau}|_{\mathcal{A}})$ is in $n\textcat{-P}|_\mathcal{A}$ if it is $n$-representable and, for any map $X\rightarrow{\mathcal{G}}$ with $X\in\mathcal{A}$ a representable stack, there exists an $n$-atlas of the form $\{U_i\rightarrow \mathcal{F}\times_\mathcal{G} X\}_{i\in I}$ such that each map $U_i\rightarrow{X}$ is in $\textcat{P}|_\mathcal{A}$.
\end{enumerate}
\end{defn}

We denote the subcategory of $\textcat{Stk}(\mathcal{A},\bm\tau|_\mathcal{A})$ consisting of $n$-geometric stacks by $\textcat{Stk}_n(\mathcal{A},\bm\tau|_\mathcal{A},\textcat{P}|_{\mathcal{A}})$.

\subsection{The Representability Theorem}

We would like to give conditions under which a stack $\mathcal{F}$ in $\textcat{Stk}(\mathcal{A},\bm{\tau}|_\mathcal{A})$ is $n$-geometric. Using the $t$-structure on $\mathcal{C}$ we can define $\textcat{DAlg}^\heartsuit(\mathcal{C})=\textcat{DAlg}(\mathcal{C})\times_\mathcal{C}\mathcal{C}^\heartsuit$. There is an induced adjunction on derived affines
\begin{equation*}
    \iota:\textcat{DAff}^{\heartsuit}(\mathcal{C})\leftrightarrows\textcat{DAff}^{cn}(\mathcal{C}):t_0
\end{equation*}

\begin{defn}
    Define the full subcategory $\mathcal{A}^\heartsuit\subseteq\mathcal{A}\subseteq\textcat{DAff}^\heartsuit(\mathcal{C})$ to consist of objects $X$ in $\textcat{DAff}^\heartsuit(\mathcal{C})$ such that $X=t_0(Y)$ for some $Y\in \mathcal{A}$. 
\end{defn}

\begin{defn}
    We define a class $\textcat{P}^\heartsuit$ of maps in $\mathcal{A}^\heartsuit$ and a collection $\bm\tau^\heartsuit$ of covering families in $\textnormal{Ho}(\mathcal{A}^{\heartsuit})$ as follows
    \begin{enumerate}
        \item The collection $\textcat{P}^\heartsuit$ is defined to be the collection of morphisms $f$ such that $f\in\textcat{P}\cap \mathcal{A}^\heartsuit$,
        \item The collection of $\bm\tau^\heartsuit$-covers is the collection $\{t_0(U_i)\rightarrow{t_0(X)}\}_{i\in I}$ such that $\{U_i\rightarrow{X}\}_{i\in I}$ is a $\bm\tau$-cover in $\mathcal{A}$.
    \end{enumerate}
\end{defn}
\begin{defn}
\begin{enumerate}
    \item The \textit{truncation functor} $t_0$ is defined to be the functor \begin{equation*}
    t_0:=(\iota|_{\mathcal{A}^\heartsuit})^*:\textcat{Stk}(\mathcal{A},\bm{\tau}|_{\mathcal{A}})\rightarrow{\textcat{Stk}(\mathcal{A}^\heartsuit,\bm{\tau}^\heartsuit)}
\end{equation*}
\item The \textit{extension functor} $i$ is defined to be its left adjoint \begin{equation*}
    i:=(\iota|_{\mathcal{A}^\heartsuit})_{\#}:\textcat{Stk}(\mathcal{A}^\heartsuit,\bm{\tau}^\heartsuit)\rightarrow{\textcat{Stk}(\mathcal{A},\bm{\tau}|_{\mathcal{A}})}
    \end{equation*}
\end{enumerate}

\end{defn}
We can prove representability of a stack $\mathcal{F}$ in $\textcat{Stk}(\mathcal{A},\bm\tau|_\mathcal{A})$ from representability of $t_0(\mathcal{F})$ by lifting an $n$-atlas of $t_0(\mathcal{F})$ to an $n$-atlas of $\mathcal{F}$. To lift such an $n$-atlas requires $\mathcal{F}$ to be nilcomplete and have an obstruction theory. We refer to \cite[Sections 3 and 4]{savage_representability_2024} for notations of derivations, square-zero extensions, and cotangent complexes. 
\begin{defn}\cite[Definition 4.1.1]{savage_representability_2024} A stack $\mathcal{F}\in\textcat{Stk}(\mathcal{A},\bm{\tau}|_\mathcal{A})$ has an \textit{obstruction theory} if it has a global cotangent complex relative to $\mathcal{A}$ and if, for any $X=\textnormal{Spec}(A)\in\mathcal{A}$, any $A$-module $M\in\textcat{M}_{A,1}:=\textcat{M}_A\times_{\textcat{Mod}_A}\textcat{Mod}_A^{\geq 1}$, and any derivation $d\in\pi_0(\textcat{Der}(A,M))$ corresponding to a morphism $d:A\rightarrow{A\oplus M}$, there is a pullback square
\begin{equation*}
    \begin{tikzcd}
        \mathcal{F}(A\oplus_d\Omega M)\arrow{r}\arrow{d} & \mathcal{F}(A)\arrow{d}\\
        \mathcal{F}(A)\arrow{r} & \mathcal{F}(A\oplus M)
    \end{tikzcd}
\end{equation*}This last condition says that $\mathcal{F}$ is \textit{infinitesimally cartesian relative to $\mathcal{A}$}. 
\end{defn}Suppose that we have a morphism $x:X\rightarrow{\mathcal{F}}$. If $\mathcal{F}$ has an obstruction theory, then lifting of this map to a map $x':X_d[\Omega M]:=\textnormal{Spec}(A\oplus_d\Omega M)\rightarrow{\mathcal{F}}$ is controlled by the cotangent complex $\mathbb{L}_{\mathcal{F},x}$ (see \cite[Proposition 4.1.7]{savage_representability_2024}). 

\begin{defn}\cite[Definition 4.5.3]{savage_representability_2024} A stack $\mathcal{F}\in\textcat{Stk}(\mathcal{A},\bm{\tau}|_\mathcal{A})$ is \textit{nilcomplete with respect to $\mathcal{A}$} if, for every $X=\textnormal{Spec}(A)\in\mathcal{A}$, there is an equivalence 
\begin{equation*}
    \mathcal{F}(A)\rightarrow{\varprojlim_k }\,\mathcal{F}(A_{\leq k})
\end{equation*}
    
\end{defn}

We can ensure that we have strong compatibility between a stack $\mathcal{F}$ and its truncation $t_0(\mathcal{F})$ by defining the notion of a representability context (see \cite[Definition 5.3.1]{savage_representability_2024}),
\begin{equation*}
    (\mathcal{C},\mathcal{C}_{\geq 0},\mathcal{C}_{\leq 0},\mathcal{C}^0,\bm\tau,\textcat{P},\mathcal{A},\textcat{M},\textcat{S})
\end{equation*}This consists of a derived geometry context along with a collection of maps $\textcat{S}$ which control the obstruction theory. The classes $\bm\tau$ and $\textcat{P}$ are required to satisfy the \textit{obstruction conditions}, see \cite[Definition 4.3.2]{savage_representability_2024}, relative to $\mathcal{A}$ for the class $\textcat{S}$ of morphisms. This ensures that any $n$-geometric stack in this context has an obstruction theory. 

In this setting we obtain the following representability theorem. 

\begin{thm}\cite[Theorem 5.6.1]{savage_representability_2024} \phantomsection\label{representabilitytheorem}Suppose that $(\mathcal{C},\mathcal{C}_{\geq 0},\mathcal{C}_{\leq 0},\mathcal{C}^0,\bm\tau,\textcat{P},\mathcal{A},\textcat{M},\textcat{S})$ is a representability context and that $\mathcal{F}$ is a stack in $\textcat{Stk}(\mathcal{A},\bm{\tau}|_{\mathcal{A}})$. The following conditions are equivalent. 
\begin{enumerate}
    \item $\mathcal{F}$ is an $n$-geometric stack in $\textcat{Stk}_n(\mathcal{A},\bm\tau|_\mathcal{A},\textcat{P}|_\mathcal{A})$, 
    \item $\mathcal{F}$ satisfies the following three conditions:
    \begin{enumerate}
\item\makeatletter\def\@currentlabel{a}\makeatother\label{representability1}The truncation $t_0(\mathcal{F})$ is an $n$-geometric stack in $\textcat{Stk}_n(\mathcal{A}^\heartsuit,\bm\tau^\heartsuit,\textcat{P}^\heartsuit)$, 
        \item \makeatletter\def\@currentlabel{b}\makeatother\label{representability2} $\mathcal{F}$ has an obstruction theory relative to $\mathcal{A}$,
        \item \makeatletter\def\@currentlabel{c}\makeatother\label{representability3} $\mathcal{F}$ is nilcomplete with respect to $\mathcal{A}$. 
    \end{enumerate}
\end{enumerate}
\end{thm}

\subsection{Representability of Mapping Stacks}

Many moduli stacks naturally appear as mapping stacks when families of geometric objects over some base object naturally correspond to maps from the base into a suitable \textit{classifying stack}. For example, the moduli stack of principal $G$-bundles on some geometric object $X$ (e.g. a scheme, or a stack), where $G$ is some group object, can be expressed as the mapping stack from $X$ to $\textcat{BG}$. 

For the rest of this section, we fix a representability context $(\mathcal{C},\mathcal{C}_{\geq 0},\mathcal{C}_{\leq 0},\mathcal{C}^0,\bm\tau,\textcat{P},\mathcal{A},\textcat{M},\textcat{S})$. We note that the category of presheaves $\textcat{PSh}(\textcat{DAff}^{cn}(\mathcal{C}))$ is locally cartesian closed, and hence, for any $X\in\mathcal{A}$, the slice category $\textcat{PSh}(\textcat{DAff}^{cn}(\mathcal{C}))_{/X}$ is cartesian closed. Suppose that $\mathcal{F}$ and $\mathcal{G}$ are in $\textcat{PSh}(\textcat{DAff}^{cn}(\mathcal{C}))_{/X}$. We note that the presheaf $\texthom{Map}_{\textcat{PSh}(\textcat{DAff}^{cn}(\mathcal{C}))_{/X}}(\mathcal{F},\mathcal{G})$ acts on $\textcat{DAlg}^{cn}(\mathcal{C})$ by
\begin{equation*}
    \texthom{Map}_{\textcat{PSh}(\textcat{DAff}^{cn}(\mathcal{C}))_{/X}}(\mathcal{F},\mathcal{G})(B):=\textnormal{Map}_{\textcat{PSh}(\textcat{DAff}^{cn}(\mathcal{C}))_{/X}}(\mathcal{F}\times Y,\mathcal{G})
\end{equation*}where $Y=\textnormal{Spec}(B)$.

If $\mathcal{F}$ and $\mathcal{G}$ are stacks in $\textcat{Stk}(\mathcal{A},\bm\tau|_\mathcal{A})_{/X}$, then we obtain similar results and the mapping stack $\texthom{Map}_{\textcat{Stk}(\mathcal{A},\bm\tau|_\mathcal{A})_{/X}}(\mathcal{F},\mathcal{G})$ is obtained by stackifying $\texthom{Map}_{\textcat{PSh}(\mathcal{A})_{/X}}(\mathcal{F},\mathcal{G})$. We can apply the representability theorem to simplify the representability theorem for mapping stacks as follows. 

\begin{thm}\phantomsection\label{mappingstackngeometricthm} Suppose that $X=\textnormal{Spec}(A)\in\mathcal{A}^\heartsuit$. Suppose that $\mathcal{F}$ and $\mathcal{G}$ are in $\textcat{Stk}(\mathcal{A},\bm\tau|_\mathcal{A})_{/X}$. Under the following conditions, $\texthom{Map}_{\textcat{Stk}(\mathcal{A},\bm\tau|_\mathcal{A})_{/X}}(\mathcal{F},\mathcal{G})$ is an $n$-geometric stack in $\textcat{Stk}_n(\mathcal{A},\bm\tau|_\mathcal{A},\textcat{P}|_\mathcal{A})$, 
    \begin{enumerate}
        \item The truncation $t_0(\texthom{Map}_{\textcat{Stk}(\mathcal{A},\bm\tau|_\mathcal{A})_{/X}}(\mathcal{F},\mathcal{G}))$ is in $\textcat{Stk}_n(\mathcal{A}^\heartsuit,\bm\tau^\heartsuit,\textcat{P}^\heartsuit)$, 
        \item The stack $\texthom{Map}_{\textcat{Stk}(\mathcal{A},\bm\tau|_\mathcal{A})_{/X}}(\mathcal{F},\mathcal{G})$ has a global cotangent complex relative to $\mathcal{A}$, 
        \item $\mathcal{G}$ is $n$-geometric, 
        \item The stack $\mathcal{F}$ can be written as a colimit of representable stacks $\varinjlim_i U_i$ where each $U_i=\textnormal{Spec}(C_i)\in\mathcal{A}^\heartsuit$ is flat and $B\otimes^\mathbb{L}C_i\in\mathcal{A}^{op}$ for any $B\in\mathcal{A}^{op}$. 
    \end{enumerate}
\end{thm}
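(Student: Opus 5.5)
We apply Theorem \ref{representabilitytheorem} to $\mathcal{H}:=\texthom{Map}_{\textcat{Stk}(\mathcal{A},\bm\tau|_\mathcal{A})_{/X}}(\mathcal{F},\mathcal{G})$. Condition (\ref{representability1}) is hypothesis (1), so it remains to check (\ref{representability2}) and (\ref{representability3}). For (\ref{representability2}), hypothesis (2) already gives a global cotangent complex. We must therefore show that $\mathcal{H}$ is infinitesimally cartesian relative to $\mathcal{A}$, and that it is nilcomplete.

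The main tool is hypothesis (4). Writing $\mathcal{F}\simeq\varinjlim_i U_i$, the internal hom converts colimits in the source into limits. For $B\in\mathcal{A}^{op}$ we obtain
\begin{equation*}
\mathcal{H}(B)\simeq\varprojlim_i \textnormal{Map}_{/X}(U_i\times \textnormal{Spec}(B),\mathcal{G})\simeq \varprojlim_i \mathcal{G}_X(B\otimes^\mathbb{L} C_i),
\end{equation*}
where $\mathcal{G}_X$ denotes sections over $X$. This uses that $U_i\times\textnormal{Spec}(B)\simeq \textnormal{Spec}(B\otimes^\mathbb{L}C_i)$ lies in $\mathcal{A}$ by hypothesis (4), so it is representable and no stackification issue arises when evaluating. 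I would first establish this formula at the presheaf level. I would then argue that the right hand side already satisfies descent, since $B\mapsto B\otimes^\mathbb{L}C_i$ should preserve $\bm\tau$-covers (stability of covers under base change) and $\mathcal{G}$ is a stack. Hence stackification does not change the value.

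\emph{Infinitesimally cartesian.} Take $A'=\textnormal{Spec}(B)\in\mathcal{A}$, $M\in\textcat{M}_{B,1}$ and a derivation $d$. Since $C_i$ is flat, $-\otimes^\mathbb{L}C_i$ is exact, so it preserves the pullback square of algebras defining $B\oplus_d\Omega M$. It sends it to the square for $B\otimes C_i\oplus_{d\otimes C_i}\Omega(M\otimes C_i)$, and $M\otimes C_i$ is still $1$-connective. Since $\mathcal{G}$ is $n$-geometric in a representability context, the obstruction conditions give $\mathcal{G}$ an obstruction theory, so it is infinitesimally cartesian. I will need $M\otimes^\mathbb{L} C_i\in\textcat{M}_{B\otimes C_i}$; I would verify this from the compatibility axioms on $\textcat{M}$, and expect it to be the main technical obstacle. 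Given this, each term of the limit is a pullback, and limits commute with pullbacks, so $\mathcal{H}$ is infinitesimally cartesian.

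\emph{Nilcomplete.} Flatness of $C_i$ gives $(B\otimes^\mathbb{L}C_i)_{\leq k}\simeq B_{\leq k}\otimes^\mathbb{L} C_i$, since $\pi_*(B\otimes C_i)\simeq\pi_*(B)\otimes\pi_0(C_i)$. Geometric stacks are nilcomplete in a representability context, by the (1)$\Rightarrow$(2) direction of Theorem \ref{representabilitytheorem} applied to $\mathcal{G}$. Then
\begin{equation*}
\mathcal{H}(B)\simeq\varprojlim_i\varprojlim_k\mathcal{G}_X(B_{\leq k}\otimes C_i)\simeq\varprojlim_k\mathcal{H}(B_{\leq k}),
\end{equation*}
after commuting the limits. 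The second delicate point is that the truncations $B_{\leq k}$ lie in $\mathcal{A}$, so that $\mathcal{H}(B_{\leq k})$ is computed by the same formula; this should follow from the axioms of the context. Combining the three conditions, Theorem \ref{representabilitytheorem} yields that $\mathcal{H}$ is $n$-geometric.
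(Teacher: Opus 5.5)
Your proposal is correct and follows essentially the same route as the paper. Both arguments reduce via $\mathcal{F}\simeq\varinjlim_i U_i$ to a single flat representable $U=\textnormal{Spec}(C)$ and identify the mapping stack's values with $\mathcal{G}(B\otimes^\mathbb{L}C)$; flatness then lets $-\otimes^\mathbb{L}C$ commute with square-zero extensions and Postnikov truncations, so infinitesimal cartesianness and nilcompleteness are inherited from $\mathcal{G}$. The extra points you flag (stackification not changing values, $M\otimes^\mathbb{L}C_i$ lying in the module class, $B_{\leq k}\in\mathcal{A}$) are ones the paper passes over silently, not a difference in approach.
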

\begin{remark}
    We note that if the morphism $U_i\rightarrow{*}$ is in $\textcat{P}$, then the final condition holds by \cite[c.f. Definition 2.1.2]{savage_representability_2024}. 
\end{remark}
\begin{proof}
We just need to check that the conditions of Theorem \ref{representabilitytheorem} hold. Indeed, Condition (\ref{representability1}) immediately holds. Condition (\ref{representability2}) is satisfied if we can show that the mapping stack is infinitesimally cartesian relative to $\mathcal{A}$. Indeed, since $\mathcal{F}$ can be written as a colimit $\varinjlim_i U_i$ where $U_i=\textnormal{Spec}(C_i)\in\mathcal{A}^\heartsuit$ with $C_i$ flat as an $A$-module, we see that 
\begin{equation*}
    \texthom{Map}_{\textcat{Stk}(\mathcal{A},\bm\tau|_\mathcal{A})_{/X}}(\mathcal{F},\mathcal{G})\simeq \varprojlim_i\texthom{Map}_{\textcat{Stk}(\mathcal{A},\bm\tau|_\mathcal{A})_{/X}}(U_i,\mathcal{G})
\end{equation*}Therefore, since the property of being infinitesimally cartesian is stable by limits, we can assume that $\mathcal{F}$ is representable by some $U=\textnormal{Spec}(C)\in\mathcal{A}^\heartsuit$. Suppose that $Y=\textnormal{Spec}(B)$ is in $\mathcal{A}$, $M\in\textcat{M}_{B,1}$, and that we have a derivation $d\in\pi_0(\textcat{Der}(B,M))$ corresponding to a morphism $d:B\rightarrow{B\oplus M}$. Then, using the Yoneda Lemma, we note that the commutative square 
\begin{equation*}
        \begin{tikzcd}
            \texthom{Map}_{\textcat{Stk}(\mathcal{A},\bm\tau|_\mathcal{A})_{/X}}(U,\mathcal{G})(B\oplus_d\Omega M)\arrow{r} \arrow{d}& \texthom{Map}_{\textcat{Stk}(\mathcal{A},\bm\tau|_\mathcal{A})_{/X}}(U,\mathcal{G})(B) \arrow{d}\\
            \texthom{Map}_{\textcat{Stk}(\mathcal{A},\bm\tau|_\mathcal{A})_{/X}}(U,\mathcal{G})(B)\arrow{r} & \texthom{Map}_{\textcat{Stk}(\mathcal{A},\bm\tau|_\mathcal{A})_{/X}}(U,\mathcal{G})(B\oplus M)
        \end{tikzcd}
    \end{equation*}is equivalent to the commutative square
\begin{equation*}
        \begin{tikzcd}
            \mathcal{G}((B\oplus_d\Omega M)\otimes^\mathbb{L}C)\arrow{r} \arrow{d}& \mathcal{G}(B\otimes^\mathbb{L}C) \arrow{d}\\
            \mathcal{G}(B\otimes^\mathbb{L}C)\arrow{r} & \mathcal{G}((B\oplus M)\otimes^\mathbb{L}C)
        \end{tikzcd}
    \end{equation*}Using the flatness of $C$, we can see that $(B\oplus_d\Omega M)\otimes^\mathbb{L}C\simeq (B\otimes^\mathbb{L}C)\oplus_{d'}\Omega M'$ where $M'=M\otimes^\mathbb{L}C$ and $d'$ is the induced derivation. Now, since $\mathcal{G}$ is $n$-geometric, it is infinitesimally cartesian relative to $\mathcal{A}$ by \cite[Theorem 4.4.1]{savage_representability_2024}. Therefore, we see that the above square is a pullback square, and hence $\texthom{Map}_{\textcat{Stk}(\mathcal{A},\bm\tau|_\mathcal{A})_{/X}}(\mathcal{F},\mathcal{G})$ is infinitesimally cartesian. 

Finally, we need to show that $\texthom{Map}_{\textcat{Stk}(\mathcal{A},\bm\tau|_\mathcal{A})_{/X}}(\mathcal{F},\mathcal{G})$ is nilcomplete. Once again it suffices, using our fourth condition, to prove the statement when $\mathcal{F}$ is a representable stack $U=\textnormal{Spec}(C)\in\mathcal{A}^\heartsuit$ such that $B\otimes^\mathbb{L}C\in\mathcal{A}^{op}$ for any $B\in\mathcal{A}^{op}$. Indeed, suppose that $Y=\textnormal{Spec}(B)\in\mathcal{A}$. Then, 
    \begin{equation*}
        \texthom{Map}_{\textcat{Stk}(\mathcal{A},\bm\tau|_\mathcal{A})_{/X}}(U,\mathcal{G})(B)\simeq 
        \mathcal{G}(B\otimes^\mathbb{L}C)
    \end{equation*}By \cite[Lemma 5.2.1]{savage_representability_2024}, we see that $(B\otimes^\mathbb{L}C)_{\leq k}\simeq B_{\leq k}\otimes^\mathbb{L}C$. Therefore, our result follows since $\mathcal{G}$ is $n$-geometric, and hence nilcomplete.  
\end{proof}

\subsection{Perfect Quasi-coherent Sheaves and Base-Change}

For a derived affine $X=\textnormal{Spec}(A)\in\textcat{DAff}^{cn}(\mathcal{C})$, we define $\textcat{QCoh}(X):=\textcat{Mod}_A$ and, for any morphism $f:Y\rightarrow{X}$, $\textcat{QCoh}(f)$ is defined to be the colimit-preserving functor $B\otimes_A^\mathbb{L}-:\textcat{Mod}_A\rightarrow{\textcat{Mod}_B}$. 
This defines a $\textcat{Cat}$-valued presheaf
\begin{equation*}
    \textcat{QCoh}:\textcat{DAff}^{cn}(\mathcal{C})^{op}\rightarrow{\textcat{Pr}^{\mathbb{L},\otimes}}
\end{equation*}where $\textcat{Pr}^{\mathbb{L},\otimes}$ is the $(\infty,1)$-category whose objects are locally presentable monoidal $(\infty,1)$-categories and morphisms are left adjoint functors. 

We left Kan extend this functor to a functor defined on $\textcat{PSh}(\textcat{DAff}^{cn}(\mathcal{C}))^{op}$. We note that, since each $\textcat{Mod}_A$ is a stable $(\infty,1)$-category, and the category of stable locally presentable monoidal $(\infty,1)$-categories is closed under colimits, then $\textcat{QCoh}(\mathcal{F})$ is a stable $(\infty,1)$-category for any $\mathcal{F}\in\textcat{PSh}(\textcat{DAff}^{cn}(\mathcal{C}))$. Suppose that we have a morphism $f:\mathcal{F}\rightarrow{\mathcal{G}}$ of presheaves in $\textcat{PSh}(\textcat{DAff}^{cn}(\mathcal{C}))$. Then, there is a natural functor $f^*:\textcat{QCoh}(\mathcal{G})\rightarrow{\textcat{QCoh}(\mathcal{F})}$ which has a right adjoint given by right Kan extension $f_*:\textcat{QCoh}(\mathcal{F})\rightarrow{\textcat{QCoh}(\mathcal{G})}$.

For any $\mathcal{F}\in\textcat{PSh}(\textcat{DAff}^{cn}(\mathcal{C}))$, we denote by $\mathcal{O}_\mathcal{F}$ the monoidal unit in $\textcat{QCoh}(\mathcal{F})$. 

\begin{defn}
    Define $\textcat{Perf}(\mathcal{F})$ to be the subcategory of perfect objects in $\textcat{QCoh}(\mathcal{F})$ in the sense of Appendix \ref{perfectappendix}, i.e. retracts of finite colimits of $\coprod_E \mathcal{O}_\mathcal{F}$ for some finite set $E$. 
\end{defn}

We note that objects in $\textcat{Perf}(\mathcal{F})$ are strongly dualisable and reflexive in $\textcat{QCoh}(\mathcal{F})$. If we have a morphism $f:\mathcal{F}\rightarrow{\mathcal{G}}$ of presheaves in $\textcat{PSh}(\textcat{DAff}^{cn}(\mathcal{C}))$, then since $f^*$ is symmetric monoidal and preserves colimits, $f^*\textcat{Perf}(\mathcal{G})\subseteq\textcat{Perf}(\mathcal{F})$. If we have a morphism of representables $f:Y=\textnormal{Spec}(B)\rightarrow{\textnormal{Spec}(A)=X}$ in $\textcat{PSh}(\textcat{DAff}^{cn}(\mathcal{C}))$, then we note that the morphism $f_*$ corresponds to the morphism $f_*:\textcat{Mod}_B\rightarrow{\textcat{Mod}_A}$ which is exact and symmetric monoidal. Hence, in particular, $f_*$ preserves finite colimits, and therefore $f_*\textcat{Perf}(Y)\subseteq\textcat{Perf}(X)$. 

\begin{defn}
    Suppose that we have a morphism $f:\mathcal{F}\rightarrow{\mathcal{G}}$ in $\textcat{PSh}(\textcat{DAff}^{cn}(\mathcal{C}))$. Then, we say that \textit{$f$ satisfies the perfect projection formula} if, for any $\mathcal{M}\in\textcat{Perf}(\mathcal{F})$ and $\mathcal{N}\in\textcat{QCoh}(\mathcal{G})$, there is an equivalence
    \begin{equation*}
        f_*(\mathcal{M}\otimes_{\textcat{QCoh}(\mathcal{F})}f^*(\mathcal{N}))\simeq f_*(\mathcal{M})\otimes_{\textcat{QCoh}(\mathcal{G})}\mathcal{N}
    \end{equation*}
\end{defn}

\begin{remark}
    If $f:Y\rightarrow{X}$ is a morphism of representables, then $f$ satisfies the perfect projection formula. 
\end{remark}

Suppose that $\mathcal{F}\in\textcat{PSh}(\textcat{DAff}^{cn}(\mathcal{C}))$. For any $\mathcal{M}\in\textcat{QCoh}(\mathcal{F})$, we consider the dual object $\mathcal{M}^\vee:=\texthom{Map}_{\textcat{QCoh}(\mathcal{F})}(\mathcal{M},\mathcal{O}_\mathcal{F})$ and denote the associated functor by $(-)_\mathcal{F}^\vee:\textcat{QCoh}(\mathcal{F})\rightarrow{\textcat{QCoh}(\mathcal{F})}$. Suppose that we have a map $f:\mathcal{F}\rightarrow{\mathcal{G}}$ in $\textcat{PSh}(\textcat{DAff}^{cn}(\mathcal{C}))$. Then, we define the plus pushforward $f_+:\textcat{QCoh}(\mathcal{F})\rightarrow{\textcat{QCoh}(\mathcal{G})}$ by
\begin{equation*}
    f_+:=(-)_\mathcal{G}^\vee \circ f_*\circ (-)_\mathcal{F}^\vee
\end{equation*}

\begin{lem}\phantomsection\label{perfectprojectionformula}Suppose that $f:\mathcal{F}\rightarrow{\mathcal{G}}$ is a map such that $f_*\textcat{Perf}(\mathcal{F})\subseteq \textcat{Perf}(\mathcal{G})$ and which satisfies the perfect projection formula. Then, for any $\mathcal{M}\in\textcat{Perf}(\mathcal{F})$ and $\mathcal{N}\in\textcat{QCoh}(\mathcal{G})$, there is an equivalence, natural in $\mathcal{M}$ and $\mathcal{N}$, 
\begin{equation*}
    \textnormal{Map}_{\textcat{QCoh}(\mathcal{G})}(f_+(\mathcal{M}),\mathcal{N})\simeq \textnormal{Map}_{\textcat{QCoh}(\mathcal{F})}(\mathcal{M},f^*(\mathcal{N}))
\end{equation*}
\end{lem}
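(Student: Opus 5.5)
The argument is a chain of equivalences obtained from duality of perfect objects, the projection formula, and the $(f^*,f_*)$ adjunction. Perfect objects are strongly dualisable and reflexive, so for $\mathcal{M}\in\textcat{Perf}(\mathcal{F})$ we have $\mathcal{M}^\vee\in\textcat{Perf}(\mathcal{F})$ and $(\mathcal{M}^\vee)^\vee\simeq\mathcal{M}$. The hypothesis $f_*\textcat{Perf}(\mathcal{F})\subseteq\textcat{Perf}(\mathcal{G})$ then gives $f_*(\mathcal{M}^\vee)\in\textcat{Perf}(\mathcal{G})$. Consequently $f_+(\mathcal{M})=(f_*(\mathcal{M}^\vee))^\vee$ is the dual of a strongly dualisable object, with $f_+(\mathcal{M})^\vee\simeq f_*(\mathcal{M}^\vee)$.

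The chain runs as follows. First, using strong dualisability of $P:=f_*(\mathcal{M}^\vee)$, we rewrite
\begin{equation*}
\textnormal{Map}_{\textcat{QCoh}(\mathcal{G})}(P^\vee,\mathcal{N})\simeq\textnormal{Map}_{\textcat{QCoh}(\mathcal{G})}(\mathcal{O}_\mathcal{G},P\otimes\mathcal{N}).
\end{equation*}
Next, the perfect projection formula applied to $\mathcal{M}^\vee\in\textcat{Perf}(\mathcal{F})$ gives $P\otimes\mathcal{N}\simeq f_*(\mathcal{M}^\vee\otimes f^*\mathcal{N})$. The adjunction $f^*\dashv f_*$, together with $f^*\mathcal{O}_\mathcal{G}\simeq\mathcal{O}_\mathcal{F}$ (which holds because $f^*$ is symmetric monoidal), turns this into $\textnormal{Map}_{\textcat{QCoh}(\mathcal{F})}(\mathcal{O}_\mathcal{F},\mathcal{M}^\vee\otimes f^*\mathcal{N})$. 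Finally, strong dualisability of $\mathcal{M}$ on $\mathcal{F}$, with $\mathcal{M}^{\vee\vee}\simeq\mathcal{M}$, identifies the result with $\textnormal{Map}_{\textcat{QCoh}(\mathcal{F})}(\mathcal{M},f^*\mathcal{N})$.

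Each step is functorial in $\mathcal{M}$ and $\mathcal{N}$. The duality equivalences $\textnormal{Map}(D^\vee,-)\simeq\textnormal{Map}(\mathcal{O},D\otimes-)$ are natural in dualisable $D$. The projection formula map is the canonical one, induced by the counit of the adjunction, so I would take it to be natural; the equivalences above are then natural.

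The main obstacle is the bookkeeping of dualisability in a left Kan extended $\textcat{QCoh}(\mathcal{F})$, which may not be compactly generated in the expected way. In particular, one has to check that the internal hom $\texthom{Map}(\mathcal{M},\mathcal{O}_\mathcal{F})$ really is the monoidal dual, so that the adjunction $(-\otimes\mathcal{M})\dashv(-\otimes\mathcal{M}^\vee)$ holds. I would take this from the appendix's statement that perfect objects are strongly dualisable and reflexive, and then verify that the projection formula's equivalence is the canonical map so that naturality follows.
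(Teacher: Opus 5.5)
Your proposal is correct and is essentially the paper's proof. The paper uses the same three ingredients: strong dualisability of $\mathcal{M}$, the perfect projection formula applied to $\mathcal{M}^\vee$, and strong dualisability plus reflexivity of $f_*(\mathcal{M}^\vee)$. It chains them at the level of internal homs, $f_*\texthom{Map}_{\textcat{QCoh}(\mathcal{F})}(\mathcal{M},f^*\mathcal{N})\simeq\texthom{Map}_{\textcat{QCoh}(\mathcal{G})}(f_+(\mathcal{M}),\mathcal{N})$. The only difference is that you pass to mapping spaces at the outset via $\textnormal{Map}(\mathcal{O}_\mathcal{G},-)$, the adjunction $f^*\dashv f_*$ and $f^*\mathcal{O}_\mathcal{G}\simeq\mathcal{O}_\mathcal{F}$, whereas the paper leaves this step implicit in ``the result easily follows''.
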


\begin{proof}Using the perfect projection formula and our conditions, the result easily follows from the following chain of equivalences. 
\begin{align*}
    f_*(\texthom{Map}_{\textcat{QCoh}(\mathcal{F})}(\mathcal{M},f^*(\mathcal{N)}))&\simeq f_*(\mathcal{M}^\vee\otimes_{\textcat{QCoh}(\mathcal{F})}f^*(\mathcal{N}))\\
    &\simeq f_*(\mathcal{M}^\vee)\otimes_{\textcat{QCoh}(\mathcal{G})}\mathcal{N}\\
    &\simeq \texthom{Map}_{\textcat{QCoh}(\mathcal{G})}(f_*(\mathcal{M}^\vee)^\vee,\mathcal{N})\\
    &\simeq \texthom{Map}_{\textcat{QCoh}(\mathcal{G})}(f_+(\mathcal{M}),\mathcal{N})
\end{align*}
\end{proof}

\begin{defn}
     Suppose that $f:\mathcal{F}\rightarrow{\mathcal{G}}$ and $g:\mathcal{H}\rightarrow{\mathcal{G}}$ are morphisms of presheaves in $\textcat{PSh}(\textcat{DAff}^{cn}(\mathcal{C}))$. Then, $(f,g)$ \textit{satisfies perfect base change} if, given the pullback diagram
    \begin{equation*}
        \begin{tikzcd}
            \mathcal{F}\times_\mathcal{G}\mathcal{H}\arrow{r}{g'}\arrow{d}{f'} & \mathcal{F} \arrow{d}{f}\\
            \mathcal{H}\arrow{r}{g} & \mathcal{G}
        \end{tikzcd}
    \end{equation*}the natural map $g^*f_*\mathcal{M}\rightarrow{f_*'(g')^*\mathcal{M}}$ is an equivalence whenever $\mathcal{M}\in\textcat{Perf}(\mathcal{F})$. 
\end{defn}

\begin{prop}\phantomsection\label{basechangeperfect} Suppose that we have a pullback diagram
        \begin{equation*}
         \begin{tikzcd}
            \mathcal{F}\times_\mathcal{G}\mathcal{H}\arrow{r}{g'} 
 \arrow{d}{f'} & \mathcal{F} \arrow{d}{f}\\
            \mathcal{H}\arrow{r}{g} & \mathcal{G}
        \end{tikzcd}   
        \end{equation*}Suppose that $(f,g)$ satisfies perfect base change and that $f_*\textcat{Perf}(\mathcal{F})\subseteq\textcat{Perf}(\mathcal{G})$. Suppose that $\mathcal{M}\in\textcat{Perf}(\mathcal{F})$. Then, there is a natural equivalence
        \begin{equation*}
            (f')_+(g')^*\mathcal{M}\rightarrow{g^*f_+\mathcal{M}}
        \end{equation*}
\end{prop}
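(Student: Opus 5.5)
We unwind the definition of the plus pushforward and reduce the claim to perfect base change, applied to the dual of $\mathcal{M}$. By definition,
\begin{equation*}
    (f')_+(g')^*\mathcal{M}=\big(f'_*\big(((g')^*\mathcal{M})^\vee\big)\big)^\vee,\qquad g^*f_+\mathcal{M}=g^*\big((f_*(\mathcal{M}^\vee))^\vee\big).
\end{equation*}
The argument has three steps, each producing a natural equivalence.

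\textbf{Step 1: pullback commutes with duals of perfect objects.} Since $(g')^*$ is symmetric monoidal and $\mathcal{M}$ is perfect, hence strongly dualisable, we have $((g')^*\mathcal{M})^\vee\simeq (g')^*(\mathcal{M}^\vee)$. The reason is that a symmetric monoidal functor sends a duality datum (evaluation and coevaluation satisfying the triangle identities) to a duality datum. The dual of a strongly dualisable object agrees with the internal mapping object into the unit used in the paper, so the two notions of dual coincide. Also $\mathcal{M}^\vee\in\textcat{Perf}(\mathcal{F})$. This holds because duality is exact on strongly dualisable objects, sends $\coprod_E\mathcal{O}_\mathcal{F}$ to itself for finite $E$, and respects retracts. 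Alternatively one can cite the appendix on perfect objects.

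\textbf{Step 2: base change.} Because $\mathcal{M}^\vee$ is perfect, perfect base change for $(f,g)$ gives a natural equivalence $g^*f_*(\mathcal{M}^\vee)\simeq f'_*(g')^*(\mathcal{M}^\vee)$. Combined with Step 1, this yields $f'_*\big(((g')^*\mathcal{M})^\vee\big)\simeq g^*f_*(\mathcal{M}^\vee)$.

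\textbf{Step 3: dualise.} Apply $(-)^\vee_\mathcal{H}$ to the equivalence of Step 2. The hypothesis $f_*\textcat{Perf}(\mathcal{F})\subseteq\textcat{Perf}(\mathcal{G})$ shows that $f_*(\mathcal{M}^\vee)$ is perfect, hence dualisable. Step 1, applied now to $g^*$, gives $\big(g^*f_*(\mathcal{M}^\vee)\big)^\vee\simeq g^*\big((f_*(\mathcal{M}^\vee))^\vee\big)=g^*f_+\mathcal{M}$. Chaining Steps 1–3 gives $(f')_+(g')^*\mathcal{M}\simeq g^*f_+\mathcal{M}$.

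The main obstacle is checking that this composite is the natural comparison map and that the identifications are natural. I would build the map directly: take the base change map $g^*f_*\to f'_*(g')^*$ evaluated at $\mathcal{M}^\vee$ and dualise it (duality is contravariant). Pre- and post-compose with the canonical maps $(g')^*(\mathcal{M}^\vee)\to((g')^*\mathcal{M})^\vee$ and $g^*(\mathcal{N}^\vee)\to(g^*\mathcal{N})^\vee$, which exist for any symmetric monoidal functor. Both canonical maps are equivalences on dualisable objects, which are exactly the inputs here. The resulting arrow points from $(f')_+(g')^*\mathcal{M}$ to $g^*f_+\mathcal{M}$, as in the statement. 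Naturality in $\mathcal{M}$ follows because every map involved is natural.
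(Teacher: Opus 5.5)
Your proposal is correct and is the paper's proof: the paper's one-line chain $(f')_+(g')^*\mathcal{M}\simeq (-)^\vee_{\mathcal{H}}\circ f'_*\circ(g')^*\mathcal{M}^\vee\simeq(-)^\vee_{\mathcal{H}}\circ g^*f_*\mathcal{M}^\vee\simeq g^*f_+\mathcal{M}$ is exactly your Steps 1--3; the paper leaves implicit that $(g')^*$ and $g^*$ commute with duals of dualisable objects, and that $f_*\textcat{Perf}(\mathcal{F})\subseteq\textcat{Perf}(\mathcal{G})$ is what licenses the last step, whereas you spell both out. One small caveat, which the paper shares: applying perfect base change to $\mathcal{M}^\vee$ needs $\textcat{Perf}(\mathcal{F})$ to be closed under duals, and because dualising turns cofibres into fibres, this only holds if ``perfect'' means the thick stable subcategory generated by $\mathcal{O}_\mathcal{F}$ (closed under desuspension), not literally retracts of finite colimits of $\coprod_E\mathcal{O}_\mathcal{F}$.
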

\begin{proof}
Using our assumptions, we have the following chain of equivalences
\begin{equation*}
    (f')_+(g')^*\mathcal{M}\simeq (-)_{\mathcal{H}}^\vee\circ (f')_*\circ (g')^* \mathcal{M}^\vee\simeq (-)^\vee_\mathcal{H}\circ g^*f_*\mathcal{M}^\vee\simeq g^*f_+\mathcal{M}
\end{equation*}

\end{proof}

\subsection{The Cotangent Complex of the Mapping Presheaf}

We note that, if a morphism $f:\mathcal{F}\rightarrow{\mathcal{G}}$ of presheaves in $\textcat{PSh}(\textcat{DAff}^{cn}(\mathcal{C}))$ has a global cotangent complex $\mathbb{L}_{\mathcal{F}/\mathcal{G},u}$ for every point $u:U\rightarrow{\mathcal{F}}$, then this determines an object $\mathbb{L}_{\mathcal{F}/\mathcal{G}}\in\textcat{QCoh}(\mathcal{F})$ such that $u^*(\mathbb{L}_{\mathcal{F}/\mathcal{G}})\simeq \mathbb{L}_{\mathcal{F}/\mathcal{G},u}$. 

Suppose that $X=\textnormal{Spec}(A)\in\mathcal{A}^\heartsuit$ and that $\mathcal{F}$ and $\mathcal{G}$ are in $\textcat{PSh}(\textcat{DAff}^{cn}(\mathcal{C}))_{/X}$. Suppose that we have a point $u:U=\textnormal{Spec}(C)\rightarrow{\texthom{Map}_{\textcat{PSh}(\textcat{DAff}^{cn}(\mathcal{C}))_{/X}}(\mathcal{F},\mathcal{G})}$ and consider the following pullback diagram in $\textcat{PSh}(\textcat{DAff}^{cn}(\mathcal{C}))$.
\begin{equation}\phantomsection\label{pullbackperfectbasechange}
    \begin{tikzcd}
         \mathcal{F}\times_{X}U\arrow{r}{u'} 
 \arrow{d}{\pi_u} & \mathcal{F}\times_X\texthom{Map}_{\textcat{PSh}(\textcat{DAff}^{cn}(\mathcal{C}))_{/X}}(\mathcal{F},\mathcal{G}) \arrow{d}{\pi}\\
            U\arrow{r}{u} & \texthom{Map}_{\textcat{PSh}(\textcat{DAff}^{cn}(\mathcal{C}))_{/X}}(\mathcal{F},\mathcal{G})
    \end{tikzcd}
\end{equation}

\begin{prop}\phantomsection\label{mappingcotangentcomplex}
    Suppose that the following conditions are satisfied:
    \begin{enumerate}
        \item\label{mapitem1} The morphism $\mathcal{G}\rightarrow{X}$ has a global cotangent complex and this is an object in $\textcat{Perf}(\mathcal{G})$, 
        \item\label{mapitem2} The morphism $\mathcal{F}\rightarrow{X}$ is $(-1)$-representable,
        \item\label{mapitem3} $(\pi,u)$ satisfies perfect base change for any morphism $u$ and $\pi_*$ preserves perfect objects.
    \end{enumerate}Then, the morphism $\texthom{Map}_{\textcat{PSh}(\textcat{DAff}^{cn}(\mathcal{C}))_{/X}}(\mathcal{F},\mathcal{G})\rightarrow{X}$ has a global cotangent complex given by
    \begin{equation*}
        \mathbb{L}_{\texthom{Map}_{\textcat{PSh}(\textcat{DAff}^{cn}(\mathcal{C}))_{/X}}(\mathcal{F},\mathcal{G})/X}=\pi_+\circ ev^*(\mathbb{L}_{\mathcal{G}/X})
    \end{equation*}where $ev$ is the evaluation morphism $ev:\mathcal{F}\times_X\texthom{Map}_{\textcat{PSh}(\textcat{DAff}^{cn}(\mathcal{C}))_{/X}}(\mathcal{F},\mathcal{G})\rightarrow{\mathcal{G}}$. 
    
\end{prop}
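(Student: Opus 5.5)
Write $\mathcal{M}:=\texthom{Map}_{\textcat{PSh}(\textcat{DAff}^{cn}(\mathcal{C}))_{/X}}(\mathcal{F},\mathcal{G})$. The plan is to verify the defining universal property of a global cotangent complex pointwise: for each point $u:U=\textnormal{Spec}(C)\rightarrow\mathcal{M}$ over $X$ and each connective $C$-module $N$, we must produce an equivalence, natural in $N$ and compatible with base change in $u$,
\begin{equation*}
\textnormal{Map}_{\textcat{Mod}_C}(u^*\pi_+ ev^*\mathbb{L}_{\mathcal{G}/X},N)\simeq \textnormal{Der}_{\mathcal{M}/X,u}(U,N),
\end{equation*}
where the right side is the fibre of $\mathcal{M}(C\oplus N)\rightarrow\mathcal{M}(C)$ over $u$ (relative to $X$).

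First I compute the derivation side. By definition of the mapping presheaf and the Yoneda lemma, a lift of $u$ to $\textnormal{Spec}(C\oplus N)$ is a map $\mathcal{F}\times_X\textnormal{Spec}(C\oplus N)\rightarrow\mathcal{G}$ over $X$ extending the map $ev\circ u':\mathcal{F}\times_XU\rightarrow\mathcal{G}$ adjoint to $u$. By condition (\ref{mapitem2}), $\mathcal{F}\times_XU=\textnormal{Spec}(D)$ is representable, and $\mathcal{F}\times_X\textnormal{Spec}(C\oplus N)\simeq \textnormal{Spec}(D\oplus \pi_u^*N)$ is the trivial square-zero extension of $\textnormal{Spec}(D)$ by $\pi_u^*N=D\otimes_C N$ (base change of a trivial square-zero extension is trivial square-zero). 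Hence the fibre is $\textnormal{Der}_{\mathcal{G}/X,ev\circ u'}(\textnormal{Spec}(D),\pi_u^*N)$, which by condition (\ref{mapitem1}) equals $\textnormal{Map}_{\textcat{Mod}_D}((ev\circ u')^*\mathbb{L}_{\mathcal{G}/X},\pi_u^*N)$.

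Second, I move this across $\pi_u$. Since $\mathbb{L}_{\mathcal{G}/X}\in\textcat{Perf}(\mathcal{G})$, $(u')^*ev^*\mathbb{L}_{\mathcal{G}/X}$ is perfect on $\textnormal{Spec}(D)$. The morphism $\pi_u$ is a morphism of representables, so by the Remark it satisfies the perfect projection formula and, as noted earlier, $(\pi_u)_*$ preserves perfect objects; thus Lemma \ref{perfectprojectionformula} gives
\begin{equation*}
\textnormal{Map}_{D}((u')^*ev^*\mathbb{L}_{\mathcal{G}/X},\pi_u^*N)\simeq \textnormal{Map}_{C}((\pi_u)_+(u')^*ev^*\mathbb{L}_{\mathcal{G}/X},N).
\end{equation*}
Finally, Proposition \ref{basechangeperfect}, applied to the square (\ref{pullbackperfectbasechange}) using condition (\ref{mapitem3}) and $ev^*\mathbb{L}_{\mathcal{G}/X}\in\textcat{Perf}$, identifies $(\pi_u)_+(u')^*ev^*\mathbb{L}_{\mathcal{G}/X}\simeq u^*\pi_+ev^*\mathbb{L}_{\mathcal{G}/X}$. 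Chaining these yields the desired equivalence, and functoriality in $u$ follows because every step (Yoneda, Lemma \ref{perfectprojectionformula}, Proposition \ref{basechangeperfect}) is natural in the point; so the pointwise cotangent complexes glue to the object $\pi_+ev^*\mathbb{L}_{\mathcal{G}/X}\in\textcat{QCoh}(\mathcal{M})$.

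The main obstacle I expect is the first step: making precise that the fibre of $\mathcal{M}(C\oplus N)\rightarrow\mathcal{M}(C)$ really is the space of derivations of $\mathcal{G}/X$ at $ev\circ u'$, i.e. that $\mathcal{F}\times_X\textnormal{Spec}(C\oplus N)$ is the trivial square-zero extension $D\oplus(D\otimes^\mathbb{L}_CN)$ and that relative-to-$X$ fibres match. I would handle it by writing $D\simeq C\otimes^\mathbb{L}_AE$ with $\mathcal{F}=\textnormal{Spec}(E)$ and using that $-\otimes^\mathbb{L}_A E$ commutes with the split square-zero construction $C\oplus N$, since it is a base change along $A\rightarrow E$ of the split map $C\rightarrow C\oplus N$. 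Connectivity of $N$ is preserved so $\pi_u^*N$ is an admissible coefficient module.
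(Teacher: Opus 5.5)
Your proposal follows the paper's proof essentially step for step. The steps are: the Yoneda identification of derivations of the mapping presheaf at $u$ with derivations of $\mathcal{G}/X$ at $ev\circ u'$ with coefficients in $\pi_u^*N$, then Lemma \ref{perfectprojectionformula} for $\pi_u$, then Proposition \ref{basechangeperfect} for the square (\ref{pullbackperfectbasechange}). Your $E\otimes^{\mathbb{L}}_A-$ argument for $\mathcal{F}\times_X U[N]\simeq(\mathcal{F}\times_X U)[\pi_u^*N]$ is more explicit than the paper's.

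One small point you share with the paper: pushforward along a general map of representables does not preserve perfect objects. Rather than citing that remark, deduce it for $\pi_u$ from hypothesis (3): $(\pi_u)_*\mathcal{O}\simeq(\pi_u)_*(u')^*\mathcal{O}\simeq u^*\pi_*\mathcal{O}$ is perfect, and $(\pi_u)_*$ is exact.
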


\begin{proof}
    Denote by $f_u$ the following composition of maps defining $\mathcal{F}\times_XU$ as a point of $\mathcal{G}$, 
    \begin{equation*}
        \mathcal{F}\times_XU\xrightarrow{u'}{\mathcal{F}\times_X\texthom{Map}_{\textcat{PSh}(\textcat{DAff}^{cn}(\mathcal{C}))_{/X}}(\mathcal{F},\mathcal{G})}\xrightarrow{ev}{\mathcal{G}}
    \end{equation*}By our assumptions, $\mathcal{F}\times_XU$ is representable. Suppose that $M\in\textcat{M}_C^{cn}$. In the following, we will abbreviate $\textcat{PSh}(\textcat{DAff}^{cn}(\mathcal{C}))$ to $\textcat{PSh}$. Then, 
    \begin{align*}
\textcat{Der}_{\texthom{Map}_{\textcat{PSh}_{/X}}(\mathcal{F},\mathcal{G})}(U,M)&:=\textnormal{Map}_{{}^{U}/\textcat{PSh}}(U[M],\texthom{Map}_{\textcat{PSh}_{/X}}(\mathcal{F},\mathcal{G}))\\
    &\simeq \textnormal{Map}_{{}^{\mathcal{F}\times_XU}/\textcat{PSh}_{/X}}(\mathcal{F}\times_X U[M],\mathcal{G})\\
    &\simeq \textnormal{Map}_{{}^{\mathcal{F}\times_XU}/\textcat{PSh}_{/X}}((\mathcal{F}\times_X U)[(\pi_u)^*(M)],\mathcal{G})\\
    &\simeq \textnormal{Map}_{\textcat{QCoh}(\mathcal{F}\times_XU)}(f_u^*(\mathbb{L}_{\mathcal{G}/X}),(\pi_u)^*(M))\\
\intertext{Now, since $\pi_u$ satisfies the perfect projection formula, $(\pi_u)_*$ preserves perfect objects, and $f_u^*(\mathbb{L}_{\mathcal{G}/X})\in\textcat{Perf}(\mathcal{F}\times_XU)$, we can apply Lemma \ref{perfectprojectionformula},}
    &\simeq \textnormal{Map}_{\textcat{QCoh}(U)}((\pi_u)_+\circ f_u^*(\mathbb{L}_{\mathcal{G}/X}),M)\\
    &\simeq \textnormal{Map}_{\textcat{QCoh}(U)}((\pi_u)_+\circ (u')^*\circ ev^*(\mathbb{L}_{\mathcal{G}/X}),M)\\
\intertext{Now, since $(\pi,u)$ satisfies perfect base-change and $\pi_*$ preserves perfect objects then, since $ev^*(\mathbb{L}_{\mathcal{G}/X})$ is perfect, we can apply Proposition \ref{basechangeperfect} to conclude that}
    &\simeq \textnormal{Map}_{\textcat{QCoh}(U)}(u^*\circ \pi_+\circ ev^*(\mathbb{L}_{\mathcal{G}/X}),M)
    \end{align*}By its definition, we can easily see that $\mathbb{L}_{\texthom{Map}_{\textcat{PSh}(\textcat{DAff}^{cn}(\mathcal{C}))_{/X}}(\mathcal{F},\mathcal{G})/X}$ is global.

\end{proof}

\begin{remark}We can then define $\mathbb{L}_{\texthom{Map}_{\textcat{PSh}(\textcat{DAff}^{cn}(\mathcal{C}))_{/X}}(\mathcal{F},\mathcal{G})}$ to be the fibre of the morphism 
\begin{equation*}
\mathbb{L}_{\texthom{Map}_{\textcat{PSh}(\textcat{DAff}^{cn}(\mathcal{C}))_{/X}}(\mathcal{F},\mathcal{G})/X}\rightarrow\mathbb{L}_X[1]
\end{equation*}in $\textcat{QCoh}(\texthom{Map}_{\textcat{PSh}(\textcat{DAff}^{cn}(\mathcal{C}))_{/X}}(\mathcal{F},\mathcal{G}))$.
\end{remark}

\begin{cor}\phantomsection\label{mappingstacksimplified}
    Suppose that $X=\textnormal{Spec}(A)\in\mathcal{A}^\heartsuit$ is such that $B\otimes^\mathbb{L}A\in\mathcal{A}^{op}$ for any $A\in\mathcal{A}^{op}$. Suppose that $\mathcal{G}$ is in $\textcat{Stk}(\mathcal{A},\bm\tau|_\mathcal{A})_{/X}$. Under the following conditions, $\texthom{Map}_{\textcat{Stk}(\mathcal{A},\bm\tau|_\mathcal{A})_{/X}}(X,\mathcal{G})$ is an $n$-geometric stack in $\textcat{Stk}_n(\mathcal{A},\bm\tau|_\mathcal{A},\textcat{P}|_\mathcal{A})$. 
    \begin{enumerate}
        \item\label{mapcor1} The truncation $t_0(\texthom{Map}_{\textcat{Stk}(\mathcal{A},\bm\tau|_\mathcal{A})_{/X}}(X,\mathcal{G}))$ is in $\textcat{Stk}_n(\mathcal{A}^\heartsuit,\bm\tau^\heartsuit,\textcat{P}^\heartsuit)$, 
        \item\label{mapcor2} $\mathcal{G}$ is $n$-geometric and the cotangent complex of the morphism $\mathcal{G}\rightarrow{X}$ is in $\textcat{Perf}(\mathcal{G})$, 
    \end{enumerate}

    Moreover, if $\mathbb{L}_X$ is perfect, then $\mathbb{L}_{\texthom{Map}_{\textcat{Stk}(\mathcal{A},\bm\tau|_{\mathcal{A}})_{/X}}(X,\mathcal{G})}$ is also perfect. 
\end{cor}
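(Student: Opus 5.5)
The plan is to deduce the corollary from Theorem \ref{mappingstackngeometricthm} with $\mathcal{F}=X$, and to get the cotangent complex from Proposition \ref{mappingcotangentcomplex}. Condition (1) of Theorem \ref{mappingstackngeometricthm} is exactly hypothesis (\ref{mapcor1}), and condition (3) is part of hypothesis (\ref{mapcor2}). For condition (4), we write $X$ as the trivial colimit consisting of the single representable $X=\textnormal{Spec}(A)\in\mathcal{A}^\heartsuit$. We then need two facts. First, $A$ is flat over itself, viewed as an object over $X$, which is automatic. Second, $B\otimes^\mathbb{L}A\in\mathcal{A}^{op}$ for every $B\in\mathcal{A}^{op}$, which is precisely the standing hypothesis; by the Remark after Theorem \ref{mappingstackngeometricthm}, it also follows if $X\rightarrow{*}$ is in $\textcat{P}$. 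So the only remaining input is condition (2): a global cotangent complex for the mapping stack relative to $\mathcal{A}$.

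For (2) we apply Proposition \ref{mappingcotangentcomplex} to $\mathcal{F}=X$ and check its three hypotheses. Hypothesis (\ref{mapitem1}) is hypothesis (\ref{mapcor2}). Hypothesis (\ref{mapitem2}) holds because $\textnormal{id}:X\rightarrow{X}$ is trivially $(-1)$-representable: $X\times_X U\simeq U$. For hypothesis (\ref{mapitem3}), observe that $\pi$ in diagram (\ref{pullbackperfectbasechange}) becomes the identity of $\texthom{Map}_{/X}(X,\mathcal{G})$, since $X\times_X(-)$ is the identity on the slice. Then $\pi_*$ is the identity and trivially preserves perfect objects, and base change along any $u$ is tautological because $\pi_u=\textnormal{id}_U$. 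The proposition therefore gives
\begin{equation*}
\mathbb{L}_{\texthom{Map}_{/X}(X,\mathcal{G})/X}\simeq \pi_+ ev^*(\mathbb{L}_{\mathcal{G}/X})=(ev^*(\mathbb{L}_{\mathcal{G}/X})^\vee)^\vee\simeq ev^*(\mathbb{L}_{\mathcal{G}/X}).
\end{equation*}
The last step uses that perfect objects are reflexive and that $ev^*$ preserves $\textcat{Perf}$. This object is global and perfect.

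The main obstacle is that Proposition \ref{mappingcotangentcomplex} is stated for the presheaf mapping object, whereas the corollary concerns the stack $\texthom{Map}_{\textcat{Stk}(\mathcal{A},\bm\tau|_\mathcal{A})_{/X}}(X,\mathcal{G})$. I would resolve this by noting that, because $X$ is representable and $\mathcal{G}$ is a stack, the presheaf $B\mapsto\mathcal{G}(B\otimes^\mathbb{L}A)$ is already a $\bm\tau$-sheaf on $\mathcal{A}$. This uses that $B\otimes^\mathbb{L}A\in\mathcal{A}^{op}$ and that covers are stable under the base change $-\otimes^\mathbb{L}A$, which is part of the geometry context axioms. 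Hence stackification changes nothing. One must also restrict derivations to modules in $\textcat{M}_C$, exactly as in the proof of Proposition \ref{mappingcotangentcomplex}. With (1)--(4) verified, Theorem \ref{mappingstackngeometricthm} gives $n$-geometricity.

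For the final claim, by the Remark after Proposition \ref{mappingcotangentcomplex}, $\mathbb{L}_{\texthom{Map}}$ is the fibre of $\mathbb{L}_{\texthom{Map}/X}\rightarrow p^*\mathbb{L}_X[1]$, where $p$ is the structure map to $X$. If $\mathbb{L}_X$ is perfect, then $p^*\mathbb{L}_X[1]$ is perfect since pullback preserves $\textcat{Perf}$. Perfect objects are closed under finite limits in the stable category, so the fibre is perfect.
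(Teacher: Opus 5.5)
Your proposal is the paper's argument: Theorem~\ref{mappingstackngeometricthm} with $\mathcal{F}=X$, the global cotangent complex from Proposition~\ref{mappingcotangentcomplex} with $\pi=\textnormal{id}$ so that $\pi_+ev^*(\mathbb{L}_{\mathcal{G}/X})=(ev^*(\mathbb{L}_{\mathcal{G}/X}))^{\vee\vee}$ is perfect, and the fibre of a map of perfect objects for the last claim, with your stackification and reflexivity remarks filling in details the paper leaves implicit. One caution, inherited from the paper: taking $\pi=\textnormal{id}$ treats $\texthom{Map}_{/X}(X,\mathcal{G})$ as the internal hom of the slice over $X$ (which is just $\mathcal{G}$), whereas your sheaf argument uses the absolute description $B\mapsto\mathcal{G}(B\otimes^{\mathbb{L}}A)$, and under that description $\pi$ is the projection $X\times\texthom{Map}_{/X}(X,\mathcal{G})\rightarrow\texthom{Map}_{/X}(X,\mathcal{G})$, $\pi_+$ is a genuine pushforward, and hypothesis~(\ref{mapitem3}) of Proposition~\ref{mappingcotangentcomplex} is no longer tautological, so you should not mix the two readings.
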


\begin{proof}
    This follows by Theorem \ref{mappingstackngeometricthm} using Proposition \ref{mappingcotangentcomplex}. 
    To see the final result, we note that, since $\mathbb{L}_{\mathcal{G}/X}$ is perfect, $\mathbb{L}_{\texthom{Map}_{\textcat{Stk}(\mathcal{A},\bm\tau|_\mathcal{A})_{/X}}(X,\mathcal{G})/X}=( ev^*(\mathbb{L}_{\mathcal{G}/X}))^{\vee\vee}$ is also perfect. Therefore, since the fibre of a morphism of perfect objects is also perfect, $\mathbb{L}_{\texthom{Map}_{\textcat{Stk}(\mathcal{A},\bm\tau|_{\mathcal{A}})_{/X}}(X,\mathcal{G})}$ is perfect. 

\end{proof}

\section{$\mathcal{C}^\infty$-Bornological Rings}\label{Cinfinitybornsection}

As explained in the introduction, there seems to be a natural extension of $\mathcal{C}^\infty$-rings to also include smooth functions on certain infinite dimensional manifolds. In this section, we describe this theory in more detail.

\subsection{Convenient Manifolds}

One approach to defining infinite dimensional manifolds comes from manifolds modelled on the notion of a \textit{convenient space}. We remark that the category of finite dimensional manifolds is not closed monoidal but the category of convenient manifolds is. Our main reference for this section is the book \textit{The Convenient Setting of Global Analysis} by Kriegl and Michor \cite{kriegl_convenient_1997}. 

\begin{defn}\cite[Section 1.2]{kriegl_convenient_1997} Suppose that $X$ is a locally convex topological vector space and that $c:\mathbb{R}\rightarrow{X}$ is a curve.
\begin{enumerate}
     \item $c$ is \textit{differentiable} if the derivative $c'(t):=\lim_{s\rightarrow{0}}\frac{1}{s}(c(t+s)-c(t))$ at $t$ exists for all $t$,
    \item  $c$ is \textit{smooth} if all iterated derivatives exist,
    \item The \textit{$c^\infty$-topology} on a locally convex space $X$ is the final topology with respect to all smooth curves $\mathbb{R}\rightarrow{X}$. Its open sets will be called \textit{$c^\infty$-open}.
\end{enumerate}
\end{defn}

In the following, for $X$ a locally convex topological vector space, $X^*$ denotes the locally convex space of continuous linear functionals on $X$. 

\begin{defn}\phantomsection\label{convenientspacedefn}
   Suppose that $X$ is a locally convex topological vector space. Then, $X$ is \textit{convenient} if it satisfies the equivalent conditions of \cite[Theorem 2.14]{kriegl_convenient_1997}. In particular, $X$ is convenient if, whenever $c:\mathbb{R}\rightarrow{X}$ is a curve such that $f\circ c:\mathbb{R}\rightarrow{\mathbb{R}}$ is smooth for all $f\in X^*$, then $c$ is smooth. 
\end{defn}

We denote by $\textnormal{Con}$ the category of convenient spaces equipped with the von Neumann bornology, with morphisms being bounded linear maps. We note that the category $\textnormal{Con}$ is a full subcategory of $\textnormal{CBorn}_\mathbb{R}$ and that Banach and Fr\'echet spaces are convenient spaces. The category $\textnormal{Con}$ is a cartesian closed category. As described in \cite{blute_convenient_2012}, the category $\textnormal{Con}$ can be described as the `fixed points' of the adjunction between separated bornological spaces of convex type and locally convex topological vector spaces. 

\begin{defn}\cite[Section 3.11]{kriegl_convenient_1997}
Suppose that we have a morphism $f:X\supseteq U\rightarrow{Y}$ defined on a $c^\infty$-open subset $U$ of $X$, where $X$ and $Y$ are convenient spaces. Then, $f$ is called \textit{smooth} if it maps smooth curves in $U$ to smooth curves in $Y$.
\end{defn}

\begin{remark}
    We can naturally define the notion of a \textit{convenient manifold} whose atlas consists of bijections onto convenient spaces. We can give any convenient space $X$ the structure of a convenient manifold by defining the atlas to be $\{(X,\textnormal{id}_X)\}$. In particular, $\mathbb{R}^n$ can be considered as a convenient manifold. We note that the category of convenient manifolds equipped with smooth mappings is cartesian closed. 
\end{remark}

We have the following relationships between smooth maps of convenient spaces and bounded linear maps. 

\begin{lem}\phantomsection\label{boundediffsmoothlinear}
    \cite[Corollary 2.11]{kriegl_convenient_1997} A linear map $f:X\rightarrow{Y}$ between convenient spaces is bounded if and only if it is smooth.
\end{lem}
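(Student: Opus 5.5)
The plan is to prove the two implications separately, using the convenient calculus of \cite{kriegl_convenient_1997}. The basic tool is the characterisation of bounded sets in a convenient space $X$: a subset $B\subseteq X$ is bounded if and only if $\ell(B)$ is bounded in $\mathbb{R}$ for every $\ell\in X^*$. This holds because the von Neumann bornology of a locally convex space is determined by the continuous functionals (Mackey's theorem).

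\textbf{Smooth implies bounded.} Suppose $f:X\to Y$ is linear and smooth. First I would show that $f$ maps bounded sets to bounded sets by testing on sequences. If $f(B)$ were unbounded for some bounded $B$, there would be $\ell\in Y^*$ and $x_n\in B$ with $|\ell(f(x_n))|\geq n^{2^n}$, say. Since $B$ is bounded, a rapidly decaying rescaling $\lambda_n x_n$ (for instance $\lambda_n=2^{-n}$ or a faster rate) converges Mackey-fast to $0$. The special curve lemma \cite[2.8]{kriegl_convenient_1997} then gives a smooth curve $c:\mathbb{R}\to X$ passing through the points $\lambda_n x_n$ at times $t_n\to 0$, with controlled derivatives. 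Because $f$ is smooth, $\ell\circ f\circ c$ is a smooth real function, hence locally Lipschitz near $0$. Linearity gives $\ell f(\lambda_n x_n)=\lambda_n\,\ell f(x_n)$, and the assumed growth of $\ell f(x_n)$ contradicts this local boundedness.

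\textbf{Bounded implies smooth.} Suppose $f$ is linear and bounded, and let $c$ be a smooth curve in $X$. The difference quotients of $c$ converge in the Mackey sense, i.e.\ with bounded error sets (\cite[Corollary 1.3/2.3]{kriegl_convenient_1997}), and a bounded linear map preserves Mackey convergence. Linearity gives
\begin{equation*}
\frac{f(c(t+s))-f(c(t))}{s}=f\Bigl(\frac{c(t+s)-c(t)}{s}\Bigr),
\end{equation*}
so $f\circ c$ is differentiable with derivative $f\circ c'$. Induction on the order of the derivative then shows that $f\circ c$ is smooth. Alternatively, I could use the convenience criterion of Definition \ref{convenientspacedefn}: for each $\ell\in Y^*$, the map $\ell\circ f$ is a bounded linear functional, hence scalarly smooth along curves, so $f\circ c$ is smooth. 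This version needs bounded functionals on $X$ to be smooth along curves, which is again handled by the Mackey convergence argument.

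\textbf{Main obstacle.} I expect the hardest step to be the special curve construction in the direction smooth $\Rightarrow$ bounded. It requires choosing the rescaling so that the sequence is ``fast falling'' enough for the curve to be smooth, while still keeping $\ell f(\lambda_n x_n)$ large enough to contradict local boundedness. I would rely on the quantitative form of \cite[2.8]{kriegl_convenient_1997} rather than reprove it.
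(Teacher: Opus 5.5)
Your argument is correct and is essentially the standard proof behind the cited Kriegl--Michor Corollary 2.11; the paper itself gives no proof beyond that citation. You use Mackey convergence of difference quotients, which bounded linear maps preserve, for bounded $\Rightarrow$ smooth, and the special curve lemma applied to a fast-falling rescaling of an unboundedness-witnessing sequence for the converse; the tension you flag as the main obstacle goes away if you fix the scalars $\lambda_n$ first (say $\lambda_n=2^{-n}$) and only then choose $x_n\in B$ with $|\ell(f(x_n))|\geq n/\lambda_n$.
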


\begin{thm}\phantomsection\label{freeconvenientspace}\cite[c.f. Theorem 23.6]{kriegl_convenient_1997} Suppose that we have a convenient vector space $X$. Then, there exists a convenient vector space $\lambda X$ along with a smooth mapping $\delta_X:X\rightarrow{\lambda X}$ such that, for every smooth mapping $f:X\rightarrow{Y}$ with values in a convenient vector space $Y$, there exists a unique linear bounded map $\tilde{f}:\lambda X\rightarrow{Y}$ such that $\tilde{f}\circ\delta_X=f$. Moreover, there is an isomorphism of convenient vector spaces
\begin{equation*}
    \textnormal{Hom}_{\textnormal{Con}}(\lambda X,Y)\simeq \textnormal{Hom}_{\textnormal{Smooth}}(X,Y)
\end{equation*}
\end{thm}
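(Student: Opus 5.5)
The plan is to follow the construction in \cite[Section 23]{kriegl_convenient_1997}: realise $\lambda X$ inside the bounded dual of the space of smooth real-valued functions on $X$. Let $\mathcal{C}^\infty(X,\mathbb{R})$ carry its standard convenient structure, namely the initial structure with respect to the maps $f\mapsto f\circ c$ into $\mathcal{C}^\infty(\mathbb{R},\mathbb{R})$, with $c$ running over smooth curves. Let $\mathcal{C}^\infty(X,\mathbb{R})'$ be its space of bounded linear functionals, which is again convenient. Define
\[
\delta_X:X\to\mathcal{C}^\infty(X,\mathbb{R})',\qquad \delta_X(x)=\textnormal{ev}_x .
\]
Then set $\lambda X$ to be the $c^\infty$-closure (equivalently, the Mackey closure) of the linear span of $\delta_X(X)$. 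A $c^\infty$-closed subspace of a convenient space is convenient, so $\lambda X\in\textnormal{Con}$.

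\textbf{Smoothness of $\delta_X$.} I will use the characterisation of smoothness in Definition \ref{convenientspacedefn}: a curve into a convenient space is smooth as soon as its compositions with a point-separating family of bounded functionals are smooth (uniform boundedness principle, \cite[Theorem 2.14]{kriegl_convenient_1997}). Evaluation at $f\in\mathcal{C}^\infty(X,\mathbb{R})$ is such a family of functionals on $\mathcal{C}^\infty(X,\mathbb{R})'$. For a smooth curve $c$ in $X$ one has $\textnormal{ev}_f\circ\delta_X\circ c=f\circ c$, which is smooth. Hence $\delta_X$ is smooth. The same computation proves the scalar case of the universal property: $\textnormal{ev}_f|_{\lambda X}$ is bounded and linear, and it satisfies $\textnormal{ev}_f\circ\delta_X=f$.

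\textbf{Universal property for general $Y$.} Given a smooth map $f:X\to Y$, first define $\tilde f$ on $\textnormal{span}\,\delta_X(X)$ by
\[
\tilde f\Bigl(\sum_i a_i\delta_X(x_i)\Bigr)=\sum_i a_i f(x_i).
\]
Well-definedness and boundedness come from testing against $\ell\in Y^*$. On the span, $\ell\circ\tilde f=\textnormal{ev}_{\ell\circ f}$. Since $Y^*$ separates points, $\tilde f$ is well defined. Since bounded sets in a convenient space are exactly the weakly bounded sets, $\tilde f$ is bounded. It remains to extend $\tilde f$ to the $c^\infty$-closure, and I expect this to be the main obstacle. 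I would handle it by embedding $Y$ as a $c^\infty$-closed subspace of $\prod_{\ell\in Y^*}\mathbb{R}$ (or of the bounded bidual), which is possible exactly because $Y$ is convenient. The functionals $\textnormal{ev}_{\ell\circ f}$ assemble to a bounded linear map $\lambda X\to\prod_{\ell}\mathbb{R}$. Bounded linear maps preserve Mackey convergence, so this map sends the (transfinitely iterated) Mackey closure of the span into the $c^\infty$-closure of $Y$, which is $Y$ itself. Uniqueness follows by the same reasoning: two bounded linear maps agreeing on a dense span agree on Mackey limits, because $Y$ is separated.

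\textbf{Isomorphism of convenient spaces.} By the previous step, $\tilde f\mapsto\tilde f\circ\delta_X$ is a linear bijection $\textnormal{Hom}_{\textnormal{Con}}(\lambda X,Y)\to\textnormal{Hom}_{\textnormal{Smooth}}(X,Y)$. It is bounded because composition with the smooth map $\delta_X$ is smooth by cartesian closedness, and it is linear, so Lemma \ref{boundediffsmoothlinear} applies. For the inverse, a bounded set of smooth maps $\{f\}$ yields $\{\ell\circ\tilde f\}=\{\textnormal{ev}_{\ell\circ f}\}$, which is bounded on bounded subsets of $\lambda X$. Hence $\{\tilde f\}$ is bounded in the bornology of uniform convergence on bounded sets, and the inverse is bounded as well.
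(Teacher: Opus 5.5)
Your construction is the one in \cite[Theorem 23.6]{kriegl_convenient_1997}, which the paper cites without giving a proof of its own. There, $\lambda X$ is the $c^\infty$-closure of the span of the evaluations $\textnormal{ev}_x$ in $\mathcal{C}^\infty(X,\mathbb{R})'$, $\delta_X(x)=\textnormal{ev}_x$, and $\tilde f$ is obtained by mapping $\lambda X$ into a space in which $Y$ sits as a $c^\infty$-closed, bornologically embedded subspace.

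Your choice of $\prod_{\ell\in Y^*}\mathbb{R}$ works. If $\iota(y_n)\to p$ in the Mackey sense, then by Mackey's theorem $(y_n)$ is Mackey--Cauchy in $Y$, so $p=\iota(y)$ for some $y\in Y$. Your arguments for well-definedness, boundedness on the span, extension, uniqueness and boundedness of the inverse are all correct. The extension is even simpler than you make it: $F^{-1}(\iota(Y))$ is a $c^\infty$-closed linear subspace containing the span, so it contains $\lambda X$ and no transfinite induction is needed.

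The step that does not hold as written is the smoothness of $\delta_X$. You invoke the principle that a curve into a convenient space is smooth once its composites with some point-separating family of bounded functionals are smooth. This principle is false. On $\ell^2$ the coordinate functionals separate points. Take bumps $\varphi_n$ supported in $(\frac{1}{n+1},\frac{1}{n})$ with $\varphi_n(t_n)=1$, and set $c(t)=\sum_n n\varphi_n(t)e_n$. Every coordinate of $c$ is smooth, yet $\|c(t_n)\|=n$ with $t_n\to 0$, so $c$ is not even locally bounded.

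Definition~\ref{convenientspacedefn} only lets you test against the \emph{whole} dual of $\mathcal{C}^\infty(X,\mathbb{R})'$. Unless $\mathcal{C}^\infty(X,\mathbb{R})$ is reflexive, that dual is larger than $\{\textnormal{ev}_f\}$. What you actually need is that $\{\textnormal{ev}_f : f\in\mathcal{C}^\infty(X,\mathbb{R})\}$ \emph{recognises bounded sets} of $\mathcal{C}^\infty(X,\mathbb{R})'$. This is where the uniform boundedness principle enters: because $\mathcal{C}^\infty(X,\mathbb{R})$ is convenient, a pointwise bounded set of bounded functionals on it is equibounded, i.e.\ bounded in $\mathcal{C}^\infty(X,\mathbb{R})'$.

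With this, the smoothness test goes through:
\begin{enumerate}
\item For a smooth curve $c$ in $X$, every $\textnormal{ev}_f\circ\delta_X\circ c=f\circ c$ is smooth.
\item Hence the second-order difference quotients of $\delta_X\circ c$ on compact intervals are bounded, since the family recognises bounded sets.
\item So the first-order difference quotients are Mackey--Cauchy and converge by completeness, giving a derivative $d$ with $\textnormal{ev}_f\circ d=(f\circ c)'$.
\item Iterate this argument on $d$.
\end{enumerate}
Equivalently, you can apply Kriegl--Michor's smooth uniform boundedness theorem to $\delta_X\circ c$ viewed in $L(\mathcal{C}^\infty(X,\mathbb{R}),\mathbb{R})$. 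You should also say why $\delta_X$ is smooth into $\lambda X$ and not just into $\mathcal{C}^\infty(X,\mathbb{R})'$: the derivatives are Mackey limits of difference quotients lying in the $c^\infty$-closed subspace $\lambda X$.

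A smaller point concerns the isomorphism step. Cartesian closedness gives boundedness of $g\mapsto g\circ\delta_X$ on $\mathcal{C}^\infty(\lambda X,Y)$. To use it on $\textnormal{Hom}_{\textnormal{Con}}(\lambda X,Y)$ with the bornology of uniform convergence on bounded sets, you need that this space embeds boundedly into $\mathcal{C}^\infty(\lambda X,Y)$. Alternatively, check directly that $(g\circ\delta_X\circ c)^{(k)}=g\circ(\delta_X\circ c)^{(k)}$ is uniformly bounded on compacts for $g$ ranging over an equibounded set.
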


\begin{defn}
    Given a convenient space $X$ we denote by $\mathcal{C}^\infty(X)$ the space of smooth maps $X\rightarrow{\mathbb{R}}$. 
\end{defn}

We note that by \cite[Section 27.17]{kriegl_convenient_1997}, $\mathcal{C}^\infty(X)$ is a convenient space. The bounded sets $B\subseteq \mathcal{C}^\infty(X)$ are those such that, for every smooth curve $c:\mathbb{R}\rightarrow{X}$, the subset $\{f\circ c\mid f\in B\}$ of $\mathcal{C}^\infty(\mathbb{R})$ is bounded. By \cite[Lemma 3.9 and 27.17]{kriegl_convenient_1997}, this is equivalent to saying that for every smooth curve $c:\mathbb{R}\rightarrow{X}$ and every compact subset $I\subseteq \mathbb{R}$, \begin{equation*}
    \sup_{f\in B}\sup_{t\in I}|d^n (f\circ c)(t)|<\infty
\end{equation*}for every $n\in\mathbb{N}$.
\begin{lem}\phantomsection\label{cinfinityalgebrastructure}
    $\mathcal{C}^\infty(X)$ has a convenient algebra structure with multiplication determined by the multiplication map $\mathbb{R}\times\mathbb{R}\rightarrow{\mathbb{R}}$. 
\end{lem}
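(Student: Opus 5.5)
The plan is to build the algebra structure pointwise and then reduce every boundedness question to one about smooth curves, using the description of bounded subsets of $\mathcal{C}^\infty(X)$ via curves recalled just before the statement. First, $\mathcal{C}^\infty(X)$ is a commutative unital $\mathbb{R}$-algebra under pointwise operations $(fg)(x):=f(x)g(x)$, i.e. $fg=m\circ(f,g)$ with $m:\mathbb{R}\times\mathbb{R}\rightarrow\mathbb{R}$ the multiplication. The map $(f,g):X\rightarrow\mathbb{R}^2$ is smooth because it sends smooth curves to smooth curves componentwise. The map $m$ is smooth, and compositions of smooth maps are smooth (they preserve smooth curves). Hence $fg\in\mathcal{C}^\infty(X)$. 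The unit is the constant function $1$. Associativity, commutativity and distributivity are inherited pointwise from $\mathbb{R}$. By construction, the multiplication is the one determined by $m$.

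The main step is to show that the bilinear multiplication $\mu:\mathcal{C}^\infty(X)\times\mathcal{C}^\infty(X)\rightarrow\mathcal{C}^\infty(X)$ is bounded. For convenient spaces this is equivalent to smoothness by the bilinear analogue of Lemma \ref{boundediffsmoothlinear} (Kriegl--Michor, Section 5). I would prove boundedness directly. Take bounded sets $B_1,B_2\subseteq\mathcal{C}^\infty(X)$, a smooth curve $c:\mathbb{R}\rightarrow X$, a compact $I\subseteq\mathbb{R}$, and $n\in\mathbb{N}$. The Leibniz rule gives
\begin{equation*}
d^n((fg)\circ c)=\sum_{k=0}^n\binom{n}{k}\, d^k(f\circ c)\, d^{n-k}(g\circ c).
\end{equation*}
Taking $\sup_{t\in I}$ and $\sup$ over $f\in B_1$, $g\in B_2$, each term is bounded by a product of the finite constants $\sup_{f\in B_1}\sup_{t\in I}|d^k(f\circ c)(t)|$ and $\sup_{g\in B_2}\sup_{t\in I}|d^{n-k}(g\circ c)(t)|$. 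These are finite by the characterisation of bounded sets. Hence $\mu(B_1\times B_2)$ is bounded.

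If one wants $\mu$ to be smooth as a map on the product rather than merely bounded bilinear, the same characterisation applies to curves in the product. Alternatively, one can use cartesian closedness of convenient manifolds: the map $\mathcal{C}^\infty(X)\times\mathcal{C}^\infty(X)\rightarrow\mathcal{C}^\infty(X)$ is the composite of the smooth map $\mathcal{C}^\infty(X)^2\cong\mathcal{C}^\infty(X,\mathbb{R}^2)$ with $m_*:\mathcal{C}^\infty(X,\mathbb{R}^2)\rightarrow\mathcal{C}^\infty(X,\mathbb{R})$, which is smooth by exponential law arguments.

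I expect the only real subtlety to be the identification of bounded bilinear maps with the appropriate notion of morphism for a convenient algebra, i.e. a map $\mathcal{C}^\infty(X)\hat\otimes\mathcal{C}^\infty(X)\rightarrow\mathcal{C}^\infty(X)$ for the bornological tensor product. This follows from the universal property of the completed bornological tensor product applied to bounded bilinear maps. Together with convenience of $\mathcal{C}^\infty(X)$ (Kriegl--Michor 27.17), this gives the claimed convenient algebra structure.
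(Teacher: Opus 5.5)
Your proof is correct, but the key boundedness step takes a different route from the paper's. The paper never estimates anything. It identifies $\mathcal{C}^\infty(X)\times\mathcal{C}^\infty(X)$ with smooth maps $X\rightarrow\mathbb{R}\times\mathbb{R}$. It then uses the exponential law (Kriegl--Michor 27.17) to reduce smoothness of the multiplication to smoothness of evaluation $\textnormal{Hom}_{\textnormal{Smooth}}(X,\mathbb{R}\times\mathbb{R})\times X\rightarrow\mathbb{R}\times\mathbb{R}$ composed with $m$. Finally it deduces boundedness from Lemma \ref{boundediffsmoothlinear}. This is essentially the alternative you sketch in your third paragraph.

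Your main argument instead bounds the bilinear map directly. You use the curve description of the bornology on $\mathcal{C}^\infty(X)$ together with the Leibniz rule.

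The paper's route is shorter and gives smoothness of the multiplication for free. However, it applies Lemma \ref{boundediffsmoothlinear}, which is stated for linear maps, to a map it calls linear but which is only bilinear. So it tacitly needs the multilinear version (Kriegl--Michor, Section 5), which you cite explicitly.

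Your route is elementary and uses only the description of bounded sets recalled just before the lemma. It yields exactly what an algebra object in $\textnormal{CBorn}_\mathbb{R}$ requires, namely a bounded bilinear multiplication. You also make explicit the passage from bounded bilinear maps to a morphism out of the completed projective tensor product, which the paper leaves implicit.
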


\begin{proof}
    We first note that $\mathcal{C}^\infty(X)\times\mathcal{C}^\infty(X)\simeq\textnormal{Hom}_{\textnormal{Smooth}}(X,\mathbb{R}\times\mathbb{R})$. Moreover, using the exponential law given in \cite[Section 27.17]{kriegl_convenient_1997}, we see that 
    \begin{align*}
\textnormal{Hom}_{\textnormal{Smooth}}(\mathcal{C}^\infty(X)\times\mathcal{C}^\infty(X),\mathcal{C}^\infty(X))&\simeq \textnormal{Hom}_{\textnormal{Smooth}}(\textnormal{Hom}_{\textnormal{Smooth}}(X,\mathbb{R}\times\mathbb{R})\times X,\mathbb{R})
\end{align*}Composing the evaluation map $\textnormal{Hom}_{\textnormal{Smooth}}(X,\mathbb{R}\times \mathbb{R})\times X\rightarrow{\mathbb{R}\times \mathbb{R}}$, which we can see is smooth by a simple application of the exponential law, with the smooth multiplication map $\mathbb{R}\times\mathbb{R}\rightarrow{\mathbb{R}}$, we obtain a smooth map \begin{equation*}
    \textnormal{Hom}_{\textnormal{Smooth}}(X,\mathbb{R}\times \mathbb{R})\times X\rightarrow{\mathbb{R}\times \mathbb{R}}\rightarrow{\mathbb{R}}
\end{equation*}Hence, we obtain a smooth map $\mathcal{C}^\infty(X)\times\mathcal{C}^\infty(X)\rightarrow{\mathcal{C}^\infty(X)}$ of convenient spaces. Since this morphism is also linear we note that it is bounded by Lemma \ref{boundediffsmoothlinear}. 
\end{proof}

\subsection{$\mathcal{C}^\infty$-Bornological Rings}
We recall from Theorem \ref{siftedcborncompletion}, that there is an equivalence of categories
\begin{equation*}
    \textnormal{LH}(\textnormal{CBorn}_\mathbb{R})\simeq \textnormal{SInd}(\textnormal{Lin}_\mathbb{R})
\end{equation*}where $\textnormal{Lin}_\mathbb{R}$ is the category whose objects are of the form $\ell^1(\kappa)$ for $\kappa<\aleph$. For ease of notation, for the rest of this paper we will abbreviate $\textnormal{Lin}_\mathbb{R}$ to $\textnormal{Lin}$. We recall that the category of $\mathcal{C}^\infty$-rings can be described as the free sifted cocompletion of the opposite of the category $\textnormal{CartSp}$ consisting of manifolds of the form $\mathbb{R}^n$ and smooth maps between them. We note that $\mathbb{R}^n$ is equivalent to $\ell^1(n)$. 

As motivated in the Introduction, we want to expand the category $\textnormal{CartSp}$ to contain infinite dimensional manifolds of the form $\ell^1(\kappa)$ for $\kappa<\aleph$. We note that smooth maps in $\textnormal{CartSp}$ are bounded on bounded subsets but that smooth maps between convenient spaces do not necessarily have this property. In the following we choose to restrict to smooth maps which are bounded on bounded subsets in order to control the bornologies of the objects we are considering.  

\begin{defn}
    We define the category $\mathcal{C}^\infty_{bb}$ to be the category with the same objects as $\textnormal{Lin}$, but whose morphisms are defined to be morphisms which are smooth and bounded in a bornological sense, i.e. morphisms are bounded on bounded subsets. 
\end{defn}
\begin{remark}
We use the notation $\mathcal{C}^\infty_{bb}$ so as to avoid confusion with \cite[Definition 15.1]{kriegl_convenient_1997}.
\end{remark}
We note that in $\textnormal{Lin}$, considered as a subset of convenient spaces, finite products and coproducts exist and coincide. We have the direct sum 
    \begin{equation*}
        \ell^1(\kappa)\oplus\ell^1(\mu):=\ell^1(\kappa\coprod\mu)
    \end{equation*}By Lemma \ref{boundediffsmoothlinear}, the bounded linear maps in $\textnormal{Lin}$ are smooth maps, and hence there is a natural inclusion functor $\iota:\textnormal{Lin}\rightarrow{\mathcal{C}^\infty_{bb}}$. 
\begin{lem}\label{iotapresevfiniteprod}
    $\mathcal{C}^\infty_{bb}$ has finite products and $\iota:\textnormal{Lin}\rightarrow{\mathcal{C}^\infty_{bb}}$ creates them. 
\end{lem}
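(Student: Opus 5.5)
The plan is to show that for objects $\ell^1(\kappa),\ell^1(\mu)$ of $\mathcal{C}^\infty_{bb}$, the object $\ell^1(\kappa\coprod\mu)=\ell^1(\kappa)\oplus\ell^1(\mu)$, with its two linear projections $p_1,p_2$, is a product in $\mathcal{C}^\infty_{bb}$. We also show that $\ell^1(\emptyset)=0$ is terminal. Since in $\textnormal{Lin}$ these are exactly the finite products (biproducts), and $\iota$ is the identity on objects and sends $p_1,p_2$ to themselves, this gives both that $\mathcal{C}^\infty_{bb}$ has finite products and that $\iota$ preserves them. Creation then follows: any cone in $\textnormal{Lin}$ whose image under $\iota$ is a product cone is isomorphic to the biproduct cone via a linear isomorphism, because the comparison map to $\ell^1(\kappa\coprod\mu)$ is $(p_1,p_2)$ composed with linear maps. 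Throughout we use the fact that $\iota$ is faithful and bijective on objects.

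\emph{Terminal object.} For any $\ell^1(\nu)$ there is exactly one map of sets to $0$. The constant map is smooth and bounded on bounded subsets, so $0$ is terminal.

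\emph{Binary products.} First, $p_1$ and $p_2$ are bounded linear maps, hence smooth by Lemma \ref{boundediffsmoothlinear}. Being bounded linear, they are also bounded on bounded subsets, so they are morphisms of $\mathcal{C}^\infty_{bb}$. Next, take morphisms $f:\ell^1(\nu)\to\ell^1(\kappa)$ and $g:\ell^1(\nu)\to\ell^1(\mu)$ in $\mathcal{C}^\infty_{bb}$. The only set map compatible with the projections is $h=(f,g)$, so uniqueness is automatic and it remains to check that $h$ lies in $\mathcal{C}^\infty_{bb}$.

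\emph{Smoothness of $h$.} We use the convenient calculus: the product of convenient spaces $\ell^1(\kappa)\times\ell^1(\mu)$ is the product in $\textnormal{Con}$ and is identified with $\ell^1(\kappa\coprod\mu)$ as a bornological space. A curve $c$ into a product is smooth iff its components are smooth (Kriegl--Michor \cite[Section 3]{kriegl_convenient_1997}, products of convenient spaces and the test via the projections). Hence $h\circ c=(f\circ c,g\circ c)$ is smooth for every smooth curve $c$, and so $h$ is smooth.

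\emph{Boundedness of $h$.} If $B\subseteq\ell^1(\nu)$ is bounded, then $f(B)$ and $g(B)$ are bounded, say contained in balls of radius $r$ and $s$. Then $h(B)$ lies in the $\ell^1$-ball of radius $r+s$, since $\|(x,y)\|_{\ell^1(\kappa\coprod\mu)}=\|x\|+\|y\|$.

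\emph{Main obstacle.} The main point is the identification of the convenient-space product structure with $\ell^1(\kappa\coprod\mu)$ equipped with its norm bornology. This needs that the $\ell^1$-sum norm is equivalent to the max norm, so both give the same bornology, and hence the same smooth curves via Definition \ref{convenientspacedefn}, because smoothness of curves depends only on the bornology and the continuous dual. Everything else is formal.
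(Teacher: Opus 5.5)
Your proposal is correct and follows essentially the same route as the paper: define $h=(f,g)$ into $\ell^1(\kappa)\oplus\ell^1(\mu)$, get smoothness componentwise and boundedness from the product bornology. You additionally handle the terminal object $\ell^1(0)$ and the reflection half of ``creates'', which the paper leaves implicit. In that reflection step, the comparison map $(q_1,q_2)$ of a \textnormal{Lin}-cone is a linear bijection whose inverse is automatically bounded linear.
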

\begin{proof}
     Suppose that we have a product $\ell^1(\mu)\times\ell^1(\kappa)$ in $\textnormal{Lin}$ and an object $\ell^1(\nu)$ along with morphisms $f:\ell^1(\nu)\rightarrow{\ell^1(\mu)}$ and $g:\ell^1(\nu)\rightarrow{\ell^1(\kappa)}$ in $\mathcal{C}^\infty_{bb}$. We define a map $h:\ell^1(\nu)\rightarrow{\ell^1(\mu)\times\ell^1(\kappa)}$ by sending any sequence $x=(x_i)_{i\in\nu}$ to the sequence $(f(x),g(x))\in\ell^1(\mu)\times\ell^1(\kappa)$. We need to show that $h$ is in $\mathcal{C}^\infty_{bb}$. Indeed, we note that $h$ is smooth since the product of smooth maps is smooth. Moreover, if $B\subseteq \ell^1(\nu)$ is bounded, then $f(B)$ is bounded in $\ell^1(\mu)$ and $g(B)$ is bounded in $\ell^1(\kappa)$, and hence, using the product bornology, $h(B)$ is also bounded
\end{proof}

We make the following natural definition of a $\mathcal{C}^\infty$-bornological ring.
\begin{defn}
    The \textit{category of $\mathcal{C}^\infty$-bornological rings}, denoted by $\mathcal{C}^\infty\textnormal{BornRing}$, is the free sifted cocompletion of $\mathcal{C}^{\infty,op}_{bb}$, i.e. 
    \begin{equation*}
        \mathcal{C}^\infty\textnormal{BornRing}:=\textnormal{SInd}(\mathcal{C}_{bb}^{\infty,op})=\textnormal{Fun}^\times(\mathcal{C}^\infty_{bb},\textnormal{Set})
    \end{equation*}
\end{defn}
\begin{remark}
    We also note that $\mathcal{C}^\infty\textnormal{BornRing}$ can be described as the algebras for the multisorted Lawvere theory defined by $\mathcal{C}^\infty_{bb}$. The monoidal structure is induced by naturally extending the natural monoidal product on $\mathcal{C}^{\infty,op}_{bb}$ inherited from $\textnormal{CBorn}_\mathbb{R}$ to the sifted cocompletion.
\end{remark}
\begin{lem}\phantomsection\label{fullyfaithfulcinf}
    There is a fully faithful embedding $i:\mathcal{C}^\infty\textnormal{Ring}\rightarrow{\mathcal{C}^\infty\textnormal{BornRing}}$. 
\end{lem}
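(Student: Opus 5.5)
The plan is to obtain $i$ from the inclusion of Lawvere theories $j:\textnormal{CartSp}\rightarrow{\mathcal{C}^\infty_{bb}}$, $\mathbb{R}^n\mapsto \ell^1(n)$, and take $i$ to be the induced map on sifted cocompletions, $i:=\textnormal{SInd}(j^{op})$. Concretely, $i$ is the left Kan extension along $j$ of product-preserving functors, followed by the observation that left Kan extension along a product-preserving functor between theories with finite products preserves product-preserving functors.

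The first step is to check that $j$ is a well-defined, fully faithful functor that preserves finite products. On objects, $\mathbb{R}^n\simeq\ell^1(n)$ as convenient spaces. On morphisms, every smooth map $\mathbb{R}^n\rightarrow\mathbb{R}^m$ is smooth in the convenient sense: in finite dimensions, convenient smoothness coincides with classical smoothness, since a map is smooth iff it sends smooth curves to smooth curves (Boman's theorem, \cite{kriegl_convenient_1997}). Such a map is also bounded on bounded sets, because bounded sets of $\mathbb{R}^n$ are relatively compact and continuous images of compacts are compact. Conversely, any morphism $\ell^1(n)\rightarrow\ell^1(m)$ in $\mathcal{C}^\infty_{bb}$ is classically smooth, again by Boman. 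So $\textnormal{Hom}_{\textnormal{CartSp}}(\mathbb{R}^n,\mathbb{R}^m)=\textnormal{Hom}_{\mathcal{C}^\infty_{bb}}(\ell^1(n),\ell^1(m))$, and $j$ is fully faithful. For products, $\ell^1(n)\times\ell^1(m)=\ell^1(n\coprod m)$ by Lemma \ref{iotapresevfiniteprod}, which is $j(\mathbb{R}^{n+m})$.

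The second step is the general categorical fact. If $j:T\rightarrow T'$ is a fully faithful finite-product-preserving functor of small categories with finite products, then $\textnormal{Lan}_j:\textnormal{Fun}^\times(T,\textnormal{Set})\rightarrow\textnormal{Fun}^\times(T',\textnormal{Set})$ is fully faithful. To see this, note that $\textnormal{Fun}^\times(T,\textnormal{Set})\simeq\textnormal{SInd}(T^{op})$, and $\textnormal{Lan}_j$ is the unique sifted-colimit-preserving extension of $T^{op}\xrightarrow{j^{op}}T'^{op}\hookrightarrow\textnormal{SInd}(T'^{op})$. Its right adjoint is restriction $j^*$, which clearly preserves product-preserving functors and also preserves sifted colimits, since these are computed pointwise in $\textnormal{Fun}^\times$.

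To show full faithfulness, it suffices to show the unit $F\rightarrow j^*\textnormal{Lan}_jF$ is an isomorphism. Both sides commute with sifted colimits in $F$, and every $F$ is a sifted colimit of representables $\textnormal{Hom}_T(t,-)$. It therefore reduces to representables, where $j^*\textnormal{Lan}_j\textnormal{Hom}_T(t,-)=\textnormal{Hom}_{T'}(jt,j-)\simeq\textnormal{Hom}_T(t,-)$ by full faithfulness of $j$.

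The main obstacle I expect is the first step: justifying rigorously that convenient smoothness and classical smoothness agree on $\mathbb{R}^n$, and that nothing extra enters the Hom-sets. I would cite \cite[Section 3]{kriegl_convenient_1997} (Boman's theorem) for this. A second subtlety is making sure that sifted colimits in $\textnormal{Fun}^\times(\mathcal{C}^\infty_{bb},\textnormal{Set})$ are indeed computed pointwise. This holds because finite products commute with sifted colimits in $\textnormal{Set}$.
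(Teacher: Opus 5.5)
Your proposal is correct and takes the same route as the paper. The paper observes that $\textnormal{CartSp}$ is a full subcategory of $\mathcal{C}^\infty_{bb}$ and then invokes Lemma \ref{fullyfaithfulsind} (citing Kelly) to get a fully faithful functor on free sifted cocompletions. You additionally justify the fullness claim, via Boman's theorem and the relative compactness of bounded sets in $\mathbb{R}^n$, and you prove the cocompletion step directly through the unit of $\textnormal{Lan}_j \dashv j^*$; the paper only asserts the first and cites the second.
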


\begin{proof}
 $\textnormal{CartSp}$ is a full subcategory of $\mathcal{C}^{\infty}_{bb}$. Hence we obtain a fully faithful functor on the level of free sifted cocompletions by Lemma \ref{fullyfaithfulsind}. 
\end{proof}

\begin{lem}\label{bornbicomplete}
    $\mathcal{C}^\infty\textnormal{BornRing}$ has all small limits and colimits. 
\end{lem}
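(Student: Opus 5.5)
The plan is to use the identification $\mathcal{C}^\infty\textnormal{BornRing}=\textnormal{Fun}^\times(\mathcal{C}^\infty_{bb},\textnormal{Set})$ and treat it as the category of models of a (multisorted) Lawvere theory. The structure is the standard one for such categories: a reflective, limit-closed subcategory of a presheaf category that is also closed under sifted colimits. The one non-formal input is that $\mathcal{C}^\infty_{bb}$ has finite products, which is exactly Lemma \ref{iotapresevfiniteprod}. This guarantees that "product-preserving functor" is a meaningful condition and that the representables lie in $\mathcal{C}^\infty\textnormal{BornRing}$.

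\textbf{Limits.} The full category $\textnormal{Fun}(\mathcal{C}^\infty_{bb},\textnormal{Set})$ is complete, with limits computed pointwise, since $\textnormal{Set}$ is complete. Limits commute with limits, and in particular with finite products. Hence a pointwise limit of finite-product-preserving functors again preserves finite products. So $\mathcal{C}^\infty\textnormal{BornRing}$ is closed under all small limits in the presheaf category, and those limits are computed pointwise. Smallness is not an issue: $\mathcal{C}^\infty_{bb}$ has a set of objects, since its objects are those of $\textnormal{Lin}$, indexed by $\kappa<\aleph$, and its hom-sets are small.

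\textbf{Sifted colimits.} These are likewise computed pointwise. Sifted colimits in $\textnormal{Set}$ commute with finite products, which is essentially the definition of siftedness. So a pointwise sifted colimit of product-preserving functors is again product-preserving. Alternatively, this is immediate from the description as $\textnormal{SInd}(\mathcal{C}^{\infty,op}_{bb})$, the free sifted cocompletion; I would cite the appendix on SInd objects here.

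\textbf{General colimits.} This is the main step. The inclusion $\mathcal{C}^\infty\textnormal{BornRing}\hookrightarrow\textnormal{Fun}(\mathcal{C}^\infty_{bb},\textnormal{Set})$ preserves all limits and filtered colimits. Both categories are locally presentable, since the former is the category of models of a limit sketch. The adjoint functor theorem then gives a left adjoint $r$, the reflector. Colimits in $\mathcal{C}^\infty\textnormal{BornRing}$ are obtained by taking the colimit in presheaves and then applying $r$.

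If I want to avoid invoking local presentability, I would take a more hands-on route. Finite coproducts of representables exist, because $\textnormal{Hom}(-,\ell^1(\kappa))\sqcup\textnormal{Hom}(-,\ell^1(\mu))$ is represented in the opposite category by the product $\ell^1(\kappa)\times\ell^1(\mu)$ from Lemma \ref{iotapresevfiniteprod}. Arbitrary coproducts then follow as filtered colimits of finite ones. Every object is a reflexive coequaliser of free objects, via the canonical simplicial resolution. Coequalisers are built as the reflexive coequaliser of the induced pair of maps between free objects, which is sifted and therefore exists. Coproducts and coequalisers together give all colimits.

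I expect the only real subtlety to be set-theoretic: confirming that $\mathcal{C}^\infty_{bb}$ is essentially small relative to the universe $V_\aleph$, so that the presheaf category and the adjoint functor theorem apply. I would dispatch this by noting that both the objects and the hom-sets of smooth bounded maps between the $\ell^1(\kappa)$ lie in $V_\aleph$.
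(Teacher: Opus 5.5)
Your proposal is correct, but your primary route to arbitrary colimits differs from the paper's. The paper argues entirely by hand, in three steps. First, limits and filtered colimits are computed pointwise, since they commute with finite products. Second, binary coproducts are defined by extending the coproduct of representables in $\mathcal{C}^{\infty,op}_{bb}$ (the product $\ell^1(\kappa)\times\ell^1(\mu)$ in $\mathcal{C}^\infty_{bb}$) along sifted colimits. Third, finite coproducts together with sifted colimits yield all small colimits.

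You instead realise $\mathcal{C}^\infty\textnormal{BornRing}$ as a full subcategory of $\textnormal{Fun}(\mathcal{C}^\infty_{bb},\textnormal{Set})$ closed under limits and filtered colimits. You then obtain a reflector from local presentability and the adjoint functor theorem, and compute colimits by reflecting pointwise colimits. This gives a uniform formula for all colimits. As a by-product it shows $\mathcal{C}^\infty\textnormal{BornRing}$ is locally (finitely) presentable, which is the natural input for the adjoint functor theorem the paper later invokes in Corollary \ref{importantadjunction}. The cost is heavier machinery and a genuine dependence on $\mathcal{C}^\infty_{bb}$ being small.

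Your hands-on fallback is essentially the paper's argument. The one difference is how coproducts pass from representables to arbitrary objects: you use free resolutions, whereas the paper writes each object as a sifted colimit of representables and indexes over the product of the two sifted index categories. Your explicit check that sifted, not merely filtered, colimits are computed pointwise is exactly what the paper's final step uses.

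One correction to your closing remark. Knowing that each $\ell^1(\kappa)$ and each hom-set lies in $V_\aleph$ gives only local smallness. The objects $\ell^1(\kappa)$, $\kappa<\aleph$, are pairwise non-isomorphic in $\mathcal{C}^\infty_{bb}$, since isomorphisms there are in particular homeomorphisms. So $\mathcal{C}^\infty_{bb}$ is not essentially $V_\aleph$-small. The right fix is the paper's convention of treating $\textnormal{Lin}$, and hence $\mathcal{C}^\infty_{bb}$, as a small category in the ambient universe. The pointwise steps (limits and sifted colimits) need no smallness at all; only your reflector argument depends on this point.
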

\begin{proof}
    Indeed, we easily see, since limits and colimits in $\textnormal{Fun}(\mathcal{C}^\infty_{bb},\textnormal{Set})$ are computed pointwise, that $\mathcal{C}^\infty\textnormal{BornRing}$ has all small limits and filtered colimits, since these commute with finite products. Now, since $\mathcal{C}_{bb}^{\infty,op}$ has finite coproducts we can easily define finite coproducts in $\mathcal{C}^\infty\textnormal{BornRing}$ by extending the definition in $\mathcal{C}_{bb}^{\infty,op}$ by sifted colimits. Hence, since $\mathcal{C}^\infty\textnormal{BornRing}$ has sifted colimits and finite coproducts then it has all small colimits.
\end{proof}

We denote by $\textnormal{Comm}(\textnormal{CBorn}_\mathbb{R})$ the category of commutative algebra objects in $\textnormal{CBorn}_\mathbb{R}$. We can define a functor $\mathcal{C}^\infty_{bb}:\mathcal{C}^{\infty,op}_{bb}\rightarrow{\textnormal{Comm}(\textnormal{CBorn}_\mathbb{R})}$ which takes any element $\ell^1(\kappa)\in\mathcal{C}^\infty_{bb}$ and considers the space $\mathcal{C}^\infty_{bb}(\ell^1(\kappa))$ of smooth bounded functions $\ell^1(\kappa)\rightarrow{\mathbb{R}}$. We can use Lemma \ref{cinfinityalgebrastructure} and the statement that bounded maps are closed under products and sums to show that there is a natural algebra structure on $\mathcal{C}^\infty_{bb}(\ell^1(\kappa))$.

We note however that bounded linear algebra maps $\mathcal{C}^\infty_{bb}(\ell^1(\kappa))\rightarrow{\mathcal{C}^\infty_{bb}(\ell^1(\mu))}$ don't necessarily determine smooth bounded maps $\ell^1(\mu)\rightarrow{\ell^1(\kappa)}$. Indeed, we can easily construct a map into $\ell^\infty(\kappa)$ but it is not clear that we can control convergence of the sum of norms in the situation when $\kappa$ is not finite.

\begin{defn}
    \cite[Section 5]{adam_countably_1999} Suppose that $X$ is a convenient space. 
    \begin{enumerate} 
        \item Denote by $\mathcal{C}^\infty_{lfcs}(X)$ the smallest subalgebra of $\mathcal{C}^\infty(X)$ containing $X^\vee:=\texthom{Hom}_{\textnormal{Con}}(X,\mathbb{R})$, the space of bounded linear functionals on $X$, and which is closed under locally finite countable sums, 
        \item $X$ is \textit{weakly realcompact} if, for any $\phi:\mathcal{C}^\infty_{lfcs}(X)\rightarrow{\mathbb{R}}$, there exists some unique $x\in X$ such that, for any $f\in \mathcal{C}^\infty_{lfcs}(X)$, $\phi\circ f=f(x)$. 
    \end{enumerate}
\end{defn}

\begin{exmp}
    Suppose that $\kappa$ is a non-measurable cardinal. Then, $\ell^1(\kappa)$ is weakly realcompact by \cite[p. 575]{edgar_measurability_1979}. 
\end{exmp}

We note that $\mathcal{C}^\infty_{lfcs}$ does not define a functor $\mathcal{C}^{\infty,op}_{bb}\rightarrow{\textnormal{Comm}(\textnormal{CBorn}_\mathbb{R}})$. Indeed, it is not always true that $\mathcal{C}^\infty$ preserves locally finite neighbourhoods. We define the following functorial extension which will also allow us to control sums defined by a cardinal which is not necessarily countable. 

\begin{defn}\label{ourl1algebra}
    Given an object $\ell^1(\kappa)\in\mathcal{C}^\infty_{bb}$, denote by $\mathcal{C}^\infty_{\ell^1}(\ell^1(\kappa))$ the smallest subalgebra of $\mathcal{C}^\infty(\ell^1(\kappa))$ which
    \begin{enumerate}
        \item\label{algebra1} Contains $\ell^\infty(\kappa)$, 
        \item\label{algebra2} Is closed under sums $\sum_{i\in I} f_i$ of smooth functions such that, for any bounded set $B\subseteq \ell^1(\kappa)$, $\sum_{i\in I}\sup_{x\in B}|f_i(x)|<\infty$, 
        \item Is closed under iterated derivatives,
        \item\label{algebra4} Satisfies that, for every map $\psi: \ell^1(\mu)\rightarrow{\ell^1(\kappa)}$ in $\mathcal{C}^\infty_{bb}$ and every $f\in\mathcal{C}^\infty_{\ell^1}(\ell^1(\kappa))$, $f\circ\psi\in\mathcal{C}^\infty_{\ell^1}(\ell^1(\mu))$. 
    \end{enumerate}
\end{defn}

We note that $\mathcal{C}^\infty_{lfcs}(\ell^1(\kappa))\subseteq \mathcal{C}^\infty_{\ell^1}(\ell^1(\kappa))\subseteq\mathcal{C}^\infty(\ell^1(\kappa))$ with equality when $\kappa$ is finite. Moreover, the bornologies on $\mathcal{C}^\infty_{lfcs}(\ell^1(\kappa))$ and $\mathcal{C}^\infty_{\ell^1}(\ell^1(\kappa))$ are induced from the bornology on $\mathcal{C}^\infty(\ell^1(\kappa))$. 

\begin{remark}
    We note that if a function $f$ is in $\mathcal{C}^\infty_{\ell^1}(\ell^1(\kappa))$ then it must be bounded on bounded subsets by Condition (\ref{algebra2}). 
\end{remark}

\begin{lem}
    $\mathcal{C}^\infty_{\ell^1}$ defines a functor $\mathcal{C}^\infty_{\ell^1}:\mathcal{C}^{\infty,op}_{bb}\rightarrow{\textnormal{Comm}(\textnormal{CBorn}_\mathbb{R})}$.  
\end{lem}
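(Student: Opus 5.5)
To prove that $\mathcal{C}^\infty_{\ell^1}$ is a functor, I would check three things: each $\mathcal{C}^\infty_{\ell^1}(\ell^1(\kappa))$ is an object of $\textnormal{Comm}(\textnormal{CBorn}_\mathbb{R})$; every morphism $\psi:\ell^1(\mu)\rightarrow{\ell^1(\kappa)}$ in $\mathcal{C}^\infty_{bb}$ induces a bounded algebra map $\psi^*:f\mapsto f\circ\psi$ going the other way; and this assignment respects identities and composition. Functoriality on the nose is formal, since $(\psi\circ\phi)^*=\phi^*\circ\psi^*$ and $\textnormal{id}^*=\textnormal{id}$ hold already for precomposition on $\mathcal{C}^\infty(-)$. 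The work is therefore in the first two points.

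\textbf{The objects.} By Lemma \ref{cinfinityalgebrastructure}, $\mathcal{C}^\infty(\ell^1(\kappa))$ is a convenient algebra with bounded multiplication. We give $\mathcal{C}^\infty_{\ell^1}(\ell^1(\kappa))$ the induced bornology, as stated just after Definition \ref{ourl1algebra}. A subalgebra with the induced bornology has bounded multiplication, so the remaining issue is completeness. I would take the bornological closure of the subalgebra inside the complete space $\mathcal{C}^\infty(\ell^1(\kappa))$ and argue that conditions (\ref{algebra1})--(\ref{algebra4}) survive bornological limits. For condition (\ref{algebra2}), a Mackey-convergent sequence $f_n\rightarrow f$ can be rewritten as the telescoping sum $f_1+\sum_n(f_{n+1}-f_n)$. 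After passing to a subsequence, this sum satisfies the summability hypothesis $\sum_i\sup_{x\in B}|g_i(x)|<\infty$, using the description of bounded sets of $\mathcal{C}^\infty(X)$ through smooth curves and compact intervals recalled before Lemma \ref{cinfinityalgebrastructure}. Hence the smallest subalgebra is already closed, so it is complete, and it is a commutative algebra object in $\textnormal{CBorn}_\mathbb{R}$.

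\textbf{The morphisms.} By condition (\ref{algebra4}), $f\circ\psi$ lies in $\mathcal{C}^\infty_{\ell^1}(\ell^1(\mu))$ whenever $f$ lies in $\mathcal{C}^\infty_{\ell^1}(\ell^1(\kappa))$, so $\psi^*$ is well defined. It is an algebra homomorphism because multiplication is pointwise. Since $\psi^*$ is linear, Lemma \ref{boundediffsmoothlinear} reduces boundedness to smoothness. I would instead check boundedness directly from the curve description of bounded sets. Take a bounded set $S\subseteq\mathcal{C}^\infty(\ell^1(\kappa))$ and a smooth curve $c$ in $\ell^1(\mu)$. Since $\psi$ is smooth, $\psi\circ c$ is a smooth curve in $\ell^1(\kappa)$. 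Then
\begin{equation*}
\{(f\circ\psi)\circ c\mid f\in S\}=\{f\circ(\psi\circ c)\mid f\in S\},
\end{equation*}
which is bounded in $\mathcal{C}^\infty(\mathbb{R})$. So $\psi^*(S)$ is bounded, and $\psi^*$ is bounded for the induced bornologies. Note that only the smoothness of $\psi$ is used here; the boundedness of $\psi$ on bounded sets is what guarantees that condition (\ref{algebra4}) is compatible with the summability in condition (\ref{algebra2}).

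\textbf{Main obstacle.} I expect the hardest step to be the completeness of $\mathcal{C}^\infty_{\ell^1}(\ell^1(\kappa))$, specifically showing that a bornological limit of elements stays inside the smallest subalgebra. The worry is that condition (\ref{algebra2}) only controls suprema on bounded sets, while the bornology of $\mathcal{C}^\infty$ controls all derivatives along curves. If the telescoping argument fails, I would fall back on replacing $\mathcal{C}^\infty_{\ell^1}(\ell^1(\kappa))$ by its bornological closure. Condition (\ref{algebra4}) is stable under closure by the boundedness of $\psi^*$ shown above, so the functor would then still land in $\textnormal{Comm}(\textnormal{CBorn}_\mathbb{R})$.
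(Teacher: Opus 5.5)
Your treatment of the morphisms is the paper's proof: its entire argument is the appeal to condition (\ref{algebra4}), which makes $\psi^*:f\mapsto f\circ\psi$ land in $\mathcal{C}^\infty_{\ell^1}(\ell^1(\mu))$, and functoriality is formal. Your curve argument that $\psi^*$ is bounded for the induced bornologies is correct, and it fills in a point the paper leaves implicit. The paper does not argue completeness of $\mathcal{C}^\infty_{\ell^1}(\ell^1(\kappa))$ at all; it only equips it with the bornology induced from $\mathcal{C}^\infty(\ell^1(\kappa))$. You are right that completeness is needed for the objects to lie in $\textnormal{Comm}(\textnormal{CBorn}_\mathbb{R})$, but the argument you give for it fails.

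The obstacle is not derivatives but compact versus bounded sets. The bornology of $\mathcal{C}^\infty(\ell^1(\kappa))$ gives uniform control only on the images $c(J)$ of compact intervals under smooth curves. These images are norm-compact, and a ball in an infinite-dimensional $\ell^1(\kappa)$ is not covered by finitely many of them. So Mackey convergence $f_n\to f$ gives no control on $\sup_{x\in B}|f_{n+1}(x)-f_n(x)|$.

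Concretely, on $\ell^1(\mathbb{N})$ take $f_N(x)=\sum_{n\leq N}x_n^n$. This is a polynomial in coordinate functionals, hence lies in $\mathcal{C}^\infty_{\ell^1}(\ell^1(\mathbb{N}))$. For a smooth curve $c$ and compact $J$, the set $c(J)$ is norm-compact, so $\delta_n:=\sup_{t\in J}|c_n(t)|\to 0$. Fa\`a di Bruno then gives $\sup_{t\in J}|d^k(c_n^n)(t)|\leq C_{k,c,J}\,n^k2^{-n}$ for large $n$. Hence $\{N^2(f-f_N)\}_N$ is bounded, and $f_N\to f=\sum_n x_n^n$ Mackey. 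Yet for any subsequence $N_j$, the telescoping term $g_j=f_{N_{j+1}}-f_{N_j}$ satisfies $g_j(2e_{N_{j+1}})=2^{N_{j+1}}$. So $\sum_j\sup_{\|x\|_1\leq 2}|g_j(x)|=\infty$, and condition (\ref{algebra2}) never applies to any subsequence.

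The limit $f$ is unbounded on the ball of radius $2$. Combined with the paper's remark that elements of $\mathcal{C}^\infty_{\ell^1}$ are bounded on bounded subsets, this indicates the smallest subalgebra is genuinely not bornologically closed. Your claim ``the smallest subalgebra is already closed'' should therefore not be expected to hold.

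Your fallback of passing to the bornological closure does give a functor into $\textnormal{Comm}(\textnormal{CBorn}_\mathbb{R})$: the closure is a subalgebra, and the bounded maps $\psi^*$ preserve Mackey limits. But it changes the objects, so it proves a statement about a different functor, not the lemma as stated. In particular, the closure contains functions such as $\sum_n x_n^n$ that are unbounded on bounded sets. Later uses of condition (\ref{algebra2}), e.g.\ in Lemma \ref{compositionequivsym}, do not allow for such functions.
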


\begin{proof}
   This easily follows from Condition (\ref{algebra4}). 
\end{proof}
We note that $\mathcal{C}^\infty(\mathbb{R}^n)\hat{\otimes}\mathcal{C}^\infty(\mathbb{R}^m)\simeq \mathcal{C}^\infty(\mathbb{R}^{n+m})$ by \cite[Theorem 51.6]{treves_topological_1967} where here we are taking the complete projective tensor product \cite[Definition 1.83]{meyer_local_2007}. The proof of this exploits the nuclearity of $\mathcal{C}^\infty(\mathbb{R}^n)$ and $\mathcal{C}^\infty(\mathbb{R}^m)$. We note however that $\mathcal{C}^\infty(\ell^1(\kappa))$ is not nuclear for $\kappa$ not finite. However, we can prove the following result for our restricted algebras. 

\begin{lem}\label{tensorl1algebra}
    There is an equivalence in $\textnormal{Comm}(\textnormal{CBorn}_\mathbb{R})$
    \begin{equation*}
        \mathcal{C}^\infty_{\ell^1}(\ell^1(\kappa))\hat{\otimes}\mathcal{C}_{\ell^1}^\infty(\ell^1(\mu))\simeq \mathcal{C}^\infty_{\ell^1}(\ell^1(\kappa)\oplus \ell^1(\mu))
    \end{equation*}
\end{lem}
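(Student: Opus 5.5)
The plan is to build the comparison map from the universal property of the complete projective tensor product, and then show it is a bornological isomorphism. Write $p_1:\ell^1(\kappa)\oplus\ell^1(\mu)\rightarrow{\ell^1(\kappa)}$ and $p_2$ for the projections. By Lemma \ref{iotapresevfiniteprod} these are morphisms in $\mathcal{C}^\infty_{bb}$. Condition (\ref{algebra4}) of Definition \ref{ourl1algebra} then gives bounded algebra maps $p_1^*,p_2^*$ into $\mathcal{C}^\infty_{\ell^1}(\ell^1(\kappa)\oplus\ell^1(\mu))$. Multiplication is bounded by Lemma \ref{cinfinityalgebrastructure}, so $(f,g)\mapsto (f\circ p_1)(g\circ p_2)$ is a bounded bilinear map into a complete space. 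It therefore induces a bounded algebra morphism
\begin{equation*}
\Phi:\mathcal{C}^\infty_{\ell^1}(\ell^1(\kappa))\hat{\otimes}\mathcal{C}^\infty_{\ell^1}(\ell^1(\mu))\rightarrow{\mathcal{C}^\infty_{\ell^1}(\ell^1(\kappa)\oplus\ell^1(\mu))}.
\end{equation*}
The task is to show that $\Phi$ is bijective and that its inverse is bounded.

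\textbf{Surjectivity.} Since the target is the \emph{smallest} algebra with properties (\ref{algebra1})--(\ref{algebra4}), it suffices to show that the image of $\Phi$ contains the generators and is stable under the closure operations. For the generators, $\ell^\infty(\kappa\coprod\mu)\simeq\ell^\infty(\kappa)\oplus\ell^\infty(\mu)$, so every linear functional $\lambda$ splits as $\lambda_1\circ p_1+\lambda_2\circ p_2=\Phi(\lambda_1\otimes 1+1\otimes\lambda_2)$.

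The closure operations are handled as follows.
\begin{enumerate}
\item Stability under products is automatic, since $\Phi$ is an algebra map.
\item For derivatives, a directional derivative in $\ell^1(\kappa)\oplus\ell^1(\mu)$ splits as a sum of derivatives in each factor. The operator $D_v\otimes 1+1\otimes D_w$ is bounded on the completed tensor product, and $\Phi$ intertwines it with $D_{(v,w)}$.
\item For summability sums (\ref{algebra2}), I would use the description of $\hat{\otimes}$ for complete bornological spaces as the completion of the algebraic tensor product, with elements realised as absolutely summable series $\sum_n a_n f_n\otimes g_n$ over bounded discs. Sums satisfying $\sum_i\sup_B|f_i|<\infty$ correspond to such absolutely convergent series, using that bounded sets of the sum are contained in products $B_1\times B_2$.
\item Closure under pullbacks (\ref{algebra4}) is the delicate point. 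For a general $\psi$ in $\mathcal{C}^\infty_{bb}$ the pullback does not obviously factor through the tensor product. Here I would use the minimality of Definition \ref{ourl1algebra} in the other direction: define $\mathcal{S}$ as the collection, over all $\kappa'$, of functions whose pullbacks along every $\psi$ lie in the image of the tensor product, and check that $\mathcal{S}$ satisfies (\ref{algebra1})--(\ref{algebra4}).
\end{enumerate}

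\textbf{Injectivity and bounded inverse.} I would use the evaluation maps, since the algebras are function algebras. An element of the tensor product is a bornologically convergent series $\sum a_n f_n\otimes g_n$, and it maps to the function $(x,y)\mapsto\sum a_nf_n(x)g_n(y)$. If this function vanishes, pair against point evaluations $\delta_x\otimes\delta_y$ and their derivatives. Point evaluations separate points, and complete bornological tensor products of spaces of functions embed into functions on the product. This requires an approximation-property style argument, which I would take from the $\ell^1$ structure: the coordinate projections onto finite subsets of $\kappa$ give bounded approximations of the identity. For the bounded inverse, I would show that a bounded set in the target, characterised by the curve-seminorms $\sup_{f}\sup_{t\in I}|d^n(f\circ c)(t)|$, is controlled by products of such seminorms on the factors. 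Curves into the sum split as pairs of curves, and this gives the needed estimate.

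\textbf{Main obstacle.} I expect the main obstacle to be the injectivity and the bornological isomorphism, that is, the failure of nuclearity noted above. The plan is to substitute the explicit summability condition (\ref{algebra2}) and the finite-rank coordinate approximations of $\ell^1(\kappa)$ for nuclearity. First I would try to prove that $\mathcal{C}^\infty_{\ell^1}(\ell^1(\kappa))$ is the bornological closure of functions depending on finitely many coordinates. This would reduce the lemma to the finite dimensional case of \cite[Theorem 51.6]{treves_topological_1967}, followed by passing to completions.
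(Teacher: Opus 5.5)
\paragraph{The minimality step is circular.} Your reformulation of Condition (\ref{algebra4}) through $\mathcal{S}$ does not get past the difficulty, and the step it is meant to secure is false. You are right that (\ref{algebra4}) is where the minimality argument bites. The paper's proof only checks pullbacks along maps $\psi:\ell^1(\eta)\to\ell^1(\kappa)\oplus\ell^1(\mu)$, i.e.\ pulling image elements back \emph{out of} the sum, and that is automatic. But take $\kappa'=1$ in your $\mathcal{S}$. Since $\ell^1(1)=\mathbb{R}$ and $\mathrm{id}_{\mathbb{R}}\in\ell^\infty(1)$, Condition (\ref{algebra1}) for $\mathcal{S}$ asks that every $\psi\in\textnormal{Hom}_{\mathcal{C}^\infty_{bb}}(\ell^1(\kappa\coprod\mu),\mathbb{R})$ lie in $\operatorname{Im}\Phi$. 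Indeed (\ref{algebra1}) and (\ref{algebra4}) together already force $\mathcal{C}^\infty_{\ell^1}(\ell^1(\nu))$ to contain every smooth function on $\ell^1(\nu)$ that is bounded on bounded subsets, by writing $g=\mathrm{id}_{\mathbb{R}}\circ g$. So minimality buys nothing. Surjectivity of $\Phi$ contains the claim that every such function on $\ell^1(\kappa)\oplus\ell^1(\mu)$ is in the image, and checking (\ref{algebra1}) for $\mathcal{S}$ is that same claim.

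\paragraph{Surjectivity fails for $\kappa=\mu=\mathbb{N}$.} So the real obstacle is surjectivity, not injectivity.
\begin{itemize}
\item The maps $D_0f=df(0)\in\ell^\infty(\mathbb{N})$ and $T(h)_{ij}=\partial_{(e_i,0)}\partial_{(0,e_j)}h(0,0)\in\ell^\infty(\mathbb{N}\times\mathbb{N})$ are bounded for the bornologies induced from $\mathcal{C}^\infty$. This follows from boundedness of $d$ and of evaluation in the convenient setting.
\item Let $m:\ell^\infty\hat{\otimes}\ell^\infty\to\ell^\infty(\mathbb{N}\times\mathbb{N})$ be $u\otimes v\mapsto(u_iv_j)$. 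Then $T\circ\Phi=m\circ(D_0\hat{\otimes}D_0)$, since both sides are bounded and agree on elementary tensors.
\item Here $\ell^\infty\hat{\otimes}\ell^\infty$ is the Banach projective tensor product. Writing $\ell^\infty=C(\beta\mathbb{N})$, each $m(\sum_n\lambda_nu_n\otimes v_n)$ extends continuously to $\beta\mathbb{N}\times\beta\mathbb{N}$, because the series converges uniformly.
\item Now $Q(x,y)=\sum_kx_ky_k$ is a $\mathcal{C}^\infty_{bb}$-map $\ell^1(\mathbb{N}\coprod\mathbb{N})\to\ell^1(1)$. Hence $Q\in\mathcal{C}^\infty_{\ell^1}(\ell^1(\mathbb{N})\oplus\ell^1(\mathbb{N}))$.
\item But $T(Q)=(\delta_{ij})$ has no continuous extension at $(p,p)$ for a free ultrafilter $p$. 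Every neighbourhood $\overline{A}\times\overline{A}$ with $A\in p$ contains both some $(i,i)$ and some $(i,j)$ with $i\neq j$, where $\delta$ takes the values $1$ and $0$.
\end{itemize}
Hence $Q\notin\operatorname{Im}\Phi$.

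\paragraph{The fallback fails too.} The same map $D_0$ defeats your reduction to functions of finitely many coordinates. Differentials at $0$ of such functions lie in $c_{00}$, whose closure in $\ell^\infty$ is $c_0$. So $(1,1,\dots)\in\ell^\infty$ is not a bornological limit of such functions. Likewise, the maps $f\mapsto f\circ P_S$ induced by coordinate projections onto finite $S\subseteq\kappa$ do not converge to the identity even on this single element, so the approximation argument for injectivity does not go through either.

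Neither your plan nor the paper's can show that the natural map $\Phi$ is an isomorphism for infinite $\kappa,\mu$. The definition of $\mathcal{C}^\infty_{\ell^1}$, or the choice of tensor product, would have to change.
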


\begin{proof}
Consider the linear map $\phi: \mathcal{C}^\infty_{\ell^1}(\ell^1(\kappa))\hat{\otimes}\mathcal{C}_{\ell^1}^\infty(\ell^1(\mu))\rightarrow \mathcal{C}^\infty_{\ell^1}(\ell^1(\kappa)\oplus\ell^1(\mu))\simeq \mathcal{C}^\infty_{\ell^1}(\ell^1(\kappa\coprod\mu))$ defined by sending any tensor $f\otimes g$ to the morphism $\phi(f\otimes g):\ell^1(\kappa)\oplus\ell^1(\mu)\rightarrow{\mathbb{R}}$ defined by $\phi(f\otimes g)(x,y)=f(x)g(y)$. Since $\ell^\infty(\kappa)\oplus \ell^\infty(\mu)\simeq \ell^\infty(\kappa\coprod\mu)$, we see that the image contains $\ell^\infty(\kappa\coprod\mu)$. 

Suppose now that we have some sum $\sum_{i\in I}f_i\otimes g_i$ in $\mathcal{C}^\infty_{\ell^1}(\ell^1(\kappa))\hat{\otimes}\mathcal{C}_{\ell^1}^\infty(\ell^1(\mu))$. Then, we see that for any bounded set $B\subseteq\ell^1(\kappa)\oplus \ell^1(\mu)$, there are bounded subsets $B_\kappa\subseteq \ell^1(\kappa)$ and $B_\mu\subseteq \ell^1(\mu)$ such that $B\subseteq B_\kappa\oplus B_\mu$. Then, we see that 
\begin{equation*}
  \sup_{x\in B}|f_i(x)g_i(x)|\leq \sup_{(x_\kappa,x_\mu)\in B_\kappa\oplus B_\mu}|f_i(x_\kappa)g_i(x_\mu)|\leq \sup_{x_\kappa\in B_\kappa}|f_i(x_\kappa)|\sup_{x_\mu\in B_\mu}|g_i(x_\mu)|<\infty
\end{equation*}

Hence, we see that \begin{equation*} \sum_{i\in I}\sup_{x\in B}|f_i(x)g_i(y)|\leq\sum_{i\in I}\sup_{x_\kappa\in B_\kappa}|f_i(x_\kappa)|\sum_{i\in I}\sup_{x_\mu\in B_\mu}|g_i(x_\mu)|<\infty
\end{equation*}Therefore, we deduce that the image satisfies Condition (\ref{algebra2}) of Definition \ref{ourl1algebra}. 

Now, since each $d^n f_i$ is in $\mathcal{C}^\infty_{\ell^1}(\ell^1(\kappa))$ and each $d^n g_i$ is in $\mathcal{C}^\infty_{\ell^1}(\ell^1(\mu))$, then, by the product rule, since each $d^n(f_i\cdot g_i)$ is a finite sum of derivatives of $f_i$ multiplied by derivatives of $g_i$, we can also show that the image of $\phi$ is closed under iterated derivatives. Finally, suppose that we have a bounded smooth map $\psi:\ell^1(\eta)\rightarrow{\ell^1(\kappa)\oplus\ell^1(\mu)}$ and consider the maps $\pi_\kappa\circ \psi:\ell^1(\eta)\rightarrow{\ell^1(\kappa)}$ and $\pi_\mu\circ\psi:\ell^1(\eta)\rightarrow{\ell^1(\mu)}$ which must also lie in $\mathcal{C}^\infty_{bb}$. Therefore, by Condition (\ref{algebra4}), we must have that $f\circ \pi_k\circ\psi\in\mathcal{C}^\infty_{\ell^1}
(\ell^1(\eta))$ and $f\circ \pi_\mu\circ \psi\in\mathcal{C}^\infty_{\ell^1}(\ell^1(\eta))$. Then, we note that $\phi(f\otimes g)\circ \psi=(f\circ\pi_\kappa\circ\psi)\cdot(g\circ\pi_\mu\circ\psi)$ which must also lie in $\mathcal{C}^\infty_{\ell^1}(\ell^1(\eta))$.

Hence, since the image satisfies conditions (\ref{algebra1}) to (\ref{algebra4}) of Definition \ref{ourl1algebra}, then using a minimality argument we must have that the image of $\phi$ is exactly $\mathcal{C}^\infty_{\ell^1}(\ell^1(\kappa)\oplus \ell^1(\mu))$. The map is also injective, and hence $\phi$ is a bijection.

Finally, to see that the map is bounded, suppose that we have some bounded subset $B$ of the tensor product $\mathcal{C}^\infty_{\ell^1}(\ell^1(\kappa))\hat{\otimes}\mathcal{C}_{\ell^1}^\infty(\ell^1(\mu))$. Then, $B$ is contained in the convex hull of some $B_{\ell^1(\kappa)}\otimes B_{\ell^1(\mu)}$ where $B_{\ell^1(\kappa)}$ and $B_{\ell^1(\mu)}$ are bounded subsets in $\mathcal{C}^\infty_{\ell^1}(\ell^1(\kappa))$ and $\mathcal{C}^\infty_{\ell^1}(\ell^1(\mu))$ respectively. We observe that for any smooth curve $c:\mathbb{R}\rightarrow{\ell^1(\kappa)\oplus\ell^1(\mu)}$ and every compact subset $J\subseteq \mathbb{R}$
\begin{equation*}
    \sup_{\phi(\sum_{i\in I} f_i\otimes g_i)\in\phi(B)}\sup_{t\in J}|d^n (\phi(\sum_{i\in I} f_i\otimes g_i)\circ c)(t)|\leq \sup_{\phi(\sum_{i\in I} f_i\otimes g_i)\in \phi(B)}\sum_{i\in I}\sup_{t\in J}| d^n (f_i\circ c)\cdot (g_i\circ c)(t)|<\infty
\end{equation*}using the product rule, the fact that $f_i$ and $g_i$ are both smooth and bounded with bounded derivatives, and that $f_i$ and $g_i$ are in $B_{\ell^1(\kappa)}$ and $B_{\ell^1(\mu)}$ respectively. Hence, $\phi(B)$ is bounded.

\end{proof}

In the following, we denote by $\textnormal{Sym}:\textnormal{CBorn}_\mathbb{R}\rightarrow{\textnormal{Comm}(\textnormal{CBorn}_\mathbb{R})}$ the left adjoint to the natural forgetful functor.

\begin{lem}\phantomsection\label{symalgebrabij}
    Suppose that $\ell^1(\mu)$ and $\ell^1(\kappa)$ are in $\mathcal{C}_{bb}^\infty$. Then, the map 
    \begin{equation*}
        \textnormal{Hom}_{\mathcal{C}^\infty_{bb}}(\ell^1(\mu),\ell^1(\kappa))\rightarrow{\textnormal{Hom}_{\textnormal{Comm}(\textnormal{CBorn}_\mathbb{R})}(\mathcal{C}_{\ell^1}^\infty(\ell^1(\kappa)),\mathcal{C}_{\ell^1}^\infty(\ell^1(\mu)))}
    \end{equation*}is a bijection. 
\end{lem}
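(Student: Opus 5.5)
The plan is to prove injectivity and surjectivity separately. The map sends $\psi:\ell^1(\mu)\rightarrow\ell^1(\kappa)$ in $\mathcal{C}^\infty_{bb}$ to the pullback $\psi^*:f\mapsto f\circ\psi$. This is well defined into $\mathcal{C}^\infty_{\ell^1}(\ell^1(\mu))$ by Condition (\ref{algebra4}) of Definition \ref{ourl1algebra}. It is multiplicative, unital and linear, and it is bounded because pulling back along a smooth map sends smooth curves to smooth curves: this lets us use the curve-wise description of bounded subsets of $\mathcal{C}^\infty$ recorded before Lemma \ref{cinfinityalgebrastructure}.

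\emph{Injectivity.} Suppose $\psi^*=\phi^*$. Evaluate both on the coordinate functionals $e_k^*\in\ell^\infty(\kappa)\subseteq\mathcal{C}^\infty_{\ell^1}(\ell^1(\kappa))$, which lie in the algebra by Condition (\ref{algebra1}). For every $k\in\kappa$ this gives $e_k^*\circ\psi=e_k^*\circ\phi$. Since the coordinate functionals separate points of $\ell^1(\kappa)$, we conclude $\psi=\phi$.

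\emph{Surjectivity.} Let $\Phi:\mathcal{C}^\infty_{\ell^1}(\ell^1(\kappa))\rightarrow\mathcal{C}^\infty_{\ell^1}(\ell^1(\mu))$ be a bounded algebra morphism.
\begin{enumerate}
\item Define the candidate map by $\psi(y):=(\Phi(e_k^*)(y))_{k\in\kappa}$. A priori this only lands in $\ell^\infty(\kappa)$, or even just $\mathbb{R}^\kappa$, which is exactly the difficulty flagged before the definition of $\mathcal{C}^\infty_{lfcs}$.
\item To show $\psi(y)\in\ell^1(\kappa)$, use the $\ell^\infty(\kappa)$-functionals. For a sign vector $\epsilon\in\ell^\infty(\kappa)$ of norm $1$, linearity and boundedness of $\Phi$ on the bounded set $\{\epsilon\cdot\}$ should give $\sum_k\epsilon_k\Phi(e_k^*)(y)=\Phi(\epsilon)(y)$. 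This needs $\Phi$ to commute with the summable sums of Condition (\ref{algebra2}). Taking the supremum over $\epsilon$ then bounds $\sum_k|\Phi(e_k^*)(y)|$ uniformly for $y$ in a bounded set. This yields both $\psi(y)\in\ell^1(\kappa)$ and that $\psi$ is bounded on bounded sets.
\item Show that $\Phi=\psi^*$. Fix $y\in\ell^1(\mu)$ and consider the character $\mathrm{ev}_y\circ\Phi$ on $\mathcal{C}^\infty_{\ell^1}(\ell^1(\kappa))$. Restrict it to $\mathcal{C}^\infty_{lfcs}(\ell^1(\kappa))\subseteq\mathcal{C}^\infty_{\ell^1}(\ell^1(\kappa))$. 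Weak realcompactness of $\ell^1(\kappa)$ (the example via \cite{edgar_measurability_1979}, assuming $\kappa$ non-measurable, which I would check holds for $\kappa<\aleph$ or else add as a hypothesis) gives a point $x$ with $\mathrm{ev}_y\circ\Phi=\mathrm{ev}_x$ on this subalgebra. Testing on the functionals $e_k^*$ forces $x=\psi(y)$. It then remains to extend the agreement from $\mathcal{C}^\infty_{lfcs}$ to all of $\mathcal{C}^\infty_{\ell^1}$. For this, show that the set of $f$ with $\Phi(f)=f\circ\psi$ is a subalgebra satisfying Conditions (\ref{algebra1})–(\ref{algebra4}), and conclude by minimality of $\mathcal{C}^\infty_{\ell^1}$, as in the proof of Lemma \ref{tensorl1algebra}.
\item Smoothness of $\psi$ then follows from the convenient-space criterion (Definition \ref{convenientspacedefn}). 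For a smooth curve $c$, each $\lambda\circ\psi\circ c$ with $\lambda\in\ell^\infty(\kappa)=\ell^1(\kappa)^*$ equals $\Phi(\lambda)\circ c$, which is smooth. Hence $\psi\circ c$ is smooth, so $\psi\in\mathcal{C}^\infty_{bb}$.
\end{enumerate}

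\textbf{Main obstacle.} The hard part is showing that $\Phi$ commutes with the infinite summable sums, which drives both Step 2 and the closure-under-sums part of Step 3. Closure under derivatives and under Condition (\ref{algebra4}) also needs checking. For the sums, my first attempt would use the characterisation of bounded sets in $\mathcal{C}^\infty$ via curves: the partial sums form a bounded family converging in the bornological (Mackey) sense, and a bounded linear map between complete bornological spaces preserves bornological limits. Derivatives would be handled by representing them as Mackey limits of difference quotients. If closure under Condition (\ref{algebra4}) fails for this set, I would instead run the minimality argument on a larger auxiliary class.
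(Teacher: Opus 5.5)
Your construction of $\psi$ follows the paper's route: weak realcompactness of $\ell^1(\kappa)$ produces the point, and the convenient-space criterion applied to the functionals in $\ell^\infty(\kappa)$ gives smoothness. Your injectivity argument and your curve argument for boundedness of $\psi^*$ are fine. The problems lie where you depart from or go beyond the paper.

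\emph{Step 2 would fail as written.} The identity $\sum_k\epsilon_k\Phi(e_k^*)(y)=\Phi(\epsilon)(y)$ cannot come from compatibility of $\Phi$ with Condition (2) sums.
\begin{itemize}
\item The family $\{\epsilon_k e_k^*\}$ is not a Condition (2) family. On the unit ball $B$ of $\ell^1(\kappa)$ one has $\sup_B|e_k^*|=1$ for every $k$, so $\sum_k\sup_B|\epsilon_k e_k^*|=\infty$ for infinite $\kappa$.
\item The finite partial sums do not converge to $\epsilon$ bornologically either. Bounded sets of linear functionals in the induced $\mathcal{C}^\infty$-bornology are norm bounded, while the tails have sup-norm $1$.
\item Boundedness of $\Phi$ alone gives no uniformity in $y$ over a bounded set, since bounded subsets of $\mathcal{C}^\infty(\ell^1(\mu))$ are only controlled along smooth curves.
\end{itemize}
None of this is needed. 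If $\psi(y)$ is defined as the point given by weak realcompactness, as the paper does, then $\psi(y)\in\ell^1(\kappa)$ is automatic, and $\Phi(\lambda)=\lambda\circ\psi$ for $\lambda\in\ell^\infty(\kappa)\subseteq\mathcal{C}^\infty_{lfcs}$. For boundedness of $\psi$, fix a bounded $B\subseteq\ell^1(\mu)$ and consider the functionals $\lambda\mapsto\Phi(\lambda)(y)$, $y\in B$, on the Banach space $\ell^\infty(\kappa)$. They are pointwise bounded, because each $\Phi(\lambda)\in\mathcal{C}^\infty_{\ell^1}(\ell^1(\mu))$ is bounded on bounded sets. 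By Banach--Steinhaus they are uniformly norm bounded, and $\|\psi(y)\|_1=\sup_{\|\lambda\|_\infty\le 1}|\Phi(\lambda)(y)|$. This is the paper's uniform boundedness step.

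\emph{The extension in Step 3 is a genuine gap.} You are right that surjectivity needs $\Phi=\psi^*$ on all of $\mathcal{C}^\infty_{\ell^1}(\ell^1(\kappa))$, not just on $\mathcal{C}^\infty_{lfcs}$. The paper's proof does not address this; it stops once $\psi\in\mathcal{C}^\infty_{bb}$. But the minimality argument you sketch does not close the gap.
\begin{itemize}
\item Condition (2) controls only $\sum_i\sup_B|f_i|$, not derivatives along curves. So its partial sums need not converge in the bornology induced from $\mathcal{C}^\infty(\ell^1(\kappa))$, and a bounded $\Phi$ need not commute with them.
\item Closure under derivatives via difference quotients needs $\Phi(f\circ\tau_{tv})=f\circ\tau_{tv}\circ\psi$ for translations $\tau_{tv}$.
\item Closure under Condition (4) needs $\Phi(g\circ\chi)=g\circ\chi\circ\psi$ for arbitrary $\chi:\ell^1(\kappa)\to\ell^1(\nu)$ in $\mathcal{C}^\infty_{bb}$.
\end{itemize}
The last two both say that $\Phi$ commutes with the $\mathcal{C}^\infty$-operations. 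That is the content of the lemma, and it does not follow from $\Phi$ being a bounded algebra morphism. Condition (4) also couples different cardinals, so a subset of a single $\mathcal{C}^\infty_{\ell^1}(\ell^1(\kappa))$ cannot be plugged into the minimality argument directly.

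What is actually needed is a realcompactness statement for $\mathcal{C}^\infty_{\ell^1}(\ell^1(\kappa))$ itself: every character is a point evaluation. That gives $\mathrm{ev}_y\circ\Phi=\mathrm{ev}_{\psi(y)}$ on the whole algebra at once. Your caveat about measurable $\kappa$ is also fair: the paper invokes weak realcompactness of $\ell^1(\kappa)$ for all $\kappa<\aleph$ without comment.
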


\begin{proof}
    Suppose that $x\in\ell^1(\mu)$ and $\phi\in \textnormal{Hom}_{\textnormal{Comm}(\textnormal{CBorn}_\mathbb{R})}(\mathcal{C}_{\ell^1}^\infty(\ell^1(\kappa)),\mathcal{C}_{\ell^1}^\infty(\ell^1(\mu)))$. Then, we note that $ev_x\circ\phi$ is a map in $\textnormal{Hom}_{\textnormal{Comm}(\textnormal{CBorn}_\mathbb{R})}(\mathcal{C}^\infty_{\ell^1}(\ell^1(\kappa)),\mathbb{R})$. Since $\ell^1(\kappa)$ is weakly realcompact, there exists a unique $y\in\ell^1(\kappa)$ such that, for all $f\in\mathcal{C}^\infty_{lfcs}(\ell^1(\kappa))\subseteq \mathcal{C}^\infty_{\ell^1}(\ell^1(\kappa))$, $(ev_x\circ\phi)(f)=f(y)$. 

    We define a function $\psi:\ell^1(\mu)\rightarrow{\ell^1(\kappa)}$ by $\psi(x)=y$. We note that $\textnormal{Sym}(\ell^1(\kappa)^*)\subseteq \mathcal{C}^\infty_{lfcs}(\ell^1(\kappa))$. Then, since 
    \begin{equation*}
        \textnormal{Hom}_{\textnormal{Comm}(\textnormal{CBorn}_\mathbb{R})}(\textnormal{Sym}(\ell^\infty(\kappa)), \mathcal{C}^\infty_{\ell^1}(\ell^1(\mu))\simeq \textnormal{Hom}_{\textnormal{CBorn}_\mathbb{R}}(\ell^\infty(\kappa),\mathcal{C}^\infty_{\ell^1}(\ell^1(\mu)))
    \end{equation*}we see that, for all $f\in(\ell^1(\kappa))^*\simeq \ell^\infty(\kappa)$, the image of $\phi$, $\overline{\phi}$, in $\textnormal{CBorn}_\mathbb{R}$ satisfies that  $(ev_{-}\circ{\overline{\phi}})(f)=f\circ \psi$. Hence, since the map $(ev_{-}\circ{\overline{\phi}})(f)$ is bounded linear and we are working with convenient vector spaces, $\psi$ can easily be seen to be smooth by Definition \ref{convenientspacedefn}. It is bounded by an application of the uniform boundedness principle for Banach spaces. 
\end{proof}

\subsection{Derived $\mathcal{C}^\infty$-Bornological Rings}

If $\mathcal{C}^\infty_{\ell^1}(\ell^1(\kappa))$ were compact as a complete bornological algebra, then we could use Lemma \ref{symalgebrabij} to show that $\mathcal{C}^\infty_{\ell^1}$ is fully faithful. However, $\mathcal{C}^\infty_{\ell^1}(\ell^1(\kappa))$ is not necessarily compact and we shouldn't expect $\mathcal{C}^\infty_{\ell^1}$ to be fully faithful. Indeed, there are several examples of $\mathcal{C}^\infty$-rings which are not complete as bornological spaces, for example certain rings of germs of smooth functions. However, as in \cite[Proposition 27]{borisov_quasi-coherent_2017} and \cite[Lemma 5.3.91]{ben-bassat_perspective_2024}, we expect to have a fully faithful embedding in the derived setting. 
\begin{defn}The \textit{$(\infty,1)$-category of derived $\mathcal{C}^\infty$-bornological rings}, which we will denote by $\mathcal{C}^\infty\textcat{DBornRing}$, is the $(\infty,1)$-category
\begin{equation*}
    \mathcal{C}^\infty\textcat{DBornRing}:=\mathcal{P}_\Sigma(\mathcal{C}_{bb}^{\infty,op})=\textcat{Fun}^\times(\mathcal{C}_{bb}^\infty,\infty\textcat{Grpd})
\end{equation*}We define the $(\infty,1)$-category of \textit{derived $\mathcal{C}^\infty$-bornological affines} by \begin{equation*}
\mathcal{C}^\infty\textcat{DBAff}:=\mathcal{C}^\infty\textcat{DBornRing}^{op}
\end{equation*}The \textit{ordinary category of $\mathcal{C}^\infty$-bornological affines} is $\mathcal{C}^\infty\textnormal{BAff}:=\mathcal{C}^\infty\textnormal{BornRing}^{op}$.  
\end{defn}

We can similarly describe the \textit{$(\infty,1)$-category of derived $\mathcal{C}^\infty$-rings} to be the $(\infty,1)$-category $\mathcal{C}^\infty\textcat{DRing}:=\mathcal{P}_\Sigma(\textnormal{CartSp}^{op})$. Similarly, the $(\infty,1)$-category of \textit{derived $\mathcal{C}^\infty$-affines} is defined by $\mathcal{C}^\infty\textcat{DAff}:=\mathcal{C}^\infty\textcat{DRing}^{op}$. By Lemma \ref{fullyfaithfulcinf}, together with \cite[Proposition 5.5.8.22]{lurie_higher_2009} and Proposition \ref{siftedcocompletion}, there exists a fully faithful inclusion functor $\mathcal{C}^\infty\textcat{DAff}\hookrightarrow{\mathcal{C}^\infty\textcat{DBAff}}$. 

\begin{remark}
    We note that derived manifolds in the sense of Carchedi and Steffens \cite[Corollary 1.1.]{carchedi_universal_2019} are a full subcategory of $\mathcal{C}^\infty\textcat{DAff}$, and hence of $\mathcal{C}^\infty\textcat{DBAff}$. 
\end{remark}

We note that, by \cite[Lemma 5.3.91]{ben-bassat_perspective_2024}, there is a fully faithful functor 
\begin{equation*}
    \mathcal{C}^\infty_{\ell^1}=\mathcal{C}^\infty:\mathcal{C}^\infty\textcat{DRing}\rightarrow{\textcat{DAlg}^{cn}(\textcat{Ch}(\textnormal{CBorn}_\mathbb{R}))}
\end{equation*}We can extend the functor $\mathcal{C}^\infty_{\ell^1}:\mathcal{C}^\infty\textnormal{BornRing}\rightarrow{\textnormal{Comm}(\textnormal{CBorn}_\mathbb{R})}$ to a functor
\begin{equation*}
\mathcal{C}^\infty_{\ell^1}:\mathcal{C}^\infty\textcat{DBornRing}\rightarrow{\textcat{DAlg}^{cn}(\textcat{Ch}(\textnormal{CBorn}_\mathbb{R}))}
\end{equation*}by Proposition \ref{siftedcocompletion}. It remains to prove that this functor is fully faithful. As in the proof of \cite[Lemma 5.3.91]{ben-bassat_perspective_2024}, we use the factorisation through $\textnormal{Sym}(\ell^1(\kappa))$ which is a compact object in $\textnormal{Comm}(\textnormal{CBorn}_\mathbb{R})$.

\begin{lem}\label{compositionequivsym}
    There is an equivalence 
    \begin{equation*}
        \textnormal{Hom}_{\mathcal{C}^\infty_{bb}}(\ell^1(\mu),\ell^1(\kappa))\rightarrow{\textnormal{Hom}}_{\textnormal{Comm}(\textnormal{CBorn}_\mathbb{R})}(\textnormal{Sym}(\ell^1(\kappa)),\mathcal{C}^\infty_{\ell^1}(\ell^1(\mu)))
    \end{equation*}
\end{lem}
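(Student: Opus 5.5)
The plan is to factor the map in the statement through the bijection of Lemma \ref{symalgebrabij}. There is a natural algebra map $\iota_\kappa:\textnormal{Sym}(\ell^1(\kappa))\rightarrow{\mathcal{C}^\infty_{\ell^1}(\ell^1(\kappa))}$. Here $\ell^1(\kappa)$ is identified with the coordinate functionals $e_i^*$ ($i\in\kappa$), which lie in $\ell^\infty(\kappa)\simeq\ell^1(\kappa)^*$ and hence in $\mathcal{C}^\infty_{\ell^1}(\ell^1(\kappa))$ by Condition (\ref{algebra1}) of Definition \ref{ourl1algebra}. This gives a bounded linear map $\ell^1(\kappa)\rightarrow{\mathcal{C}^\infty_{\ell^1}(\ell^1(\kappa))}$, $(a_i)\mapsto\sum_i a_ie_i^*$. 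The sum is admissible by Condition (\ref{algebra2}), since on a bounded $B$ we have $\sum_i|a_i|\sup_{x\in B}|x_i|\leq\|a\|_1\sup_{B}\|x\|_1$. By adjunction this linear map extends to $\iota_\kappa$.

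The map in the statement is then the composite
\begin{equation*}
\textnormal{Hom}_{\mathcal{C}^\infty_{bb}}(\ell^1(\mu),\ell^1(\kappa))\rightarrow{\textnormal{Hom}(\mathcal{C}^\infty_{\ell^1}(\ell^1(\kappa)),\mathcal{C}^\infty_{\ell^1}(\ell^1(\mu)))}\xrightarrow{\iota_\kappa^*}{\textnormal{Hom}(\textnormal{Sym}(\ell^1(\kappa)),\mathcal{C}^\infty_{\ell^1}(\ell^1(\mu)))}
\end{equation*}
The first map is a bijection by Lemma \ref{symalgebrabij}, so it suffices to construct an inverse to the total composite directly. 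By the free/forgetful adjunction, the target is $\textnormal{Hom}_{\textnormal{CBorn}_\mathbb{R}}(\ell^1(\kappa),\mathcal{C}^\infty_{\ell^1}(\ell^1(\mu)))$. Since $\ell^1(\kappa)$ is projective with basis $\{e_i\}$, such a bounded map is the same as a family $(f_i)_{i\in\kappa}$ in $\mathcal{C}^\infty_{\ell^1}(\ell^1(\mu))$ that is uniformly bounded in the bornology of $\mathcal{C}^\infty_{\ell^1}(\ell^1(\mu))$. A morphism $\psi$ is sent to the family of its components $f_i=e_i^*\circ\psi$. Injectivity of the composite is therefore immediate: $\psi$ is determined by its coordinates.

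For surjectivity, given a bounded family $(f_i)$ I would try to define $\psi(x)=(f_i(x))_{i\in\kappa}$. Three things must be checked: that $\psi(x)\in\ell^1(\kappa)$, that $\psi$ is bounded on bounded sets, and that $\psi$ is smooth. Smoothness follows as in Lemma \ref{symalgebrabij}: every bounded functional composed with $\psi$ is smooth, so Definition \ref{convenientspacedefn} applies, with boundedness coming from uniform boundedness. The real issue, and the main obstacle, is the $\ell^1$-summability $\sum_i|f_i(x)|<\infty$, uniformly on bounded sets. Uniform boundedness of $(f_i)$ only gives $\sup_i|f_i(x)|<\infty$, i.e.\ a map into $\ell^\infty(\kappa)$; this is exactly the issue flagged after Lemma \ref{cinfinityalgebrastructure}.

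To handle this, I would use that $\mathcal{C}^\infty_{\ell^1}(\ell^1(\mu))$ is closed under the summable sums of Condition (\ref{algebra2}). Take any $a\in\ell^\infty(\kappa)$. Boundedness of $\ell^1(\kappa)\rightarrow{\mathcal{C}^\infty_{\ell^1}(\ell^1(\mu))}$ should force $\sum_ia_if_i$ to be an admissible sum, so that $\sum_i\sup_{x\in B}|a_if_i(x)|<\infty$. Choosing $a_i=\textnormal{sgn}f_i(x)$ then yields $\sum_i\sup_{B}|f_i|<\infty$, which gives both summability and boundedness of $\psi$. If this step fails, I would restrict attention to the families whose sums satisfy Condition (\ref{algebra2}), and check that these are exactly those in the image of $\iota_\kappa^*$. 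Once the inverse is constructed, the fact that $\iota_\kappa^*\circ(\text{Lemma \ref{symalgebrabij}})$ recovers $(f_i)$ is a direct check on generators, and the equivalence follows.
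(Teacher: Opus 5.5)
Your set-up follows the paper's route. You identify the target with bounded families $(f_i)_{i\in\kappa}$ in $\mathcal{C}^\infty_{\ell^1}(\ell^1(\mu))$, send $\psi$ to $(e_i^*\circ\psi)_{i}$, get injectivity, and invert by $\psi(x)=(f_i(x))_{i\in\kappa}$, which is the paper's $\psi(x)=(\overline{\phi}(e_k)(x))_{k\in\kappa}$. The gap is the surjectivity step, exactly where you placed it, and your fix does not work. A bounded linear map out of $\ell^1(\kappa)$ only controls $\sum_i a_if_i$ for $a\in\ell^1(\kappa)$, not for $a\in\ell^\infty(\kappa)$. In fact nothing can close this step when $\kappa$ is infinite. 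Take $\mu=1$, so that $\mathcal{C}^\infty_{\ell^1}(\ell^1(\mu))=\mathcal{C}^\infty(\mathbb{R})$, and put $f_i=1$ for every $i\in\kappa$. This family is bounded (it is a single point). Hence $a\mapsto(\sum_i a_i)\cdot 1$ is a bounded linear map $\ell^1(\kappa)\rightarrow\mathcal{C}^\infty(\mathbb{R})$, and so an algebra map out of $\textnormal{Sym}(\ell^1(\kappa))$. But a preimage $\psi$ would have to satisfy $e_i^*\circ\psi=1$ for all $i$, i.e.\ $\psi(t)=(1)_{i\in\kappa}\notin\ell^1(\kappa)$. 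So for infinite $\kappa$ the map in the statement is injective but not surjective. The target only sees bounded families, morally maps into $\ell^\infty(\kappa)$, which is the very issue you quoted. The paper's proof gets past this point by asserting $\sum_k|\overline{\phi}(e_k)(x)|<\infty$ ``by Condition (\ref{algebra2})''. That condition is a closure property of $\mathcal{C}^\infty_{\ell^1}$ under admissible sums, not a summability property of its bounded families, so it does not supply the missing step either. For finite $\kappa$ the summability issue disappears.

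Two details of your fix are also off. The sign choice $a_i=\textnormal{sgn}f_i(x)$ depends on $x$, so at most it gives pointwise summability, not $\sum_i\sup_{x\in B}|f_i(x)|<\infty$. That stronger inequality fails even for genuine morphisms: for $\psi=\textnormal{id}$ on $\ell^1(\mathbb{N})$ and $B$ the unit ball, $\sum_i\sup_{x\in B}|e_i^*(x)|=\infty$. The condition that says $\psi$ lands in $\ell^1(\kappa)$ and is bounded on $B$ is the weaker $\sup_{x\in B}\sum_i|f_i(x)|<\infty$. Your smoothness step also needs smoothness of $\sum_i a_if_i$ for every $a\in\ell^\infty(\kappa)$, and hence similar control of derivatives. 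Consequently your fallback, describing the image as the families whose sums satisfy Condition (\ref{algebra2}) of Definition \ref{ourl1algebra}, is wrong as stated, since it excludes the identity. In any case, a bijection onto a proper subset is not the claimed equivalence, and it would not suffice for the use of this lemma in Theorem \ref{fullyfaithfulembed}.
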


\begin{proof}Suppose that we have a smooth morphism $\psi:\ell^1(\mu)\rightarrow{\ell^1(\kappa)}$ which is bounded on bounded subsets. Then, we can define a bounded linear algebra morphism $\textnormal{Sym}(\ell^1(\kappa))\rightarrow{\mathcal{C}^\infty_{\ell^1}(\ell^1(\mu))}$ by considering $\mathcal{C}^\infty_{\ell^1}(\psi)$ and restricting to $\textnormal{Sym}(\ell^1(\kappa))$.

Conversely, given a morphism $\phi:\textnormal{Sym}(\ell^1(\kappa))\rightarrow{\mathcal{C}^\infty_{\ell^1}(\ell^1(\mu))}$, we consider the morphism $\overline{\phi}:\ell^1(\kappa)\rightarrow{\mathcal{C}^\infty_{\ell^1}(\ell^1(\mu))}$ in $\textnormal{CBorn}_\mathbb{R}$ induced by the universal property of $\textnormal{Sym}$. Define a morphism $\psi:\ell^1(\mu)\rightarrow{\ell^1(\kappa)}$ by $\psi(x)=(\overline{\phi}(e_k)(x))_{k\in\kappa}$. We note that this is well defined since the family $\{\overline{\phi}(e_k):k\in \kappa\}$ lies in $\mathcal{C}^\infty_{\ell^1}(\ell^1(\kappa))$, and hence we see that $\sum_{k\in\kappa}|\overline{\phi}(e_k)(x)|<\infty$ by Condition (\ref{algebra2}). 

Now, it remains to show that $\psi$ is smooth and bounded on bounded subsets. Suppose that $B$ is a bounded subset of $\ell^1(\mu)$. Then, we easily note that $\sup_{x\in B}|\psi(x)|=\sup_{x\in B}\sum_{k\in\kappa}|\overline{\phi}(e_k)(x)|<\infty$ and therefore $\psi$ is bounded. Now to show that $\psi$ is smooth, we note that by \cite[Theorem 5.20]{kriegl_convenient_1997}, it suffices to show that all iterated derivatives $d^n\psi$ exist and are bounded. Consider the family $\{d^n\overline{\phi}(e_k)\}_{k\in K}$. Each function $d^n\overline{\phi}(e_k)$ must lie in $\mathcal{C}^\infty_{\ell^1}(\ell^1(\mu))$ since the algebra is closed under iterated derivatives. Hence, by the conditions on our algebra, we must have that for each $n\geq 0$ and each bounded set $B\subseteq \ell^1(\mu)$,
\begin{equation*}
    \sum_{k\in\kappa}\sup_{x\in B}|(d^n\overline{\phi}(e_k))(x)|<\infty
\end{equation*}Therefore for each $n\geq 0$, the derivative $d^n\psi$ is bounded. 
\end{proof}

Suppose that $\ell^1(\kappa)\in\mathcal{C}^\infty_{bb}$. Consider the canonical chain of inclusion maps $\ell^1(\kappa)\hookrightarrow{\ell^\infty(\kappa)}\hookrightarrow{\mathcal{C}^\infty_{\ell^1}(\kappa)}$. This map is linear and bounded and hence induces a map $\Phi:\textnormal{Sym}(\ell^1(\kappa))\rightarrow{\mathcal{C}^\infty_{\ell^1}(\ell^1(\kappa))}$ in $\textnormal{Comm}(\textnormal{CBorn}_\mathbb{R})$.

\begin{lem}\label{symhomotopyepil1}
    $\Phi:\textnormal{Sym}(\ell^1(\kappa))\rightarrow{\mathcal{C}^\infty_{\ell^1}(\ell^1(\kappa))}$ is a homotopy epimorphism in $\textnormal{Comm}(\textnormal{CBorn}_\mathbb{R})$, in the sense of Definition \ref{homotopyepi}.
\end{lem}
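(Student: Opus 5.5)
We have to show that the multiplication map $\mathcal{C}^\infty_{\ell^1}(\ell^1(\kappa))\otimes^\mathbb{L}_{\textnormal{Sym}(\ell^1(\kappa))}\mathcal{C}^\infty_{\ell^1}(\ell^1(\kappa))\rightarrow{\mathcal{C}^\infty_{\ell^1}(\ell^1(\kappa))}$ is an equivalence. The plan follows the classical finite dimensional argument used for \cite[Lemma 5.3.91]{ben-bassat_perspective_2024}: identify the derived tensor product with a Koszul complex over $\mathcal{C}^\infty_{\ell^1}(\ell^1(\kappa)\oplus\ell^1(\kappa))$, and then prove that this complex resolves the diagonal.

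\textbf{Step 1: Koszul model.} By Lemma \ref{tensorl1algebra} we have $\mathcal{C}^\infty_{\ell^1}(\ell^1(\kappa))\hat{\otimes}\mathcal{C}^\infty_{\ell^1}(\ell^1(\kappa))\simeq\mathcal{C}^\infty_{\ell^1}(\ell^1(\kappa\coprod\kappa))$. Write $C:=\mathcal{C}^\infty_{\ell^1}(\ell^1(\kappa\coprod\kappa))$ and let $x,y$ be the two copies of the variable. The base ring $\textnormal{Sym}(\ell^1(\kappa))$ is a free algebra on a projective object, so the diagonal module $\textnormal{Sym}(\ell^1(\kappa))$ over $\textnormal{Sym}(\ell^1(\kappa))^{\otimes 2}\simeq\textnormal{Sym}(\ell^1(\kappa)\oplus\ell^1(\kappa))$ has the Koszul resolution $\textnormal{Sym}(\ell^1(\kappa)^{\oplus 2})\otimes\Lambda^\bullet\ell^1(\kappa)$, with differential induced by $e_k\mapsto x_k-y_k$. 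Base changing along $\Phi\otimes\Phi$ identifies the derived tensor product with the Koszul complex
\begin{equation*}
K_\bullet:=C\hat{\otimes}\Lambda^\bullet\ell^1(\kappa),
\end{equation*}
whose differential is contraction with the family $(x_k-y_k)_{k\in\kappa}$. This uses Lemma \ref{tensorl1algebra} together with the flatness of $\ell^1$-spaces in $\textnormal{CBorn}_\mathbb{R}$. It then suffices to show that $K_\bullet$ is a strictly exact resolution of $C/(x-y)\simeq\mathcal{C}^\infty_{\ell^1}(\ell^1(\kappa))$, where the last map is restriction to the diagonal.

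\textbf{Step 2: Hadamard contracting homotopy.} We change coordinates $(x,y)\mapsto(x,u)$ with $u=x-y$. This is a bounded linear automorphism of $\ell^1(\kappa\coprod\kappa)$, and by Condition (\ref{algebra4}) of Definition \ref{ourl1algebra} it induces an automorphism of $C$. We then construct an explicit contracting homotopy $h$ using Hadamard's lemma:
\begin{equation*}
f(x,u)=f(x,0)+\sum_{k\in\kappa}u_k\int_0^1\partial_{u_k}f(x,tu)\,dt .
\end{equation*}
More generally, for a form $\omega=\sum_I f_I e_I$ we set $h(\omega)=\sum_{I,k}\left(\int_0^1 t^{|I|}\partial_{u_k}f_I(x,tu)\,dt\right)e_k\wedge e_I$, which is the de Rham/Poincaré homotopy for the radial contraction in the $u$-direction. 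We then verify the identity $dh+hd=\textnormal{id}-\textnormal{(restriction to } u=0)$ on $K_\bullet$.

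\textbf{Step 3 (main obstacle): $h$ preserves $\mathcal{C}^\infty_{\ell^1}$ and is bounded.} The difficult part is checking that $h$ lands in $C\hat{\otimes}\Lambda^\bullet\ell^1(\kappa)$ rather than merely in smooth functions. The partial derivatives $\partial_{u_k}f$ stay in the algebra because it is closed under iterated derivatives. Precomposition with $(x,u)\mapsto(x,tu)$ is permitted by Condition (\ref{algebra4}). The infinite sum over $k\in\kappa$ must converge in the sense of Condition (\ref{algebra2}). For this I would use that the derivative $df$ of $f\in\mathcal{C}^\infty_{\ell^1}$ is a bounded family, so that $\sum_k\sup_B|\partial_{u_k}f|$ is controlled, mirroring the estimate in the proof of Lemma \ref{compositionequivsym}. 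The integral in $t$ is a bounded parameter integral, which converges in the convenient space $C$ by \cite{kriegl_convenient_1997}.

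Boundedness of $h$ would follow from the curve-wise seminorm description of bounded sets in $\mathcal{C}^\infty(X)$. I expect the convergence control over uncountable $\kappa$ in this step to be where the argument needs the most care. Once $h$ is established, $K_\bullet$ is homotopy equivalent to its $H_0$, which equals $\mathcal{C}^\infty_{\ell^1}(\ell^1(\kappa))$, and this proves that $\Phi$ is a homotopy epimorphism.
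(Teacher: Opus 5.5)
Your strategy is the paper's: a Koszul model over $C=\mathcal{C}^\infty_{\ell^1}(\ell^1(\kappa\coprod\kappa))$, then a Hadamard-type contracting homotopy. Your Step 2 is actually the correct form of it. The paper's $h_i$ has no weights $t^{|I|}$ and relies on the false identity $D(f\cdot\widetilde{\Phi}(v_j))(e_k)=Df(e_k)\cdot\widetilde{\Phi}(v_j)$, so its homotopy relation fails in positive degrees: on $(x_2-y_2)\hat{\otimes}e_1$ it returns $\tfrac{3}{2}(x_2-y_2)\hat{\otimes}e_1-\tfrac{1}{2}(x_1-y_1)\hat{\otimes}e_2$. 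Your radial homotopy does give $dh+hd=\textnormal{id}-(\text{restriction to }u=0)$, with the restriction read as pullback of forms, hence zero in positive degrees.

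The genuine gap is Step 3, and for infinite $\kappa$ it cannot be closed. The claim that $\sum_k\sup_B|\partial_{u_k}f|$ is controlled confuses boundedness with summability. Since $df$ takes values in $\ell^1(\kappa)^\vee\simeq\ell^\infty(\kappa)$, the family $\{\partial_{u_k}f\}_{k\in\kappa}$ is bounded. It need not be $\ell^1$-summable, and summability is what $h(f)\in C\hat{\otimes}\ell^1(\kappa)$ requires. Concretely, let $\mathbf{1}=(1)_{k\in\kappa}$ and $g(x,u)=\sum_{k\in\kappa}u_k$; in the coordinates $(x,y)$ this is pairing with $(\mathbf{1},-\mathbf{1})\in\ell^\infty(\kappa\coprod\kappa)$. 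It lies in $C$ by Condition (\ref{algebra1}) and vanishes on the diagonal, yet $h(g)=\sum_k 1\cdot e_k$ would need $\mathbf{1}\in\ell^1(\kappa)$. Mirroring Lemma \ref{compositionequivsym} does not help, since that estimate applies Condition (\ref{algebra2}) to a family only known to be bounded.

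No other homotopy can rescue the argument, because $K_\bullet$ does not resolve the diagonal. Define $T:C\rightarrow\ell^\infty(\kappa)$ by $T(c)=(\partial_{u_j}c(0,0))_{j\in\kappa}$; it is bounded by uniform boundedness. The product rule gives $T(c\cdot\widetilde{\Phi}(v))=c(0,0)\,v$ for $v\in\ell^1(\kappa)$. Hence $T$ maps the image of $C\hat{\otimes}\ell^1(\kappa)\rightarrow C$ into $\ell^1(\kappa)$, and its closure into $c_0(\kappa)$, while $T(g)=\mathbf{1}$. So $H_0(K_\bullet)\rightarrow\mathcal{C}^\infty_{\ell^1}(\ell^1(\kappa))$ is not injective.

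In fact the statement itself fails for infinite $\kappa$. Choose a bounded linear functional $\lambda$ on $\ell^\infty(\kappa)$ with $\lambda|_{c_0(\kappa)}=0$ and $\lambda(\mathbf{1})=1$, and set $D(f)=\lambda(df(0))$. Then $f\mapsto f(0)$ and $f\mapsto f(0)+\epsilon D(f)$ are bounded algebra maps $\mathcal{C}^\infty_{\ell^1}(\ell^1(\kappa))\rightarrow\mathbb{R}[\epsilon]/(\epsilon^2)$. They differ on the functional $\mathbf{1}\in\ell^\infty(\kappa)\subseteq\mathcal{C}^\infty_{\ell^1}(\ell^1(\kappa))$. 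They agree on $\textnormal{Sym}(\ell^1(\kappa))$, because $d(\Phi(w))(0)=w\in c_0(\kappa)$ for $w\in\ell^1(\kappa)$, and products of two or more such functions have zero derivative at $0$. A homotopy epimorphism would force any two such maps under $\textnormal{Sym}(\ell^1(\kappa))$ to coincide.

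The paper's proof has the same gap: it simply asserts that the sum over $\kappa$ defining $h_i$ lies in $\mathcal{C}^\infty_{\ell^1}$. Both arguments are valid only for finite $\kappa$, which is the case of \cite[Lemma 5.3.91]{ben-bassat_perspective_2024}.
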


\begin{proof}It suffices to show that $\mathcal{C}^\infty_{\ell^1}(\ell^1(\kappa))\otimes^\mathbb{L}_{\textnormal{Sym}(\ell^1(\kappa))}\mathcal{C}^\infty_{\ell^1}(\ell^1(\kappa))\simeq \mathcal{C}^\infty_{\ell^1}(\ell^1(\kappa))$. By \cite[Example 7.13]{savage_koszul_2023}, the algebra $\textnormal{Sym}(\ell^1(\kappa))$ has a Koszul complex given by
\begin{equation*}
        \dots\rightarrow{\textnormal{Sym}(\ell^1(\kappa))\hat{\otimes} \bigwedge^2 \ell^1(\kappa)}\rightarrow{\textnormal{Sym}(\ell^1(\kappa))\hat{\otimes} \ell^1(\kappa)}\rightarrow{\textnormal{Sym}(\ell^1(\kappa))}
    \end{equation*}with differential
    \begin{equation*}
        \partial_i':\textnormal{Sym}(\ell^1(\kappa))\hat{\otimes}\bigwedge^i\ell^1(\kappa)\rightarrow{\textnormal{Sym}(\ell^1(\kappa))\hat{\otimes}\bigwedge^{i-1}\ell^1(\kappa)}
    \end{equation*}given by sending any $x\hat{\otimes}v_1\wedge\dots\wedge v_i$ in $\textnormal{Sym}(\ell^1(\kappa))\hat{\otimes}\bigwedge^i\ell^1(\kappa)$ to $\sum_{j=1}^{i}(-1)^{j-1}xv_i\hat{\otimes}v_1\wedge\dots\wedge\hat{v_j}\wedge\dots \wedge v_i$. By Lemma \ref{tensorl1algebra}, we have that $\mathcal{C}^\infty_{\ell^1}(\ell^1(\kappa))\hat{\otimes}\mathcal{C}^\infty_{\ell^1}(\ell^1(\kappa))\simeq \mathcal{C}^\infty_{\ell^1}(\ell^1(\kappa\coprod\kappa))$ and hence, to show that $\Phi$ is a homotopy epimorphism, it suffices to show that the complex
    \begin{equation*}
        \dots\rightarrow{\mathcal{C}^\infty_{\ell^1}(\ell^1(\kappa\coprod\kappa))\hat{\otimes} \bigwedge^2 \ell^1(\kappa)}\rightarrow{\mathcal{C}^\infty_{\ell^1}(\ell^1(\kappa\coprod\kappa))\hat{\otimes} \ell^1(\kappa)}\rightarrow{\mathcal{C}^\infty_{\ell^1}(\ell^1(\kappa\coprod\kappa))
        }
    \end{equation*}is strictly exact in the sense of \cite[Definition 1.1.9]{schneiders_quasi-abelian_1999}. We note that the differential of the above complex is given by the map 
    \begin{equation*}
        \partial_i: \mathcal{C}^\infty_{\ell^1}(\ell^1(\kappa\coprod\kappa))\hat{\otimes} \bigwedge^i\ell^1(\kappa)\rightarrow \mathcal{C}^\infty_{\ell^1}(\ell^1(\kappa\coprod\kappa))\hat{\otimes} \bigwedge^{i-1}\ell^1(\kappa)
    \end{equation*}which sends any $f\hat{\otimes}v_1\wedge\dots\wedge v_i$ to $\sum_{j=1}^i (-1)^{j-1}f\cdot \widetilde{\Phi}(v_j)\hat{\otimes} v_1\wedge\dots\wedge \hat{v_j}\wedge \dots\wedge v_i$, where the map $\widetilde{\Phi}(v_j):\ell^1(\kappa\coprod\kappa)\simeq\ell^1(\kappa)\oplus\ell^1(\kappa)\rightarrow{\mathbb{R}}$ is defined by $\widetilde{\Phi}(v_j)(x,y)=\Phi(v_j)(x)-\Phi(v_j)(y)$. We note that $\widetilde{\Phi}(v_j)$ is clearly in $\mathcal{C}^\infty_{\ell^1}(\ell^1(\kappa\coprod\kappa))$ and that $\partial_i$ is bounded as a map of complete bornological algebras because $\widetilde{\Phi}(v_j)$ is a bounded map. We want to show that the complex is homotopy equivalent to the complex $0\rightarrow{\mathcal{C}^\infty_{\ell^1}(\ell^1(\kappa))}\rightarrow{0}$.

    Motivated by the finite dimensional case, discussed in \cite[Lemma 5.3.91]{ben-bassat_perspective_2024}, we define the map
    \begin{equation*}
        h_i:\mathcal{C}^\infty_{\ell^1}(\ell^1(\kappa\coprod\kappa))\hat{\otimes} \bigwedge^i\ell^1(\kappa)\rightarrow \mathcal{C}^\infty_{\ell^1}(\ell^1(\kappa\coprod\kappa))\hat{\otimes} \bigwedge^{i+1}\ell^1(\kappa)
    \end{equation*}which sends any $f\hat{\otimes} v_1\wedge\dots\wedge v_i$ to $\sum_{k\in\kappa}Df(e_k)\hat{\otimes} e_k\wedge v_1\wedge \dots v_i$ where $e_k$, $k\in\kappa$, are the standard basis elements of $\ell^1(\kappa)$. The map $Df(e_k)$ is defined such that \begin{equation*}
        Df(e_k)(x,y)=\int_0^1 d^1_{e_k}f(y+t(x-y),y) dt
    \end{equation*}where $d^1_{e_k}$ is the directional derivative in the direction $e_k$, see \cite[Theorem 5.20]{kriegl_convenient_1997}. We note that any integral on $[0,1]$ can be realised as the limit of a Riemann sum by \cite[Proposition 2.7]{kriegl_convenient_1997}. Hence, since derivatives and bounded sums are contained in $\mathcal{C}^\infty_{\ell^1}(\ell^1(\kappa\coprod\kappa))$, then $Df(e_k)\in\mathcal{C}^\infty_{\ell^1}(\ell^1(\kappa\coprod\kappa))$ and the sum over $\kappa$ also defines an object in $\mathcal{C}^\infty_{\ell^1}(\ell^1(\kappa\coprod\kappa))$. Moreover, we can see that $h_i$ is a map of complete bornological algebras.

    For each $i\geq 1$, \begin{align*}
       &(h_{i-1}\circ\partial_i+\partial_{i+1}\circ h_i)(f\hat{\otimes}v_1\wedge\dots\wedge v_i)\\
       &=h_{i-1}(\sum_{j=1}^i (-1)^{j-1}f\cdot \widetilde{\Phi}(v_j)\hat{\otimes} v_1\wedge\dots\wedge \hat{v_j}\wedge \dots\wedge v_i)+\partial_{i+1}(\sum_{k\in\kappa}D{f}(e_k)\hat{\otimes} e_k\wedge v_1\wedge \dots \wedge v_i)\\
       &=\sum_{j=1}^i(-1)^{j-1}\sum_{k\in\kappa} D{(f\cdot\widetilde{\Phi}(v_j))}(e_k)\hat{\otimes}e_k\wedge v_1\wedge\dots\wedge\hat{v_j}\wedge\dots\wedge v_i+\sum_{k\in\kappa}D{f}(e_k)\cdot\widetilde{\Phi}(e_k)\hat{\otimes} v_1\wedge\dots \wedge v_i\\
       &+\sum_{k\in\kappa}\sum_{j=1}^i(-1)^j Df(e_k)\cdot\widetilde{\Phi}(v_j)\hat{\otimes}e_k\wedge v_1\wedge \dots \wedge \hat{v_j}
\wedge\dots v_i\\
\intertext{We note that $D(f\cdot\widetilde{\Phi}(v_j))(e_k)=Df(e_k)\cdot\widetilde{\Phi}(v_j)$ and hence, cancelling,}
&=\sum_{k\in\kappa}Df(e_k)\cdot \widetilde{\Phi}(e_k)\hat{\otimes}v_1\wedge\dots \wedge v_i
\end{align*}
By the fundamental theorem of calculus \cite[c.f. Section 13.3]{kriegl_convenient_1997}, 
    \begin{align*}
        f(x,y)-f(y,y)&=\int_0^1 (x-y)d_t^1 f(y+t(x-y),y) dt\\&=\int_0^1 \sum_{k\in\kappa}(x_k-y_k)d_{e_k}^1f(y+t(x-y),y) dt\\
        \intertext{Since the sum is convergent by our assumptions on $\mathcal{C}^\infty_{\ell^1}(\ell^1(\kappa\coprod\kappa))$,}
    &\simeq \sum_{k\in\kappa}\int_0^1 (x_k-y_k)d_{e_k}^1f(y+t(x-y),y) dt\\
    &\simeq \sum_{k\in\kappa}Df(e_k)(x,y)\cdot\widetilde{\Phi}(e_k)(x,y)
    \end{align*}
Hence, we see that 
\begin{equation*}
    (h_{i-1}\circ\partial_i+\partial_{i+1}\circ h_i)(f\hat{\otimes}v_1\wedge\dots\wedge v_i)=f\hat{\otimes} v_1\wedge\dots\wedge v_i-f'\hat{\otimes}v_1\wedge\dots \wedge v_i
\end{equation*}where $f'(x,y)=f(y,y)$. Therefore, $h_{i-1}\circ\partial_i+\partial_{i+1}\circ h_i=id-p_i$ where $p_i$ is the map sending $f\hat{\otimes}v_1\wedge\dots\wedge v_i$ to $f'\hat{\otimes}v_1\wedge\dots\wedge v_i$.

Consider the map $q_\bullet$ from our complex $\mathcal{C}^\infty_{\ell^1}(\ell^1(\kappa\coprod\kappa))\hat{\otimes}\bigwedge^\bullet\ell^1(\kappa)$ to the complex $0\rightarrow\mathcal{C}^\infty_{\ell^1}(\ell^1(\kappa))\rightarrow{0}$ given by $q_i=0$ for $i>0$ and given in degree $0$ by the map  $q_0(f)$ defined by $q_0(f)(x)=f(x,x)$ for $f\in\mathcal{C}^\infty_{\ell^1}(\ell^1(\kappa\coprod\kappa))$. We also have a map $r_\bullet$ from $0\rightarrow\mathcal{C}^\infty_{\ell^1}(\ell^1(\kappa))\rightarrow{0}$ to $\mathcal{C}^\infty_{\ell^1}(\ell^1(\kappa\coprod\kappa))\hat{\otimes}\bigwedge^\bullet\ell^1(\kappa)$ given by $r_i=0$ for $i>0$ and $r_0(f)(x,y)=f(y)$ for $f\in\mathcal{C}^\infty(\ell^1(\kappa))$. 

Since $h_{i-1}\circ\partial_i+\partial_{i+1}\circ h_i=id-p_i$ where $p_i=r_i\circ q_i$, then our complex is homotopy equivalent to the complex $0\rightarrow{\mathcal{C}^\infty_{\ell^1}(\ell^1(\kappa))}\rightarrow{0}$ as desired.

\end{proof}

\begin{thm}\label{fullyfaithfulembed}
The functor $\mathcal{C}^\infty_{\ell^1}:\mathcal{C}^\infty\textcat{DBornRing}\rightarrow{\textcat{DAlg}^{cn}(\textcat{Ch}(\textnormal{CBorn}_\mathbb{R}))}$ is fully faithful. 
\end{thm}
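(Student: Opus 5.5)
Since $\mathcal{C}^\infty\textcat{DBornRing}=\mathcal{P}_\Sigma(\mathcal{C}_{bb}^{\infty,op})$ and the functor $\mathcal{C}^\infty_{\ell^1}$ was obtained by extending along sifted colimits via Proposition \ref{siftedcocompletion}, I will reduce full faithfulness to the generators, following \cite[Lemma 5.3.91]{ben-bassat_perspective_2024}. By \cite[Proposition 5.5.8.22]{lurie_higher_2009}, a sifted-colimit-preserving extension of a functor $\mathcal{C}_{bb}^{\infty,op}\rightarrow\textcat{DAlg}^{cn}(\textcat{Ch}(\textnormal{CBorn}_\mathbb{R}))$ is fully faithful provided two things hold. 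First, the restriction to $\mathcal{C}_{bb}^{\infty,op}$ is fully faithful in the $\infty$-categorical sense. Second, each image $\mathcal{C}^\infty_{\ell^1}(\ell^1(\kappa))$ is compact projective, or more weakly, $\textnormal{Map}(\mathcal{C}^\infty_{\ell^1}(\ell^1(\kappa)),-)$ commutes with sifted colimits of objects in the essential image. Because $\mathcal{C}^\infty_{\ell^1}(\ell^1(\kappa))$ is not itself compact, I will not verify compactness directly. Instead I will transfer the needed properties from $\textnormal{Sym}(\ell^1(\kappa))$, which is compact projective in $\textcat{DAlg}^{cn}$ because $\ell^1(\kappa)\in\textnormal{Lin}$ is compact projective in $\textcat{Ch}_{\geq 0}(\textnormal{CBorn}_\mathbb{R})$.

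The key step uses Lemma \ref{symhomotopyepil1}. Since $\Phi:\textnormal{Sym}(\ell^1(\kappa))\rightarrow\mathcal{C}^\infty_{\ell^1}(\ell^1(\kappa))$ is a homotopy epimorphism, for any $B\in\textcat{DAlg}^{cn}$ the induced map
\begin{equation*}
\textnormal{Map}(\mathcal{C}^\infty_{\ell^1}(\ell^1(\kappa)),B)\rightarrow\textnormal{Map}(\textnormal{Sym}(\ell^1(\kappa)),B)
\end{equation*}
is a monomorphism of spaces, i.e.\ an inclusion of a union of connected components. The reason is that the diagonal of the mapping space is computed by $\mathcal{C}^\infty_{\ell^1}\otimes^\mathbb{L}_{\textnormal{Sym}}\mathcal{C}^\infty_{\ell^1}\simeq\mathcal{C}^\infty_{\ell^1}$. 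Whether a point lies in the image depends only on $\pi_0$, and hence only on $\pi_0(B)$ through the underlying map $\textnormal{Sym}(\ell^1(\kappa))\rightarrow\pi_0 B$.

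Now take $B=\mathcal{C}^\infty_{\ell^1}(\ell^1(\mu))$, which is discrete. The target $\textnormal{Map}(\textnormal{Sym}(\ell^1(\kappa)),B)$ is then the discrete set $\textnormal{Hom}_{\textnormal{Comm}}(\textnormal{Sym}(\ell^1(\kappa)),B)$. By Lemma \ref{compositionequivsym}, this set is in bijection with $\textnormal{Hom}_{\mathcal{C}^\infty_{bb}}(\ell^1(\mu),\ell^1(\kappa))$, and every such map factors through $\Phi$ via $\mathcal{C}^\infty_{\ell^1}(\psi)$. The monomorphism is therefore an equivalence on these objects, and Lemma \ref{symalgebrabij} agrees with it. This gives full faithfulness on generators.

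For the sifted-colimit step, I will show that for $B$ in the essential image, written as a sifted colimit $\varinjlim B_j$ of generators, the component condition for $\textnormal{Map}(\mathcal{C}^\infty_{\ell^1}(\ell^1(\kappa)),B)$ inside $\textnormal{Map}(\textnormal{Sym}(\ell^1(\kappa)),B)\simeq\varinjlim\textnormal{Map}(\textnormal{Sym}(\ell^1(\kappa)),B_j)$ is detected at some stage $j$. Since $\pi_0$ commutes with sifted colimits, this would be the case, for example, if every point factors through the generators. The argument would then yield $\textnormal{Map}(\mathcal{C}^\infty_{\ell^1}(\ell^1(\kappa)),\varinjlim B_j)\simeq\varinjlim\textnormal{Map}(\mathcal{C}^\infty_{\ell^1}(\ell^1(\kappa)),B_j)$, which is what the Lurie criterion requires.

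I expect this last step to be the main obstacle. The cleanest way I see is to argue that the image of $\mathcal{C}^\infty_{\ell^1}$ consists of algebras $B$ for which every map $\textnormal{Sym}(\ell^1(\kappa))\rightarrow\pi_0B$ extends to $\mathcal{C}^\infty_{\ell^1}(\ell^1(\kappa))$, a "$\mathcal{C}^\infty$-closed" condition. This condition is stable under sifted colimits, since $\pi_0$ of a sifted colimit is a reflexive coequalizer or filtered colimit of the $\pi_0 B_j$ and extensions are unique. With that in hand, the mapping space into such $B$ equals $\textnormal{Map}(\textnormal{Sym}(\ell^1(\kappa)),B)$, which commutes with sifted colimits.
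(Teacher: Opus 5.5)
Your strategy is the same as the paper's. The homotopy epimorphism $\Phi$ makes $\textnormal{Map}(\mathcal{C}^\infty_{\ell^1}(\ell^1(\kappa)),B)\to\textnormal{Map}(\textnormal{Sym}(\ell^1(\kappa)),B)$ a monomorphism. Lemma \ref{compositionequivsym}, together with compact projectivity of $\textnormal{Sym}(\ell^1(\kappa))$, then supplies surjectivity on $\pi_0$. Your generator case is correct.

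The gap is in the sifted-colimit step, which you leave open. The route you call cleanest rests on a claim: a derived map $f:\textnormal{Sym}(\ell^1(\kappa))\to B$ extends along $\Phi$ as soon as $\pi_0 f:\textnormal{Sym}(\ell^1(\kappa))\to\pi_0B$ extends. Nothing you have proved gives this. Knowing that the image of a monomorphism is a union of components does not tell you which components they are. Lifting an extension from $\pi_0B$ back to $B$ would need $\mathbb{L}_\Phi\simeq 0$, plus obstruction theory along the Postnikov tower of $B$, plus convergence of that tower. None of this is in your argument.

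The detour is also unnecessary, because the condition you stated only conditionally ("if every point factors through the generators") holds automatically. Take $B=\varinjlim_j B_j$ with $B_j=\mathcal{C}^\infty_{\ell^1}(\ell^1(\mu_j))$.
\begin{itemize}
\item Compact projectivity gives $\textnormal{Map}(\textnormal{Sym}(\ell^1(\kappa)),B)\simeq\varinjlim_j\textnormal{Map}(\textnormal{Sym}(\ell^1(\kappa)),B_j)$.
\item $\pi_0$ of a colimit of spaces is a quotient of $\coprod_j\pi_0\textnormal{Map}(\textnormal{Sym}(\ell^1(\kappa)),B_j)$. So every component of the target contains a point coming from some stage $B_j$.
\item By your generator case, that point factors through $\Phi$. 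Composing with $B_j\to B$ shows the whole component lies in the image.
\end{itemize}
Hence the monomorphism is surjective on $\pi_0$, so it is an equivalence. This is exactly how the paper argues, in a single step. The composite
\begin{equation*}
\varinjlim_j\textnormal{Hom}_{\mathcal{C}^\infty_{bb}}(\ell^1(\mu_j),\ell^1(\kappa))\to\textnormal{Map}(\mathcal{C}^\infty_{\ell^1}(\ell^1(\kappa)),B)\to\textnormal{Map}(\textnormal{Sym}(\ell^1(\kappa)),B)
\end{equation*}
is an equivalence, by Lemma \ref{compositionequivsym} stagewise and compactness of $\textnormal{Sym}(\ell^1(\kappa))$. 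The second map is a monomorphism, so both maps are equivalences. This handles generators and sifted colimits together. It also makes the separate appeal to the criterion of \cite[Proposition 5.5.8.22]{lurie_higher_2009} unnecessary, beyond the routine reduction to a representable source.
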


\begin{proof}
    Suppose that $A,B\in\mathcal{C}^\infty\textcat{DBornRing}$. We want to show that the canonical map 
    \begin{equation*}
        \textnormal{Map}_{\mathcal{C}^\infty\textcat{DBornRing}}(A,B)\rightarrow{\textnormal{Map}_{\textcat{DAlg}^{cn}(\textcat{Ch}(\textnormal{CBorn}_\mathbb{R}))}}(\mathcal{C}^\infty_{\ell^1}(A),\mathcal{C}^\infty_{\ell^1}(B))
    \end{equation*}is an equivalence. We can immediately reduce to the case when $A$ is of the form $\ell^1(\kappa)$ for some cardinal $\kappa$. Then, writing $B$ as a formal sifted $(\infty,1)$-colimit $``\varinjlim_i"\ell^1(\mu_i)$, we consider the composite
    \begin{equation}\phantomsection\label{equationcomposite}\begin{aligned}
       &\textnormal{Map}_{\mathcal{C}^\infty\textcat{DBornRing}}(\ell^1(\kappa),``\varinjlim_i"\ell^1(\mu_i))\\
       &\xrightarrow{\mathcal{C}^\infty_{\ell^1}}{\textnormal{Map}_{\textcat{DAlg}^{cn}(\textcat{Ch}(\textnormal{CBorn}_\mathbb{R}))}(\mathcal{C}_{\ell^1}^\infty}(\ell^1(\kappa)),\varinjlim_i{\mathcal{C}_{\ell^1}^\infty}(\ell^1(\mu_i)))\\
        & \xrightarrow{-\circ \Phi}{\textnormal{Map}_{\textcat{DAlg}^{cn}(\textcat{Ch}(\textnormal{CBorn}_\mathbb{R}))}(\textnormal{Sym}(\ell^1(\kappa)),\varinjlim_i {\mathcal{C}_{\ell^1}^\infty}(\ell^1(\mu_i)))}
    \end{aligned}\end{equation}By Lemma \ref{compositionequivsym}, for each $i$, the map
    \begin{equation*}
        \textnormal{Hom}_{\mathcal{C}^\infty_b}(\ell^1(\mu_i),\ell^1(\kappa))\rightarrow{\textnormal{Hom}_{\textnormal{Comm}(\textnormal{CBorn}_\mathbb{R})}(\textnormal{Sym}(\ell^1(\kappa)), {\mathcal{C}_{\ell^1}^\infty}(\ell^1(\mu_i)))}
    \end{equation*}is an equivalence. Now, since $\textnormal{Sym}(\ell^1(\kappa))$ is compact projective in $\textnormal{Comm}(\textnormal{CBorn}_\mathbb{R})$, the composition of maps in Equation (\ref{equationcomposite}) is an equivalence.

Since the morphism $\Phi:\textnormal{Sym}(\ell^1(\kappa))\rightarrow{{\mathcal{C}_{\ell^1}^\infty}(\ell^1(\kappa))}$ is a homotopy epimorphism by Lemma \ref{symhomotopyepil1}, the morphism 
    \begin{equation*}\phantomsection\label{mappingsymepimorphism}
        \textnormal{Map}(\mathcal{C}_{\ell^1}^{\infty}(\ell^1(\kappa)),\varinjlim_i\mathcal{C}_{\ell^1}^{\infty}(\ell^1(\mu_i)))
        \rightarrow{\textnormal{Map}(\textnormal{Sym}(\ell^1(\kappa)),\varinjlim_i \mathcal{C}_{\ell^1}^{\infty}(\ell^1(\mu_i)))}
    \end{equation*}is a monomorphism on $\pi_0$ and an equivalence on $\pi_n$ for any $n$. Since the composition in Equation (\ref{equationcomposite}) is an equivalence, we note that the above equation is also an epimorphism on $\pi_0$, and hence is an equivalence. We then easily deduce that the first map in Equation (\ref{equationcomposite}) is an equivalence, as desired. 
\end{proof}

\subsection{The Free $\mathcal{C}^\infty$-Bornological Ring}

Recall that there exists a functor $\iota:\textnormal{Lin}\rightarrow{\mathcal{C}^\infty_{bb}}$ and that, by Lemma \ref{iotapresevfiniteprod},  $\iota^{op}$ preserves finite coproducts. Therefore, we can use Lemma \ref{siftedcocompletionproperties} and Lemma \ref{bornbicomplete} to extend $\iota^{op}$ to a colimit preserving functor 
\begin{equation*}
    \tilde{L}:\textnormal{SInd}(\textnormal{Lin}^{op})\rightarrow{\textnormal{SInd}(\mathcal{C}_{bb}^{\infty,op})\simeq \mathcal{C}^\infty\textnormal{BornRing}}
\end{equation*}We note that, since $\textnormal{Lin}$ has finite products, there is a finite product preserving functor $\textnormal{Lin}\rightarrow{\textnormal{SInd}(\textnormal{Lin}^{op})}^{op}$. Since sifted colimits commute with finite products, we can left Kan extend to a finite product preserving functor $\textnormal{SInd}(\textnormal{Lin})\rightarrow{\textnormal{SInd}(\textnormal{Lin}^{op})}^{op}$. Hence, taking the opposite functor and composing it with $\tilde{L}$ we obtain a finite coproduct preserving functor \begin{equation}\label{cborncoproductfunctor}
    \textnormal{LH}(\textnormal{CBorn}_\mathbb{R})^{op}\simeq \textnormal{SInd}(\textnormal{Lin})^{op}\rightarrow{\mathcal{C}^\infty\textnormal{BornRing}}
\end{equation}

\begin{thm}\phantomsection\label{colimitpreservingcinfinity}
    There is a colimit-preserving functor
    \begin{equation*}
        L:\textnormal{LH}(\textnormal{CBorn}_\mathbb{R})\rightarrow{\mathcal{C}^\infty\textnormal{BornRing}}
    \end{equation*}which acts on elements of $\textnormal{Lin}$ by $L(\ell^1(\kappa))=\ell^1(\kappa)^\vee:=\texthom{Hom}_{\textnormal{CBorn}_\mathbb{R}}(\ell^1(\kappa),\mathbb{R})\simeq\ell^\infty(\kappa)$. 
\end{thm}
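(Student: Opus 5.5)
The plan is to obtain $L$ as the sifted-colimit extension of a finite-coproduct-preserving functor out of $\textnormal{Lin}$, using the universal property of free sifted cocompletions (Lemma \ref{siftedcocompletionproperties}). By Theorem \ref{siftedcborncompletion}, $\textnormal{LH}(\textnormal{CBorn}_\mathbb{R})\simeq\textnormal{SInd}(\textnormal{Lin})$, and $\textnormal{Lin}$ has finite coproducts given by $\ell^1(\kappa)\oplus\ell^1(\mu)=\ell^1(\kappa\coprod\mu)$. By Lemma \ref{bornbicomplete}, $\mathcal{C}^\infty\textnormal{BornRing}$ is cocomplete. It therefore suffices to construct a finite-coproduct-preserving functor $L_0:\textnormal{Lin}\rightarrow{\mathcal{C}^\infty\textnormal{BornRing}}$ with $L_0(\ell^1(\kappa))$ the object we want to call $\ell^\infty(\kappa)$. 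Its left Kan extension along the Yoneda embedding into $\textnormal{SInd}(\textnormal{Lin})$ will then preserve sifted colimits and finite coproducts, and hence all small colimits.

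To construct $L_0$, I will use the duality $(-)^\vee=\texthom{Hom}_{\textnormal{CBorn}_\mathbb{R}}(-,\mathbb{R})$. This sends a bounded linear map $f:\ell^1(\kappa)\rightarrow\ell^1(\mu)$ to its transpose $f^\vee:\ell^\infty(\mu)\rightarrow\ell^\infty(\kappa)$, and $\ell^1(\kappa)^\vee\simeq\ell^\infty(\kappa)$. For $\mathcal{C}^\infty\textnormal{BornRing}=\textnormal{SInd}(\mathcal{C}^{\infty,op}_{bb})$ to receive $\ell^\infty(\kappa)$, I will realise it through the Yoneda image of $\ell^1(\kappa)\in\mathcal{C}^{\infty,op}_{bb}$. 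This is the free $\mathcal{C}^\infty$-bornological ring on the ``coordinate functions'' $\ell^\infty(\kappa)=\ell^1(\kappa)^*$, which matches the finite-dimensional picture where $\mathbb{R}^n$ corresponds to $\mathcal{C}^\infty(\mathbb{R}^n)$, free on $n$ generators. Concretely, $L_0$ is the composite
\[
\textnormal{Lin}\xrightarrow{(-)^{op}\text{-transpose}}\textnormal{Lin}^{op}\xrightarrow{\iota^{op}}\mathcal{C}^{\infty,op}_{bb}\hookrightarrow\textnormal{SInd}(\mathcal{C}^{\infty,op}_{bb}).
\]
This matches the construction already sketched before the statement: the functor (\ref{cborncoproductfunctor}) composed with $\tilde{L}$.

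The coproduct-preservation check goes as follows. First, finite coproducts in $\textnormal{Lin}$ are biproducts, so the transpose $\textnormal{Lin}\rightarrow\textnormal{Lin}^{op}$ turns them into coproducts of $\textnormal{Lin}^{op}$, that is, products of $\textnormal{Lin}$. Next, Lemma \ref{iotapresevfiniteprod} says that $\iota$ preserves finite products, so $\iota^{op}$ preserves finite coproducts. Finally, the Yoneda embedding into $\textnormal{SInd}$ preserves finite coproducts, because $\textnormal{SInd}(\mathcal{D})=\textnormal{Fun}^\times(\mathcal{D}^{op},\textnormal{Set})$ is designed so that representables of coproducts in $\mathcal{D}$ are coproducts. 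Together with $\tilde{L}$ preserving colimits, this gives a functor $L$ preserving all colimits with $L(\ell^1(\kappa))=\ell^1(\kappa)^\vee\simeq\ell^\infty(\kappa)$.

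The main obstacle is variance bookkeeping. The functor on $\textnormal{Lin}$ is naturally contravariant (a dual) while $L$ must be covariant, and it is easy to end up with an $\textnormal{op}$ on the wrong side. I will resolve this by making the identification $\textnormal{Lin}\simeq\textnormal{Lin}^{op}$, $\ell^1(\kappa)\mapsto\ell^\infty(\kappa)$, explicit only at the level of objects and morphisms via the transpose. I will also check that transposes of bounded maps between $\ell^1$-spaces give genuine morphisms in the image category, if necessary by viewing them through $\mathcal{C}^\infty_{\ell^1}$ and using Lemma \ref{symalgebrabij}. The remaining colimit arguments are formal.
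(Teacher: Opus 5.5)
There is a genuine gap: the functor $L_0$ you build does not exist for infinite $\kappa$. Your outer framework is the paper's: produce a finite-coproduct-preserving functor $\textnormal{Lin}\to\mathcal{C}^\infty\textnormal{BornRing}$ and extend it by Lemma \ref{siftedcocompletionproperties}, using Lemma \ref{bornbicomplete}. The problem is the inner step.

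\textbf{The transpose does not stay in $\textnormal{Lin}$.} There is no transpose functor $\textnormal{Lin}\to\textnormal{Lin}^{op}$, and no identification $\textnormal{Lin}\simeq\textnormal{Lin}^{op}$ with $\ell^1(\kappa)\mapsto\ell^\infty(\kappa)$.
For infinite $\kappa$, $\ell^\infty(\kappa)$ is not isomorphic to any $\ell^1(\lambda)$: it contains $c_0$, which no $\ell^1(\lambda)$ does. So the transpose $f^\vee\colon\ell^\infty(\mu)\to\ell^\infty(\kappa)$ of $f\colon\ell^1(\kappa)\to\ell^1(\mu)$ is not a morphism of $\textnormal{Lin}$.
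It also does not restrict to the $\ell^1$-subspaces. For $f\colon\ell^1(\mathbb{N})\to\mathbb{R}$, $f(x)=\sum_n x_n$, one gets $f^\vee(1)=(1,1,1,\dots)\notin\ell^1(\mathbb{N})$.
No other choice fixes this. Let $D$ be any identity-on-objects contravariant functor exchanging the canonical inclusions $e_n$ and projections $p_n$. Then $p_n\circ D(f)=D(f\circ e_n)=\textnormal{id}_\mathbb{R}$ for every $n$, which forces the same non-summable vector.
Since $\mathcal{C}^{\infty,op}_{bb}\to\textnormal{SInd}(\mathcal{C}^{\infty,op}_{bb})$ is fully faithful, making $\ell^1(\kappa)\mapsto h_{\ell^1(\kappa)}$ covariant needs exactly such maps $\ell^1(\mu)\to\ell^1(\kappa)$ in $\mathcal{C}^\infty_{bb}$. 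Lemma \ref{symalgebrabij} only re-encodes morphisms that already exist, so it cannot supply them.

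\textbf{The assigned object is also the wrong one.} As you say yourself, $h_{\ell^1(\kappa)}$ (that is, $\mathcal{C}^\infty_{\ell^1}(\ell^1(\kappa))$) is generated by the coordinates $\ell^1(\kappa)^\vee=\ell^\infty(\kappa)$. That is why it is naturally contravariant in $\ell^1(\kappa)$: it is $\iota^{op}$ followed by Yoneda.
With it you would get $R(A)(\ell^1(\kappa))=A(\ell^1(\kappa))$. Its natural functoriality is covariant, the opposite of what an object of $\textnormal{Fun}^\times(\textnormal{Lin}^{op},\textnormal{Set})$ needs.
The statement's $L(\ell^1(\kappa))=\ell^1(\kappa)^\vee$ is instead the object attached to the space $\ell^\infty(\kappa)$ itself, with $\ell^1(\kappa)$ as its linear coordinates. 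The two agree only for finite $\kappa$, where $(\mathbb{R}^n)^\vee\simeq\mathbb{R}^n$ lies in $\textnormal{Lin}$. That is why the $\mathcal{C}^\infty$-ring analogy looked convincing.

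\textbf{The missing idea is to leave $\textnormal{Lin}$, as the paper does.} Since $\ell^\infty(\kappa)\notin\textnormal{Lin}$, the paper proceeds as follows:
\begin{enumerate}
\item It dualises $\textnormal{Lin}\to\textnormal{CBorn}_\mathbb{R}^{op}$, which sends finite biproducts to finite coproducts.
\item It includes $\textnormal{CBorn}_\mathbb{R}^{op}\hookrightarrow\textnormal{LH}(\textnormal{CBorn}_\mathbb{R})^{op}\simeq\textnormal{SInd}(\textnormal{Lin})^{op}$, which preserves finite coproducts. There $\ell^\infty(\kappa)$ becomes a formal sifted colimit of objects $\ell^1(\kappa_i)$.
\item It then applies (\ref{cborncoproductfunctor}).
\end{enumerate}
That functor already contains $\tilde{L}$. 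Its whole point is that it is defined on all of $\textnormal{SInd}(\textnormal{Lin})^{op}$, by extending $\textnormal{Lin}\to\textnormal{SInd}(\textnormal{Lin}^{op})^{op}$ along sifted colimits, and not just on $\textnormal{Lin}^{op}$. So your composite does not match it.
If you replace $L_0$ by the composite $\textnormal{Lin}\to\textnormal{CBorn}_\mathbb{R}^{op}\to\textnormal{LH}(\textnormal{CBorn}_\mathbb{R})^{op}\to\mathcal{C}^\infty\textnormal{BornRing}$, the rest of your argument goes through as written.
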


\begin{proof}
    We note that, since $\textnormal{CBorn}_\mathbb{R}$ is closed, there is a colimit-preserving functor $\textnormal{Lin}\xrightarrow{(-)^\vee}{\textnormal{CBorn}_\mathbb{R}^{op}}$ obtained by considering each element of $\textnormal{Lin}$ as a complete bornological space and then taking the dual in $\textnormal{CBorn}_\mathbb{R}$. Therefore, since there is a finite coproduct preserving functor $\textnormal{CBorn}_\mathbb{R}^{op}\hookrightarrow{\textnormal{LH}(\textnormal{CBorn}_\mathbb{R})^{op}}$ by \cite[Corollary 1.4.7]{schneiders_quasi-abelian_1999}, we can compose with the functor from Equation (\ref{cborncoproductfunctor}) to obtain a finite coproduct preserving functor
    \begin{equation*}
        \textnormal{Lin}\rightarrow{\textnormal{CBorn}_\mathbb{R}^{op}}\rightarrow{\textnormal{LH}(\textnormal{CBorn}_\mathbb{R})^{op}}\rightarrow{\textnormal{SInd}(\textnormal{Lin}^{op})}\rightarrow{\mathcal{C}^\infty\textnormal{BornRing}}
    \end{equation*}By Lemma \ref{siftedcocompletionproperties}, we can extend this functor to a colimit-preserving functor 
    \begin{equation*}
        L:\textnormal{LH}(\textnormal{CBorn}_\mathbb{R})\rightarrow{\mathcal{C}^\infty\textnormal{BornRing}}
    \end{equation*}
\end{proof}

By the adjoint functor theorem, we obtain the following result. This shows that we can construct a \textit{free $\mathcal{C}^\infty$-bornological ring} from any complete bornological space. 
\begin{cor}\phantomsection\label{importantadjunction}
    There is an adjunction $
        L:\textnormal{LH}(\textnormal{CBorn}_\mathbb{R})\leftrightarrows{\mathcal{C}^\infty\textnormal{BornRing}}:R$. 
\end{cor}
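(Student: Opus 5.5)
The plan is to obtain $R$ from the adjoint functor theorem applied to the colimit-preserving functor $L$ of Theorem \ref{colimitpreservingcinfinity}. Both categories are locally presentable. $\textnormal{LH}(\textnormal{CBorn}_\mathbb{R})\simeq\textnormal{SInd}(\textnormal{Lin})$ by Theorem \ref{siftedcborncompletion}. $\mathcal{C}^\infty\textnormal{BornRing}=\textnormal{Fun}^\times(\mathcal{C}^\infty_{bb},\textnormal{Set})$ is the category of models of a (multisorted) Lawvere theory with small set of sorts, hence locally finitely presentable. So once $L$ preserves all small colimits, the adjoint functor theorem for locally presentable categories gives a right adjoint $R$.

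Rather than quote the abstract theorem, I would also construct $R$ explicitly, which is useful for the later claim that $R$ is monoidal. For $A\in\mathcal{C}^\infty\textnormal{BornRing}$, I would define $R(A)$ as the object of $\textnormal{SInd}(\textnormal{Lin})\simeq\textnormal{Fun}^\times(\textnormal{Lin}^{op},\textnormal{Set})$ given by
\begin{equation*}
R(A)(\ell^1(\kappa)):=\textnormal{Hom}_{\mathcal{C}^\infty\textnormal{BornRing}}(L(\ell^1(\kappa)),A),
\end{equation*}
that is, restriction along the composite $\textnormal{Lin}\to\mathcal{C}^\infty\textnormal{BornRing}$ used to build $L$. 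This is product preserving because $L$ restricted to $\textnormal{Lin}$ preserves finite coproducts: $\ell^1(\kappa)\oplus\ell^1(\mu)\mapsto\ell^\infty(\kappa\coprod\mu)$, which corresponds to the product in $\mathcal{C}^\infty_{bb}$ by Lemma \ref{iotapresevfiniteprod}. By the Yoneda lemma, the adjunction isomorphism $\textnormal{Hom}(L(V),A)\simeq\textnormal{Hom}(V,R(A))$ holds for representables $V\in\textnormal{Lin}$. It then extends to all $V=\varinjlim V_i$ (sifted colimits of representables) because $L$ preserves colimits and $\textnormal{Hom}(-,R(A))$ turns colimits into limits.

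The main obstacle is checking that $L$ really preserves all small colimits, and not merely sifted colimits plus finite coproducts, so that the adjoint functor theorem applies. By Lemma \ref{siftedcocompletionproperties}, a sifted-colimit-preserving extension of a finite-coproduct-preserving functor out of $\textnormal{Lin}$ is fully cocontinuous, because every colimit is built from sifted colimits and finite coproducts. I would verify this decomposition in $\textnormal{SInd}(\textnormal{Lin})$, using Lemma \ref{bornbicomplete} to know that the target has the needed colimits.
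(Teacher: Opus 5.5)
Your proposal is correct and follows the paper's route: the paper also applies the adjoint functor theorem to the colimit-preserving $L$ of Theorem \ref{colimitpreservingcinfinity}, and it describes $R(A)$ as the product-preserving functor $\ell^1(\kappa)\mapsto\textnormal{Hom}_{\mathcal{C}^\infty\textnormal{BornRing}}(L(\ell^1(\kappa)),A)$ on $\textnormal{Lin}^{op}$, which is exactly your formula. Your explicit Yoneda-and-colimits check just unpacks what the adjoint functor theorem gives here; one small point is that product-preservation of $R(A)$ is cleaner to see from cocontinuity of $L$ together with the fact that $\textnormal{Lin}\hookrightarrow\textnormal{SInd}(\textnormal{Lin})$ preserves finite coproducts, rather than via $\ell^\infty(\kappa\coprod\mu)$, which is not an object of $\mathcal{C}^\infty_{bb}$.
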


By the adjoint functor theorem, $R$ is defined by 
\begin{equation*}\begin{aligned}
    R:\mathcal{C}^\infty\textnormal{BornRing}&\rightarrow{\textnormal{LH}(\textnormal{CBorn}_\mathbb{R})\simeq \textnormal{SInd}(\textnormal{Lin})}\\
    A&\rightarrow{\textnormal{Hom}_{\mathcal{C}^\infty\textnormal{BornRing}}(L(-),A)}
\end{aligned}\end{equation*}identified as a product preserving functor $\textnormal{Lin}^{op}\rightarrow{\textnormal{Set}}$.

\begin{prop}\phantomsection\label{Rmonoidalstructure}
    $R$ preserves the monoidal structure when restricted to $\mathcal{C}^\infty\textnormal{Ring}$. 
\end{prop}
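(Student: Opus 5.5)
We need to show that for $\mathcal{C}^\infty$-rings $A,B$ (viewed inside $\mathcal{C}^\infty\textnormal{BornRing}$ via Lemma \ref{fullyfaithfulcinf}) the natural comparison map $R(A)\hat{\otimes}R(B)\rightarrow R(A\otimes B)$ is an isomorphism in $\textnormal{LH}(\textnormal{CBorn}_\mathbb{R})\simeq\textnormal{SInd}(\textnormal{Lin})$, and that $R$ sends the unit to the unit. Here the monoidal product on $\mathcal{C}^\infty\textnormal{BornRing}$ is the one induced from the coproduct on $\mathcal{C}^{\infty,op}_{bb}$, so $A\otimes B$ is the coproduct. The plan is to reduce to generators and then use Lemma \ref{tensorl1algebra}.

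\textbf{Step 1 (reduction to free objects).} Every $\mathcal{C}^\infty$-ring is a sifted colimit of free ones $\mathcal{C}^\infty(\mathbb{R}^n)$, corresponding to $\ell^1(n)\in\textnormal{CartSp}\subseteq\mathcal{C}^\infty_{bb}$. Both tensor products commute with sifted colimits in each variable. On the bornological side this is because $\textnormal{CBorn}_\mathbb{R}$ is closed and the left heart tensor is extended from $\textnormal{Lin}$ by sifted colimits. On the $\mathcal{C}^\infty$-side it holds by construction in the Remark defining the monoidal structure.

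We also need $R$ to commute with sifted colimits of finitely generated objects. Since $R(A)=\textnormal{Hom}(L(-),A)$ and $L(\ell^1(\kappa))=\ell^\infty(\kappa)$, it suffices to know that $L(\ell^1(\kappa))$ is compact projective in $\mathcal{C}^\infty\textnormal{BornRing}$. I expect that $L$ sends the generator $\ell^1(\kappa)$ to the representable $\ell^1(\kappa)$, which is compact projective in $\textnormal{SInd}(\mathcal{C}^{\infty,op}_{bb})$. So the first task is to unwind the construction in Theorem \ref{colimitpreservingcinfinity} and check that $L\circ$ (Yoneda) agrees with $\iota^{op}$ on generators.

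Given this, $R$ preserves sifted colimits, and we reduce to $A=\mathcal{C}^\infty(\mathbb{R}^n)$ and $B=\mathcal{C}^\infty(\mathbb{R}^m)$.

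\textbf{Step 2 (free case).} For representables, $R(\mathbb{R}^n)$ is the functor $\ell^1(\kappa)\mapsto\textnormal{Hom}_{\mathcal{C}^\infty_{bb}}(\mathbb{R}^n,\ell^1(\kappa))$. I would identify this presheaf with the complete bornological space $\mathcal{C}^\infty_{\ell^1}(\mathbb{R}^n)=\mathcal{C}^\infty(\mathbb{R}^n)$, viewed in $\textnormal{SInd}(\textnormal{Lin})$. The argument is modelled on Lemma \ref{compositionequivsym}: a bornological map $\ell^\infty(\kappa)\rightarrow\mathcal{C}^\infty(\mathbb{R}^n)$ is determined by images of the coordinate functionals, which assemble exactly into a smooth bounded map $\mathbb{R}^n\rightarrow\ell^1(\kappa)$. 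Lemma \ref{symalgebrabij} and Lemma \ref{boundediffsmoothlinear} supply the converse direction.

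The comparison map then becomes
\begin{equation*}
\mathcal{C}^\infty(\mathbb{R}^n)\hat{\otimes}\mathcal{C}^\infty(\mathbb{R}^m)\rightarrow\mathcal{C}^\infty(\mathbb{R}^{n+m}).
\end{equation*}
This is an isomorphism by \cite[Theorem 51.6]{treves_topological_1967}, or equivalently by Lemma \ref{tensorl1algebra} with $\kappa=n$, $\mu=m$. Naturality in morphisms of $\textnormal{CartSp}$ is clear from the formula $f\otimes g\mapsto f(x)g(y)$.

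The unit case is the instance $n=0$: $R(\mathbb{R}^0)=\mathbb{R}$.

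\textbf{Main obstacle.} I expect the main obstacle to be the identification in Steps 1–2 of $R$ on free $\mathcal{C}^\infty$-rings with the underlying complete bornological space, and the accompanying claim that $R$ commutes with the relevant sifted colimits. The difficulty is that these colimits are computed in the left heart, where the underlying-space description may fail.

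The restriction to $\mathcal{C}^\infty\textnormal{Ring}$ is what makes this tractable. In that case the colimit involves only nuclear Fréchet objects $\mathcal{C}^\infty(\mathbb{R}^n)$, for which the projective tensor product is exact on strict sequences. Accordingly, I would first try to prove that the induced functor on presentations preserves reflexive coequalizers. The approach is to check that the comparison map is an isomorphism on the level of $\textnormal{SInd}(\textnormal{Lin})$-presheaves, evaluating pointwise on each $\ell^1(\kappa)$.
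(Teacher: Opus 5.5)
There is a genuine gap at the start of Step 1, and it also breaks Step 2.

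\textbf{The identification of $L$ on generators is wrong.} You expect $L(\ell^1(\kappa))$ to be the representable $\ell^1(\kappa)$. That is the behaviour of $\tilde{L}$ on $\textnormal{SInd}(\textnormal{Lin}^{op})$, which extends $\iota^{op}$. The functor $L$ on $\textnormal{LH}(\textnormal{CBorn}_\mathbb{R})\simeq\textnormal{SInd}(\textnormal{Lin})$ is different: it first applies the dualisation $(-)^\vee$, which is exactly what fixes the variance. Theorem \ref{colimitpreservingcinfinity} states $L(\ell^1(\kappa))=\ell^1(\kappa)^\vee\simeq\ell^\infty(\kappa)$. In the paper's proof this is the formal sifted colimit $``\varinjlim_i"\ell^1(\kappa_i)$ coming from a presentation $\ell^\infty(\kappa)\simeq\varinjlim_i\ell^1(\kappa_i)$. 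It agrees with the representable $\ell^1(\kappa)$ only when $\kappa$ is finite.

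Your version cannot be rescued by unwinding the construction. By Yoneda it would give $R(A)(\ell^1(\kappa))=A(\ell^1(\kappa))$, which is covariant in $\ell^1(\kappa)$. But objects of $\textnormal{SInd}(\textnormal{Lin})=\textnormal{Fun}^\times(\textnormal{Lin}^{op},\textnormal{Set})$ are contravariant. Your own unit check shows the problem: $R(\mathbb{R}^0)(\ell^1(\kappa))$ would be the set of points of $\ell^1(\kappa)$. The unit $\mathbb{R}$ of $\textnormal{SInd}(\textnormal{Lin})$ instead takes the value $\textnormal{Hom}(\ell^1(\kappa),\mathbb{R})\cong\ell^\infty(\kappa)$ there.

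\textbf{Step 2 fails for the same reason.} Smooth maps $\mathbb{R}^n\to\ell^1(\kappa)$ are not the same as bounded linear maps $\ell^1(\kappa)\to\mathcal{C}^\infty(\mathbb{R}^n)$. The constant family $f_k\equiv 1$ defines a bounded linear map, but $x\mapsto(f_k(x))_k$ is not $\ell^1$-valued. Also, a bounded map out of $\ell^\infty(\kappa)$ is not determined by its values on the $e_k$, since these only span $c_0(\kappa)$.

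\textbf{Consequence for the reduction.} Once $L(\ell^1(\kappa))$ is not representable, $\textnormal{Hom}(L(\ell^1(\kappa)),-)$ is an inverse limit over $i$ of the evaluation functors $A\mapsto A(\ell^1(\kappa_i))$. There is no reason for this to commute with sifted colimits, so your reduction to free $\mathcal{C}^\infty$-rings is unsupported.

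\textbf{What the paper does instead.} It does not reduce to free objects. For arbitrary $A=``\varinjlim_j"\mathbb{R}^{n_j}$ it computes
\[
R(A)(\ell^1(\kappa))\simeq\varprojlim_i\varinjlim_j\textnormal{Hom}_{\mathcal{C}^\infty_{bb}}(\mathbb{R}^{n_j},\ell^1(\kappa_i)).
\]
It then proceeds as follows.
\begin{enumerate}
\item It linearises smooth maps through the free convenient space $\lambda\mathbb{R}^{n_j}\simeq\mathcal{C}^\infty(\mathbb{R}^{n_j})^\vee$ (Theorem \ref{freeconvenientspace}).
\item It uses nuclearity and reflexivity (Corollary \ref{strongdualisablenucref}) to rewrite each term as points of $\mathcal{C}^\infty(\mathbb{R}^{n_j})\hat\otimes\ell^1(\kappa_i)$.
\item It uses nuclearity again (\cite[Lemma 5.18]{ben-bassat_frechet_2023}) to move $\varprojlim_i$ past $\hat\otimes$. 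This lands at $\textnormal{Hom}(\mathbb{R},\mathcal{C}^\infty(A)\hat\otimes\ell^\infty(\kappa))$, in effect identifying $R(A)$ with $\mathcal{C}^\infty(A)$ in the left heart.
\item Only then does it invoke \cite[Theorem 51.6]{treves_topological_1967}, exactly as in your final step.
\end{enumerate}
Your ``main obstacle'' paragraph rightly senses that nuclearity is the key input. But it enters at the interchange of the inverse limit over the presentation of $\ell^\infty(\kappa)$ with $\hat\otimes$, not through a reflexive-coequaliser argument, and your plan does not address that interchange.
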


\begin{proof}
    Suppose that we have some element $\ell^1(\kappa)\in\textnormal{Lin}$ and consider the dual object $\ell^1(\kappa)^\vee\in\textnormal{CBorn}_\mathbb{R}^{op}$. Since every object in $\textnormal{CBorn}_\mathbb{R}$ can be written as a filtered colimit of objects in $\textnormal{Lin}$, we can identify $\ell^1(\kappa)^\vee$ with a cofiltered limit $\varprojlim_i\ell^1(\kappa_i)$ in $\textnormal{CBorn}_\mathbb{R}^{op}$ and identify with a formal sifted colimit $``\varinjlim_i"\ell^1(\kappa_i)$ in $\mathcal{C}^\infty\textnormal{BornRing}$ using Appendix \ref{indobjectappendix}. Since $A\in\mathcal{C}^\infty\textnormal{Ring}$, $A$ can be written as a formal sifted colimit $``\varinjlim_j"\mathbb{R}^{n_j}$. Then, we note that 
    \begin{align*}
        R(A)(\ell^1(\kappa))&\simeq \textnormal{Hom}_{\mathcal{C}^\infty\textnormal{BornRing}}(L(\ell^1(\kappa)),A)\\
        &\simeq \textnormal{Hom}_{\mathcal{C}^\infty\textnormal{BornRing}}(``\varinjlim_i"\ell^1(\kappa_i),``\varinjlim_j"\mathbb{R}^{n_j})\\
        &\simeq \varprojlim_i\varinjlim_j\textnormal{Hom}_{\mathcal{C}^\infty_{bb}}(\mathbb{R}^{n_j},\ell^1(\kappa_i))
        \intertext{Since smooth maps $\mathbb{R}^{n_j}\rightarrow{\ell^1(\kappa_i)}$ are bounded on bounded subsets,}
        &\simeq \varprojlim_i\varinjlim_j\textnormal{Hom}_{\textnormal{Smooth}}(\mathbb{R}^{n_j},\ell^1(\kappa_i))
\intertext{Using Theorem \ref{freeconvenientspace},}
        &\simeq \varprojlim_i\varinjlim_j\textnormal{Hom}_{\textnormal{Con}}(\lambda\mathbb{R}^{n_j},\ell^1(\kappa_i))\\
\intertext{Since $\lambda \mathbb{R}^{n_j}$ is equivalent to $\mathcal{C}^\infty(\mathbb{R}^{n_j})^\vee=\texthom{Hom}_{\textnormal{Con}}(\mathcal{C}^\infty(\mathbb{R}^{n_j}),\mathbb{R})$ by \cite[Corollary 23.11]{kriegl_convenient_1997},}
&\simeq \varprojlim_i\varinjlim_j\textnormal{Hom}_{\textnormal{Con}}(\mathcal{C}^\infty(\mathbb{R}^{n_j})^\vee,\ell^1(\kappa_i))\\
\intertext{Since $\mathcal{C}^\infty(\mathbb{R}^{n_j})$ is a nuclear and reflexive convenient space by \cite[Proposition 6.1 and Results 6.5]{kriegl_convenient_1997}, then using Lemma \ref{strongdualisablenucref},} 
&\simeq \varprojlim_i\varinjlim_j\textnormal{Hom}_{\textnormal{Con}}(\mathbb{R},\mathcal{C}^\infty(\mathbb{R}^{n_j})\hat{\otimes}\ell^1(\kappa_i))\\
&\simeq \textnormal{Hom}_{\textnormal{Con}}(\mathbb{R},\varprojlim_i\varinjlim_j\mathcal{C}^\infty(\mathbb{R}^{n_j})\hat{\otimes}\ell^1(\kappa_i))\\
\intertext{Since $\mathcal{C}^\infty(\mathbb{R}^{n_j})$ is nuclear, then by \cite[Lemma 5.18]{ben-bassat_frechet_2023},}&\simeq \textnormal{Hom}_{\textnormal{Con}}(\mathbb{R},(\varinjlim_j\mathcal{C}^\infty(\mathbb{R}^{n_j}))\hat{\otimes}\varprojlim_i\ell^1(\kappa_i))\\
\intertext{And, by definition of the functor $\mathcal{C}_{\ell^1}^\infty:\mathcal{C}^\infty\textnormal{BornRing}\rightarrow{\textnormal{Comm}(\textnormal{CBorn}_\mathbb{R})}$,}
&\simeq \textnormal{Hom}_{\textnormal{Con}}(\mathbb{R},\mathcal{C}^\infty(A)\hat{\otimes}\ell^\infty(\kappa_i))
    \end{align*}Suppose that $B\in\mathcal{C}^\infty\textnormal{Ring}$. Since $\mathcal{C}^\infty(A\otimes B)\simeq \mathcal{C}^\infty(A)\hat{\otimes} \,\mathcal{C}^\infty(B)$ for $\mathcal{C}^\infty$-rings by \cite[Theorem 51.6]{treves_topological_1967} and by the definition of the monoidal product in $\textnormal{SInd}(\textnormal{CartSp}^{op})$, we can use our above characterisation of the right adjoint to show that $R(A\otimes B)\simeq R(A)\hat\otimes R(B)$. 
\end{proof}
We recall that $\mathcal{C}^\infty(\mathbb{R}^n)$ is a nuclear complete bornological space for any $n\in\mathbb{N}$. Hence, since sifted colimits of nuclear complete bornological spaces are nuclear by \cite[Proposition 3.52]{bambozzi_stein_2018}, we see that any $\mathcal{C}^\infty$-ring $A$ gets mapped to a nuclear complete bornological algebra $\mathcal{C}^\infty(A)$. 

\begin{lem}\label{reflexivereduced} Suppose that $A,B\in\mathcal{C}^\infty\textnormal{Ring}$ and that $\mathcal{C}^\infty(A)$ is reflexive. Then, \begin{equation*}
    R(A)\hat{\otimes}R(B)\simeq \texthom{Hom}_{\textnormal{LH}(\textnormal{CBorn}_\mathbb{R})}(R(A)^\vee,R(B))
\end{equation*}
\end{lem}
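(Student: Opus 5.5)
The plan is to combine the explicit description of $R$ on $\mathcal{C}^\infty$-rings from the proof of Proposition \ref{Rmonoidalstructure} with the standard fact that, in a closed symmetric monoidal category, a reflexive nuclear object is strongly dualisable. From the computation in Proposition \ref{Rmonoidalstructure}, $R(A)$ is identified with the nuclear complete bornological space underlying $\mathcal{C}^\infty(A)$, viewed in $\textnormal{LH}(\textnormal{CBorn}_\mathbb{R})$ through the fully faithful inclusion $\textnormal{CBorn}_\mathbb{R}\hookrightarrow\textnormal{LH}(\textnormal{CBorn}_\mathbb{R})$ of \cite[Corollary 1.4.7]{schneiders_quasi-abelian_1999}. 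The same holds for $B$. Nuclearity is automatic, by the remark preceding the lemma that sifted colimits of the nuclear spaces $\mathcal{C}^\infty(\mathbb{R}^n)$ are nuclear. Reflexivity of $\mathcal{C}^\infty(A)$ is the hypothesis.

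Step one is to record that $R(A)\simeq\mathcal{C}^\infty(A)$ and $R(B)\simeq\mathcal{C}^\infty(B)$. Step two is to apply Lemma \ref{strongdualisablenucref}, as was done for $\mathcal{C}^\infty(\mathbb{R}^{n})$ in Proposition \ref{Rmonoidalstructure}. Since $\mathcal{C}^\infty(A)$ is nuclear and reflexive, it is strongly dualisable, and in particular $\mathcal{C}^\infty(A)\simeq\mathcal{C}^\infty(A)^{\vee\vee}$. Moreover, for every complete bornological $W$ the canonical map
\begin{equation*}
\mathcal{C}^\infty(A)^{\vee\vee}\hat{\otimes}W\rightarrow\texthom{Hom}(\mathcal{C}^\infty(A)^\vee,W)
\end{equation*}
is an isomorphism. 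Taking $W=\mathcal{C}^\infty(B)$ yields the claim inside $\textnormal{CBorn}_\mathbb{R}$.

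Step three is to transport the statement to $\textnormal{LH}(\textnormal{CBorn}_\mathbb{R})$. Two facts are needed here. First, the tensor product on $\textnormal{LH}$ restricted to complete bornological spaces agrees with $\hat{\otimes}$. This holds because $\mathcal{C}^\infty(A)$ is flat, as nuclear spaces are flat for $\hat{\otimes}$, so the derived tensor product in the left heart is concentrated in degree $0$. Second, the internal Hom in $\textnormal{LH}$ out of the dual $R(A)^\vee$ agrees with the internal Hom in $\textnormal{CBorn}_\mathbb{R}$. For this I would use that a strongly dualisable object $R(A)^\vee$ has internal Hom functor $R(A)\otimes-$, which is exact, so no higher derived terms arise.

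I expect this last comparison to be the main obstacle: making sure that the duals and internal Homs computed in the left heart coincide with those computed in $\textnormal{CBorn}_\mathbb{R}$. The first thing I would try is the characterisation of strong dualisability by the unit and counit maps. Those maps already exist in $\textnormal{CBorn}_\mathbb{R}$, and the inclusion into $\textnormal{LH}$ is strong monoidal on flat objects, so it preserves them. Strong dualisability, and hence the formula $\texthom{Hom}(R(A)^\vee,-)\simeq R(A)\hat{\otimes}-$, would then pass directly to $\textnormal{LH}(\textnormal{CBorn}_\mathbb{R})$.
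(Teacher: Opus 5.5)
Your outline is the same as the paper's: identify $R(A),R(B)$ with $\mathcal{C}^\infty(A),\mathcal{C}^\infty(B)$, compute in $\textnormal{CBorn}_\mathbb{R}$, then use nuclearity and reflexivity. However, Step 2 rests on a false claim.

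Corollary~\ref{strongdualisablenucref} gives $\texthom{Hom}(V^\vee,W)\simeq W\hat\otimes V$ only when $W$ is a \emph{Banach} space; its proof presents $W$ as a kernel of a map $\ell^\infty(\kappa)\to\ell^\infty(\mu)$. That is why it could be used in Proposition~\ref{Rmonoidalstructure}, where $W=\ell^1(\kappa_i)$. It does not say that a nuclear reflexive space is strongly dualisable, and that statement is in fact false in $\textnormal{CBorn}_\mathbb{R}$. Here is a counterexample satisfying the lemma's hypotheses:
\begin{itemize}
\item Take $V=\mathcal{C}^\infty(\mathbb{R})$. Then $V^\vee$ is the space of compactly supported distributions, whose bounded sets are equicontinuous and hence supported in a common $[-k,k]$.
\item Take $W=\bigoplus_{n\in\mathbb{N}}\mathbb{R}$ with the fine bornology, which is complete.
\item Since $V\hat\otimes-$ is a left adjoint, $V\hat\otimes W\simeq\bigoplus_n V$. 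Its image in $\texthom{Hom}(V^\vee,W)$ consists of maps with only finitely many nonzero components.
\item Let $f_n\neq 0$ be a bump function supported in $(n,n+2)$. Then $T\mapsto (T(f_n))_n$ is a bounded map $V^\vee\to W$, since a bounded set of distributions supported in $[-k,k]$ lands in a bounded subset of $\mathbb{R}^k$. It has infinitely many nonzero components, so it is not in the image.
\end{itemize}

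The same example rules out Step 3. If the canonical unit and counit existed in $\textnormal{CBorn}_\mathbb{R}$ and satisfied the triangle identities, $\mathcal{C}^\infty(A)^\vee$ would be strongly dualisable there, contradicting the example. Concretely, a coevaluation would realise $\textnormal{id}_V$ by an element of $V^\vee\hat\otimes V$. Every such element induces a map sending a zero-neighbourhood into a bounded set, which would make the infinite-dimensional nuclear Fr\'echet space $V$ normable.

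What you need instead is the paper's route, in two steps.
\begin{itemize}
\item Proposition~\ref{Rmonoidalstructure} and full faithfulness of $\textnormal{CBorn}_\mathbb{R}\hookrightarrow\textnormal{LH}(\textnormal{CBorn}_\mathbb{R})$ identify $\texthom{Hom}_{\textnormal{LH}(\textnormal{CBorn}_\mathbb{R})}(R(A)^\vee,R(B))$ with $\texthom{Hom}_{\textnormal{CBorn}_\mathbb{R}}(\mathcal{C}^\infty(A)^\vee,\mathcal{C}^\infty(B))$. No comparison of derived functors or unit/counit argument is needed.
\item Corollary~\ref{nuclearhomequivCinfinity} upgrades the Banach case to a \emph{Fr\'echet} target. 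Write $W=\varprojlim_n W_n$ with $W_n$ Banach, pull the limit out of $\texthom{Hom}(V^\vee,-)$, and apply the Banach case termwise. Then move $-\hat\otimes V$ back past the countable limit using \cite[Lemma 5.18]{ben-bassat_frechet_2023} for the nuclear Fr\'echet space $V$.
\end{itemize}

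The missing ingredient is this Banach-to-Fr\'echet passage, together with the input that $\mathcal{C}^\infty(B)$ is Fr\'echet. The paper's argument uses that input; it holds, for example, for bornologically finitely generated $B$. Your flatness remark on the tensor side is harmless; the problem is entirely on the Hom side.
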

\begin{proof}Indeed, by Proposition \ref{Rmonoidalstructure}, and using fully faithfulness of the inclusion functor $\textnormal{CBorn}_\mathbb{R}\hookrightarrow{\textnormal{LH}(\textnormal{CBorn}_\mathbb{R})}$, we have that
\begin{equation*}
    \texthom{Hom}_{\textnormal{LH}(\textnormal{CBorn}_\mathbb{R})}(R(A)^\vee,R(B))\simeq \texthom{Hom}_{\textnormal{CBorn}_\mathbb{R}}(\mathcal{C}^\infty(A)^\vee,\mathcal{C}^\infty(B))
\end{equation*}
Now, by Corollary \ref{nuclearhomequivCinfinity}, since $\mathcal{C}^\infty(A)$ is reflexive and nuclear, and $\mathcal{C}^\infty(B)$ is a Fr\'echet space, then this is equivalent to $\mathcal{C}^\infty(A)\hat{\otimes}\mathcal{C}^\infty(B)$ from which our result follows.

\end{proof}

\section{Derived Moduli Stacks of Solutions to Non-Linear PDEs}\label{section3}

As explained in the Introduction, the derived moduli stack of solutions to a system of partial differential equations can be expressed as a mapping stack of sections over the de Rham stack. In order to apply the results from Section \ref{representabilitytheoremsection}, we have to embed our derived $\mathcal{C}^\infty$-bornological affines into a suitable derived geometry context. We will frequently refer to \cite{savage_representability_2024} where the necessary definitions were established.

\subsection{The $\mathcal{C}^\infty$-Localisation Topology}

Using Theorem \ref{fullyfaithfulembed}, we see that we can consider $\mathcal{C}^\infty\textcat{DBAff}$ as a full subcategory of $\textcat{DAff}^{cn}(\textcat{Ch}(\textnormal{CBorn}_\mathbb{R}))$ under the functor $\mathcal{C}^\infty_{\ell^1}$. In the following section, we will only explicitly mention this embedding functor when it is essential for the clarity of the exposition. We will often just be restricting to objects in $\mathcal{C}^\infty\textcat{DBAff}$, but in order to easily define suitable topologies and maps between them we will need to embed them as objects in $\textcat{DAff}^{cn}(\textcat{Ch}(\textnormal{CBorn}_\mathbb{R}))$. 

We recall the following definitions we frequently used in  \cite{savage_representability_2024}. 

\begin{defn}
    Suppose that $f:A\rightarrow{B}$ is in $\textcat{DAlg}^{cn}(\textcat{Ch}(\textnormal{CBorn}_\mathbb{R}))$. Then, 
    \begin{enumerate}
        \item $f$ is \textit{flat} if, whenever $M$ is an $A$-module in $\textcat{DAlg}^{\heartsuit}(\textcat{Ch}(\textnormal{CBorn}_\mathbb{R}))$, then $M\otimes^\mathbb{L}_AB$ is in $\textcat{DAlg}^{\heartsuit}(\textcat{Ch}(\textnormal{CBorn}_\mathbb{R}))$,
        \item $f$ is \textit{derived strong} if there is an equivalence
        \begin{equation*}
    \pi_*(A)\otimes^\mathbb{L}_{\pi_0(A)}\pi_0(B)\rightarrow{\pi_*(B)}
\end{equation*}
\item $f$ is \textit{derived strongly flat} if it is derived strong and $\pi_0(f)$ is flat.

    \end{enumerate}
    
\end{defn}
\begin{lem}\label{derivedstrongpi0}
\cite[Lemma 5.2.10]{savage_representability_2024} Suppose that $f:A\rightarrow{B}$ is a derived strong map in $\textcat{DAlg}^{cn}(\mathcal{C})$. Then, $\pi_0(B)\simeq B\otimes_A^\mathbb{L}\pi_0(A)$.
\end{lem}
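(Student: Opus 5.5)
The claim is that for a derived strong map $f:A\rightarrow B$ we have $\pi_0(B)\simeq B\otimes^\mathbb{L}_A\pi_0(A)$. The plan is to compute the homotopy groups of $B\otimes^\mathbb{L}_A\pi_0(A)$ by a Tor (Künneth) spectral sequence. Derived strongness lets us rewrite $\pi_*(B)$ as a base change of $\pi_*(A)$, which makes the relevant Tor groups collapse to degree zero.

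First, I set up the spectral sequence in the derived algebraic context $\mathcal{C}$:
\begin{equation*}
E^2_{p,q}=\textnormal{Tor}^{\pi_*(A)}_{p}(\pi_*(B),\pi_*(\pi_0(A)))_q\Rightarrow \pi_{p+q}(B\otimes^\mathbb{L}_A\pi_0(A)).
\end{equation*}
This is available because $\mathcal{C}$ is stable with a $t$-structure generated by compact projectives $\mathcal{C}^0$. One can either resolve by free objects built from $\mathcal{C}^0$ and filter by the Postnikov towers, or, to avoid quoting it, use the skeletal filtration of the bar construction $B\otimes_A A^{\otimes\bullet}\otimes_A\pi_0(A)$. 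Since $\pi_*(\pi_0(A))=\pi_0(A)$ is concentrated in degree $0$, the $E^2$ page is $\textnormal{Tor}^{\pi_*(A)}_p(\pi_*(B),\pi_0(A))$.

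Next, I use derived strongness, $\pi_*(B)\simeq\pi_*(A)\otimes^\mathbb{L}_{\pi_0(A)}\pi_0(B)$, to compute
\begin{equation*}
\pi_*(B)\otimes^\mathbb{L}_{\pi_*(A)}\pi_0(A)\simeq \pi_0(B)\otimes^\mathbb{L}_{\pi_0(A)}\pi_*(A)\otimes^\mathbb{L}_{\pi_*(A)}\pi_0(A)\simeq\pi_0(B)\otimes^\mathbb{L}_{\pi_0(A)}\pi_0(A)\simeq \pi_0(B).
\end{equation*}
This is the associativity and cancellation of relative derived tensor products in the graded setting. So $E^2_{p,q}=0$ unless $p=q=0$, and $E^2_{0,0}=\pi_0(B)\otimes_{\pi_0(A)}\pi_0(A)=\pi_0(B)$. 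The spectral sequence therefore degenerates, giving $\pi_n(B\otimes^\mathbb{L}_A\pi_0(A))=0$ for $n>0$ and $\pi_0(B\otimes^\mathbb{L}_A\pi_0(A))\simeq\pi_0(B)$.

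Finally, I identify the map. The natural map $B\otimes^\mathbb{L}_A\pi_0(A)\rightarrow \pi_0(B)$ is induced by $B\rightarrow\pi_0(B)$ and $\pi_0(A)\rightarrow\pi_0(B)$, and both are compatible over $A$. On $\pi_0$ it is the isomorphism $\pi_0(B)\otimes_{\pi_0(A)}\pi_0(A)\simeq\pi_0(B)$. The source is connective, and I have shown it is concentrated in degree $0$, so this map is an equivalence.

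The main obstacle is making the Tor spectral sequence, or an equivalent filtration argument, rigorous in this generality: the heart is only the left heart of a quasi-abelian category, and tensor products need not be exact. If the spectral sequence is awkward to justify there, I would instead argue inductively on Postnikov truncations $A_{\leq n}$, $B_{\leq n}$, using that derived strongness identifies each fibre $\pi_n(B)[n]\simeq\pi_n(A)[n]\otimes^\mathbb{L}_{\pi_0(A)}\pi_0(B)$. Base-changing these fibres along $A\rightarrow\pi_0(A)$, with care about the connectivity bookkeeping, shows that each higher layer contributes nothing to the result.
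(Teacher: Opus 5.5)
The paper gives no argument here: the lemma is imported verbatim from \cite[Lemma 5.2.10]{savage_representability_2024}, so your proposal is a self-contained reconstruction rather than a variant of an internal proof. Its core is the standard argument (the one used for strong morphisms by To\"en--Vezzosi), and it is correct. Derived strongness plus base change gives $\pi_*(B)\otimes^{\mathbb{L}}_{\pi_*(A)}\pi_0(A)\simeq\pi_0(B)$ in bidegree $(0,0)$. Since $\pi_0(B\otimes^{\mathbb{L}}_A\pi_0(A))\simeq\pi_0(B)\otimes_{\pi_0(A)}\pi_0(A)$ holds for any map, the real content is the vanishing of the higher homotopy.

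The weak point is your justification of the spectral sequence. That $\mathcal{C}_{\geq 0}$ is generated by the compact projectives $\mathcal{C}^0$ does not by itself give an $E^2$-page of graded Tor over $\pi_*(A)$. The standard construction needs $\pi_*(A\otimes P)\simeq\pi_*(A)\otimes P$ for $P\in\mathcal{C}^0$, i.e.\ flat generators, which holds for the $\ell^1(\kappa)$. The same flatness is what makes Tor in the heart agree with the $\otimes^{\mathbb{L}}$ in the definition of derived strong.

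You can sidestep this by running exactly your computation on associated gradeds, and this needs no flatness at all.
\begin{itemize}
\item Tensor the Whitehead towers $\tau_{\geq\bullet}B$ and $\tau_{\geq\bullet}\pi_0(A)$ over $\tau_{\geq\bullet}A$. This filters $B\otimes^{\mathbb{L}}_A\pi_0(A)$ with $n$-connective stages $\mathrm{Fil}^n$ and associated graded $\mathrm{gr}(B)\otimes^{\mathbb{L}}_{\mathrm{gr}(A)}\pi_0(A)$, where $\mathrm{gr}(X)$ has $\pi_n(X)[n]$ in weight $n$.
\item Derived strongness says precisely that $\mathrm{gr}(B)\simeq\mathrm{gr}(A)\otimes^{\mathbb{L}}_{\pi_0(A)}\pi_0(B)$, so your cancellation puts everything in weight $0$.
\item Hence $\mathrm{Fil}^1\simeq\mathrm{Fil}^n$ for all $n\geq 1$ is $\infty$-connective. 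It is therefore zero, because $\mathcal{C}_{\geq 0}\simeq\mathcal{P}_\Sigma(\mathcal{C}^0)$ with $\mathcal{C}^0$ in the heart, and so $B\otimes^{\mathbb{L}}_A\pi_0(A)\simeq\mathrm{gr}^0\simeq\pi_0(B)$.
\end{itemize}

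Drop your two fallbacks. The bar construction over the unit produces Tor over $\pi_*(A)$ only through a K\"unneth isomorphism over the ground field. That is unavailable because $\hat{\otimes}$ is not exact on $\textnormal{CBorn}_\mathbb{R}$.

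In the Postnikov induction, the layers do not individually vanish after base change along $A\to\pi_0(A)$. Take $f=\mathrm{id}_A$ with $A=\mathbb{R}\oplus\mathbb{R}[1]$. The two layers base-change to $\mathbb{R}\otimes^{\mathbb{L}}_A\mathbb{R}$ and its shift by one, each with homotopy in infinitely many degrees, while $A\otimes^{\mathbb{L}}_A\mathbb{R}\simeq\mathbb{R}$. The contributions cancel through the connecting maps, and that is exactly the bookkeeping the spectral sequence, or the filtration above, performs.
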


\begin{lem}\phantomsection\label{derivedstronglyflatflat}
    \cite[c.f. Corollary 2.3.86]{ben-bassat_perspective_2024} Given a morphism $f:A\rightarrow{B}$ in $\textcat{DAlg}^{cn}(\mathcal{C})$, the following are equivalent,
\begin{enumerate}
    \item $f$ is a flat morphism, 
    \item $f$ is a derived strongly flat morphism, 
    \item $B\otimes^\mathbb{L}_A -:\textcat{Mod}_A^{cn}\rightarrow{\textcat{Mod}_B^{cn}}$ commutes with finite limits.
\end{enumerate}
\end{lem}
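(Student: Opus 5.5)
We prove the cycle $(1)\Rightarrow(2)\Rightarrow(3)\Rightarrow(1)$, working in the derived algebraic context with its $t$-structure. Throughout we use the Tor spectral sequence
\begin{equation*}
E^2_{p,q}=\pi_p\big(\pi_*(M)\otimes^\mathbb{L}_{\pi_*(A)}\pi_*(B)\big)_q\Rightarrow\pi_{p+q}(M\otimes^\mathbb{L}_AB),
\end{equation*}
which is available for connective objects in a derived algebraic context with compact projective generators; in the present setting it comes from the left heart description $\textnormal{LH}\simeq\textnormal{SInd}(\mathcal{C}^0)$ of Theorem \ref{siftedcborncompletion}.

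For $(1)\Rightarrow(2)$, first take $M=\pi_0(A)$, which lies in the heart. Flatness gives that $\pi_0(A)\otimes^\mathbb{L}_AB$ is discrete. Next take $M=\pi_n(A)$, an $A$-module through $\pi_0(A)$. We write $\pi_n(A)\otimes^\mathbb{L}_AB\simeq\pi_n(A)\otimes^\mathbb{L}_{\pi_0(A)}(\pi_0(A)\otimes^\mathbb{L}_AB)$, and this is discrete by flatness. Applying the Postnikov tower of $A$ and inducting on $k$ with the fibre sequences $\pi_k(A)[k]\to A_{\leq k}\to A_{\leq k-1}$, we identify $\pi_k(B)\simeq\pi_k(A)\otimes_{\pi_0(A)}\pi_0(B)$. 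To pass to the limit we use that $\otimes^\mathbb{L}$ commutes with the Postnikov limit in each fixed degree, by connectivity. It follows that $\pi_0(B)\simeq\pi_0(A)\otimes^\mathbb{L}_AB$. This gives derived strongness, and flatness of $\pi_0(f)$ follows from the same base change identity.

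For $(2)\Rightarrow(3)$: by Lemma \ref{derivedstrongpi0}, for connective $M$ we can compute $M\otimes^\mathbb{L}_AB$ through the spectral sequence above. Derived strongness makes $\pi_*(B)$ flat over $\pi_*(A)$ in the graded sense, since it is base changed from the flat $\pi_0(A)$-module $\pi_0(B)$. Hence the spectral sequence collapses to give
\begin{equation*}
\pi_n(M\otimes^\mathbb{L}_AB)\simeq\pi_n(M)\otimes_{\pi_0(A)}\pi_0(B).
\end{equation*}
Flatness of $\pi_0(f)$ then makes this exact in $M$. Therefore $B\otimes^\mathbb{L}_A-$ preserves fibre sequences of connective modules whose fibres are connective, and in particular it preserves finite limits in $\textcat{Mod}^{cn}$. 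Preservation of the terminal object is automatic.

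For $(3)\Rightarrow(1)$: a discrete module $M$ is the connective cover of the pullback $0\times_{M[1]}0$. Commuting with finite limits in $\textcat{Mod}^{cn}$, where limits are the $\tau_{\geq0}$ of limits in $\textcat{Mod}$, together with exactness of $B\otimes^\mathbb{L}_A-$ on the stable categories, shows that $\tau_{\geq 0}\big((M\otimes^\mathbb{L}_AB)[-1]\big)$ vanishes. Hence $\pi_i(M\otimes^\mathbb{L}_AB)=0$ for $i\geq1$, i.e. $M\otimes^\mathbb{L}_AB$ is discrete.

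The main obstacle is justifying the Tor spectral sequence and its convergence in this non-abelian bornological setting, together with the exactness statements in the left heart. We will handle this by reducing, through the equivalence with $\textnormal{SInd}(\mathcal{C}^0)$, to simplicial objects built from projectives, where the classical argument applies verbatim. Alternatively, we will cite \cite[Corollary 2.3.86]{ben-bassat_perspective_2024}, which carries this out for derived algebraic contexts.
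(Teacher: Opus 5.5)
Your strategy, the Lurie/Lazard-type argument transplanted to the heart of $\mathcal{C}$, has the right shape. It is also more than the paper provides: the paper gives no proof and only cites \cite[Corollary 2.3.86]{ben-bassat_perspective_2024}.

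However, (3)$\Rightarrow$(1) fails as written. For discrete $M$, the pullback $0\times_{M[1]}0$ in $\textcat{Mod}_A$ is already $M$, which is connective. Applying left exactness to that square therefore yields only the tautology $B\otimes^\mathbb{L}_AM\simeq\tau_{\geq0}(B\otimes^\mathbb{L}_AM)$, and the vanishing of $\tau_{\geq0}((M\otimes^\mathbb{L}_AB)[-1])$ does not follow. The square you need is $0\times_M 0$. Its limit in $\textcat{Mod}^{cn}_A$ is $\tau_{\geq0}(M[-1])=0$ because $M$ is discrete. Left exactness then gives $\tau_{\geq0}((B\otimes^\mathbb{L}_AM)[-1])\simeq 0$, i.e. $\pi_i(B\otimes^\mathbb{L}_AM)=0$ for $i\geq 1$.

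Second, (2)$\Rightarrow$(3) rests on a Tor spectral sequence over the graded ring $\pi_*(A)$ in $\mathcal{C}^\heartsuit$ that you never construct. Lemma \ref{derivedstrongpi0} does not provide it, and building it would require graded homological algebra over $\pi_*(A)$ in the heart. You can avoid it entirely:
\begin{itemize}
\item For (2)$\Rightarrow$(1): Lemma \ref{derivedstrongpi0} gives $\pi_0(A)\otimes^\mathbb{L}_AB\simeq\pi_0(B)$. So for discrete $M$ you have $M\otimes^\mathbb{L}_AB\simeq M\otimes^\mathbb{L}_{\pi_0(A)}\pi_0(B)$, which is discrete by flatness of $\pi_0(f)$.
\item For (1)$\Rightarrow$(3): run your own Postnikov induction on a connective $M$ rather than on $A$. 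The fibre sequences $\pi_k(M)[k]\to\tau_{\leq k}M\to\tau_{\leq k-1}M$ together with flatness give natural isomorphisms $\pi_n(B\otimes^\mathbb{L}_AM)\cong\pi_n(M)\otimes^\mathbb{L}_AB$, with $-\otimes^\mathbb{L}_AB$ exact on $\textcat{Mod}_A^\heartsuit$. By shifting, the same holds for bounded below $M$.
\end{itemize}

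This t-exactness is exactly what (3) requires, and it is the step your sketch skips. Limits in $\textcat{Mod}^{cn}$ are $\tau_{\geq0}$ of limits in $\textcat{Mod}$. So you must show that $B\otimes^\mathbb{L}_A-$ commutes with $\tau_{\geq0}$ on bounded below modules; use $\tau_{\geq0}F\to F\to\tau_{\leq -1}F$ to do this. By contrast, "preserves fibre sequences of connective modules with connective fibres" holds for any exact functor that preserves connective objects, so it does not by itself imply left exactness on $\textcat{Mod}^{cn}$.
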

Recall the definition of a homotopy monomorphism from Definition \ref{homotopyepi}. 

\begin{defn}Let $f:Y=\textnormal{Spec}(B)\rightarrow{\textnormal{Spec}(A)=X}$ be a morphism in $\textcat{DAff}^{cn}(\textcat{Ch}(\textnormal{CBorn}_\mathbb{R}))$. Then,  $f$ is a \textit{$\mathcal{C}^\infty$-localisation} if it is a flat homotopy monomorphism. 
\end{defn}

\begin{exmp}
    Suppose that we have a $\mathcal{C}^\infty$-ring $A$ and $a\in \mathcal{C}^\infty(A)$ is an element. Then, the localisation $\mathcal{C}^\infty(A)\rightarrow{\mathcal{C}^\infty(A)[a^{-1}]}$ is a flat map of commutative $\mathbb{R}$-algebras by \cite[Corollary 2.3.105]{steffens_derived_2023}. Moreover, we can see that, since $\mathcal{C}^\infty(A)[a^{-1}]\otimes_{\mathcal{C}^\infty(A)}\mathcal{C}^\infty(A)[a^{-1}]\simeq \mathcal{C}^\infty(A)[a^{-1}]$ and the localisation is flat, then it is a homotopy epimorphism. 
\end{exmp}

\begin{defn}
    The \textit{finite $\mathcal{C}^\infty$-localisation topology} on $\textnormal{Ho}(\textcat{DAff}^{cn}(\textcat{Ch}(\textnormal{CBorn}_\mathbb{R})))$ has finite covers $\{f_j:U_j=\textnormal{Spec}(B_j)\rightarrow{\textnormal{Spec}(A)=X}\}_{j\in J}$, satisfying the following properties
    \begin{enumerate}
        \item Each $A\rightarrow{B_j}$ is a $\mathcal{C}^\infty$-localisation, 
        \item The covering family is a \textit{formal covering family}, in the sense that the family of functors $\{(f_j)_!:\textcat{Mod}_A^{cn}\rightarrow{\textcat{Mod}^{cn}_{B_j}}\}_{j\in J}$ is conservative.
    \end{enumerate}
\end{defn}
We denote the class of $\mathcal{C}^\infty$-localisations by $\textcat{loc}$ and we denote the finite $\mathcal{C}^\infty$-localisation topology by $\bm\tau_{\textcat{loc}}$. Given a class of maps $\textcat{P}$ and a topology $\bm\tau$, we let $\textcat{P}^{\bm\tau}$ be the class of maps $f:Y\rightarrow{X}$ in $\textcat{DAff}^{cn}(\textcat{Ch}(\textnormal{CBorn}_\mathbb{R}))$ such that there is a $\bm\tau$-cover $\{g_i:U_i\rightarrow{Y}\}_{i\in I}$ with $f\circ g_i\in \textcat{P}$.

\begin{defn}In the following definition, $\textcat{Alg}$ will denote each of the categories $\mathcal{C}^\infty\textcat{DBornRing}$,  $\mathcal{C}^\infty\textnormal{BornRing}$, $\mathcal{C}^\infty\textcat{DRing}$, and $\mathcal{C}^\infty\textnormal{Ring}$, embedded as full subcategories of $\textcat{DAlg}^{cn}(\textcat{Ch}(\textnormal{CBorn}_\mathbb{R}))$. 
\begin{itemize}
    \item The class $\textcat{open}_{\mathcal{C}^\infty}$ of \textit{open immersions} is the subclass of maps $f:Y=\textnormal{Spec}(B)\rightarrow{\textnormal{Spec}(A)=X}$ in $\textcat{loc}^{\bm\tau_{\textcat{loc}}}$ such that, whenever $A\rightarrow{C}$ is a map in $\textcat{open}_{\mathcal{C}^\infty}$ with $C\in\textcat{Alg}$, then $B\otimes 
        ^\mathbb{L}_AC\in\textcat{Alg}$,
    \item The \textit{finite $\mathcal{C}^\infty$-topology}, which we will denote by $\bm\tau_{\mathcal{C}^\infty}$, consists of finite covers $\{U_j=\textnormal{Spec}(B_j)\rightarrow{\textnormal{Spec}(A)=X}\}_{j\in J}$ in $\bm\tau_{\textcat{loc}}$ such that, whenever there is some map $A\rightarrow{C}$, then $C\in\textcat{Alg}$ if and only if $B_j\otimes_A^\mathbb{L}C\in\textcat{Alg}$.
\end{itemize}
\end{defn}

We remark that $\mathcal{C}^\infty$-localisations and $\mathcal{C}^\infty$-open immersions are closed under equivalences, compositions, and pullbacks. 

\begin{cor}\phantomsection\label{strongmapcinfinity}Suppose that $f:Y\rightarrow{X}$ is a map in $\textcat{DAff}^{cn}(\textcat{Ch}(\textnormal{CBorn}_\mathbb{R}))$. Then, 
\begin{enumerate}
    \item $f$ is a $\mathcal{C}^\infty$-localisation if and only if it is derived strong and $t_0(f)$ is a $\mathcal{C}^\infty$-localisation, 
    \item $f$ is an open immersion if it is derived strong and $t_0(f)$ is an open immersion.
\end{enumerate}
\end{cor}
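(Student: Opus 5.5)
We treat the two parts in turn. In both, the key tool is Lemma \ref{derivedstronglyflatflat}, which identifies flatness with derived strong flatness.

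\textbf{Part (1).} Write $f$ as coming from $A\rightarrow{B}$.

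First suppose $f$ is a $\mathcal{C}^\infty$-localisation, i.e.\ a flat homotopy monomorphism.
\begin{itemize}
    \item By Lemma \ref{derivedstronglyflatflat}, $f$ is derived strong and $\pi_0(A)\rightarrow{\pi_0(B)}$ is flat. This is exactly flatness of $t_0(f)$.
    \item It remains to show $\pi_0(A)\rightarrow{\pi_0(B)}$ is a homotopy epimorphism. By Lemma \ref{derivedstrongpi0}, $\pi_0(B)\simeq B\otimes^\mathbb{L}_A\pi_0(A)$.
    \item Hence
    \begin{equation*}
        \pi_0(B)\otimes^\mathbb{L}_{\pi_0(A)}\pi_0(B)\simeq (B\otimes^\mathbb{L}_AB)\otimes^\mathbb{L}_A\pi_0(A)\simeq B\otimes^\mathbb{L}_A\pi_0(A)\simeq \pi_0(B),
    \end{equation*}
    where the middle step uses $B\otimes^\mathbb{L}_AB\simeq B$.
\end{itemize}

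Conversely, suppose $f$ is derived strong and $t_0(f)$ is a $\mathcal{C}^\infty$-localisation.
\begin{itemize}
    \item $\pi_0(f)$ is flat, so $f$ is derived strongly flat, and hence flat by Lemma \ref{derivedstronglyflatflat}.
    \item To get the homotopy epimorphism, consider the multiplication map $B\otimes^\mathbb{L}_AB\rightarrow{B}$ and compare homotopy groups.
    \item Since $B$ is flat over $A$, the base change $B\rightarrow{B\otimes^\mathbb{L}_AB}$ is flat. Flatness then gives
    \begin{equation*}
        \pi_*(B\otimes^\mathbb{L}_AB)\simeq \pi_*(B)\otimes_{\pi_0(B)}\pi_0(B\otimes^\mathbb{L}_AB),
    \end{equation*}
    with the same tensor computed using derived strongness and flatness.
    \item Moreover $\pi_0(B\otimes^\mathbb{L}_AB)\simeq\pi_0(B)\otimes_{\pi_0(A)}\pi_0(B)$, which is $\simeq\pi_0(B)$ by the hypothesis on $t_0(f)$.
    \item Therefore $\pi_*(B\otimes^\mathbb{L}_AB)\simeq\pi_*(B)$, compatibly with the multiplication map, so that map is an equivalence.
\end{itemize}
The main care in part (1) is identifying the derived tensor of flat objects on homotopy. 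This reduces to the standard Tor-vanishing statement contained in Lemma \ref{derivedstronglyflatflat}(3), applied in the heart $\textnormal{LH}(\textnormal{CBorn}_\mathbb{R})$.

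\textbf{Part (2).} Suppose $f$ is derived strong and $t_0(f)$ is an open immersion.
\begin{itemize}
    \item \emph{Membership in $\textcat{loc}^{\bm\tau_{\textcat{loc}}}$.} Choose a $\bm\tau_{\textcat{loc}}$-cover $\{t_0(U_i)\rightarrow{t_0(Y)}\}$ with each composite to $t_0(X)$ in $\textcat{loc}$. Lift each cover element to a derived strong map $U_i\rightarrow{Y}$ by setting $U_i:=\textnormal{Spec}(B\otimes^\mathbb{L}_{\pi_0(B)}\pi_0(U_i))$; the lifts use flatness exactly as in part (1). The composites $U_i\rightarrow{X}$ are derived strong, because compositions of derived strong maps are derived strong, and their truncations are localisations. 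Part (1) then shows they are in $\textcat{loc}$. Conservativity of the lifted family follows from conservativity on $\pi_0$ together with derived strongness, by checking on homotopy groups.
    \item \emph{The condition on $\textcat{Alg}$.} For $A\rightarrow{C}$ an open immersion with $C\in\textcat{Alg}$, we must show $B\otimes^\mathbb{L}_AC\in\textcat{Alg}$. The truncation of this is $t_0(B)\otimes^\mathbb{L}_{\pi_0(A)}\pi_0(C)$, which lies in $\textcat{Alg}$ by hypothesis. Derived strongness of the base-changed map then reconstructs $B\otimes^\mathbb{L}_AC$ as $C\otimes_{\pi_0(C)}$ of this truncation.
\end{itemize}

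I expect the main obstacle to be this last closure claim. It needs that $\textcat{Alg}$ is closed under such derived strong (flat) extensions. I would try to deduce this from Theorem \ref{fullyfaithfulembed} by exhibiting the extension as a sifted colimit of objects of $\mathcal{C}^\infty_{bb}$. This is where the argument may need an extra hypothesis.
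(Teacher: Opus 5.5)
Your part (1) is correct and follows the paper's route. Flatness is traded for derived strong flatness via Lemma~\ref{derivedstronglyflatflat}, and the forward direction is the base change of $B\otimes^{\mathbb{L}}_AB\simeq B$ along $A\to\pi_0(A)$ using Lemma~\ref{derivedstrongpi0}. For the converse the paper cites Ben-Bassat--Kelly--Kremnizer, Proposition 2.6.165, while you argue directly. Your argument works and is cleanest phrased through the unit $i\colon B\to B\otimes^{\mathbb{L}}_AB$, of which the multiplication $\mu$ is a retraction: $\pi_0(\mu)$ is an isomorphism, hence so is $\pi_0(i)$; derived strongness of the flat map $i$ then makes $i$ an equivalence, and therefore $\mu$ is one too.

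Part (2), however, has a genuine gap. The lift $U_i:=\textnormal{Spec}(B\otimes^{\mathbb{L}}_{\pi_0(B)}\pi_0(U_i))$ is not defined. The truncation map goes $B\to\pi_0(B)$, and $B$ is in general not a $\pi_0(B)$-algebra, so nothing can be base changed from $\pi_0(B)$ up to $B$. Part (1) does not help, since it only recognises localisations among maps that are already given; it constructs nothing. Producing a flat $B\to B_i$ whose truncation is a prescribed localisation $\pi_0(B)\to D_i$ is a topological-invariance statement and needs real input. One route is as follows. A homotopy epimorphism $\pi_0(B)\to D_i$ has $\mathbb{L}_{D_i/\pi_0(B)}\simeq 0$, by base change of cotangent complexes and $D_i\otimes^{\mathbb{L}}_{\pi_0(B)}D_i\simeq D_i$. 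So along each square-zero extension $B_{\leq n+1}\to B_{\leq n}$ of the Postnikov tower the classifying derivation extends uniquely to the previous lift. The next lift is then formed exactly as $A_j\oplus_{d_j'}\Omega M_j'$ is in the proof of Lemma~\ref{obstructioncinfinity}, and Postnikov compatibility (Lemma~\ref{postnikovcinfinity}) lets you pass to the limit. The paper's short proof never base changes classical covers upwards. It rests on the fact that $\bm\tau_{\mathcal{C}^\infty}$-covers are derived strong, working with covers of $Y$ itself, so that part (1) applies directly to the composites $U_i\to X$.

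Your second bullet has the same defect and is, as you suspect, where the argument is incomplete. The expression $C\otimes_{\pi_0(C)}(-)$ is again ill-formed. Derived strongness of $C\to B\otimes^{\mathbb{L}}_AC$ only identifies homotopy objects, $\pi_*(B\otimes^{\mathbb{L}}_AC)\simeq\pi_*(C)\otimes_{\pi_0(C)}\pi_0(B\otimes^{\mathbb{L}}_AC)$; it does not exhibit $B\otimes^{\mathbb{L}}_AC$ as an object of $\textcat{Alg}$. Theorem~\ref{fullyfaithfulembed} cannot supply this, since full faithfulness says nothing about the essential image. Even to invoke the hypothesis on $t_0(f)$ you must first check two things:
\begin{enumerate}
\item $\pi_0(A)\to\pi_0(C)$ is an open immersion. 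It is the base change of $A\to C$ along $A\to\pi_0(A)$ by Lemma~\ref{derivedstrongpi0}, once $A\to C$ is known to be derived strong.
\item $\pi_0(C)\in\textcat{Alg}$.
\end{enumerate}
After that you still need a transfer principle. For example, you could show that the stage-by-stage formally \'etale lift above can be carried out inside $\textcat{Alg}$, meaning $\textcat{Alg}$ is closed under the relevant square-zero extensions and Postnikov limits. Combined with uniqueness of such lifts, this would identify $B\otimes^{\mathbb{L}}_AC$ with an object of $\textcat{Alg}$. As written, part (2) is not proved.
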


\begin{proof}If a morphism $f:Y\rightarrow{X}$ is a $\mathcal{C}^\infty$-localisation, then it is derived strong by Lemma \ref{derivedstronglyflatflat}. We can then use flatness to easily show that $t_0(f)$ is a $\mathcal{C}^\infty$-localisation. The converse is true using Lemma \ref{derivedstronglyflatflat} and \cite[Proposition 2.6.165]{ben-bassat_perspective_2024}. The result in the case of open immersions follows using Lemma \ref{derivedstrongpi0}, Lemma \ref{derivedstronglyflatflat}, and \cite[ Proposition 2.6.165]{ben-bassat_perspective_2024}, along with the statement that covers in $\bm\tau_{\mathcal{C}^\infty}$ are derived strong.
\end{proof}

Since covers in $\bm\tau_{\mathcal{C}^\infty}$ are covers in the faithfully flat topology, we can easily show that $\textcat{QCoh}$ satisfies hyperdescent using a similar proof to \cite[Lemma 6.7.8]{savage_representability_2024} (see also \cite[c.f. Lemma 2.2.2.13]{toen_homotopical_2008}). If we consider the presheaf $\textcat{Dls}:\textcat{DAff}^{cn}(\textcat{Ch}(\textnormal{CBorn}_\mathbb{R}))^{op}\rightarrow{\textcat{Pr}^{\mathbb{L},\otimes}}$ which sends any $\textnormal{Spec}(A)$ to the category of strongly dualisable modules, then by \cite[c.f. Lemma 7.2.35]{ben-bassat_perspective_2024}, $\textcat{Dls}$ also satisfies hyperdescent for $\bm\tau_{\mathcal{C}^\infty}$-covers. 

We recall the notation $n$-representable$|_{\mathcal{C}^\infty\textcat{DBAff}}$ and $n\textcat{-open}_{\mathcal{C}^\infty}|_{\mathcal{C}^\infty\textcat{DBAff}}$ from Definition \ref{geometricstackdefinition}. 

\begin{cor}\phantomsection\label{infinitesimalcinfinitycor}
   Suppose that we have an $n$-representable$|_{\mathcal{C}^\infty\textcat{DBAff}}$ morphism $f:\mathcal{F}\rightarrow{\mathcal{G}}$ of stacks in $\textcat{Stk}(\mathcal{C}^\infty\textcat{DBAff},\bm\tau_{\mathcal{C}^\infty}|_{\mathcal{C}^\infty\textcat{DBAff}})$. Then, $f$ is in $n\textcat{-open}_{\mathcal{C}^\infty}|_{\mathcal{C}^\infty\textcat{DBAff}}$ if $f$ satisfies that, for any $x:X=\textnormal{Spec}(A)\rightarrow{\mathcal{F}}$ and any $M\in\textcat{M}_{A,1}$, 
    \begin{equation*}
        \pi_0(\textnormal{Map}_{\textcat{Mod}_A}(\mathbb{L}_{\mathcal{F}/\mathcal{G},x},M))=0
    \end{equation*}The converse holds if $t_0(\mathcal{F})\rightarrow{t_0(\mathcal{G})}$ is in $n\textcat{-open}^{\heartsuit}_{\mathcal{C}^\infty}$.
\end{cor}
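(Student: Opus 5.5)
This statement is an infinitesimal criterion for open immersions, and I would deduce it from the corresponding general result in \cite{savage_representability_2024}. That result characterises maps in $n\textcat{-P}$ via their cotangent complexes, provided $\textcat{P}$ satisfies the \emph{obstruction conditions} of \cite[Definition 4.3.2]{savage_representability_2024} relative to the subcategory of affines. So the plan is to check that the triple
\begin{equation*}
(\mathcal{C}^\infty\textcat{DBAff},\ \bm\tau_{\mathcal{C}^\infty},\ \textcat{open}_{\mathcal{C}^\infty})
\end{equation*}
satisfies these obstruction conditions, with $\textcat{S}$ taken to be the class of formally \'etale (cotangent-complex-vanishing) maps. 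Once that is done, the general theorem applies verbatim.

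The proof would proceed in the following order.

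\textbf{Step 1: reduce to the affine case.} Because $f$ is $n$-representable, pulling back along any $X=\textnormal{Spec}(A)\rightarrow\mathcal{G}$ with $X\in\mathcal{C}^\infty\textcat{DBAff}$ reduces both the hypothesis and the conclusion to an $n$-geometric stack over $X$. Cotangent complexes are stable under this base change, so the vanishing condition transfers. The atlas condition in Definition \ref{geometricstackdefinition} is local for $\bm\tau_{\mathcal{C}^\infty}$, so one can then induct on $n$ down to $(-1)$-representable maps $Y\rightarrow X$ of affines.

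\textbf{Step 2: the affine case, sufficiency.} Suppose $\pi_0\textnormal{Map}(\mathbb{L}_{Y/X,x},M)=0$ for all $M\in\textcat{M}_{A,1}$. Since $\mathbb{L}_{Y/X}$ is connective and the modules in $\textcat{M}$ detect it, this forces $\mathbb{L}_{Y/X}\simeq 0$. I would then show, using the Postnikov tower $B\simeq\varprojlim_k B_{\leq k}$ and the fact that each $B_{\leq k+1}\rightarrow B_{\leq k}$ is a square-zero extension by a module in $\textcat{M}_{B,1}$, that $B$ is the unique lift of $\pi_0(B)$ along the tower of $A$. This gives $\pi_*(A)\otimes_{\pi_0 A}\pi_0(B)\simeq\pi_*(B)$, i.e. $f$ is derived strong. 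Corollary \ref{strongmapcinfinity}(2) then upgrades this to $f$ being an open immersion, provided $t_0(f)$ is one. In the sufficiency direction that truncated statement must come from the cotangent hypothesis on $\pi_0$: a formally \'etale, finitely presented map of $\mathcal{C}^\infty$-bornological rings should be locally a localisation. I would handle this by invoking the classical $\mathcal{C}^\infty$ result (\'etale maps are local diffeomorphisms), via the truncated representability context. For the purposes of this statement, I would phrase it so that the obstruction-condition axiom supplies this case.

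\textbf{Step 3: the converse.} Assume $t_0(f)$ is an open immersion, and that $f$ is one as well. Open immersions are flat homotopy monomorphisms, so $B\otimes^\mathbb{L}_AB\simeq B$. Hence
\begin{equation*}
\mathbb{L}_{B/A}\simeq \mathbb{L}_{B/B\otimes_A^\mathbb{L}B}\otimes\cdots,
\end{equation*}
and the standard base-change argument for the diagonal gives $\mathbb{L}_{B/A}\otimes_B B\simeq 0$. Therefore $\mathbb{L}_{Y/X}=0$ and the $\pi_0$ condition holds. At the stack level, the condition glues along the atlas because $\textcat{QCoh}$ satisfies hyperdescent for $\bm\tau_{\mathcal{C}^\infty}$, as noted above.

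\textbf{Main obstacle.} I expect the hardest step to be the middle of Step 2: showing that vanishing of the cotangent complex forces derived strongness in this bornological setting. This needs $\mathcal{C}^\infty\textcat{DBAff}$ to be closed under the square-zero extensions $A\oplus_d\Omega M$ and under Postnikov truncations, together with the compatibility $(B\otimes^\mathbb{L}C)_{\leq k}\simeq B_{\leq k}\otimes^\mathbb{L}C$ from \cite[Lemma 5.2.1]{savage_representability_2024}. I would first try to verify these closure properties using the sifted-colimit description $\mathcal{P}_\Sigma(\mathcal{C}^{\infty,op}_{bb})$ and Theorem \ref{fullyfaithfulembed}.
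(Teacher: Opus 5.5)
There is a genuine gap, and it is in your Step 2. You read the statement literally, as ``vanishing $\Rightarrow$ $n$-open'', with the $t_0$ hypothesis attached to the reverse implication. That unconditional implication is false, and the paper's proof establishes the other arrangement:
\begin{itemize}
\item $f\in n\textcat{-open}_{\mathcal{C}^\infty}$ implies the vanishing condition with no extra hypothesis. This holds because $\textcat{open}_{\mathcal{C}^\infty}\subseteq\textcat{loc}^{\bm\tau_{\mathcal{C}^\infty}}$, $\mathcal{C}^\infty$-localisations are formally perfect, and \cite[Proposition 6.5.2]{savage_representability_2024} applies since $\textcat{Dls}$ satisfies descent.
\item The vanishing condition \emph{together with} $t_0(f)\in n\textcat{-open}^\heartsuit_{\mathcal{C}^\infty}$ implies $f\in n\textcat{-open}_{\mathcal{C}^\infty}$.
\end{itemize}

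Your claim that $\pi_0\textnormal{Map}(\mathbb{L}_{\mathcal{F}/\mathcal{G},x},M)=0$ for all $M\in\textcat{M}_{A,1}$ forces the cotangent complex to vanish is wrong. The test modules are $1$-connective, so a free cotangent complex passes: $\pi_0\textnormal{Map}(A^r,M)\simeq\pi_0(M)^r=0$. Concretely, take the projection $p:\textnormal{Spec}(\mathcal{C}^\infty(\mathbb{R}^2))\rightarrow\textnormal{Spec}(\mathcal{C}^\infty(\mathbb{R}))$. Since $\textnormal{Sym}(\mathbb{R}^n)\rightarrow\mathcal{C}^\infty(\mathbb{R}^n)$ is a homotopy epimorphism, $p$ has relative cotangent complex free of rank one, so it satisfies the hypothesis. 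Yet it lies in no $n\textcat{-open}_{\mathcal{C}^\infty}$: such maps are locally on the source $\mathcal{C}^\infty$-localisations, and hence locally have vanishing cotangent complex.

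The condition is an obstruction-vanishing (formal smoothness) condition, not formal \'etaleness, so you cannot extract truncated openness from it. The obstruction conditions of \cite[Definition 4.3.2]{savage_representability_2024} do not help either. They are a separate axiom of the representability context, verified in the paper with $\textcat{S}=\textcat{hm}$, and say nothing about $t_0(f)$. You also have no finite presentation hypothesis with which to run an inverse-function-type argument.

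The missing idea is that the hypothesis on $t_0(f)$ belongs to this direction, and it is exactly what your Postnikov mechanism needs.
\begin{enumerate}
\item As in the proof of \cite[Proposition 6.5.2]{savage_representability_2024}, reduce to an affine $f:Y=\textnormal{Spec}(B)\rightarrow\textnormal{Spec}(A)=X$ in $\textcat{fP}^{\bm\tau_{\mathcal{C}^\infty}}$ with $t_0(f)\in\textcat{open}^\heartsuit_{\mathcal{C}^\infty}$.
\item Choose the cover $\{U_i\rightarrow Y\}$ so that each $t_0(U_i)\rightarrow t_0(X)$ is also a $\mathcal{C}^\infty$-localisation. Flatness of these truncations is the input your lifting argument lacks: without flatness on $\pi_0$ you cannot get $\pi_*(A)\otimes_{\pi_0(A)}\pi_0(B)\simeq\pi_*(B)$.
\item With that flatness, each formally perfect $U_i\rightarrow X$ is derived strong by \cite[Corollary 6.3.4]{savage_representability_2024}.
\item Derived strongness descends to $f$ along the derived strong formal covering family by \cite[Lemma 6.1.4]{savage_representability_2024}.
\item Corollary \ref{strongmapcinfinity}(2) then shows $f$ is an open immersion.
\end{enumerate}

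Your Step 3 is essentially the paper's easy direction, and it needs no hypothesis on $t_0(f)$. It is fine once you account for open immersions being $\mathcal{C}^\infty$-localisations only locally on the source, so that you pass through the cover and use conservativity.
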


\begin{proof}
Indeed, we note that, since $\textcat{Dls}$ satisfies descent for $\bm\tau_{\mathcal{C}^\infty}$-covers, the forwards direction follows from \cite[Proposition 6.5.2]{savage_representability_2024} since $\textcat{open}_{\mathcal{C}^\infty}\subseteq\textcat{loc}^{\bm\tau_{\mathcal{C}^\infty}}$ and $\mathcal{C}^\infty$-localisations are formally perfect by \cite[Proposition 3.5.5]{savage_representability_2024}. To prove the converse, we note that, by the proof of \cite[Proposition 6.5.2]{savage_representability_2024}, it suffices to show that if we have a morphism $f:Y=\textnormal{Spec}(B)\rightarrow{\textnormal{Spec}(A)=X}$ in $\mathcal{C}^\infty\textcat{DBAff}$ which is in $\textcat{fP}^{\bm\tau_{\mathcal{C}^\infty}}$ and satisfies that its truncation lies in $\textcat{open}_{\mathcal{C}^\infty}^\heartsuit$, then $f$ lies in $\textcat{open}_{\mathcal{C}^\infty}$. We can also assume that the cover $\{U_i\rightarrow{Y}\}_{i\in I}$ in $\bm{\tau}_{\mathcal{C}^\infty}$, chosen such that each $U_i\rightarrow{X}$ is formally perfect, also satisfies that $t_0(U_i)\rightarrow{t_0(X)}$ is in $\textcat{loc}^\heartsuit$. Therefore, we see that, since $t_0(U_i)\rightarrow{t_0(X)}$ is flat and formally perfect, the formally perfect morphism $U_i\rightarrow{X}$ is derived strong by \cite[Corollary 6.3.4]{savage_representability_2024}. Since covers in $\bm\tau_{\mathcal{C}^\infty}$ are formal covering families and derived strong by Corollary \ref{strongmapcinfinity}, we see that the morphism $f:Y\rightarrow{X}$ is derived strong by \cite[Lemma 6.1.4]{savage_representability_2024}. Hence, we conclude that $f$ is in $\textcat{open}_{\mathcal{C}^\infty}$ by Corollary \ref{strongmapcinfinity}.
\end{proof}

It is clear from our definitions that we have strong relative $(\infty,1)$-geometry tuples
\begin{equation*}
    (\textcat{DAff}^{cn}(\textcat{Ch}(\textnormal{CBorn}_\mathbb{R})),\bm\tau_{\mathcal{C}^\infty},\textcat{open}_{\mathcal{C}^\infty},\mathcal{C}^\infty\textcat{DBAff})
\end{equation*}and \begin{equation*}
    (\mathcal{C}^\infty\textcat{DBAff},\bm\tau_{\mathcal{C}^\infty}|_{\mathcal{C}^\infty\textcat{DBAff}},\textcat{open}_{\mathcal{C}^\infty}|_{\mathcal{C}^\infty\textcat{DBAff}},\mathcal{C}^\infty\textcat{DAff})
\end{equation*}in the sense of \cite[Definition 2.1.2]{savage_representability_2024}. Therefore, by \cite[Proposition 6.7]{kelly_analytic_2022}, we have a chain of fully faithful inclusions
\begin{equation*}
    \textcat{Stk}(\mathcal{C}^\infty\textcat{DAff},\bm\tau_{\mathcal{C}^\infty})\rightarrow{\textcat{Stk}(\mathcal{C}^\infty\textcat{DBAff},\bm\tau_{\mathcal{C}^\infty})}\rightarrow{\textcat{Stk}(\textcat{DAff}^{cn}(\textcat{Ch}(\textnormal{CBorn}_\mathbb{R})),\bm\tau_{\mathcal{C}^\infty})}
\end{equation*}where we have dropped the restriction notation for ease of notation. Moreover, there is a chain of full subcategory inclusions on the level of $n$-geometric stacks.

\subsection{A Representability Context for Derived $\mathcal{C}^\infty$-Bornological Geometry}

We now endow the $(\infty,1)$-geometry tuples in the previous section with the necessary structure so that we obtain representability contexts. Indeed, the following is clear.

\begin{lem}\phantomsection\label{tuplecinfinity}
    $(\mathcal{C}^\infty\textcat{DBAff},\bm\tau_{\mathcal{C}^\infty}|_{\mathcal{C}^\infty\textcat{DBAff}},\textcat{open}_{\mathcal{C}^\infty}|_{\mathcal{C}^\infty\textcat{DBAff}},\mathcal{C}^\infty\textnormal{BAff})$ is a strong relative $(\infty,1)$-geometry tuple. 
\end{lem}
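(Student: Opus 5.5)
The plan is to check directly the axioms of a strong relative $(\infty,1)$-geometry tuple from \cite[Definition 2.1.2]{savage_representability_2024}, transporting them from the larger tuple
\begin{equation*}
(\mathcal{C}^\infty\textcat{DBAff},\bm\tau_{\mathcal{C}^\infty}|_{\mathcal{C}^\infty\textcat{DBAff}},\textcat{open}_{\mathcal{C}^\infty}|_{\mathcal{C}^\infty\textcat{DBAff}},\mathcal{C}^\infty\textcat{DAff})
\end{equation*}
already shown to be such a tuple. The same argument should work with the distinguished subcategory $\mathcal{C}^\infty\textcat{DAff}$ replaced by $\mathcal{C}^\infty\textnormal{BAff}$, regarded via Theorem \ref{fullyfaithfulembed} as the full subcategory of $\mathcal{C}^\infty\textcat{DBAff}$ of discrete objects, i.e.\ those $\textnormal{Spec}(A)$ with $A$ in the heart. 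Most axioms concern only the ambient category, the topology and the class $\textcat{open}_{\mathcal{C}^\infty}$:
\begin{itemize}
\item \textbf{Stability of open immersions.} The class $\textcat{open}_{\mathcal{C}^\infty}$ contains equivalences and is stable under composition and pullback, as remarked after the definition.
\item \textbf{Topology axioms.} The covers of $\bm\tau_{\mathcal{C}^\infty}$ are formal covering families of flat homotopy monomorphisms. Formal covering families are stable under base change and composition because conservativity of the family $\{(f_j)_!\}$ is preserved, and refinements and isomorphisms are clear.
\item \textbf{Covers consist of open immersions.} This holds by construction.
\end{itemize}
These verifications are literally the same as for the previous tuple, so I would simply cite them.

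The new content is the compatibility with the distinguished subcategory $\mathcal{C}^\infty\textnormal{BAff}$.
\begin{itemize}
\item \textbf{Base change of covers.} For a $\bm\tau_{\mathcal{C}^\infty}$-cover $\{\textnormal{Spec}(B_j)\rightarrow\textnormal{Spec}(A)\}$ and a map $A\rightarrow C$ with $C$ discrete, I need $B_j\otimes^\mathbb{L}_AC$ to again be discrete and lie in $\mathcal{C}^\infty\textnormal{BornRing}$. Discreteness follows from flatness, using Lemma \ref{derivedstronglyflatflat}: a flat map sends modules in the heart to the heart. Membership in $\mathcal{C}^\infty\textnormal{BornRing}$ follows from the defining property of $\bm\tau_{\mathcal{C}^\infty}$ applied with $\textcat{Alg}=\mathcal{C}^\infty\textnormal{BornRing}$.
\item \textbf{Base change of open immersions.} The analogous statement for $\textcat{open}_{\mathcal{C}^\infty}$ uses the definition of open immersions with $\textcat{Alg}=\mathcal{C}^\infty\textnormal{BornRing}$, together with flatness of localisations.
\item \textbf{Strongness.} The ``strong'' condition asks that maps from distinguished objects behave well under truncation. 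For this I would use Corollary \ref{strongmapcinfinity}: covers and open immersions are derived strong, so by Lemma \ref{derivedstrongpi0} we get $\pi_0(B)\simeq B\otimes^\mathbb{L}_A\pi_0(A)$. Hence truncation commutes with these base changes, and $t_0$ sends covers and open immersions in $\mathcal{C}^\infty\textcat{DBAff}$ to covers and open immersions between objects of $\mathcal{C}^\infty\textnormal{BAff}$.
\end{itemize}

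The main obstacle is confirming that $\pi_0$ of an object of $\mathcal{C}^\infty\textcat{DBornRing}$ actually lies in $\mathcal{C}^\infty\textnormal{BornRing}$, so that $t_0$ lands in the distinguished subcategory. I would argue this from the presentation $\mathcal{P}_\Sigma(\mathcal{C}^{\infty,op}_{bb})$. Here $\pi_0$ is computed by the discrete sifted cocompletion, since $\pi_0$ commutes with sifted colimits and with finite products. It then remains to check that $\mathcal{C}^\infty_{\ell^1}$ intertwines this with $\pi_0$ in $\textcat{DAlg}^{cn}(\textcat{Ch}(\textnormal{CBorn}_\mathbb{R}))$. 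That comparison holds because both functors are left Kan extended from $\mathcal{C}^{\infty,op}_{bb}$, on which the objects are already discrete.
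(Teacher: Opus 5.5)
Your core argument matches what the paper means when it calls the lemma ``clear''; the paper gives no further proof. That core argument is: the axioms about the distinguished subcategory reduce to the $\textcat{Alg}$-clauses in the definitions of $\bm\tau_{\mathcal{C}^\infty}$ and $\textcat{open}_{\mathcal{C}^\infty}$, taken with $\textcat{Alg}=\mathcal{C}^\infty\textnormal{BornRing}$, exactly as for the tuple with $\mathcal{C}^\infty\textcat{DAff}$.

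Your ``Strongness'' bullet, however, verifies the wrong condition. The axioms of a strong relative $(\infty,1)$-geometry tuple in \cite[Definition 2.1.2]{savage_representability_2024} make no reference to a $t$-structure. The paper applies the same notion to $(\textcat{DAff}^{cn}(\textcat{Ch}(\textnormal{CBorn}_\mathbb{R})),\bm\tau_{\mathcal{C}^\infty},\textcat{open}_{\mathcal{C}^\infty},\mathcal{C}^\infty\textcat{DBAff})$ and to $(\mathcal{C}^\infty\textcat{DBAff},\dots,\mathcal{C}^\infty\textcat{DAff})$, where the distinguished subcategory is not a heart at all. What you prove in that bullet (derived strongness, truncations of covers being covers) is the content of Conditions (2) and (4) of the representability context. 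The paper checks those separately in Lemma \ref{cinfinitycontinuous} and Corollary \ref{strongmapcinfinity}.

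The conditions on the distinguished subcategory concern how it interacts with covers and $\textcat{P}$-maps. For instance, the remark after Theorem \ref{mappingstackngeometricthm} shows that stability under base change along $\textcat{P}$ is part of the definition. These conditions are supplied by the pullback clause in $\textcat{open}_{\mathcal{C}^\infty}$ together with both directions of the biconditional in $\bm\tau_{\mathcal{C}^\infty}$. You use the ``only if'' direction, but never the ``if'' direction, which is what makes membership in $\mathcal{C}^\infty\textnormal{BAff}$ local for $\bm\tau_{\mathcal{C}^\infty}$.

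Your ``main obstacle'' is likewise not part of this lemma. The statement only needs $\mathcal{C}^\infty\textnormal{BAff}$ as a full subcategory of $\mathcal{C}^\infty\textcat{DBAff}$, namely the $0$-truncated objects of $\mathcal{P}_\Sigma(\mathcal{C}^{\infty,op}_{bb})$. Compatibility of $t_0$ with $\mathcal{C}^\infty_{\ell^1}$ is what the paper implicitly uses to identify $\mathcal{A}^\heartsuit$ with $\mathcal{C}^\infty\textnormal{BAff}$ in Corollary \ref{tuplediffgeometryrep}.

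Your Kan-extension argument also proves less than your first bullet relies on. It shows that $\pi_0\circ\mathcal{C}^\infty_{\ell^1}$ agrees with the discrete extension on generators. It does not show that $\mathcal{C}^\infty_{\ell^1}$ of a $0$-truncated object has vanishing higher homotopy, which you tacitly assume when you identify $\mathcal{C}^\infty\textnormal{BAff}$ with the objects lying in the heart. That identification, and the flatness step built on it, are unnecessary anyway: the defining clause of $\bm\tau_{\mathcal{C}^\infty}$ gives $B_j\otimes^\mathbb{L}_AC\in\mathcal{C}^\infty\textnormal{BornRing}$ directly.

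If you drop the strongness bullet and the obstacle paragraph and add the locality clause, your proposal is the paper's argument.
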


\begin{lem}\phantomsection\label{cinfinitycontinuous}
    $\iota|_{\mathcal{C}^\infty\textnormal{BAff}}:(\mathcal{C}^\infty\textnormal{BAff},\bm{\tau}_{\mathcal{C}^\infty}^\heartsuit)\rightarrow(\mathcal{C}^\infty\textcat{DBAff},\bm{\tau}_{\mathcal{C}^\infty}|_{\mathcal{C}^\infty\textcat{DBAff}})$ is a continuous functor of $(\infty,1)$-sites. 
\end{lem}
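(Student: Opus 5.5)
The plan is to verify the definition of a continuous functor of $(\infty,1)$-sites, in the sense used in \cite{savage_representability_2024}, for the inclusion $\iota|_{\mathcal{C}^\infty\textnormal{BAff}}$. Two things must be checked. First, $\iota$ sends $\bm\tau^\heartsuit_{\mathcal{C}^\infty}$-covers to $\bm\tau_{\mathcal{C}^\infty}|_{\mathcal{C}^\infty\textcat{DBAff}}$-covers. Second, $\iota$ commutes with the pullbacks along covering maps that appear in the definition, so that restriction along $\iota$ preserves sheaves.

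\emph{Covers.} By definition, a $\bm\tau^\heartsuit_{\mathcal{C}^\infty}$-cover has the form $\{t_0(U_j)\rightarrow t_0(X)\}_{j\in J}$, where $\{U_j\rightarrow X\}$ is a $\bm\tau_{\mathcal{C}^\infty}$-cover in $\mathcal{C}^\infty\textcat{DBAff}$. Write $X=\textnormal{Spec}(A)$ and $U_j=\textnormal{Spec}(B_j)$. Each $A\rightarrow B_j$ is flat, hence derived strongly flat by Lemma \ref{derivedstronglyflatflat}. Lemma \ref{derivedstrongpi0} then gives $\pi_0(B_j)\simeq B_j\otimes^\mathbb{L}_A\pi_0(A)$, so $\{t_0(U_j)\rightarrow t_0(X)\}$ is the derived base change of the original cover along $t_0(X)\rightarrow X$. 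I then apply the remark that $\mathcal{C}^\infty$-localisations, and $\bm\tau_{\mathcal{C}^\infty}$-covers, are stable under pullback. This needs three checks:
\begin{itemize}
\item flatness is stable under base change, by Lemma \ref{derivedstronglyflatflat};
\item homotopy monomorphisms are stable under base change, since $(B\otimes_A C)\otimes_C(B\otimes_AC)\simeq (B\otimes_AB)\otimes_AC$;
\item conservativity of the family $\{(f_j)_!\}$ is preserved after base change to $\pi_0(A)$, via the flatness and the equivalence $M\otimes_{\pi_0 A}\pi_0 B_j\simeq M\otimes_A B_j$.
\end{itemize}
The $\textcat{Alg}$-condition in the definition of $\bm\tau_{\mathcal{C}^\infty}$ is preserved because $\pi_0(A)\in\mathcal{C}^\infty\textnormal{BornRing}\subseteq\textcat{Alg}$. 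Hence $\iota$ of the cover is a $\bm\tau_{\mathcal{C}^\infty}$-cover of $\iota(t_0 X)$.

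\emph{Pullbacks.} Fibre products of covering maps in $\mathcal{C}^\infty\textnormal{BAff}$ are computed by $\pi_0$ of the derived tensor product. Since covers are flat, $\pi_0(B_j)\otimes^\mathbb{L}_{\pi_0 A}C\simeq \pi_0(B_j)\otimes_{\pi_0A}C$ for discrete $C$. So $\iota$ preserves fibre products along covering maps, and the Čech nerves of covers agree in both sites. It follows that $\iota^*$ sends stacks to stacks, which is continuity.

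The main obstacle is the conservativity clause for the truncated family. This is where the proof relies on Lemma \ref{derivedstronglyflatflat} and \cite[Proposition 2.6.165]{ben-bassat_perspective_2024}, as in Corollary \ref{strongmapcinfinity}. If the direct argument is awkward, I would fall back on the corresponding general result in \cite{savage_representability_2024} for topologies consisting of derived strong maps.
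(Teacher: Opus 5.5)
Your proposal is correct and is essentially the paper's argument. Both use Lemma \ref{derivedstrongpi0} to identify $\pi_0(B_j)$ with $B_j\otimes^{\mathbb{L}}_A\pi_0(A)$ and then check the cover axioms for the truncated family; the paper cites Corollary \ref{strongmapcinfinity} and \cite[Lemma 6.1.2]{savage_representability_2024} where you argue by base-change stability, and your extra check on fibre products along covers does not appear in the paper's proof.

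One small correction: the $\textcat{Alg}$-condition transfers because $\pi_0(B_j)\otimes^{\mathbb{L}}_{\pi_0(A)}C\simeq B_j\otimes^{\mathbb{L}}_AC$ for every map $\pi_0(A)\rightarrow C$. It does not follow from $\pi_0(A)$ itself lying in $\textcat{Alg}$.
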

\begin{proof}Suppose that $\{t_0(U_j)=\textnormal{Spec}(\pi_0(B_j))\rightarrow{\textnormal{Spec}(\pi_0(A))=t_0(X)}\}_{j\in J}$ is a cover in $\bm{\tau}_{\mathcal{C}^\infty}^\heartsuit$ corresponding to the truncation of a cover $\{U_j\rightarrow{X}\}_{j\in J}$ in $\bm{\tau}_{\mathcal{C}^\infty}|_{\mathcal{C}^\infty\textcat{DBAff}}$. The cover is a formal covering family by \cite[Lemma 6.1.2]{savage_representability_2024}. We note that $\pi_0(A)\rightarrow\pi_0(B_j)$ is a $\mathcal{C}^\infty$-localisation and the morphism $A\rightarrow{B_j}$ is derived strong by Corollary \ref{strongmapcinfinity}. Therefore, using Lemma \ref{derivedstrongpi0}, we can show that $\pi_0(A)\rightarrow{\pi_0(B_j)}$ satisfies the conditions to be a cover in the finite $\mathcal{C}^\infty$-topology.
\end{proof}

\begin{lem}\phantomsection\label{cinfinityrepresentables}
     For any finite collection $\{U_i\}_{i\in I}$ in $\textcat{DAff}^{cn}(\textcat{Ch}(\textnormal{CBorn}_\mathbb{R}))$, the map $\coprod_{i\in I} h(U_i)\rightarrow{h(\coprod_{i\in I} U_i)}$ is an equivalence in $\textcat{Stk}(\mathcal{C}^\infty\textcat{DBAff},\bm{\tau}_{\mathcal{C}^\infty}|_{\mathcal{C}^\infty\textcat{DBAff}})$,
 \end{lem}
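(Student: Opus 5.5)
The plan is to reduce the statement to the observation that the canonical inclusions $U_i\rightarrow\coprod_{i\in I}U_i$ form a $\bm\tau_{\mathcal{C}^\infty}$-cover whose pairwise intersections are empty. Sheafification then forces the coproduct of the representables to agree with the representable of the coproduct. Write $U_i=\textnormal{Spec}(B_i)$, so that $\coprod_i U_i=\textnormal{Spec}(\prod_i B_i)$, with the finite product taken in $\textcat{DAlg}^{cn}(\textcat{Ch}(\textnormal{CBorn}_\mathbb{R}))$. Since finite products of connective modules are computed termwise and $\prod_i B_i\simeq\bigoplus_i B_i$ as modules, each projection $p_j:\prod_i B_i\rightarrow B_j$ is the localisation at the idempotent $e_j$.

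First I will check that $\{U_j\rightarrow\coprod_i U_i\}_{j\in I}$ is a cover in $\bm\tau_{\mathcal{C}^\infty}$.
\begin{enumerate}
\item For flatness, $B_j\otimes^\mathbb{L}_{\prod B_i}M\simeq e_jM$ is a direct summand of $M$, so the base change functor is exact. Lemma \ref{derivedstronglyflatflat} then gives flatness.
\item For the homotopy monomorphism condition, $B_j\otimes^\mathbb{L}_{\prod B_i}B_j\simeq e_jB_j=B_j$.
\item For the formal covering condition, the family $\{M\mapsto e_jM\}_j$ is conservative, because $M\simeq\bigoplus_j e_jM$.
\item For the $\textcat{Alg}$ condition, given $\prod_i B_i\rightarrow C$, we have $C\simeq\prod_j(B_j\otimes^\mathbb{L}C)$. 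So $C$ lies in $\mathcal{C}^\infty\textcat{DBornRing}$ if and only if each factor does, since the image of $\mathcal{C}^\infty_{\ell^1}$ is closed under finite products and retracts; here I use that $\mathcal{P}_\Sigma$ is closed under finite products and Theorem \ref{fullyfaithfulembed}.
\end{enumerate}
I would also record that $U_j\times_{\coprod U_i}U_k\simeq\textnormal{Spec}(B_j\otimes^\mathbb{L}B_k)\simeq\textnormal{Spec}(0)$ for $j\neq k$, because $e_je_k=0$.

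Next I will show that the empty family covers $\textnormal{Spec}(0)$. Its module category is trivial, so conservativity holds vacuously. Hence any stack $\mathcal{F}$ satisfies $\mathcal{F}(0)\simeq *$.

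I will then apply Čech descent along this cover. Since the double and higher intersections are $\textnormal{Spec}(0)$ off the diagonal, the Čech nerve of $\coprod_j h(U_j)\rightarrow h(\coprod U_i)$ is constant after stackification. Therefore $\coprod_j h(U_j)\rightarrow h(\coprod_iU_i)$ is both an effective epimorphism and a monomorphism, and so an equivalence. Equivalently, for every stack $\mathcal{F}$ the descent condition reads $\mathcal{F}(\prod B_i)\simeq\prod_i\mathcal{F}(B_i)$, and the Yoneda lemma finishes the argument. The statement is stated for $U_i$ in the larger category; the same argument applies after restriction, since each $U_i$ is itself a stack via the fully faithful chain of inclusions.

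I expect the main obstacle to be the $\textcat{Alg}$-closure condition in the definition of $\bm\tau_{\mathcal{C}^\infty}$, together with the claim that $\textnormal{Spec}(0)$ lies in $\mathcal{C}^\infty\textcat{DBAff}$. For the first, I would argue that $\mathcal{C}^\infty_{\ell^1}$ preserves finite products, using $\ell^1(\kappa)\oplus\ell^1(\mu)$ and Lemma \ref{tensorl1algebra}. For $\textnormal{Spec}(0)$, I would check that $0$ is the terminal object, which is the image of the constant functor at a point.
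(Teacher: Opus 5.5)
Your proposal follows the paper's proof: the paper also checks that $\{U_i\rightarrow\coprod_{j\in I}U_j\}_{i\in I}$ is a $\bm\tau_{\mathcal{C}^\infty}$-cover, via flatness, the homotopy monomorphism property, the formal covering condition, and the $\textcat{Alg}$-condition through $C\simeq\prod_j B_j\otimes^\mathbb{L}_{\prod_i B_i}C$. It then concludes by citing Corollary 2.3.7 of the earlier representability paper, and you simply unpack that citation into the explicit \v{C}ech argument, with empty pairwise intersections and the empty cover of $\textnormal{Spec}(0)$.

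One caveat concerns your planned justification of the $\textcat{Alg}$-condition, which is misdirected in two ways. First, Lemma \ref{tensorl1algebra} concerns $\hat{\otimes}$, i.e.\ coproducts of algebras (the products $\ell^1(\kappa)\oplus\ell^1(\mu)$ of spaces), not products of algebras. Second, $B_j\otimes^\mathbb{L}_{\prod_i B_i}C$ is not in general a retract of $C$ in $\textcat{DAlg}^{cn}(\textcat{Ch}(\textnormal{CBorn}_\mathbb{R}))$, because a unital section would require algebra maps into the other factors. What is really needed is that $\mathcal{C}^\infty_{\ell^1}$ preserves finite products, and that an idempotent in $\pi_0$ decomposes an object of $\mathcal{C}^\infty\textcat{DBornRing}$ as a product there. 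The paper likewise only asserts this closure property without proof.
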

 \begin{proof}Suppose that $U_j=\textnormal{Spec}(A_j)$ and consider the morphism $U_i\rightarrow{\coprod_{j\in I} U_j}$. We note that this map is flat and, moreover, by \cite[Lemma 6.6.6]{savage_representability_2024}, it is a homotopy monomorphism. It is clear that the family $\{U_i\rightarrow{\coprod_{j\in I} U_j\}_{i\in I}}$ is a formal covering family. Moreover, for any morphism $\prod_{j\in I}A_j\rightarrow{C}$, we note that $A_i\otimes_{\prod_{j\in I}A_j}C$ is the $j$-th component of $C$, and since $\textcat{Alg}$ is closed under finite equalisers and products we can easily see that $A_i\otimes_{\prod_{j\in I}A_j}C\in\textcat{Alg}$ if and only if $C\in\textcat{Alg}$. Hence, we can deduce that the family $\{U_i\rightarrow{\coprod_{j\in I} U_j\}_{i\in I}}$ is a cover in $\bm\tau_{\mathcal{C}^\infty}$. We then conclude by \cite[Corollary 2.3.7]{savage_representability_2024}. 
 \end{proof}

Let $\textcat{Mod}$ denote the collection of, for each $X=\textnormal{Spec}(A)\in\mathcal{C}^\infty\textcat{DBAff}$, the categories $\textcat{Mod}_A$ of $A$-modules. 

\begin{lem}\phantomsection\label{postnikovcinfinity}$(\texthom{\textbf{Ch}}(\textnormal{CBorn}_\mathbb{R}),\bm\tau_{\mathcal{C}^\infty},\textcat{open}_{\mathcal{C}^\infty},\mathcal{C}^\infty\textcat{DBAff},\textcat{Mod})$ is a flat Postnikov compatible derived geometry context in the sense of \cite[Definition 4.5.2]{savage_representability_2024}.
    
\end{lem}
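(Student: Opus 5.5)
The plan is to check, one at a time, the axioms of a flat Postnikov compatible derived geometry context from \cite[Definition 4.5.2]{savage_representability_2024}. Most of them should reduce to facts already established for the ambient context $\textcat{DAff}^{cn}(\textcat{Ch}(\textnormal{CBorn}_\mathbb{R}))$ in \cite{ben-bassat_perspective_2024} and \cite{savage_representability_2024}. The remaining ones should follow from the good behaviour of $\mathcal{C}^\infty\textcat{DBAff}$ under truncations and Postnikov towers.

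First I would confirm that the tuple is a derived geometry context.
\begin{itemize}
\item By Lemma \ref{tuplecinfinity} and the discussion preceding it, the data form a strong relative $(\infty,1)$-geometry tuple.
\item Lemma \ref{cinfinitycontinuous} gives continuity of the truncated site.
\item Lemma \ref{cinfinityrepresentables} handles finite coproducts of representables.
\item Descent for $\textcat{QCoh}$ and $\textcat{Dls}$ along $\bm\tau_{\mathcal{C}^\infty}$-covers was noted above, using that these covers are faithfully flat.
\item Taking $\textcat{Mod}$ to be all modules makes the module-compatibility conditions (stability under base change and under the square-zero/derivation constructions) automatic.
\end{itemize}
Flatness is the next ingredient. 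Covers in $\bm\tau_{\mathcal{C}^\infty}$ and maps in $\textcat{open}_{\mathcal{C}^\infty}$ are flat, since $\mathcal{C}^\infty$-localisations are flat by definition and open immersions lie in $\textcat{loc}^{\bm\tau_{\textcat{loc}}}$. Moreover, by Lemma \ref{derivedstronglyflatflat} and Corollary \ref{strongmapcinfinity}, they are derived strong, with truncations again in the corresponding classes.

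For Postnikov compatibility, I need three things.
\begin{enumerate}
\item The subcategory $\mathcal{C}^\infty\textcat{DBAff}$ is closed under the truncations $A\mapsto A_{\leq k}$ and $A\mapsto\pi_0(A)$.
\item It is closed under square-zero extensions $A\oplus_d\Omega M$.
\item Every $A\in\mathcal{C}^\infty\textcat{DBornRing}$ is the limit of its Postnikov tower $\varprojlim_k A_{\leq k}$.
\end{enumerate}
Since $\mathcal{C}^\infty\textcat{DBornRing}=\mathcal{P}_\Sigma(\mathcal{C}^{\infty,op}_{bb})$ is a $\mathcal{P}_\Sigma$ of a Lawvere theory, objectwise truncation of a product-preserving functor $\mathcal{C}^\infty_{bb}\to\infty\textcat{Grpd}$ is again product-preserving. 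Postnikov towers converge objectwise, and limits of product-preserving functors are computed pointwise. Hence (1) and (3) hold intrinsically.

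The work is then to show that the fully faithful embedding $\mathcal{C}^\infty_{\ell^1}$ of Theorem \ref{fullyfaithfulembed} intertwines these intrinsic truncations and limits with those of $\textcat{DAlg}^{cn}(\textcat{Ch}(\textnormal{CBorn}_\mathbb{R}))$. I would argue as in the finite-dimensional case \cite[Lemma 5.3.91]{ben-bassat_perspective_2024}. The underlying module functor is the composite of $\mathcal{C}^\infty_{\ell^1}$ with the forgetful functor, and it is computed by evaluation at $\ell^1(\kappa)$ via the adjunction of Corollary \ref{importantadjunction}. It therefore preserves sifted colimits and small limits, and it commutes with homotopy groups. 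For (2), I would write $A\oplus_d\Omega M$ as the pullback $A\times_{A\oplus M}A$. Trivial square-zero extensions $A\oplus M$ should lie in the image, being obtained by $\mathcal{P}_\Sigma$-extension of $\ell^1(\kappa)\mapsto \mathcal{C}^\infty_{\ell^1}(\ell^1(\kappa))\oplus M$. The image is closed under limits, so the pullback lies in it as well.

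The flat part of the definition asks that flat maps be compatible with truncation, namely $B_{\leq k}\simeq B\otimes^\mathbb{L}_A A_{\leq k}$ for flat $A\to B$. This is the derived strong property of Lemma \ref{derivedstronglyflatflat} combined with \cite[Lemma 5.2.1]{savage_representability_2024}.

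I expect the main obstacle to be the compatibility of $\mathcal{C}^\infty_{\ell^1}$ with truncations and Postnikov limits, in particular the claim that the forgetful functor from $\mathcal{C}^\infty\textcat{DBornRing}$ to complete bornological modules preserves limits. This is delicate because the algebras $\mathcal{C}^\infty_{\ell^1}(\ell^1(\kappa))$ are not compact. My first attempt would be to show that the forgetful functor is corepresented by the free object on one generator, the image of $\ell^1(1)=\mathbb{R}$ under Theorem \ref{colimitpreservingcinfinity}. Limit preservation would then be formal, and the remaining axioms follow as described above.
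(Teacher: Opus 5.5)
There is a genuine gap. It sits exactly at the step you flag as the main obstacle, and neither mechanism you propose for it works. Write $U$ for the forgetful functor to $\textcat{Ch}_{\geq 0}(\textnormal{CBorn}_\mathbb{R})$.

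\emph{Evaluation at $\ell^1(\kappa)$.} The functor $\textnormal{Map}(\ell^1(\kappa),U\mathcal{C}^\infty_{\ell^1}(-))$ is \emph{not} evaluation at the representable $\ell^1(\kappa)$. Take the object attached to the point $\ell^1(0)=0$, which is initial and is sent to $\mathcal{C}^\infty_{\ell^1}(0)=\mathbb{R}$. Evaluation at $\ell^1(\kappa)$ gives $\textnormal{Hom}_{\mathcal{C}^\infty_{bb}}(0,\ell^1(\kappa))=\ell^1(\kappa)$, whereas $\textnormal{Hom}_{\textnormal{CBorn}_\mathbb{R}}(\ell^1(\kappa),\mathbb{R})=\ell^\infty(\kappa)$. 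At best, Corollary \ref{importantadjunction} (an underived statement only) exhibits this functor as corepresented by $L(\ell^1(\kappa))\simeq\ell^\infty(\kappa)$. For infinite $\kappa$ that object is not a retract of a representable of $\mathcal{P}_\Sigma(\mathcal{C}^{\infty,op}_{bb})$: differentiating a smooth retraction would make $\ell^\infty(\kappa)$ a complemented subspace of some $\ell^1(\nu)$. So there is no formal reason for this functor to commute with the objectwise truncations, homotopy groups or limits of $\mathcal{P}_\Sigma$.

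\emph{The one-generator fallback.} Suppose you do identify $\textnormal{Map}(\mathbb{R},U\mathcal{C}^\infty_{\ell^1}(-))$ with evaluation at $\mathbb{R}$. That functor only sees underlying sets and is not conservative on $\textcat{Ch}_{\geq 0}(\textnormal{CBorn}_\mathbb{R})$. For an infinite-dimensional Banach space $V$, consider $\textnormal{id}:V_{\mathrm{fine}}\to V$ from the fine to the von Neumann bornology. Its cofibre has non-zero $\pi_0$ in $\textnormal{LH}(\textnormal{CBorn}_\mathbb{R})$, yet $\textnormal{Map}(\mathbb{R},-)$ kills it. Hence it cannot detect the $\textnormal{LH}(\textnormal{CBorn}_\mathbb{R})$-valued homotopy objects that define the ambient truncations.

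Consequently your items (1) and (3) are not established. Item (2) is not established either, since closure of the image under the pullback $A\times_{A\oplus M}A$ is exactly the missing limit preservation. Moreover, your description of $A\oplus M$ as a $\mathcal{P}_\Sigma$-extension is only a sketch: for infinite-dimensional $\ell^1(\mu)$ the first-order Taylor action on $M$ involves infinite sums that must be shown to converge boundedly.

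For comparison, the paper does not route anything through intrinsic truncations in $\mathcal{P}_\Sigma$. It obtains the derived geometry context from the results on $\bm\tau_{\mathcal{C}^\infty}$ and $\textcat{open}_{\mathcal{C}^\infty}$ established just before, plus the observation that $\textcat{Mod}$ is a good system of modules. It then treats the Postnikov conditions as immediate, apart from Condition (4). Two points are handled differently from your plan:
\begin{enumerate}
\item \emph{Flatness.} In the paper this is a property of the underlying derived algebraic context $\underline{\textcat{Ch}}(\textnormal{CBorn}_\mathbb{R})$, quoted from \cite[Proposition 3.1.69]{ben-bassat_perspective_2024}. It is not the flatness of covers and open immersions that your flatness paragraph checks.
\item \emph{Condition (4).} The paper deduces this from working over $\mathbb{R}$, by adapting \cite[Proposition 25.2.4.1]{lurie_spectral_2018}. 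Your plan never uses characteristic zero. Nothing in your (1)--(3) addresses the square-zero structure of the Postnikov stages $A_{\leq k+1}\to A_{\leq k}$, which is the natural content of such a condition and is what the atlas-lifting argument behind Theorem \ref{representabilitytheorem} consumes.
\end{enumerate}
So even granting the comparison, that condition is missing from your proposal.
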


\begin{proof}We easily note that it is a derived geometry context in the sense of \cite[Definition 3.6.2]{savage_representability_2024} using results from the previous section and the observation that $\textcat{Mod}$ is a good system of categories of modules on $\mathcal{C}^\infty\textcat{DBAff}$ in the sense of \cite[Definition 3.6.1]{savage_representability_2024}. By \cite[Proposition 3.1.69]{ben-bassat_perspective_2024}, it is flat. The conditions for Postnikov compatibility in the sense of \cite[Definition 4.5.2]{savage_representability_2024} also easily follow. In particular, since we are working over $\mathbb{R}$, Condition (4) follows using a similar proof to \cite[Proposition 25.2.4.1]{lurie_spectral_2018}. 
\end{proof}

\begin{lem}\label{obstructioncinfinity}
    $\bm{\tau}_{\mathcal{C}^\infty}$ and $\textcat{open}_{\mathcal{C}^\infty}$ satisfy the obstruction conditions, in the sense of \cite[Definition 4.3.2]{savage_representability_2024}, relative to $\mathcal{C}^\infty\textcat{DBAff}$ for the class $\textcat{hm}$ of homotopy monomorphisms. 
\end{lem}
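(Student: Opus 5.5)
The plan is to check the axioms of \cite[Definition 4.3.2]{savage_representability_2024} one at a time, relative to $\mathcal{C}^\infty\textcat{DBAff}$ and the class $\textcat{hm}$. Most of them should reduce to the derived-strong characterisation in Corollary \ref{strongmapcinfinity} and to the formal-perfectness arguments already used for Corollary \ref{infinitesimalcinfinitycor}. Modelled on the analytic case \cite[Corollary 6.8.6]{savage_representability_2024}, I expect these axioms to amount to the following:
\begin{enumerate}
\item covers in $\bm\tau_{\mathcal{C}^\infty}$ and maps in $\textcat{open}_{\mathcal{C}^\infty}$ are formally perfect, hence have well-behaved cotangent complexes;
\item they are stable under square-zero extensions by modules in $\textcat{M}_{A,1}$;
\item they lift along such extensions: a cover or open immersion over $X=\textnormal{Spec}(A)$ lifts uniquely to one over $X_d[\Omega M]$;
\item the class $\textcat{hm}$ contains the relevant maps and is stable under composition and base change.
\end{enumerate}

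For (1) and (4), note that a $\mathcal{C}^\infty$-localisation is by definition a flat homotopy monomorphism, so it lies in $\textcat{hm}$. Homotopy monomorphisms are formally perfect with vanishing relative cotangent complex: $B\otimes^\mathbb{L}_AB\simeq B$ forces $\mathbb{L}_{B/A}\simeq 0$ \cite[Proposition 3.5.5]{savage_representability_2024}. The maps in $\textcat{open}_{\mathcal{C}^\infty}$ are only $\bm\tau_{\mathcal{C}^\infty}$-locally localisations, so I would pass through the local-to-global property, using hyperdescent of $\textcat{QCoh}$ and $\textcat{Dls}$ for $\bm\tau_{\mathcal{C}^\infty}$-covers. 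Stability of $\textcat{hm}$ under base change and composition is formal, since tensor products commute.

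For (2) and (3), take $A\rightarrow A\oplus_d\Omega M$ and a localisation $A\rightarrow B$. The candidate lift is
\begin{equation*}
B':=B\otimes^\mathbb{L}_A(A\oplus_d\Omega M).
\end{equation*}
This is flat and a homotopy epimorphism by base change. Its truncation agrees with $\pi_0(B)$, because $M$ is $1$-connective and so $\pi_0(A\oplus_d\Omega M)\simeq\pi_0(A)$. By Corollary \ref{strongmapcinfinity}, being derived strong with correct truncation recovers localisations and open immersions. The covering condition, that the family is formal (conservative), can be checked on $\pi_0$ by \cite[Lemma 6.1.2]{savage_representability_2024}, together with nilpotence of the extension.

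Uniqueness of the lift should follow from the vanishing of $\mathbb{L}_{B/A}$: deformations of $B$ over the square-zero extension are controlled by $\textnormal{Map}(\mathbb{L}_{B/A},M[\ast])$, which is contractible. This is the same obstruction-theoretic argument as in Corollary \ref{infinitesimalcinfinitycor}.

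The main obstacle is keeping the constructions inside $\textcat{Alg}=\mathcal{C}^\infty\textcat{DBornRing}$. The definitions of $\textcat{open}_{\mathcal{C}^\infty}$ and $\bm\tau_{\mathcal{C}^\infty}$ impose the condition that $B_j\otimes^\mathbb{L}_AC\in\textcat{Alg}$, so I must show that $A\oplus_d\Omega M$ and $B'$ lie in the essential image of the embedding of Theorem \ref{fullyfaithfulembed}. I would handle this as follows:
\begin{enumerate}
\item Write $A\oplus_d\Omega M$ as the pullback $A\times_{A\oplus M}A$.
\item Argue that $\mathcal{C}^\infty\textcat{DBornRing}$ is closed under these finite limits in $\textcat{DAlg}^{cn}(\textcat{Ch}(\textnormal{CBorn}_\mathbb{R}))$, since limits in $\mathcal{P}_\Sigma$ are computed pointwise and the embedding preserves them.
\item This requires $A\oplus M$ to be $\mathcal{C}^\infty$, which needs $M\in\textcat{Mod}_A$ to give a $\mathcal{C}^\infty$-square-zero extension. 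If that fails in general, I would restrict to the modules in $\textcat{M}$ for which it holds.
\end{enumerate}
Once this closure is in place, the defining condition of $\bm\tau_{\mathcal{C}^\infty}$, namely that $C\in\textcat{Alg}$ if and only if $B_j\otimes^\mathbb{L}_AC\in\textcat{Alg}$, transports membership to $B'$.
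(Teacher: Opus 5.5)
\textbf{The lifting step, which is the substantive obstruction condition, has a genuine gap.} Your handling of the first two conditions agrees with the paper: homotopy monomorphisms have $\mathbb{L}_{B/A}\simeq 0$, so they are formally $i$-smooth, and the remaining checks are formal.

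The gap is in lifting covers along $X\rightarrow X_d[\Omega M]$. There is no map $A\rightarrow A\oplus_d\Omega M$. The square-zero extension is the pullback $A\times_{A\oplus M}A$ of $d$ and the zero derivation, so it only maps \emph{to} $A$. Hence $B\otimes^\mathbb{L}_A(A\oplus_d\Omega M)$ is not defined, and ``flat and a homotopy epimorphism by base change'' has nothing to apply to.

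Your supporting claim that $\pi_0(A\oplus_d\Omega M)\simeq\pi_0(A)$ for $1$-connective $M$ is also false. Take $A=\mathbb{R}$, $M=\mathbb{R}[1]$ and $d=0$: then $A\oplus_d\Omega M$ is the dual numbers $\mathbb{R}[\epsilon]/\epsilon^2$ concentrated in degree $0$. So the route through Corollary \ref{strongmapcinfinity} via truncations also fails.

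The correct lift of $V_j=\textnormal{Spec}(A_j)\rightarrow X$ is $W_j=\textnormal{Spec}(B_j)$ with $B_j=A_j\oplus_{d_j'}\Omega M_j'$. Here $M_j'=M\otimes^\mathbb{L}_AA_j$ and $d_j'$ is the derivation induced from $d$, which exists because $\mathbb{L}_{A_j/A}\simeq 0$.

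You are also proving more than is required. The condition does not ask that the $W_j$ form a $\bm\tau_{\mathcal{C}^\infty}$-cover. It only asks that $\coprod_j W_j\rightarrow X_d[\Omega M]$ be an epimorphism of stacks. So you need neither flatness of the lifts, nor uniqueness, nor the $\textcat{Alg}$-transport argument. Your fallback of restricting to a smaller class of modules is not available, since the context fixes $\textcat{M}=\textcat{Mod}$.

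The paper proceeds in four steps:
\begin{enumerate}
\item It notes that a $\bm\tau_{\mathcal{C}^\infty}$-cover is already a finite $\textcat{hm}$-cover.
\item It shows $\{W_j\rightarrow X_d[\Omega M]\}$ is a formal covering family, by \cite[Lemma 6.1.5]{savage_representability_2024}.
\item It shows each $W_j\rightarrow X_d[\Omega M]$ is a homotopy monomorphism. Base change of $B_j\otimes^\mathbb{L}_{A\oplus_d\Omega M}B_j\rightarrow B_j$ along $A\oplus_d\Omega M\rightarrow A$ becomes $A_j\otimes^\mathbb{L}_AA_j\simeq A_j$, using \cite[Lemma 4.3.3]{savage_representability_2024}. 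This base change is conservative for square-zero extensions by $1$-connective modules, by \cite[Corollary 3.4.3]{savage_representability_2024}.
\item It uses $\textcat{QCoh}$-descent for finite $\textcat{hm}$-covers, via \cite[Lemma 6.6.4]{savage_representability_2024}, to conclude that $\coprod_jW_j\rightarrow X_d[\Omega M]$ is an epimorphism of stacks.
\end{enumerate}
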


\begin{proof}We note that any homotopy monomorphism is formally $i$-smooth by \cite[Corollary 4.2.3 and Lemma 6.6.2]{savage_representability_2024}. Therefore, Condition (2) easily follows and Condition (1) follows using \cite[Lemma 6.1.3]{savage_representability_2024}. 

To show that Condition (3) is satisfied, we suppose that we have a $\bm{\tau}_{\mathcal{C}^\infty}$-covering family $\{V_j=\textnormal{Spec}(A_j)\rightarrow{\textnormal{Spec}(A)}=X\}_{j\in J}$ and some $M\in\textcat{Mod}_{A,1}$. This can trivially be refined to a finite $\textcat{hm}$-covering family. Now, define $W_j=\textnormal{Spec}(B_j)=\textnormal{Spec}(A_j\oplus_{d_j'} \Omega M_j')\in\mathcal{C}^\infty\textcat{DBAff}$ with $d_j'$ the induced derivation in $\pi_0(\textcat{Der}(B_j,M_j'))$ and $M_j'=M\otimes^\mathbb{L}_A A_j$. The collection $\{W_j\rightarrow{X_d[\Omega M]}\}_{j\in J}$ is a formal covering family by \cite[Lemma 6.1.5]{savage_representability_2024}. By \cite[Lemma 4.3.3]{savage_representability_2024}, and using that the morphism $V_j\rightarrow{X}$ is a homotopy monomorphism, we have an equivalence
\begin{equation*}
B_j\otimes_{A\oplus_d\Omega M}^\mathbb{L}A\simeq A_j\simeq A_j\otimes^\mathbb{L}_AA_j\simeq (B_j\otimes_{A\oplus_d\Omega M}^\mathbb{L}B_j)\otimes_{A\oplus_d\Omega M}^\mathbb{L}A
\end{equation*}Hence, by \cite[Corollary 3.4.3]{savage_representability_2024}, we have an equivalence $B_j\simeq B_j\otimes^\mathbb{L}_{A\oplus_d\Omega M}B_j$. Therefore, $\{W_j\rightarrow{X_d[\Omega M]}\}_{j\in J}$ is also a finite $\textcat{hm}$-covering family. 

By \cite[Lemma 6.6.4]{savage_representability_2024}, since we have $\textcat{QCoh}$-descent for finite $\textcat{hm}$-covers and since $W_j$ and $X_d[\Omega M]$ are $\mathcal{C}^\infty\textcat{DBAff}$-admissible, we see that this defines an epimorphism of stacks $\coprod_{j\in J}W_j\rightarrow{X_d[\Omega M]}$ in $\textcat{Stk}(\mathcal{C}^\infty\textcat{DBAff},\bm{\tau}_{\mathcal{C}^\infty}|_{\mathcal{C}^\infty\textcat{DBAff}})$ as required. 
    
\end{proof}

\begin{cor}\phantomsection\label{tuplediffgeometryrep} The tuple $(\texthom{\textbf{Ch}}(\textnormal{CBorn}_\mathbb{R}),\bm\tau_{\mathcal{C}^\infty},\textcat{open}_{\mathcal{C}^\infty},\mathcal{C}^\infty\textcat{DBAff},\textcat{Mod},\textcat{hm})$ is a representability context in the sense of \cite[Definition 5.3.1]{savage_representability_2024}. 
\end{cor}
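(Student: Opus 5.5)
The plan is to unpack \cite[Definition 5.3.1]{savage_representability_2024}. A representability context is a tuple $(\mathcal{C},\mathcal{C}_{\geq 0},\mathcal{C}_{\leq 0},\mathcal{C}^0,\bm\tau,\textcat{P},\mathcal{A},\textcat{M},\textcat{S})$ that must satisfy four things: (i) the underlying tuple is a flat Postnikov compatible derived geometry context; (ii) the underlying relative geometry tuple on $\mathcal{A}$ is strong, and its truncation is compatible with it; (iii) the inclusion of the heart affines is a continuous functor of sites and finite coproducts of representables are representable; (iv) $\bm\tau$ and $\textcat{P}$ satisfy the obstruction conditions relative to $\mathcal{A}$ for the class $\textcat{S}$. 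I would check these one at a time against the lemmas just proved, taking $\mathcal{A}=\mathcal{C}^\infty\textcat{DBAff}$, $\mathcal{A}^\heartsuit=\mathcal{C}^\infty\textnormal{BAff}$, $\textcat{P}=\textcat{open}_{\mathcal{C}^\infty}$ and $\textcat{S}=\textcat{hm}$.

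The correspondence is as follows. Condition (i) is exactly Lemma \ref{postnikovcinfinity}. For (ii), strongness of the tuple on $\mathcal{C}^\infty\textcat{DBAff}$ was recorded just before the statement, and its heart version is Lemma \ref{tuplecinfinity}. Continuity of $\iota|_{\mathcal{C}^\infty\textnormal{BAff}}$ is Lemma \ref{cinfinitycontinuous}, and the coproduct condition is Lemma \ref{cinfinityrepresentables}. Condition (iv) is Lemma \ref{obstructioncinfinity}.

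The remaining requirement is that $\textcat{P}$ be detected on truncations for maps with the right infinitesimal behaviour, i.e.\ that $\textcat{P}$ be compatible with $t_0$ and with cotangent complexes. Corollary \ref{strongmapcinfinity} gives that a derived strong map whose truncation is an open immersion is itself an open immersion. Corollary \ref{infinitesimalcinfinitycor} gives the characterisation of $n\textcat{-open}_{\mathcal{C}^\infty}$ through vanishing of $\pi_0\textnormal{Map}(\mathbb{L}_{\mathcal{F}/\mathcal{G},x},M)$, together with its converse on truncations. I would cite these two results for the clauses of Definition 5.3.1 concerning lifting $\textcat{P}$-maps along square-zero extensions and recognising them from $t_0$. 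I also need that $\textcat{open}_{\mathcal{C}^\infty}$ is stable under composition, pullback and equivalence, and the remark following the definition of open immersions supplies this.

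The main obstacle I expect is the interaction with the subcategory $\mathcal{C}^\infty\textcat{DBAff}$. Every pullback, square-zero extension $A\oplus_d\Omega M$, and Postnikov truncation $A_{\leq k}$ used in these conditions must stay inside $\mathcal{C}^\infty\textcat{DBAff}$, i.e.\ its objects must be admissible. For square-zero extensions I would argue that $\mathcal{C}^\infty\textcat{DBornRing}$ is closed under finite limits in $\textcat{DAlg}^{cn}$, since it is a $\mathcal{P}_\Sigma$ and Theorem \ref{fullyfaithfulembed} gives a fully faithful embedding; this handles $A\oplus_d\Omega M$, which is a pullback. For truncations I would use the Postnikov compatibility already established in Lemma \ref{postnikovcinfinity}. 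Closure of $\textcat{open}_{\mathcal{C}^\infty}$ under base change into $\mathcal{C}^\infty\textcat{DBAff}$ is built into its definition. With these in place, the corollary follows by assembling the cited results.
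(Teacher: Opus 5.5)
Your proposal is essentially the paper's proof: the paper checks the flat Postnikov compatible derived geometry context via Lemma~\ref{postnikovcinfinity}, and then Conditions (1)--(6) via Lemma~\ref{tuplecinfinity}, Lemma~\ref{cinfinitycontinuous}, Lemma~\ref{cinfinityrepresentables}, Corollary~\ref{strongmapcinfinity}, Corollary~\ref{infinitesimalcinfinitycor} and Lemma~\ref{obstructioncinfinity} respectively, which is the same matching you give. Your final paragraph is unnecessary, because in the paper admissibility of square-zero extensions and truncations is already covered by Lemma~\ref{postnikovcinfinity}; you should also drop its argument, since being a $\mathcal{P}_\Sigma$ with a fully faithful embedding into $\textcat{DAlg}^{cn}$ does not by itself make $\mathcal{C}^\infty_{\ell^1}$ preserve pullbacks.
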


\begin{proof}
    By Lemma \ref{postnikovcinfinity}, our context is a flat Postnikov compatible derived geometry context. Now, Condition (1) follows from Lemma \ref{tuplecinfinity}. Condition (2) follows by Lemma \ref{cinfinitycontinuous}. Condition (3) follows by Lemma \ref{cinfinityrepresentables}. Condition (4) follows by Corollary \ref{strongmapcinfinity} and Condition (5) by Corollary \ref{infinitesimalcinfinitycor}. Finally, Condition (6) follows by Lemma \ref{obstructioncinfinity}. 
\end{proof}

\subsection{Derived Partial Differential Equations}\label{defnasmappingstack}

Classically, a system of partial differential equations of order $k$ can be realised as a submanifold of the \textit{$k$-jet space}. See \cite{krasilshchik_geometry_2011} for more details. 

An alternative formulation appears in the language of $D$-modules. Indeed, suppose that we have a system of partial differential equations $E_0$ of order $k$ defined using the sheaf of differential operators associated to a smooth scheme $X$. Then, we can define a \textit{$D$-module} $\mathcal{M}_{E_0}$ encoding the system, see \cite[Chapter 6]{coutinho_primer_1995}, and then the space of solutions is given by 
\begin{equation*}
    \textnormal{Hom}_{\mathcal{D}_X}(\mathcal{M}_{E_0},\mathcal{O}_{X})
\end{equation*}where $\mathcal{D}_{X}$ is the sheaf of differential operators on $X$. 

Given a smooth scheme $X$, the associated \textit{de Rham space}, $X_{dR}$, is the presheaf on affine schemes defined by sending $\textnormal{Spec}(B)$ to $X(B^{red})$ where $B^{red}$ is the quotient of $B$ by the nilradical, the ideal of nilpotent elements. We remark that there is an equivalence between $\mathcal{D}_X$-modules and quasicoherent sheaves on the de Rham space $X_{dR}$ associated to $X$, and hence we can identify the solution space with global sections of $X_{dR}$. In this section, we generalise these ideas to define the notion of a derived partial differential equation and its solution space. 

A construction of the de Rham space exists in the general setting of $\mathcal{C}^\infty$-schemes, as described in \cite{borisov_beyond_2018}. Recall that there exists a fully faithful sifted colimit preserving functor $\mathcal{C}^\infty:\mathcal{C}^\infty\textnormal{Ring}\rightarrow{\textnormal{Comm}(\textnormal{CBorn}_\mathbb{R})}$. Using \cite[Definition 1]{borisov_beyond_2018}, we can define a functor 
\begin{equation*}\begin{aligned}
R_{nil}:\mathcal{C}^\infty\textnormal{Ring}&\rightarrow{\textnormal{Comm}(\textnormal{CBorn}_\mathbb{R}})\\
    B&\rightarrow{\mathcal{C}^\infty(B)/\overline{\sqrt[nil]{0}}}
\end{aligned}\end{equation*}where, for any ideal $I\leq \mathcal{C}^\infty(B)$, $\sqrt[nil]{I}:=\{f\in \mathcal{C}^\infty(B)\mid \exists k\in\mathbb{Z}_{>0} \text{ such that }f^k\in I\}$. Here $\overline{\sqrt[nil]{0}}$ denotes the closure of the ideal in the natural bornology on $\mathcal{C}^\infty(B)$ and we consider the quotient bornology on $\mathcal{C}^\infty(B)/\overline{\sqrt[nil]{0}}$.

We note that, since $R_{nil}(B)$ is a sifted colimit of an object in the essential image of $\mathcal{C}^\infty$, then it lies in the essential image of $\mathcal{C}^\infty$. For $B\in\mathcal{C}^\infty\textnormal{Ring}$, we define the \textit{reduced $\mathcal{C}^\infty$-ring}, $B^{red}$, to be the object in $\mathcal{C}^\infty\textnormal{Ring}$ corresponding to $R_{nil}(B)$, defined up to isomorphism.

\begin{defn}Suppose that $\mathcal{X}\in\textcat{Stk}(\mathcal{C}^\infty\textcat{DAff},\bm\tau_{\mathcal{C}^\infty}|_{\mathcal{C}^\infty\textcat{DAff}})$ and $Y=\textnormal{Spec}(B)$ is in $\mathcal{C}^\infty\textcat{DAff}$. 
\begin{enumerate}
    \item The \textit{reduced $\mathcal{C}^\infty$-ring}, denoted $B^{red}$, is the object of $\mathcal{C}^\infty\textnormal{Ring}$ defined by $B^{red}:=\pi_0(B)^{red}$, 
    \item The \textit{de Rham space} is the presheaf $\mathcal{X}_{dR}$ defined by sending $\textnormal{Spec}(B)$ to $\mathcal{X}(B^{red})$. 
\end{enumerate}
\end{defn}

\begin{lem}
    $\mathcal{X}_{dR}$ is a stack for the $\bm\tau_{\mathcal{C}^\infty}|_{\mathcal{C}^\infty\textcat{DAff}}$-topology. 
\end{lem}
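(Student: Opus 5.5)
The plan is to write $\mathcal{X}_{dR}$ as a composite $\mathcal{X}\circ(-)^{red}$. Here $(-)^{red}:\mathcal{C}^\infty\textcat{DAff}\rightarrow\mathcal{C}^\infty\textcat{DAff}$ sends $\textnormal{Spec}(B)$ to $\textnormal{Spec}(\pi_0(B)^{red})$. We then check that this functor is continuous for $\bm\tau_{\mathcal{C}^\infty}$. Once we know that covers (and their Čech nerves) are carried to covers (and Čech nerves), descent for $\mathcal{X}_{dR}$ follows formally from descent for $\mathcal{X}$, since $\mathcal{X}$ is a stack.

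Concretely, let $\{U_j=\textnormal{Spec}(B_j)\rightarrow\textnormal{Spec}(A)\}_{j\in J}$ be a $\bm\tau_{\mathcal{C}^\infty}$-cover in $\mathcal{C}^\infty\textcat{DAff}$.

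\emph{Step 1: reduce to the truncated cover.} By Corollary \ref{strongmapcinfinity}, each $A\rightarrow B_j$ is derived strong, and by Lemma \ref{derivedstrongpi0} we have $\pi_0(B_j)\simeq B_j\otimes^\mathbb{L}_A\pi_0(A)$. By Lemma \ref{cinfinitycontinuous}, the family $\{\pi_0(A)\rightarrow\pi_0(B_j)\}$ is a cover. Since $B^{red}$ only depends on $\pi_0(B)$, we may therefore replace the original cover by this one. Moreover, iterated derived fibre products $B_{j_0}\otimes^\mathbb{L}_A\cdots\otimes^\mathbb{L}_AB_{j_n}$ have $\pi_0$ equal to the corresponding classical tensor products, because flat base change keeps everything in the heart. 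So the Čech nerve is computed on $\pi_0$.

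\emph{Step 2: reduction commutes with the $\mathcal{C}^\infty$-localisations.} We need to show that $\pi_0(B_j)^{red}\simeq\pi_0(B_j)\otimes_{\pi_0(A)}\pi_0(A)^{red}$, and the analogous statement for the iterated tensor products. This says that reduction is compatible with flat homotopy epimorphisms. The argument is as follows. Because $\pi_0(A)\rightarrow\pi_0(B_j)$ is flat, tensoring the sequence $0\rightarrow\overline{\sqrt[nil]{0}}\rightarrow\mathcal{C}^\infty(A)\rightarrow R_{nil}(A)\rightarrow 0$ stays exact. The homotopy epimorphism property then identifies the image ideal with $\overline{\sqrt[nil]{0}}$ in $\mathcal{C}^\infty(B_j)$. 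This is the step I expect to be the main obstacle: nilradicals need not be preserved by base change in general, and the closure in the bornology complicates matters. I would first try it for the basic localisations $\mathcal{C}^\infty(A)\rightarrow\mathcal{C}^\infty(A)[a^{-1}]$, where the nilradical of a localisation is the localisation of the nilradical and closures are compatible. I would then extend to general covers using that covers can be refined by such localisations, or by arguing with points: in $\mathcal{C}^\infty$-rings the closed nilradical is detected by evaluation at $\mathbb{R}$-points, and a formal covering family is jointly surjective on points.

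\emph{Step 3: conclude.} From Step 2, $\{\textnormal{Spec}(\pi_0(B_j)^{red})\rightarrow\textnormal{Spec}(\pi_0(A)^{red})\}$ is the base change of a $\bm\tau_{\mathcal{C}^\infty}$-cover along $A\rightarrow\pi_0(A)^{red}$, and covers are stable under pullback, so it is again a cover. Its Čech nerve is the image of the original Čech nerve under $(-)^{red}$. The descent condition for $\mathcal{X}$ along this cover then gives
\begin{equation*}
\mathcal{X}_{dR}(A)=\mathcal{X}(A^{red})\simeq\varprojlim_{[n]\in\Delta}\prod_{j_0,\dots,j_n}\mathcal{X}\big((B_{j_0}\otimes^\mathbb{L}_A\cdots\otimes^\mathbb{L}_AB_{j_n})^{red}\big),
\end{equation*}
which is exactly the descent condition for $\mathcal{X}_{dR}$. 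Hyperdescent, if required, follows in the same way using hypercovers.
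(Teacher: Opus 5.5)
Your strategy is the paper's. The paper also reduces the lemma to showing that $\{\textnormal{Spec}(C_i^{red})\rightarrow\textnormal{Spec}(B^{red})\}_{i\in I}$ is again a $\bm\tau_{\mathcal{C}^\infty}$-cover, and derives this from a compatibility of reduction with base change along the cover maps. Your Steps 1 and 3 are the formal part of that argument, and they are fine \emph{given} Step 2: they use pullback stability of covers, the $\textcat{Alg}$-condition, and Step 2 applied to the iterated tensor products, which are again $\mathcal{C}^\infty$-localisations of $A$. Your formulation $\pi_0(B_j)^{red}\simeq\pi_0(B_j)\otimes_{\pi_0(A)}\pi_0(A)^{red}$ is the form actually needed. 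The paper's displayed identity, read literally for an arbitrary $B^{red}\rightarrow D$, cannot hold: take the trivial cover $C_i=B$ and a non-reduced $D$. However, the paper only asserts this compatibility as an ``observation'', and your proposal does not establish it either, so everything rests on Step 2.

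That is where the gap is. After Step 1, take $A$ and $B_j$ discrete.
\begin{itemize}
\item Flatness gives $B_j\otimes_A A^{red}\cong\mathcal{C}^\infty(B_j)/\mathfrak{a}$, where $\mathfrak{a}$ is the kernel of $\mathcal{C}^\infty(B_j)\rightarrow B_j\otimes_A A^{red}$, the closed ideal generated by the image of $\overline{\sqrt[nil]{0}}\subseteq\mathcal{C}^\infty(A)$.
\item The inclusion $\mathfrak{a}\subseteq\overline{\sqrt[nil]{0}}$ is easy. The entire content is the reverse inclusion, i.e.\ that $B_j\otimes_A A^{red}$ has vanishing closed nilradical.
\item The equivalence $B_j\otimes^{\mathbb{L}}_A B_j\simeq B_j$ gives no direct handle on nilpotents. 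Even for discrete rings, the standard proof that flat epimorphisms preserve reducedness goes through the fact that they induce isomorphisms of local rings, and no analogue compatible with bornological closures is available here.
\end{itemize}
Neither fallback closes the gap.
\begin{itemize}
\item The $\bm\tau_{\mathcal{C}^\infty}$-covers are defined intrinsically (flat homotopy monomorphisms, conservativity, the $\textcat{Alg}$-condition). Nothing in the paper says they can be refined by principal localisations $\mathcal{C}^\infty(A)\rightarrow\mathcal{C}^\infty(A)[a^{-1}]$.
\item The closed nilradical of a $\mathcal{C}^\infty$-ring is not detected by $\mathbb{R}$-points. The ideal $\mathfrak{m}_0^\infty\subseteq\mathcal{C}^\infty(\mathbb{R})$ of functions flat at $0$ is closed, and the quotient is $\mathbb{R}[[x]]$, whose closed nilradical is zero. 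Yet $x$ vanishes at its only $\mathbb{R}$-point. Joint surjectivity on points of formal covering families is also unproved.
\end{itemize}
What is missing is a genuine proof that, for a $\mathcal{C}^\infty$-localisation $A\rightarrow B$ of $\mathcal{C}^\infty$-rings, $B\otimes_A A^{red}$ has zero closed nilradical. Alternatively, you need some other argument that $\{\textnormal{Spec}(B_j^{red})\rightarrow\textnormal{Spec}(A^{red})\}$ is a cover with the expected \v{C}ech nerve.
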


\begin{proof}
    It suffices to show that if we have a cover $\{U_i=\textnormal{Spec}(C_i)\rightarrow{\textnormal{Spec}(B)=Y}\}_{i\in I}$ in $\bm\tau_{\mathcal{C}^\infty}|_{\mathcal{C}^\infty\textcat{DAff}}$, then $\{\textnormal{Spec}(C_i^{red})\rightarrow{\textnormal{Spec}(B^{red})}\}_{i\in I}$ is also a cover. Indeed, this follows from the observation that $\mathcal{C}^\infty(C_i^{red})\otimes^\mathbb{L}_{\mathcal{C}^\infty(B^{red})}D\simeq \mathcal{C}^\infty((C_i\otimes^\mathbb{L}_B D)^{red})$ for any morphism $B^{red}\rightarrow{D}$ in $\textcat{DAlg}^{cn}(\textcat{Ch}(\textnormal{CBorn}_\mathbb{R}))$. 
\end{proof}

Fix some $\mathcal{X}\in\textcat{Stk}(\mathcal{C}^\infty\textcat{DAff},\bm\tau_{\mathcal{C}^\infty}|_{\mathcal{C}^\infty\textcat{DAff}})$ and consider the induced morphism of stacks $p_{dR}:\mathcal{X}\rightarrow{\mathcal{X}_{dR}}$ in $\textcat{Stk}(\mathcal{C}^\infty\textcat{DAff},\bm\tau_{\mathcal{C}^\infty}|_{\mathcal{C}^\infty\textcat{DAff}})$. There is an induced functor
\begin{equation*}
    p_{dR,*}:\textcat{Stk}(\mathcal{C}^\infty\textcat{DAff},\bm\tau_{\mathcal{C}^\infty}|_{\mathcal{C}^\infty\textcat{DAff}})_{/\mathcal{X}}\rightarrow{\textcat{Stk}(\mathcal{C}^\infty\textcat{DAff},\bm\tau_{\mathcal{C}^\infty}|_{\mathcal{C}^\infty\textcat{DAff}})_{/\mathcal{X}_{dR}}}
\end{equation*}which has a left adjoint given by 
\begin{equation*}
    p_{dR}^*:\textcat{Stk}(\mathcal{C}^\infty\textcat{DAff},\bm\tau_{\mathcal{C}^\infty}|_{\mathcal{C}^\infty\textcat{DAff}})_{/\mathcal{X}_{dR}}\rightarrow{\textcat{Stk}(\mathcal{C}^\infty\textcat{DAff},\bm\tau_{\mathcal{C}^\infty}|_{\mathcal{C}^\infty\textcat{DAff}})_{/\mathcal{X}}}
\end{equation*}

The following definitions and their connections with classical notions is discussed in \cite[Section 3.5]{kryczka_derived_2024}.

\begin{defn}\label{jetdefn}
    Suppose that $\mathcal{F}\in\textcat{Stk}(\mathcal{C}^\infty\textcat{DAff},\bm\tau_{\mathcal{C}^\infty}|_{\mathcal{C}^\infty\textcat{DAff}})_{/\mathcal{X}}$. Then, 
    \begin{enumerate}
        \item The associated \textit{de Rham jet stack} is defined to be $\textcat{Jets}_{\mathcal{X}_{dR}}^\infty(\mathcal{F}):=p_{dR,*}(\mathcal{F})$, 
        \item The associated \textit{jet stack} is defined to be $\textcat{Jets}_{\mathcal{X}}^\infty(\mathcal{F}):=p_{dR}^*\circ p_{dR,*}(\mathcal{F})$ 
    \end{enumerate}
\end{defn}

By definition of the base-change map, we see that we have a pullback square
\begin{equation*}
    \begin{tikzcd}
    \textcat{Jets}_\mathcal{X}^\infty(\mathcal{F})\arrow{r} \arrow{d}
 & \textcat{Jets}_{\mathcal{X}_{dR}}^\infty(\mathcal{F})\arrow{d}\\
        \mathcal{X}\arrow{r}{p_{dR}} & {\mathcal{X}}_{dR}
    \end{tikzcd}
\end{equation*}in $\textcat{Stk}(\mathcal{C}^\infty\textcat{DAff},\bm\tau_{\mathcal{C}^\infty}|_{\mathcal{C}^\infty\textcat{DAff}})$. 

\begin{defn}\cite[c.f. Definition 2.2.3.5]{toen_homotopical_2008}
    Suppose that we have a morphism $f:\mathcal{F}\rightarrow{\mathcal{G}}$ of stacks in $\textcat{Stk}(\mathcal{C}^\infty\textcat{DAff},\bm\tau_{\mathcal{C}^\infty}|_{\mathcal{C}^\infty\textcat{DAff}})$. Then, $f$ is a \textit{closed immersion} if it is $(-1)$-representable and if, for any $X=\textnormal{Spec}(A)\in\mathcal{C}^\infty\textcat{DAff}$, the induced morphism
    \begin{equation*}
        \mathcal{F}\times_\mathcal{G}X:= \textnormal{Spec}(B)\rightarrow{\textnormal{Spec}(A)}
    \end{equation*}induces an epimorphism $\pi_0(A)\rightarrow{\pi_0(B)}$ in $\textnormal{Comm}(\textnormal{CBorn}_\mathbb{R})$. 
\end{defn}

In particular, if we have a closed immersion $\mathcal{G}\rightarrow{\textcat{Jets}^\infty_{\mathcal{X}_{dR}}(\mathcal{F})}$, then this also defines a closed immersion $\mathcal{G}\times_{\textcat{Jets}^\infty_{\mathcal{X}_{dR}}(\mathcal{F})}\textcat{Jets}^\infty_{\mathcal{X}}(\mathcal{F})\rightarrow{\textcat{Jets}^\infty_{\mathcal{X}}(\mathcal{F})}$. 

\begin{defn}\label{pdedefn}Suppose that $\mathcal{F}\in\textcat{Stk}(\mathcal{C}^\infty\textcat{DAff},\bm\tau_{\mathcal{C}^\infty}|_{\mathcal{C}^\infty\textcat{DAff}})_{/\mathcal{X}}$. Then, a stack $\mathcal{G}$ is a \textit{system of derived non-linear partial differential equations on sections} of $\mathcal{F}$ if there is a closed immersion $\mathcal{G}\rightarrow{\textcat{Jets}_{\mathcal{X}_{dR}}^\infty(\mathcal{F})}$ in $\textcat{Stk}(\mathcal{C}^\infty\textcat{DAff},\bm\tau_{\mathcal{C}^\infty}|_{\mathcal{C}^\infty\textcat{DAff}})_{/\mathcal{X}_{dR}}$.
\end{defn}
We can then naturally make the following definition motivated by the classical theory. 
\begin{defn}\label{solnstackdefn}Suppose that we have a morphism $\mathcal{X}\rightarrow\mathcal{F}$ in $\textcat{Stk}(\mathcal{C}^\infty\textcat{DAff},\bm\tau_{\mathcal{C}^\infty}|_{\mathcal{C}^\infty\textcat{DAff}})$. Suppose that $\mathcal{G}$ is a system of derived non-linear partial differential equations on sections of $\mathcal{F}$. Then, \textit{the derived moduli stack of solutions to $\mathcal{G}$}, denoted $\textcat{Sol}_\mathcal{X}(\mathcal{G})$, is the mapping stack of sections
    \begin{equation*}
        \textcat{Sol}_\mathcal{X}(\mathcal{G}):=\texthom{Map}_{\textcat{Stk}(\mathcal{C}^\infty\textcat{DAff},\bm\tau_{\mathcal{C}^\infty}|_{\mathcal{C}^\infty\textcat{DAff}})_{/\mathcal{X}_{dR}}}(\mathcal{X}_{dR},\mathcal{G})
    \end{equation*}
\end{defn}

\begin{remark}
    By \cite[Theorem 3.57]{kryczka_derived_2024}, if $\mathcal{X}$ is a smooth scheme we recover the derived version of the $D$-module solution space.
\end{remark}

\subsection{Finitely Generated and Finitely Presented $\mathcal{C}^\infty$-Rings}\label{finitelypresentedcinfinity}

We first recall the classical definitions of finitely generated and finitely presented $\mathcal{C}^\infty$-rings. 

\begin{defn}
    A $\mathcal{C}^\infty$-ring $A$ is
    \begin{enumerate}
        \item \textit{finitely generated} if $\mathcal{C}^\infty(A)$ is equivalent to a quotient $\mathcal{C}^\infty(\mathbb{R}^n)/I$ in $\textnormal{Comm}(\textnormal{Mod}_\mathbb{Z})$ for some $n\in\mathbb{N}$ and some ideal $I$, 
        \item \textit{finitely presented} if it is finitely generated and the ideal $I$ is finitely generated. 
    \end{enumerate}
\end{defn}

We recall that $\mathcal{C}^\infty(\mathbb{R}^n)$ is a Fr\'echet space and can be considered as a complete bornological space with the von Neumann bornology. We note that a finitely generated $\mathcal{C}^\infty$-ring is not a complete bornological space unless the ideal $I$ is closed in the bornology. Therefore, we make the following definitions. 

\begin{defn}
    A $\mathcal{C}^\infty$-ring $A$ is
    \begin{enumerate}
        \item \textit{bornologically finitely generated} if $\mathcal{C}^\infty(A)$ is equivalent to a quotient $\mathcal{C}^\infty(\mathbb{R}^n)/I$ in the category $\textnormal{Comm}(\textnormal{CBorn}_\mathbb{R})$ for some $n\in\mathbb{N}$ and some ideal $I$ which is closed in the bornology on $\mathcal{C}^\infty(\mathbb{R}^n)$ (equivalently, closed with respect to the topology coming from the Fr\'echet space structure \cite[Section 1.2.2]{meyer_local_2007}), 
        \item \textit{bornologically finitely presented} if it is bornologically finitely generated and the ideal $I$ is finitely generated. 
    \end{enumerate}
\end{defn}

\begin{lem}\label{finprescinfinity}Suppose that $A$ is a bornologically finitely generated $\mathcal{C}^\infty$-ring. Then $\mathcal{C}^\infty(A)$ is reflexive as a complete bornological space. 
\end{lem}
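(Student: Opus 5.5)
The plan is to reduce reflexivity of $\mathcal{C}^\infty(A)\simeq\mathcal{C}^\infty(\mathbb{R}^n)/I$ to standard facts about nuclear Fréchet spaces. By definition of bornological finite generation, $I$ is a closed ideal of the Fréchet space $\mathcal{C}^\infty(\mathbb{R}^n)$. The quotient is taken in $\textnormal{Comm}(\textnormal{CBorn}_\mathbb{R})$ with the quotient bornology, and it agrees with the von Neumann bornology of the Fréchet quotient $\mathcal{C}^\infty(\mathbb{R}^n)/I$. So $\mathcal{C}^\infty(A)$ is the complete bornological space associated to a Fréchet space.

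First step: a quotient of a Fréchet space by a closed subspace is again Fréchet. A quotient of a nuclear space by a closed subspace is again nuclear. Since $\mathcal{C}^\infty(\mathbb{R}^n)$ is nuclear, as already used in the proof of Proposition \ref{Rmonoidalstructure}, $E:=\mathcal{C}^\infty(\mathbb{R}^n)/I$ is a nuclear Fréchet space. Nuclear Fréchet spaces are Montel, and Montel spaces are reflexive in the topological sense. Hence $E$ is a reflexive nuclear Fréchet space, i.e. an FN-space. This gives reflexivity topologically; the remaining work is to transfer it to $\textnormal{CBorn}_\mathbb{R}$.

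Second step: compare duals. The bornological dual $E^\vee=\texthom{Hom}_{\textnormal{CBorn}_\mathbb{R}}(E,\mathbb{R})$ of a Fréchet space with its von Neumann bornology consists of the bounded linear functionals. Since Fréchet spaces are bornological, these are exactly the continuous functionals, so $E^\vee=E'$ as vector spaces. Its bornology is equicontinuity on bounded sets, i.e. the bornology of bounded subsets of $E'_b$. For a Fréchet–Montel space, the bounded sets of $E'_b$ are the equicontinuous ones, so $E^\vee$ is the bornological space attached to the strong dual, a DFN-space. Applying the same argument to $E^\vee$, the bounded functionals on a DF/Montel dual are again continuous, and topological reflexivity of $E$ identifies them with $E$. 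One then checks that the bornology on $E^{\vee\vee}$, uniform boundedness on bounded subsets of $E'_b$, is the von Neumann bornology of $E$; this is the polar/bipolar theorem for the Montel pair. I expect to cite Meyer's treatment of the precompact/von Neumann bornologies on Fréchet spaces, \cite[Section 1.2.2]{meyer_local_2007}, together with the Kriegl–Michor statement that nuclear convenient spaces such as $\mathcal{C}^\infty(\mathbb{R}^n)$ are reflexive, \cite[Results 6.5]{kriegl_convenient_1997}.

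The main obstacle is this second step: the canonical map $E\to E^{\vee\vee}$ must be an isomorphism of bornologies and not merely a linear bijection. The subtle point is the bornology on $E^\vee$, so I would first try to show that for Fréchet–Montel spaces the von Neumann, precompact and compactoid bornologies coincide, which makes all dual bornologies match the topological ones. A possible alternative is to argue directly inside $\textnormal{Con}$. Quotients of convenient reflexive nuclear spaces by closed ideals remain convenient, and \cite[Results 6.5]{kriegl_convenient_1997} then applies verbatim. Either route yields that $\mathcal{C}^\infty(A)$ is reflexive in $\textnormal{CBorn}_\mathbb{R}$.
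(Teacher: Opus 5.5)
Your proposal is correct and follows the paper's proof: the paper also notes that $\mathcal{C}^\infty(\mathbb{R}^n)$ is a nuclear Fr\'echet space and that its quotient by a closed ideal is again nuclear Fr\'echet, and concludes that this quotient is reflexive in $\textnormal{CBorn}_\mathbb{R}$. Your second step (identifying the bornological dual with the strong dual and its von Neumann bornology, then using the Montel and bipolar arguments) unpacks a fact the paper simply cites from \cite[Results 6.5]{kriegl_convenient_1997}, namely that nuclear Fr\'echet spaces are reflexive complete bornological spaces.
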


\begin{proof}
    Indeed, we note that $\mathcal{C}^\infty(\mathbb{R}^n)$ is a nuclear Fr\'echet space by \cite[Proposition 6.1]{kriegl_convenient_1997}. Hence, any quotient by a closed ideal is also a nuclear Fr\'echet space by \cite[p. 103]{schaefer_topological_1999}, and hence is reflexive as a complete bornological space. 
\end{proof}

We now want to explore conditions under which classical finitely generated or finitely presented $\mathcal{C}^\infty$-rings are bornologically finitely generated or finitely presented.

\begin{defn}
    A finitely presented $\mathcal{C}^\infty$-ring $A$ with $\mathcal{C}^\infty(A)\simeq \mathcal{C}^\infty(\mathbb{R}^n)/(f_1,\dots,f_m)$ is in addition \textit{regular} if the induced manifold map $f=(f_1,\dots,f_m):\mathbb{R}^n\rightarrow{\mathbb{R}^m}$ is regular, i.e. transverse to the zero map $*\hookrightarrow{\mathbb{R}^m}$.
\end{defn}

\begin{remark}
    We note that the above definition of being a finitely presented regular $\mathcal{C}^\infty$-ring is an underived analogue of the definition of being finitely presented and quasi-smooth in \cite[Remark 1.0.1]{steffens_representability_2024}. 
\end{remark}

\begin{lem}\label{cinfinitymanifold}
    Suppose that we have a $\mathcal{C}^\infty$-ring $A$ which is finitely presented and regular. Then, it is bornologically finitely presented. Moreover, in this situation, $\mathcal{C}^\infty(A)\simeq\mathcal{C}^\infty(M)$ for some closed submanifold $M\subseteq \mathbb{R}^n$. 
\end{lem}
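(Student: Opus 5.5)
The plan is to reduce to two classical facts about smooth functions on $\mathbb{R}^n$: the regular value theorem, and the fact that the ideal of functions vanishing on a regular zero set is generated by the defining functions (a Hadamard-type lemma).

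Write $\mathcal{C}^\infty(A)\simeq\mathcal{C}^\infty(\mathbb{R}^n)/(f_1,\dots,f_m)$ with $f=(f_1,\dots,f_m):\mathbb{R}^n\rightarrow\mathbb{R}^m$ transverse to $0$. Set $M:=f^{-1}(0)$. By the regular value theorem, $M$ is a closed embedded submanifold of $\mathbb{R}^n$ of codimension $m$; it is closed as a subset because it is the preimage of a point under a continuous map.

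The central step is to show that the algebraic ideal $I=(f_1,\dots,f_m)$ equals the vanishing ideal
\begin{equation*}
I_M:=\{g\in\mathcal{C}^\infty(\mathbb{R}^n)\mid g|_M=0\}.
\end{equation*}
The inclusion $I\subseteq I_M$ is immediate. For the reverse inclusion, take $g\in I_M$ and work locally.
\begin{enumerate}
\item Near a point $p\in M$, transversality gives local coordinates $(y_1,\dots,y_n)$ with $y_i=f_i$ for $i\le m$. Hadamard's lemma then gives $g=\sum_{i\le m} h_i f_i$ with $h_i$ smooth on that chart.
\item On the open set $\mathbb{R}^n\setminus M$, some $f_i$ is nonvanishing at each point, so we may write $g=\sum_i \bigl(g f_i/|f|^2\bigr) f_i$, where $|f|^2=\sum_i f_i^2$.
\end{enumerate}
Gluing these local expressions with a smooth partition of unity subordinate to the cover gives global smooth coefficients $h_i$ with $g=\sum_i h_if_i$. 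Hence $g\in I$, and so $I=I_M$. I expect this to be the main point needing care, namely checking that the partition-of-unity gluing is legitimate. It is, because the representation $g=\sum_i h_if_i$ is linear in the coefficients, so locally defined coefficients can be patched by a partition of unity.

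Closedness then follows. $I_M$ is closed in the Fréchet topology, since it is an intersection of kernels of the continuous evaluation functionals $\textnormal{ev}_x$ for $x\in M$. By the remark before the definition of bornological finite generation, closedness in the Fréchet topology is equivalent to closedness in the bornology. Therefore $I$ is a finitely generated closed ideal, and $A$ is bornologically finitely presented. Here I also need the quotient in $\textnormal{Comm}(\textnormal{CBorn}_\mathbb{R})$ to agree with the algebraic quotient, which holds precisely because $I$ is closed.

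Finally, the restriction map $\mathcal{C}^\infty(\mathbb{R}^n)\rightarrow\mathcal{C}^\infty(M)$ has kernel $I_M=I$. It is surjective because $M$ is a closed submanifold: smooth functions on $M$ extend to $\mathbb{R}^n$ using tubular neighbourhoods and a partition of unity. So it induces a bijective bounded algebra map $\mathcal{C}^\infty(\mathbb{R}^n)/I\rightarrow\mathcal{C}^\infty(M)$. Both sides are Fréchet spaces: the left as a quotient by a closed subspace, the right with its standard topology. The open mapping theorem therefore makes this map a topological isomorphism, and hence an isomorphism of complete bornological algebras. This gives $\mathcal{C}^\infty(A)\simeq\mathcal{C}^\infty(M)$.
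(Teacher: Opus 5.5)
Your proposal is correct and follows the same route as the paper: take $M=f^{-1}(0)$, identify the kernel of the restriction map $\mathcal{C}^\infty(\mathbb{R}^n)\rightarrow\mathcal{C}^\infty(M)$ with the ideal $(f_1,\dots,f_m)$, which is therefore closed, and use the extension lemma for closed submanifolds to get surjectivity. You also supply two steps that the paper only asserts: the Hadamard-lemma and partition-of-unity argument showing that the kernel is exactly $(f_1,\dots,f_m)$, and the open mapping theorem, which upgrades the algebraic bijection to an isomorphism of Fr\'echet, hence complete bornological, algebras.
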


\begin{proof}
    We note that $\mathcal{C}^\infty(A)$ is a quotient in $\textnormal{Comm}(\textnormal{Mod}_\mathbb{Z})$ of $\mathcal{C}^\infty(\mathbb{R}^n)$ by an ideal finitely generated by relations $\{f_1,\dots,f_m\}$. Let $M=f^{-1}(0)$ where $f=(f_1,\dots,f_m):\mathbb{R}^n\rightarrow{\mathbb{R}^m}$. Since $0$ is a regular value, $M$ is a closed submanifold of $\mathbb{R}^n$. Consider the continuous map $\mathcal{C}^\infty(\mathbb{R}^n)\rightarrow{\mathcal{C}^\infty(M)}$ defined by restriction. This has kernel ${(f_1,\dots,f_m)}$, which must be closed in the Fr\'echet topology on $\mathcal{C}^\infty(\mathbb{R}^n)$. We note that, since $M$ is a closed submanifold of $\mathbb{R}^n$, we can extend any smooth function on $M$ to a smooth function on $\mathbb{R}^n$ by \cite[Lemma 2.27]{lee_introduction_2003}. Hence, we see that we have an equivalence $\mathcal{C}^\infty(A)\simeq\mathcal{C}^\infty(\mathbb{R}^n)/(f_1,\dots,f_m)\simeq \mathcal{C}^\infty(M)$.
\end{proof}

Suppose that $A$ is a finitely generated $\mathcal{C}^\infty$-ring. Let $A_k$ be the $\mathcal{C}^\infty$-ring corresponding to the quotient $\mathcal{C}^\infty(A\hat{\otimes} A)/{I^k}$ in $\textnormal{Comm}(\textnormal{CBorn}_\mathbb{R})$. Here $I$ is the kernel of the morphism $\mathcal{C}^\infty(A\hat\otimes A)\rightarrow{\mathcal{C}^\infty(\Delta)}$, where $\Delta\subseteq A\hat\otimes A$ denotes the diagonal. Consider the two natural projection maps $\textnormal{Spec}(A_k)\rightarrow{\textnormal{Spec}(A)}$. 

\begin{lem}\label{coequaliserxdr}
    Suppose that $X=\textnormal{Spec}(A)$, where $A$ is a \textit{finitely presented regular} $\mathcal{C}^\infty$-ring. Then, $X_{dR}$ is equivalent to the coequaliser of the diagram 
    \begin{equation*}
        \varinjlim_{k\in\mathbb{Z}_{\geq 1}}\textnormal{Spec}(A_k)\rightrightarrows{\textnormal{Spec}(A)}
    \end{equation*}in $\textcat{PStk}(\mathcal{C}^\infty\textnormal{Aff})$. Under stackification it is also a colimit in $\textcat{Stk}(\mathcal{C}^\infty\textnormal{Aff},\bm\tau_{\mathcal{C}^\infty}^\heartsuit|_{\mathcal{C}^\infty\textnormal{Aff}})$. 
\end{lem}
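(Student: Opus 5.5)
We argue pointwise on test objects $Y=\textnormal{Spec}(B)\in\mathcal{C}^\infty\textnormal{Aff}$, computing both sides as presheaves; the stackified statement then follows formally. By Lemma \ref{cinfinitymanifold}, $\mathcal{C}^\infty(A)\simeq\mathcal{C}^\infty(M)$ for a closed submanifold $M\subseteq\mathbb{R}^n$. By Lemma \ref{tensorl1algebra} in the finite-dimensional case, $\mathcal{C}^\infty(A\hat\otimes A)\simeq\mathcal{C}^\infty(M\times M)$, and $\Delta\subseteq M\times M$ is a closed submanifold. Hence $A_k$ is the ring of functions on the $(k-1)$-st infinitesimal neighbourhood of $\Delta$. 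The coequaliser $\varinjlim_k\textnormal{Spec}(A_k)\rightrightarrows\textnormal{Spec}(A)$ in presheaves of sets (we work classically, so $\mathcal{C}^\infty\textnormal{Aff}$ is a $1$-category and the presheaf coequaliser is computed objectwise) sends $B$ to $X(B)=\textnormal{Hom}(A,B)$ modulo the equivalence relation generated by $\phi\sim\psi$ whenever $(\phi,\psi):A\hat\otimes A\to B$ factors through some $A_k$. Note that $\textnormal{Hom}$ out of the filtered colimit of representables is the union of the $\textnormal{Hom}(A_k,B)$.

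The key steps are as follows. First, we construct the comparison map $X(B)\to X(B^{red})=X_{dR}(B)$, $\phi\mapsto\phi^{red}$. It coequalises the two projections because if $(\phi,\psi)$ factors through $A_k$, then $\phi-\psi$ sends generators of $I$ to elements whose $k$-th powers vanish, hence to nilpotents. Thus $\phi^{red}=\psi^{red}$ on the ideal generated by $I$; since $A$ is finitely presented, $I$ is generated by the finitely many functions $x_i\otimes1-1\otimes x_i$, so checking on generators suffices.

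Second, we prove \emph{surjectivity}. Given $\bar\phi:A\to B^{red}$, we use that $A\simeq\mathcal{C}^\infty(M)$ with $M$ a closed submanifold of $\mathbb{R}^n$, hence a smooth retract of a tubular neighbourhood $U\supseteq M$. Lift the coordinates $\bar\phi(x_i)$ arbitrarily to $B$ to get $\mathbb{R}^n\to\textnormal{Spec}(B)$. The composite to $B^{red}$ lands in $M$, so the lift lands in $U$ after possibly localising; here we need that the closed nilradical in $\mathcal{C}^\infty$-rings is contained in the Jacobson-type radical, so that the values of lifts lie where the reduced values do. Composing with the retraction $U\to M$ gives a lift $\phi:A\to B$.

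Third, we prove \emph{injectivity modulo the relation}. If $\phi^{red}=\psi^{red}$, then $(\phi,\psi)$ sends $I$ into the closed nilradical of $B$. Since $I$ is finitely generated, we need some $k$ with $(\phi,\psi)(I^k)=0$, i.e. the factorisation through $A_k$.

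The main obstacle is this last step, since the closed nilradical need not be nilpotent. The elements $\phi(x_i)-\psi(x_i)$ are individually nilpotent, and finitely many nilpotents generate a nilpotent ideal, which handles the algebraic nilradical. The issue is the closure $\overline{\sqrt[nil]{0}}$. We would address this by arguing that for $B$ with $\mathcal{C}^\infty(B)$ in the bornological (closed-ideal) setting, the test objects relevant to $X_{dR}$ can be restricted to those with finite-order nilpotence. Failing that, we replace the relation by its transitive closure through a finite zig-zag passing through $\textnormal{Spec}(A_k)$, using a Hadamard-lemma argument ($f(x)-f(y)=\sum(x_i-y_i)g_i(x,y)$ as in the proof of Lemma \ref{symhomotopyepil1}) to control higher-order terms.

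Finally, for the stack statement, stackification $a$ is a left adjoint and so preserves colimits, and $X_{dR}$ is already a stack by the preceding lemma. Therefore $a$ applied to the presheaf coequaliser is the coequaliser in $\textcat{Stk}(\mathcal{C}^\infty\textnormal{Aff},\bm\tau_{\mathcal{C}^\infty}^\heartsuit)$ and is equivalent to $X_{dR}$.
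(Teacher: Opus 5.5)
There is a genuine gap in your third step, and neither of your proposed repairs can close it. In the presheaf quotient, $\phi$ and $\psi$ are identified only through finite chains $\phi=\phi_0,\phi_1,\dots,\phi_r=\psi$ in which each consecutive pair factors through some $A_k$. Since $(x_i\otimes 1-1\otimes x_i)^k\in I^k$, each $\phi_j(x_i)-\phi_{j+1}(x_i)$ is nilpotent, and hence so is the finite sum $\phi(x_i)-\psi(x_i)$. Conversely, if all $\phi(x_i)-\psi(x_i)$ are nilpotent, they generate a nilpotent ideal, so $(\phi,\psi)$ already kills a single $I^K$. Hence the quotient identifies $\phi$ and $\psi$ exactly when the coordinate differences lie in $\sqrt[nil]{0}$. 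Zig-zags and Hadamard's lemma add nothing. Already for $M=\mathbb{R}$, where $X(B)$ is the underlying set of $B$, injectivity at $B$ says precisely that $\sqrt[nil]{0}$ is bornologically closed in $\mathcal{C}^\infty(B)$.

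That fails in general. Let $J\subseteq\mathcal{C}^\infty(\mathbb{R})$ be the closed ideal of functions vanishing to order $k$ at $1/k$ for every $k\geq 1$, and put $B=\mathcal{C}^\infty(\mathbb{R})/J$. Let $g$ vanish on $(-\infty,0]$ and have simple zeros at the points $1/k$ (e.g.\ $g(x)=e^{-1/x^2}\sin(\pi/x)$ for $x>0$); such a $g$ is flat at $0$. Then $[g]$ is not nilpotent, since $g^N$ vanishes only to order $N$ at each $1/k$. Now take $\beta$ a cut-off equal to $1$ on $[-2,2]$ and set $\beta_m(x)=\beta(mx)$. The classes $[g(1-\beta_m)]$ are nilpotent, because their $m$-th powers lie in $J$. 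They converge to $[g]$, because $g\beta_m\to 0$ in $\mathcal{C}^\infty(\mathbb{R})$ by flatness of $g$ at $0$. So $x\mapsto 0$ and $x\mapsto [g]$ agree in $X_{dR}(B)$ but not in the quotient. Your other suggestion, restricting the class of test objects, changes the statement being proved.

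The same issue is hidden in your second step. ``After possibly localising'' is not available objectwise. What you actually need is that the image in $B$ of $\chi$ (with $U=\{\chi\neq 0\}$) is a unit as soon as it is a unit modulo $\overline{\sqrt[nil]{0}}$. If $B^{red}$ were formed with $\sqrt[nil]{0}$ itself, both steps would be complete. A unit modulo nilpotents is a unit, so the lift factors through $\mathcal{C}^\infty(U)$ without localising, and finite generation of $I$ gives the factorisation through one $A_K$. So what your argument establishes is that variant.

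The paper does not argue objectwise at all. It uses Lemma~\ref{cinfinitymanifold} to identify $\mathcal{C}^\infty(A)$ with $\mathcal{C}^\infty(M)$, and then imports the result from Borisov--Kremnizer \cite{borisov_beyond_2018} (Example 3, Propositions 5 and 6). Their inputs, finite generation and $R_{nil}$-injectivity, correspond to your steps three and two. The example shows that the objectwise identification genuinely depends on whether $B^{red}$ uses $\sqrt[nil]{0}$ or its closure. That is exactly the point the paper delegates to the citation rather than proving, and it is the point your proposal leaves open.
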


\begin{proof}
    This follows from Example 3, Proposition 5, and Proposition 6 in \cite{borisov_beyond_2018} together with Lemma \ref{cinfinitymanifold}. 
\end{proof}

\begin{remark}
    We note that the above Lemma also holds in the situation where $A$ is finitely generated and $R_{nil}$-injective in the sense of \cite[Definition 4]{borisov_beyond_2018}. 
\end{remark}

\begin{lem}\label{bornfinpresAk}
    Suppose that $X=\textnormal{Spec}(A)$ where $A$ is a finitely presented regular $\mathcal{C}^\infty$-ring. For each $k\in\mathbb{Z}_{\geq 1}$, $A_k$ is a bornologically finitely presented $\mathcal{C}^\infty$-ring.  
\end{lem}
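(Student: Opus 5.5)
The plan is to reduce $A_k$ to an explicit quotient of $\mathcal{C}^\infty(\mathbb{R}^{2n})$ by a finitely generated ideal, and then prove that this ideal is closed.

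\emph{Step 1: the model for $A\hat\otimes A$.} By Lemma \ref{cinfinitymanifold}, write $\mathcal{C}^\infty(A)\simeq\mathcal{C}^\infty(\mathbb{R}^n)/(f_1,\dots,f_m)\simeq\mathcal{C}^\infty(M)$, where $f=(f_1,\dots,f_m)$ has $0$ as a regular value and $M=f^{-1}(0)$. Using $\mathcal{C}^\infty(\mathbb{R}^n)\hat\otimes\mathcal{C}^\infty(\mathbb{R}^n)\simeq\mathcal{C}^\infty(\mathbb{R}^{2n})$ \cite[Theorem 51.6]{treves_topological_1967} and exactness of $\hat\otimes$ on nuclear Fr\'echet spaces, I identify
\begin{equation*}
\mathcal{C}^\infty(A\hat\otimes A)\simeq\mathcal{C}^\infty(\mathbb{R}^{2n})/(f(x),f(y))\simeq\mathcal{C}^\infty(M\times M).
\end{equation*}
Here $(x,y)\mapsto(f(x),f(y))$ still has $0$ as a regular value. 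So $A\hat\otimes A$ is again finitely presented and regular.

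\emph{Step 2: generators for $I$ and for $I^k$.} Let $J\subseteq\mathcal{C}^\infty(\mathbb{R}^{2n})$ be the preimage of $I$, i.e. the functions vanishing on $\Delta_M=\{(x,x):x\in M\}$. The submanifold $\Delta_M$ is the zero set of $g=(x_1-y_1,\dots,x_n-y_n,f_1(x),\dots,f_m(x))$, and $0$ is a regular value of $g$ on a neighbourhood of $\Delta_M$.

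I apply the Hadamard-lemma argument behind Lemma \ref{cinfinitymanifold}, using a partition of unity together with local straightening coordinates. This gives
\begin{equation*}
J=(x_i-y_i,\,f_j(x)).
\end{equation*}
Since $f(x)-f(y)\in(x-y)$ by Hadamard, the preimage of $I^k$ is
\begin{equation*}
J_k:=(f(x),f(y))+J^k.
\end{equation*}
This ideal is generated by $f(x)$, $f(y)$ and the finitely many degree-$k$ monomials in the generators of $g$. It follows that $\mathcal{C}^\infty(A_k)\simeq\mathcal{C}^\infty(\mathbb{R}^{2n})/J_k$ with $J_k$ finitely generated.

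\emph{Step 3: closedness of $J_k$ (main obstacle).} Finitely generated ideals of $\mathcal{C}^\infty(\mathbb{R}^N)$ need not be closed, so this is the step I expect to be hardest. I would use the Whitney spectral theorem: an ideal is closed exactly when it contains every function whose Taylor series at each point lies in the ideal of formal power series generated by the ideal. This criterion is local, so it suffices to check it near each point.

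Away from $M\times M$, some $f_j(x)$ or $f_j(y)$ is a unit, so $J_k$ is locally the whole ring. Near points of $M\times M$ off $\Delta_M$, some $x_i-y_i$ is a unit. Then $J_k$ locally agrees with $(f(x),f(y))$, which is closed by Lemma \ref{cinfinitymanifold}. Near a point of $\Delta_M$, I choose coordinates $(u,v,s,t)$ in which $f(x)=s$, $f(y)=t$, and in which $M\times M$ is $\{s=t=0\}$ and $\Delta_M$ is $\{s=t=0,\ u=v\}$. In these coordinates $J_k$ is generated by polynomials, namely $s$, $t$ and $(u-v)^\alpha$ with $|\alpha|=k$. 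Malgrange's theorem that ideals generated by analytic functions are closed then shows the Whitney condition holds locally.

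Combining the local statements, $J_k$ is closed. Hence $A_k$ is bornologically finitely presented.
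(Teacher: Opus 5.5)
Your proof is correct, and the finite-presentation part agrees with the paper. Your $J_k=(f(x),f(y))+(x-y,f(x))^k$ is the paper's $\underline{I}^k+J=(f(x),f(y))+(x-y)^k$, because any degree-$k$ monomial involving some $f_j(x)$ already lies in $(f(x))$. The two routes differ in the closedness step.

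\emph{The paper's route.} It argues globally in ambient coordinates. It Taylor expands along the diagonal in the variable $x-y$ and introduces the continuous maps $\phi_j(h)=d^j_{x-y}h(x,x)$ into $\mathcal{C}^\infty(\mathbb{R}^n)$. It then writes the preimage of $I^k$ as $\bigcap_{|j|<k}\phi_j^{-1}(f_1,\dots,f_m)$, which is closed because $(f_1,\dots,f_m)$ is closed by Lemma \ref{cinfinitymanifold}. This is short and elementary, but as written the key identity fails once first-order $j$ occur, i.e.\ for $k\geq 2$. The element $f_1(y)\in(f(x),f(y))$ expands as $f_1(x)-\sum_i\partial_i f_1(x)(x_i-y_i)+\cdots$. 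By regularity $df_1$ does not vanish on $M$ (assuming $M\neq\emptyset$), so some coefficient $\partial_i f_1$ does not lie in $(f_1,\dots,f_m)$. The ambient $x-y$ derivatives do not see that one is working modulo $(f(x),f(y))$.

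\emph{Your route.} You localize, using Whitney's spectral theorem together with finite generation and a partition of unity. You then straighten coordinates so that $f(x)=s$, $f(y)=t$ and $\Delta_M=\{s=t=0,\ u=v\}$, and invoke Malgrange. These adapted coordinates, in which the relations $s,t$ are decoupled from $u-v$, are exactly what repairs the defect above. Conversely, you do not need Whitney or Malgrange. In your coordinates, set $w=u-v$ and work on a box. Hadamard's lemma then shows the local ideal $(s,t,w^\alpha:|\alpha|=k)$ equals $\{h:\partial_w^\beta h|_{w=s=t=0}=0 \text{ for } |\beta|<k\}$, which is visibly closed. Your argument can therefore be made fully elementary; it is in effect a localized, corrected version of the paper's $\phi_j$ argument.

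One small slip: at points of $M\times M$ off $\Delta_M$, $J_k\supseteq J^k$ contains the local unit $(x_i-y_i)^k$. So $J_k$ is locally the unit ideal there, not $(f(x),f(y))$. Local closedness at those points is then trivial, so nothing breaks.
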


\begin{proof}We know that $A$ is a finitely presented regular $\mathcal{C}^\infty$-ring. Hence, it is bornologically finitely presented. Therefore, $\mathcal{C}^\infty(A)$ is equivalent in $\textnormal{Comm}(\textnormal{CBorn}_\mathbb{R})$ to a quotient $\mathcal{C}^\infty(\mathbb{R}^n)/{(f_1,\dots,f_m)}$ for $m,n\in\mathbb{N}$. Therefore, if we choose coordinates $(x,y)=(x_1,\dots,x_n,y_1,\dots,y_n)$ for $\mathbb{R}^{2n}$, we see that $\mathcal{C}^\infty(A\hat\otimes A)$ is equivalent to \begin{equation*}\mathcal{C}^\infty(\mathbb{R}^{2n})/{(f_1(x),\dots,f_m(x),f_1(y),\dots,f_m(y))}
    \end{equation*} in $\textnormal{Comm}(\textnormal{Mod}_\mathbb{Z})$. The morphism $\mathcal{C}^\infty(A\hat\otimes A)\rightarrow{\mathcal{C}^\infty(\Delta)}$ is equivalent to the morphism setting $x=y$. Therefore, we see that $I$ is equivalent to $(x_1-y_1,\dots,x_n-y_n)$ which is finitely generated. Therefore, $I^k$ is finitely generated for any finite $k\in\mathbb{Z}_{\geq 1}$ and hence \begin{equation*}
A_k=\mathcal{C}^\infty(\mathbb{R}^{2n})/{(I^k,f_1(x),\dots,f_m(x),f_1(y),\dots,f_m(y))}
    \end{equation*}is finitely presented for any $k\in\mathbb{Z}_{\geq 1}$.

    It now remains to show that $\mathcal{C}^\infty(A)/I^k$ is a Fr\'echet space. We will let $J$ denote the ideal $(f_1(x),\dots,f_m(x),f_1(y),\dots,f_m(y))$ and we will consider the projection map 
    \begin{equation*}
        \pi:\mathcal{C}^\infty(\mathbb{R}^{2n})\rightarrow{\mathcal{C}^\infty(\mathbb{R}^{2n})/J}=\mathcal{C}^\infty(A)
    \end{equation*}We note that $I^k$ is closed in the Fr\'echet space $\mathcal{C}^\infty(A)$ if and only if $\pi^{-1}(I^k)$ is closed in $\mathcal{C}^\infty(\mathbb{R}^{2n})$. We can see that $\pi^{-1}(I^k)=\underline{I}^{k}+J$ where $\underline{I}^k=(\prod_{i=1}^n(x_i-y_i)^{j_i}\mid \sum_{i=1}^n j_i> k)$. A map $f$ is in $\underline{I}^k$ if and only if $d_{x-y}^{j}f(x,x)=0$ where $j=(j_1,\dots j_n)$ is such that $\sum_{i=1}^n j_i\leq k$. Suppose that we have some $f\in\mathcal{C}^\infty(\mathbb{R}^{2n})$. If we Taylor expand along the diagonal we get $f=\sum_{j}\frac{1}{j!}d^j_{x-y} f(x,x)(x-y)^j+g$ for some $g\in \underline{I}^k$. Therefore, we see that if $f\in \underline{I}^k+J$, then each $d_{x-y}^{j}f(x,x)\in (f_1(x),\dots,f_m(x))$. Conversely, if each $d_{x-y}^{j}f(x,x)\in (f_1(x),\dots,f_m(x))$, then clearly $f\in \underline{I}^k+J$. 

    Now, for each $j=(j_1,\dots,j_n)$ such that $\sum_{i=1}^n j_i<k$, we define a map $\phi_j:\mathcal{C}^\infty(\mathbb{R}^{2n})\rightarrow{\mathcal{C}^\infty(\mathbb{R}^{n})}$ by sending a function $f\in\mathcal{C}^\infty(\mathbb{R}^{2n})$ to the function $\phi_j(f)=d_{x-y}^j f(x,x)$. We recall that $f\in \underline{I}^k+J$ if and only if $\phi_j(f)\in (f_1(x),\dots,f_m(x))$ for all $j$. Hence, we see that $\underline{I}^{k}+J=\bigcap_j \phi_j^{-1}(f_1(x),\dots,f_m(x))$. Since each $\phi_j$ is a continuous map and $(f_1(x),\dots,f_m(x))$ is closed in $\mathcal{C}^\infty(\mathbb{R}^n)$ using Lemma \ref{cinfinitymanifold}, we see that $\underline{I}^k+J$ is an intersection of closed subsets, and hence is also closed in $\mathcal{C}^\infty(\mathbb{R}^{2n})$, as desired.    
  
\end{proof}

\subsection{Representability of the Derived Moduli Stack of Solutions}

We recall from Corollary \ref{tuplediffgeometryrep} that we have the following representability context modelling derived $\mathcal{C}^\infty$-bornological geometry
\begin{equation*}
    (\texthom{\textbf{Ch}}(\textnormal{CBorn}_\mathbb{R}),\bm\tau_{\mathcal{C}^\infty},\textcat{open}_{\mathcal{C}^\infty},\mathcal{C}^\infty\textcat{DBAff},\textcat{Mod},\textcat{hm})
\end{equation*}We will suppose for the rest of this section that $X=\textnormal{Spec}(A)$ lies in $\mathcal{C}^\infty\textcat{DAff}^{\heartsuit}\simeq\mathcal{C}^\infty\textnormal{Aff}$ and that $\mathcal{F}\in\textcat{Stk}(\mathcal{C}^\infty\textcat{DAff},\bm\tau_{\mathcal{C}^\infty}|_{\mathcal{C}^\infty\textcat{DAff}})_{/X}$. Suppose that $Y=\textnormal{Spec}(B)$ is a $(-1)$-representable system of non-linear partial differential equations on sections of $\mathcal{F}$. Consider $\textcat{Sol}_X(Y)$ as a stack in $\textcat{Stk}(\mathcal{C}^\infty\textcat{DAff},\bm\tau_{\mathcal{C}^\infty}|_{\mathcal{C}^\infty\textcat{DAff}})\subseteq \textcat{Stk}(\mathcal{C}^\infty\textcat{DBAff},\bm\tau_{\mathcal{C}^\infty}|_{\mathcal{C}^\infty\textcat{DBAff}})$. We will show that it is geometric as a derived $\mathcal{C}^\infty$-bornological stack.

Recall from Corollary \ref{importantadjunction}, that there is an adjunction
\begin{equation*}
    L:\textnormal{LH}(\textnormal{CBorn}_\mathbb{R})\leftrightarrows{\mathcal{C}^\infty\textnormal{BornRing}}:R
\end{equation*}We note that $\mathbb{R}^n\in\mathcal{C}^\infty\textnormal{BornRing}$ for each $n\in\mathbb{N}$ is in the essential image of $L$, as well as any sifted colimit of objects of this form where morphisms are bounded linear. In particular, we note that any bornologically finitely presented $\mathcal{C}^\infty$-ring is in the essential image of $L$.
\begin{lem}\label{firststeptruncation}
    Suppose that $Y=\textnormal{Spec}(B)$ is a $(-1)$-representable system of partial differential equations on sections of $\mathcal{F}$ and that $B$ is a bornologically finitely presented $\mathcal{C}^\infty$-ring. Let $X'=\textnormal{Spec}(A')\in\mathcal{C}^\infty\textnormal{Aff}$ be such that $\mathcal{C}^\infty(A')$ is reflexive in $\textnormal{CBorn}_\mathbb{R}$. Then, the stack 
    \begin{equation*}
        t_0(\texthom{Map}_{\textcat{Stk}(\mathcal{C}^\infty\textcat{DAff},\bm\tau_{\mathcal{C}^\infty}|_{\mathcal{C}^\infty\textcat{DAff}})_{/X'}}(X',Y))
    \end{equation*}is representable by an object of $\mathcal{C}^\infty\textnormal{BAff}$. 
\end{lem}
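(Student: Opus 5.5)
The plan is to compute the truncated mapping stack pointwise on classical test objects, turn the mapping space into a space of algebra maps out of a tensor product, and then use the adjunction of Corollary \ref{importantadjunction} together with the monoidality of $R$ (Proposition \ref{Rmonoidalstructure}) and Lemma \ref{reflexivereduced} to identify it with maps out of a single $\mathcal{C}^\infty$-bornological ring.

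First, for a classical test object $\textnormal{Spec}(C)\in\mathcal{C}^\infty\textnormal{BAff}$, I would unwind the definitions. Since $t_0$ is restriction along $\iota|_{\mathcal{A}^\heartsuit}$, the truncation sends $C$ to the space of sections $X'\times\textnormal{Spec}(C)\rightarrow{Y}$ over $X'$. Because $Y=\textnormal{Spec}(B)$ is affine, these are maps of $A'$-algebras $B\rightarrow{\pi_0(A'\otimes^\mathbb{L}C)}$. Here I use that $\mathcal{C}^\infty(A')$ is a nuclear Fr\'echet space, so it is flat and the derived tensor product is discrete, equal to the completed projective tensor product $\mathcal{C}^\infty(A')\hat{\otimes}C$. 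Since $B$ is bornologically finitely presented, the remark before the lemma puts it in the essential image of $L$. So, up to the $A'$-linearity constraint, this set of maps is $\textnormal{Hom}_{\mathcal{C}^\infty\textnormal{BornRing}}(L(V),A'\otimes C)\simeq\textnormal{Hom}(V,R(A')\hat\otimes R(C))$ for the relevant generating space $V$ (with its relations), using the monoidality of $R$.

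Second, by Lemma \ref{reflexivereduced}, applied with $\mathcal{C}^\infty(A')$ reflexive, I would rewrite $R(A')\hat{\otimes}R(C)\simeq\texthom{Hom}(R(A')^\vee,R(C))$. Tensor–hom adjunction in $\textnormal{LH}(\textnormal{CBorn}_\mathbb{R})$ then turns a map $V\rightarrow{\texthom{Hom}(R(A')^\vee,R(C))}$ into a map $V\hat{\otimes}R(A')^\vee\rightarrow{R(C)}$. Applying the adjunction $L\dashv R$ again, this becomes a map of $\mathcal{C}^\infty$-bornological rings $L(V\hat{\otimes}R(A')^\vee)\rightarrow{C}$. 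The relations defining $B$ (finitely many generators $f_i$) and the condition of lying over $X'$ are cut out by finitely many further equations, so they correspond to a coequaliser or pushout of such free objects in $\mathcal{C}^\infty\textnormal{BornRing}$, which exists by Lemma \ref{bornbicomplete}. Let $D$ be the resulting object. Naturality in $C$ of each step then shows that $\textnormal{Hom}(D,-)$ represents the truncation on $\mathcal{C}^\infty\textnormal{BAff}$, and stackification does not change it since representables are already stacks for the subcanonical topology.

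The main obstacle I expect is the second step. Lemma \ref{reflexivereduced} and Proposition \ref{Rmonoidalstructure} are stated only for honest $\mathcal{C}^\infty$-rings $A,B$, but the test object $C$ ranges over all of $\mathcal{C}^\infty\textnormal{BornRing}$. I would first try to reduce to $C$ in $\mathcal{C}^\infty\textnormal{Ring}$, or to generators $\ell^\infty(\kappa)$, by writing $C$ as a sifted colimit. The difficulty is that $R(A')\hat{\otimes}-$ must commute with that colimit, which should follow from nuclearity of $\mathcal{C}^\infty(A')$. A secondary issue is checking that the relations imposed via $L$ are compatible with the $\mathcal{C}^\infty$-structure and not only with the underlying algebra; I would handle this by expressing everything through the adjunction on generators $\ell^1(\kappa)$.
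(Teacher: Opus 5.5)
Your argument follows the paper's proof. Writing $B\simeq L(B')$, the paper runs the same chain: $L\dashv R$, monoidality of $R$ (Proposition \ref{Rmonoidalstructure}), Lemma \ref{reflexivereduced} with tensor--hom adjunction, and $L\dashv R$ again. It takes $\textnormal{Spec}(L(B'\hat\otimes R(A')^\vee))$ as the representing object. No extra coequaliser is needed for the relations of $B$, since they are already encoded in $B'$. Your explicit treatment of the over-$X'$ condition is more explicit than the paper, which passes from maps under $R(A')$ to plain maps out of $B'\hat\otimes R(A')^\vee$ without comment.

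The obstacle you anticipate does not arise. $t_0$ of a stack on $\mathcal{C}^\infty\textcat{DAff}$ is a stack on $\mathcal{C}^\infty\textnormal{Aff}$, so the test objects $C$ are honest $\mathcal{C}^\infty$-rings and both cited results apply directly, as in the paper. Your proposed sifted-colimit reduction for general $C\in\mathcal{C}^\infty\textnormal{BornRing}$ would also need $R$ to commute with sifted colimits, which nothing in the paper provides.
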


\begin{proof}
    Since $B$ is bornologically finitely presented, then it can be written as $L(B')$ where $B'\in\textnormal{LH}(\textnormal{CBorn}_\mathbb{R})$. Suppose that $Z=\textnormal{Spec}(C)\in\mathcal{C}^\infty\textnormal{Aff}$. Then, \begin{align*}
        t_0(\texthom{Map}_{\textcat{Stk}(\mathcal{C}^\infty\textcat{DAff},\bm\tau_{\mathcal{C}^\infty}|_{\mathcal{C}^\infty\textcat{DAff}})_{/X'}}(X',Y))&\simeq \textnormal{Hom}_{{}^{A'/}\mathcal{C}^\infty\textnormal{BornRing}}(L(B'),A'\otimes C)\\
        \intertext{Now, using the adjunction from Corollary \ref{importantadjunction},}
        &\simeq \textnormal{Hom}_{{}^{R(A')/}\textnormal{LH}(\textnormal{CBorn}_\mathbb{R})}(B',R(A'\otimes C))
        &\intertext{Now, by Proposition \ref{Rmonoidalstructure},}
        &\simeq \textnormal{Hom}_{{}^{R(A')/}\textnormal{LH}(\textnormal{CBorn}_\mathbb{R})}(B',R(A')\hat\otimes R(C))\\
        \intertext{Since $\mathcal{C}^\infty(A')$ is reflexive, we can use Lemma \ref{reflexivereduced} to show that this is equivalent to}
        &\simeq \textnormal{Hom}_{\textnormal{LH}(\textnormal{CBorn}_\mathbb{R})}(B'\hat\otimes R(A')^\vee, R(C))\\
    &\simeq\textnormal{Hom}_{\mathcal{C}^\infty\textnormal{BornRing}}(L(B'\hat\otimes R(A')^\vee), C)\\
        &\simeq \textnormal{Hom}_{\mathcal{C}^\infty\textnormal{BAff}}(Z,\textnormal{Spec}(L(B'\hat\otimes R(A')^\vee)))
    \end{align*}

    Therefore, we see that our truncated stack is equivalent, as a stack in $\textcat{Stk}(\mathcal{C}^\infty\textnormal{Aff},\bm\tau_{\mathcal{C}^\infty}^\heartsuit)$, to the stack represented by $\textnormal{Spec}(L(B'\hat\otimes R(A')^\vee))\in\mathcal{C}^\infty\textnormal{BAff}$.  
\end{proof}

We will restrict our attention to the following well behaved PDEs, i.e. ones with perfect cotangent complex. 
\begin{defn}
   Suppose that $X=\textnormal{Spec}(A)$ with $A$ a finitely presented regular $\mathcal{C}^\infty$-ring. A system of derived non-linear partial differential equations $\mathcal{G}$ on sections of $\mathcal{F}\in\textcat{Stk}(\mathcal{C}^\infty\textcat{DAff},\bm\tau_{\mathcal{C}^\infty}|_{\mathcal{C}^\infty\textcat{DAff}})_{/X}$ is \textit{formally perfect} if
   \begin{enumerate}
       \item it is an $n$-geometric stack, 
       \item the global cotangent complex $\mathbb{L}_{\mathcal{G}/X'}$ is an object of $\textcat{Perf}(\mathcal{G})$ for $X'=X$ and $X'=\textnormal{Spec}(A_k)$ for $k\in\mathbb{Z}_{\geq 1}$, where $A_k$ is defined as in Lemma \ref{coequaliserxdr}. 
   \end{enumerate}
\end{defn}

\begin{exmp}
    Suppose that $\mathcal{G}$ is a non-linear elliptic system of PDEs on sections of $X=\textnormal{Spec}(A)$ where $A$ is a finitely presented regular $\mathcal{C}^\infty$-ring. Suppose that $\mathcal{G}$ is representable by a finitely presented regular $\mathcal{C}^\infty$-affine. Suppose also that the morphism $\mathcal{G}\rightarrow{\textcat{Jets}_{X_{dR}}^\infty(X)}$ is finitely presented (in the sense of \cite[Definition 1.2.3.1]{toen_homotopical_2008}) and factors through some $\textnormal{Spec}(A_k)$ for some $k$. Consider the following cofibre sequence (see \cite[Corollary 3.7.8]{savage_representability_2024})
    \begin{equation*}
        \mathbb{L}_{\textcat{Jets}_{X_{dR}}^\infty(X)/X'}\rightarrow{\mathbb{L}_{\mathcal{G}/X'}}\rightarrow{\mathbb{L}_{\mathcal{G}/\textcat{Jets}_{X_{dR}}^\infty(X)}}
    \end{equation*}Since $\mathcal{G}$ is a system of elliptic PDEs, then the cotangent complex $\mathbb{L}_{\mathcal{G}/\textcat{Jets}_{X_{dR}}^\infty(X)}$ is quasi-isomorphic to the dual of a two term complex of finitely generated projective objects, and hence is perfect. Moreover, since $\textcat{Jets}_{X_{dR}}^\infty(X)\simeq X_{dR}$, we see that $\mathbb{L}_{\textcat{Jets}_{X_{dR}}^\infty(X)/X}$ is perfect and hence, by base-change, $\mathbb{L}_{\textcat{Jets}_{X_{dR}}^\infty(X)/X'}$ is perfect. Therefore, we see that $\mathcal{G}$ is a formally perfect system of non-linear PDEs. 
\end{exmp}

\begin{cor}[Representability of the Derived Moduli Stack]\phantomsection\label{repsolutionstack}
    Suppose that
    \begin{enumerate}
        \item $Y=\textnormal{Spec}(B)$ is a $(-1)$-representable system of formally perfect derived non-linear partial differential equations on sections of $\mathcal{F}$, where $B$ is a bornologically finitely presented $\mathcal{C}^\infty$-ring,
        \item $X=\textnormal{Spec}(A)$ is such that $A$ is a finitely presented regular $\mathcal{C}^\infty$-ring. 
    \end{enumerate}Then, $\textcat{Sol}_X(Y)$ is a $(-1)$-geometric stack in $\textcat{Stk}_n(\mathcal{C}^\infty\textcat{DBAff},\tau_{\mathcal{C}^\infty}|_{\mathcal{C}^\infty\textcat{DBAff}},\textcat{open}_{\mathcal{C}^\infty}|_{\mathcal{C}^\infty\textcat{DBAff}})$, i.e. it is representable by an object of $\mathcal{C}^\infty\textcat{DBAff}$.
\end{cor}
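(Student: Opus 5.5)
We plan to reduce the statement to the case of an affine source, where Corollary \ref{mappingstacksimplified} and Lemma \ref{firststeptruncation} apply directly, and then pass back to $X_{dR}$ by a limit argument. By Lemma \ref{coequaliserxdr}, since $A$ is finitely presented and regular, $X_{dR}$ is the coequaliser of $\varinjlim_k \textnormal{Spec}(A_k)\rightrightarrows X$. Mapping stacks turn colimits in the source into limits, so
\begin{equation*}
\textcat{Sol}_X(Y)\simeq \textnormal{eq}\Bigl(\texthom{Map}_{/X}(X,Y)\rightrightarrows \varprojlim_k \texthom{Map}_{/\textnormal{Spec}(A_k)}(\textnormal{Spec}(A_k),Y)\Bigr).
\end{equation*}
Here the slices over $X$ and over $\textnormal{Spec}(A_k)$ are obtained by pulling $Y\rightarrow X_{dR}$ back along $X\rightarrow X_{dR}$ and $\textnormal{Spec}(A_k)\rightarrow X_{dR}$. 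Representable stacks in $\mathcal{C}^\infty\textcat{DBAff}$ are closed under finite limits, and $(-1)$-geometric stacks are closed under such limits because representables correspond to colimits of algebras in $\mathcal{C}^\infty\textcat{DBornRing}$. It therefore suffices to show that each term $\texthom{Map}_{/X'}(X',Y)$ is representable, for $X'=X$ and $X'=\textnormal{Spec}(A_k)$, and to handle the sequential limit over $k$.

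For a fixed $X'$ we apply Corollary \ref{mappingstacksimplified} with $n=-1$. For its hypotheses, $X'=\textnormal{Spec}(A')$ with $A'$ bornologically finitely presented: $A'=A$ by Lemma \ref{cinfinitymanifold}, and $A'=A_k$ by Lemma \ref{bornfinpresAk}. Such $X'$ lies in $\mathcal{A}^\heartsuit$. Tensoring with $\mathcal{C}^\infty(A')$, which is nuclear Fréchet, keeps us inside $\mathcal{C}^\infty\textcat{DBAff}$, and we take this tensor condition as the flatness/base-change hypothesis. For condition (\ref{mapcor1}), $\mathcal{C}^\infty(A')$ is reflexive by Lemma \ref{finprescinfinity}, so Lemma \ref{firststeptruncation} shows that the truncation is represented by $\textnormal{Spec}(L(B'\hat\otimes R(A')^\vee))\in\mathcal{C}^\infty\textnormal{BAff}$. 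For condition (\ref{mapcor2}), $Y$ is representable and hence geometric, and $\mathbb{L}_{Y/X'}$ is perfect by the formal perfectness hypothesis. The corollary then gives each $\texthom{Map}_{/X'}(X',Y)$ as $(-1)$-geometric with perfect cotangent complex. The perfect base-change hypotheses of Proposition \ref{mappingcotangentcomplex} hold trivially here, since $\pi$ is a base change of the identity on a representable.

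The main obstacle is the limit over $k$, because a countable limit of affines need not be a finite limit. We would show directly that $\varprojlim_k\texthom{Map}(\textnormal{Spec}(A_k),Y)$ is represented by $\textnormal{Spec}(\varinjlim_k C_k)$, where $C_k$ represents the $k$-th term. This holds because $\mathcal{C}^\infty\textcat{DBornRing}$ is cocomplete and $\textnormal{Map}(-,\textnormal{Spec}(\varinjlim C_k))=\varprojlim\textnormal{Map}(-,\textnormal{Spec}(C_k))$ on representables, so the limit is computed in presheaves and remains a stack. If the paper's notion of representability requires the limit to lie in the stated site, this step is where care is needed. Combining the two steps presents $\textcat{Sol}_X(Y)$ as a finite limit of representables, hence as $(-1)$-geometric.
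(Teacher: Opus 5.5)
Your proposal follows essentially the same route as the paper: you present $X_{dR}$ as the coequaliser from Lemma \ref{coequaliserxdr}, reduce $\textcat{Sol}_X(Y)$ to a limit of affine-source mapping stacks over $X'=\textnormal{Spec}(A)$ and $X'=\textnormal{Spec}(A_k)$, and verify the hypotheses of Corollary \ref{mappingstacksimplified} via reflexivity (Lemmas \ref{finprescinfinity}, \ref{bornfinpresAk}), Lemma \ref{firststeptruncation} for the truncation, and formal perfectness for $\mathbb{L}_{Y/X'}$. Your Yoneda argument for the sequential limit over $k$, using cocompleteness of $\mathcal{C}^\infty\textcat{DBornRing}$, is a spelled-out version of the paper's one-line remark that the class of derived $\mathcal{C}^\infty$-bornological affines is closed under the relevant limits.
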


\begin{proof}
We recall that 
\begin{equation*}
    \textcat{Sol}_X(Y)=\texthom{Map}_{\textcat{Stk}(\mathcal{C}^\infty\textcat{DAff},\bm\tau_{\mathcal{C}^\infty}|_{\mathcal{C}^\infty\textcat{DAff}})_{/X_{dR}}}(X_{dR},Y)
\end{equation*}Since $A$ is a finitely presented regular $\mathcal{C}^\infty$-ring, then by Lemma \ref{coequaliserxdr}, $X_{dR}$ can be described as the coequaliser of the diagram 
\begin{equation*}
        \varinjlim_{k\in\mathbb{Z}_{\geq 1}}\textnormal{Spec}(A_k)\rightrightarrows{\textnormal{Spec}(A)}
    \end{equation*}in $\textcat{Stk}(\mathcal{C}^\infty\textnormal{Aff},\bm\tau_{\mathcal{C}^\infty}^\heartsuit|_{\mathcal{C}^\infty\textnormal{Aff}})\subseteq \textcat{Stk}(\mathcal{C}^\infty\textcat{DAff},\bm\tau_{\mathcal{C}^\infty}|_{\mathcal{C}^\infty\textcat{DAff}})$. Hence, we see that $\textcat{Sol}_X(Y)$ can be expressed as the limit of mapping stacks of the form
    \begin{equation*}
        \texthom{Map}_{\textcat{Stk}(\mathcal{C}^\infty\textcat{DAff},\bm\tau_{\mathcal{C}^\infty}|_{\mathcal{C}^\infty\textcat{DAff}})_{/X'}}(X',Y)
    \end{equation*}with $X'$ either $\textnormal{Spec}(A)$ or $\textnormal{Spec}(A_k)$ for $k\in\mathbb{Z}_{\geq 1}$. Since $\mathcal{C}^\infty\textcat{DBAff}$ is closed under colimits, it suffices to show that each of these mapping stacks is $(-1)$-representable by an object of $\mathcal{C}^\infty\textcat{DBAff}$, as then the limit will also be $(-1)$-representable. Since derived $\mathcal{C}^\infty$-rings are closed under tensor products, it suffices to check the conditions of Corollary \ref{mappingstacksimplified}.

    We note that $A$ and each $A_k$ satisfy that $\mathcal{C}^\infty(A)$ and $\mathcal{C}^\infty(A_k)$ are reflexive in $\textnormal{CBorn}_\mathbb{R}$ by Lemmas \ref{finprescinfinity} and \ref{bornfinpresAk}. Hence, by Lemma \ref{firststeptruncation}, we see that the truncated mapping stacks are representable by objects $\textnormal{Spec}(L(B'\hat\otimes R(A)^\vee))$ and $\textnormal{Spec}(L(B'\hat\otimes R(A_k)^\vee))$ in $\mathcal{C}^\infty\textnormal{BAff}$ respectively. Hence, Condition (\ref{mapcor1}) is satisfied. 

    Now, since $Y$ is a system of formally perfect derived non-linear PDEs, the cotangent complex of the morphism $Y\rightarrow{X'}$ is in $\textcat{Perf}(Y)$ for $X'=\textnormal{Spec}(A)$ and $X'=\textnormal{Spec}(A_k)$ for $k\in\mathbb{Z}_{\geq 1}$. Therefore, Condition (\ref{mapcor2}) is also satisfied, and hence $\textcat{Sol}_X(Y)$ is $(-1)$-representable by an object of $\mathcal{C}^\infty\textcat{DBAff}$.

\end{proof}

\appendix

\section{Bornological Spaces}\label{bornologyappendix}
Bornological spaces are spaces which possess enough structure to consider questions of boundedness, and thus are an ideal setting for bringing together homological algebra and functional analysis. Our main references are \cite{bambozzi_dagger_2016} and \cite{meyer_local_2007}.

\begin{defn}
Let $X$ be a set. A \textit{bornology} on $X$ is a collection $\mathcal{B}$ of subsets of $X$ such that 
\begin{itemize}
    \item $\mathcal{B}$ covers $X$, i.e.  for every $x\in X$, there exists some $B\in \mathcal{B}$ such that $x\in \mathcal{B}$, 
    \item $\mathcal{B}$ is stable under inclusions, i.e. for every inclusion $A\subset B\in \mathcal{B}$, we have $A\in\mathcal{B}$,
    \item $\mathcal{B}$ is stable under finite unions, i.e. for each $n\in\mathbb{N}$ and $B_1,\dots,B_n\in\mathcal{B}$, we have $\bigcup_{i=1}^nB_i\in\mathcal{B}$.
\end{itemize}
\end{defn}

The pair $(X,\mathcal{B})$ is called a \textit{bornological set}, and the elements of $\mathcal{B}$ are called bounded subsets of $X$. A family of subsets $\mathcal{A}\subset\mathcal{B}$ is called a \textit{basis} for $\mathcal{B}$ if, for any $B\in\mathcal{B}$, there exist $A_1,\dots,A_n\in \mathcal{A}$ such that $B\subset A_1\cup \dots\cup A_n$. A \textit{morphism of bornological sets} is any map which sends bounded subsets to bounded subsets. 

Suppose that we have a complete non-trivially valued field $k$. The case where $k$ is a trivially valued field is addressed in \cite[Section 6]{bambozzi_dagger_2016}.
\begin{defn}
A \textit{bornological vector space} over $k$ is a $k$-vector space $V$ together with a bornology on the underlying set of $V$ such that the maps $(\lambda,v)\rightarrow{\lambda v}$ and $(v,w)\rightarrow{v+w}$ are bounded. 
\end{defn}

\begin{exmp}\phantomsection\label{vonnemannborn}
    Suppose that $V$ is a vector space.
    \begin{enumerate}
        \item The \textit{fine bornology on $V$} is the smallest possible bornology on $V$. A subset $B\subseteq V$ belongs to this bornology if and only if there is a finite-dimensional subspace $W\subseteq V$ such that $B\subseteq W$ and $B$ is bounded in $W$.     \end{enumerate}
Suppose that $V$ is a locally convex topological vector space, 
\begin{enumerate}
    \item The \textit{von Neumann bornology on $V$} is the bornology consisting of von-Neumann bounded subsets, i.e. those which are absorbed by each neighbourhood of the origin in $V$, 
    \item The \textit{precompact bornology on $V$} is the bornology consisting of those subsets $B\subseteq V$ such that their closure in the completion of $V$ is compact. 
\end{enumerate}

\end{exmp}

We will now detail certain categories of bornological vector spaces. We let $k^\circ=\{\lambda\in k\mid |\lambda|\leq 1\}$. Suppose that $V$ is a $k$-vector space. Then, a subset $W$ of $V$ is \textit{convex} if, for every  $v,w\in W$ and $t\in [0,1]$, we have that $(1-t)v+tw\in W$, and \textit{balanced} if, for every $\lambda\in k^\circ$, $\lambda W\subset W$. We will say that $W$  is \textit{absolutely convex (or a disk)} if, for $k$ Archimedean, $W$ is convex and balanced, and if, for $k$ non-Archimedean, $W$ is a $k^\circ$-submodule of $V$.

\begin{defn}
A bornological vector space is said to be of \textit{convex type} if it has a basis made up of absolutely convex subsets. The category of bornological $k$-vector spaces of convex type, equipped with bounded linear maps, will be denoted by $\textnormal{Born}_k$.
\end{defn}

\begin{defn}
A bornological vector space over $k$ is 
\begin{itemize}
    \item \textit{separated} if its only bounded vector subspace is the trivial subspace $\{0\}$, 
    \item \textit{complete} if there exists a small filtered category $I$, a functor $I\rightarrow{\textnormal{Ban}}_k$, and an isomorphism $V\simeq \underset{i\in I}{\varinjlim}\, V_i$ for a filtered colimit of Banach spaces over $k$, for which the system morphisms are all injective and the colimit is calculated in $\textnormal{Born}_k$. 
\end{itemize}
We denote the full subcategories of $\textnormal{Born}_k$ consisting of separated and complete bornological $k$-vector spaces by $\textnormal{SBorn}_k$ and $\textnormal{CBorn}_k$ respectively. 
\end{defn}

We note that we have fully faithful inclusion functors
\begin{equation*}
    \textnormal{CBorn}_k\lhook\joinrel\longrightarrow{\textnormal{SBorn}_k}\lhook\joinrel\longrightarrow{\textnormal{Born}_k}
\end{equation*}which have left adjoints given by the \textit{separation functor} $\textnormal{sep}:\textnormal{Born}_k\rightarrow\textnormal{SBorn}_k$ sending any space $V$ to $V/\overline{\{0\}}$, and the \textnormal{completion functor} $\textnormal{comp}:\textnormal{SBorn}_k\rightarrow{\textnormal{CBorn}_k}$ whose construction is detailed in \cite[Section 3.3]{bambozzi_dagger_2016}.

\begin{exmp}
    \begin{enumerate}
        \item Any vector space endowed with the fine bornology is a complete bornological space, 
        \item The von-Neumann bornology and the precompact bornology on any locally convex topological vector space $V$ is a convex bornology. It is separated if $V$ is Hausdorff and complete if $V$ is complete. 
    \end{enumerate}
\end{exmp}

We note that the category $\textnormal{Born}_k$ is closed symmetric monoidal. Given $V,W\in\textnormal{Born}_k$, we can endow $V\otimes_kW$ with the \textit{projective tensor product bornology} generated by the absolutely convex hulls of subsets of the form
\begin{equation*}
    X\otimes Y=\{x\otimes y\mid x\in X, y\in Y\}
\end{equation*}for bounded disks $X$ in $V$ and $Y$ in $W$. The space $\textnormal{Hom}(V,W)$ can be endowed with a bornology provided by the \textit{equibounded subsets}, i.e. those subsets $L$ consisting of linear maps $f:V\rightarrow{W}$ such that, for each $B$ bounded in $V$, the set $\{f(v)\mid f\in L, v\in B\}$ is bounded in $W$. 

The closed monoidal structure on $\textnormal{CBorn}_k$ is given by the completion $\textnormal{comp}(V\otimes W)$ of the projective tensor product. The internal hom is defined as in $\textnormal{Born}_k$.

\begin{prop}\cite[Lemma 3.53]{bambozzi_dagger_2016}
    $\textnormal{CBorn}_k$ is a bicomplete closed symmetric monoidal quasi-abelian category. 
\end{prop}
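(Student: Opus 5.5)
This statement is quoted from \cite[Lemma 3.53]{bambozzi_dagger_2016}, so I would organise the argument in four parts: bicompleteness, the closed symmetric monoidal structure, quasi-abelianness, and compatibility of completion with the tensor product. Throughout, the basic tool is the description of $\textnormal{CBorn}_k$ as a reflective subcategory of $\textnormal{Born}_k$. The composite $\textnormal{comp}\circ\textnormal{sep}$ is left adjoint to the inclusion $\textnormal{CBorn}_k\hookrightarrow\textnormal{Born}_k$. Hence limits in $\textnormal{CBorn}_k$ are computed in $\textnormal{Born}_k$, and colimits are obtained by applying $\textnormal{comp}\circ\textnormal{sep}$ to the colimit computed in $\textnormal{Born}_k$.

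\emph{Step 1: bicompleteness.} First I will show that $\textnormal{Born}_k$ is bicomplete. Limits are set-theoretic limits of the underlying vector spaces, with the initial bornology: a subset is bounded when all of its projections are bounded. Colimits are quotients of direct sums. The direct sum carries the bornology generated by finite sums of images of bounded sets, and quotients carry the image bornology. Next I will check that $\textnormal{CBorn}_k$ is closed under limits in $\textnormal{Born}_k$. For products, a bounded disk in $\prod V_i$ is contained in $\prod B_i$ with each $B_i$ a completant disk, and a product of completant disks is completant. For kernels, the intersection of a completant disk with a closed subspace is completant. Reflectivity then supplies all colimits.

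\emph{Step 2: closed symmetric monoidal structure.} On $\textnormal{Born}_k$ the projective bornology on $V\otimes W$ is characterised by the adjunction $\textnormal{Hom}(V\otimes W,U)\simeq\textnormal{Hom}(V,\underline{\textnormal{Hom}}(W,U))$, where the hom carries the equibounded bornology. Verifying this is a direct unwinding: a bilinear map is bounded exactly when it sends $X\times Y$ into a bounded set, for all bounded disks $X,Y$. I will then show that if $U$ is complete, so is $\underline{\textnormal{Hom}}(W,U)$: an equibounded disk of maps into a complete space is completant, since limits can be taken pointwise inside completant disks of $U$. This gives
\[
\textnormal{Hom}(\textnormal{comp}(V\otimes W),U)\simeq\textnormal{Hom}(V,\underline{\textnormal{Hom}}(W,U)),
\]
so $\textnormal{CBorn}_k$ is closed with tensor product $\textnormal{comp}(V\otimes W)$. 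Associativity and the unit follow by the standard argument that the reflector is compatible with tensoring. This needs $\textnormal{comp}(\textnormal{comp}(V)\otimes W)\simeq\textnormal{comp}(V\otimes W)$, which is a Yoneda argument using completeness of internal homs.

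\emph{Step 3: quasi-abelianness, the main obstacle.} A morphism $f$ is strict when $\textnormal{coim} f\to\textnormal{im} f$ is an isomorphism. In $\textnormal{CBorn}_k$ the cokernel is $V/\overline{f(U)}$, with closure taken in the bornological sense, followed by completion. I need strict epimorphisms to be stable under pullback and strict monomorphisms to be stable under pushout. For pullbacks, strict epimorphisms are quotient maps whose bounded sets lift to bounded sets, and this lifting property survives pullback along a fibre product directly. For pushouts, I would first try to characterise strict monomorphisms as embeddings of closed subspaces carrying the induced bornology. The pushout of $i:U\hookrightarrow V$ along $g:U\to W$ is $(V\oplus W)/\{(i(u),-g(u))\}$. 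I would show that this antidiagonal subspace is already bornologically closed, since $U$ is complete and closed in $V$, so no completion correction is needed. The map $W\to$ pushout is then injective with the induced bornology. This closure check, which uses completeness of $U$ essentially, is the delicate point. If it fails to be straightforward, I would instead invoke the equivalence $\textnormal{CBorn}_k\simeq\textnormal{Ind}^m(\textnormal{Ban}_k)$ and reduce to the known quasi-abelianness of $\textnormal{Ban}_k$ together with exactness of essentially monomorphic filtered colimits.
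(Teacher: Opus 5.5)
Your argument is correct. The paper does not prove this proposition; it quotes it from \cite{bambozzi_dagger_2016}, so there is no internal proof to compare against. Your route is the standard direct verification: bicompleteness from reflectivity of $\textnormal{CBorn}_k$ in $\textnormal{Born}_k$, the closed monoidal structure by reflecting the projective tensor product through the equibounded internal hom, and quasi-abelianness by checking pullback and pushout stability. For the last part you identify strict epimorphisms as bornological quotient maps and strict monomorphisms as injections carrying the induced bornology.

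Two points can be tightened.

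In Step 2, it is not true that every equibounded disk is completant. What is true, and what you need, is that every equibounded set $L$ lies in a completant one. For each bounded disk $Y\subseteq W$, choose a closed completant disk $C_Y\subseteq U$ containing $L(Y)$. Then the disk $\{f : f(Y)\subseteq C_Y \text{ for all } Y\}$ contains $L$ and is completant. This follows from your pointwise-limit argument, with separatedness of $U$ ensuring the limits obtained for different $Y$ agree.

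In Step 3, the closure check you call delicate is immediate. The map $u\mapsto (i(u),-g(u))$ is injective, and $U$ carries the bornology induced from $V\oplus W$, since boundedness of the first coordinate already forces boundedness in $U$. A complete subspace carrying the induced bornology of a separated space is automatically bornologically closed. Indeed, a Mackey-convergent sequence from the subspace is Mackey–Cauchy for a completant disk of the subspace, so its limit lies in the subspace. Similarly, the quotient of a complete space by a closed subspace is already complete. So no completion step is needed in the cokernel, and the fallback through $\textnormal{Ind}^m(\textnormal{Ban}_k)$ is unnecessary.
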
 

We will often implicitly consider the categories of Banach and Fr\'echet spaces as full subcategories of the category of complete bornological spaces by equipping them with the von Neumann bornology.

\section{Nuclear Complete Bornological Spaces}

\begin{defn}\cite[Definition 3.38]{bambozzi_stein_2018} An object $V\in\textnormal{CBorn}_k$ is \textit{nuclear} if there is an isomorphism 
\begin{equation*}
    V\simeq \varinjlim_i V_i
\end{equation*}where $I\rightarrow{\textnormal{Ban}_k}$ is a diagram of Banach spaces with nuclear monomorphisms $V_i\rightarrow{V_j}$ as transition maps for all $i<j$. 
\end{defn}

We note that, by \cite[Lemma 3.60]{ben-bassat_frechet_2023}, any nuclear Fr\'echet space can be considered as a complete nuclear bornological space by endowing it with the von Neumann bornology. 

\begin{lem}\cite[c.f. Lemma 2.17]{prosmans_topological_2000} Any nuclear object $V\in\textnormal{CBorn}_k$ is isomorphic to a filtered colimit $\varinjlim_j \ell^1$ where $j:J\rightarrow{\textnormal{Ban}_k}$ is a diagram of Banach spaces with nuclear transition maps.
    
\end{lem}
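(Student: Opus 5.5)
The plan is to reduce to a factorization lemma for nuclear maps between Banach spaces, and then re-index the given diagram so that these factorizations interleave with the original Banach spaces.

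\emph{Step 1: factorization.} Write $V\simeq\varinjlim_{i\in I}V_i$ as in the definition of nuclearity. Replacing $I$ by a cofinal directed poset, which changes neither the colimit nor the transition maps, we may assume $I$ is a directed poset. For $i<j$ the transition map $f_{ij}:V_i\to V_j$ is nuclear, so it can be written as
\begin{equation*}
f_{ij}(x)=\sum_{n\in\mathbb{N}}\lambda_n\,\varphi_n(x)\,y_n,\qquad \sum_n|\lambda_n|<\infty,\quad \|\varphi_n\|\le 1,\quad \|y_n\|\le 1 .
\end{equation*}
This gives a factorization
\begin{equation*}
f_{ij}=\beta_{ij}\circ\alpha_{ij},\qquad V_i\xrightarrow{\alpha_{ij}}\ell^1_{ij}:=\ell^1(\mathbb{N})\xrightarrow{\beta_{ij}}V_j ,
\end{equation*}
where $\alpha_{ij}(x)=(\lambda_n\varphi_n(x))_n$ and $\beta_{ij}((a_n)_n)=\sum_n a_n y_n$. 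The map $\alpha_{ij}$ is itself nuclear, since it is the sum $\sum_n\lambda_n\,\varphi_n\otimes e_n$ with absolutely summable coefficients. The map $\beta_{ij}$ is bounded. We fix one such factorization for each pair $i<j$.

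\emph{Step 2: re-indexing.} Let $I'$ be the set of pairs $(i,j)$ with $i<j$, ordered by declaring $(i,j)\le(i',j')$ if either $(i,j)=(i',j')$ or $j\le i'$. This relation is transitive, since $j\le i'<j'\le i''$ gives $j\le i''$. It is directed: given $(i,j)$ and $(i',j')$, choose $i''\ge j,j'$ and then $j''>i''$. The functor $\ell^1_\bullet:I'\to\textnormal{Ban}_k$ sends $(i,j)$ to $\ell^1_{ij}$. For $(i,j)<(i',j')$ it sends the arrow to the composite
\begin{equation*}
\ell^1_{ij}\xrightarrow{\beta_{ij}}V_j\xrightarrow{f_{ji'}}V_{i'}\xrightarrow{\alpha_{i'j'}}\ell^1_{i'j'} ,
\end{equation*}
with $f_{ji'}$ read as the identity when $j=i'$. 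Functoriality is the key check, and it reduces to the identity $\beta_{i'j'}\circ\alpha_{i'j'}=f_{i'j'}$ together with functoriality of $f$. Concretely, the composite of the arrows for $(i,j)<(i',j')<(i'',j'')$ is
\begin{equation*}
\alpha_{i''j''}\circ f_{j'i''}\circ\beta_{i'j'}\circ\alpha_{i'j'}\circ f_{ji'}\circ\beta_{ij}
=\alpha_{i''j''}\circ f_{j'i''}\circ f_{i'j'}\circ f_{ji'}\circ\beta_{ij}
=\alpha_{i''j''}\circ f_{ji''}\circ\beta_{ij} ,
\end{equation*}
which is exactly the arrow assigned to $(i,j)<(i'',j'')$. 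Each transition map of $\ell^1_\bullet$ contains the nuclear factor $\alpha_{i'j'}$, and composites of a nuclear map with bounded maps are nuclear, so every transition map is nuclear.

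\emph{Step 3: identifying the colimit.} The maps $\beta_{ij}$ assemble into a morphism of diagrams $\ell^1_\bullet\to V_\bullet\circ(\text{second projection})$. In the other direction, $V_i\xrightarrow{\alpha_{ij}}\ell^1_{ij}$ gives compatible maps back. The two composites agree with the transition maps $f$ of the $V$-system and with the transition maps of the $\ell^1$-system respectively. Hence the two Ind-objects are isomorphic, by the standard interleaving or cofinality argument for Ind-objects. Applying $\varinjlim$ into $\textnormal{CBorn}_k$ then gives $V\simeq\varinjlim_{I'}\ell^1_{ij}$.

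\emph{Main obstacle.} I expect the main obstacle to be Steps 2 and 3: making the interleaving strictly functorial over a general filtered index category, since the factorizations are chosen non-canonically, and ensuring that the colimit is computed compatibly in $\textnormal{CBorn}_k$ and not only at the level of Ind-objects. The pair-indexed poset $I'$ is designed to handle the first issue. For the second, I would use that $\textnormal{CBorn}_k$ is the essentially monomorphic part of $\textnormal{Ind}(\textnormal{Ban}_k)$, so $\varinjlim$ takes isomorphic Ind-objects to isomorphic colimits.

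Note that the statement does not ask the new transition maps to be monomorphisms, and indeed they need not be.
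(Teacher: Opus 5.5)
The paper gives no proof of this lemma; it only points to Prosmans--Schneiders. Your proposal therefore supplies an argument the paper omits, and it is correct; it is the standard argument.

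You factor each transition map as $f_{ij}=\beta_{ij}\circ\alpha_{ij}$ through $\ell^1(\mathbb{N})$ with $\alpha_{ij}$ nuclear, then interleave over the pair poset $I'$. This works:
\begin{itemize}
\item your order on $I'$ is a partial order;
\item the functoriality computation is right;
\item each strict transition map is nuclear because it ends with $\alpha_{i'j'}$.
\end{itemize}

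Step 3 is less delicate than you fear. The projections $\mathrm{pr}_1,\mathrm{pr}_2\colon I'\to I$ are monotone and cofinal. $\alpha$ is a natural transformation $V_\bullet\circ\mathrm{pr}_1\Rightarrow\ell^1_\bullet$, and $\beta$ is one $\ell^1_\bullet\Rightarrow V_\bullet\circ\mathrm{pr}_2$. Both round trips are transition maps: $\beta_{ij}\alpha_{ij}=f_{ij}$, and $\alpha_{jj'}\beta_{ij}$ is the arrow for $(i,j)<(j,j')$. This gives mutually inverse maps between the colimits in any cocomplete category, in particular directly in $\textnormal{CBorn}_k$. So neither Ind-objects nor essential monomorphicity are needed; all that matters is that you are comparing colimits.

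Compared with the bare citation, your route gives an explicit presentation: countable $\ell^1$'s, with each transition map a bounded map followed by a nuclear one. The cost is injectivity of the transition maps, which the statement does not require.

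Two hypotheses are used silently and should be stated.

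\emph{No maximal elements.} Directedness of $I'$ needs every $i\in I$ to admit some $j>i$. This is not in the definition as written in the paper. If $I$ has a greatest element, the nuclearity condition is vacuous, every Banach space becomes nuclear, and the lemma fails. Take $\ell^2$: it is not isomorphic to any $\ell^1(\kappa)$. Nor is it a $\textnormal{CBorn}_k$-colimit of a system in which every object has a nuclear map out, since the structure maps into $\ell^2$ would then be compact and the universal property would force the unit ball of $\ell^2$ to be precompact. So ``no maximal elements'' has to be part of the definition of nuclearity, as is evidently intended.

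\emph{The cofinal poset.} Replacing a filtered category by a cofinal directed poset can send strict inequalities to identity morphisms, so it does change which transition maps occur. Either take $I$ to be a directed set from the start, as the notation $i<j$ suggests, or restrict $I'$ to pairs whose transition map is nuclear.

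Finally, Step 1 uses the Archimedean description of nuclear operators, with $\sum_n|\lambda_n|<\infty$. This covers the case $k=\mathbb{R}$ that the paper actually needs.
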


For $V\in\textnormal{CBorn}_k$, we denote by $V^\vee$ the dual $V^\vee:=\texthom{Hom}_{\textnormal{CBorn}_k}(V,k)$. For $V\in\textnormal{CBorn}_\mathbb{R}$, we will say that $V$ is \textit{reflexive} if $(V^\vee)^\vee\simeq V$. We note that any nuclear Fr\'echet space is a reflexive complete bornological space by \cite[Results 6.5]{kriegl_convenient_1997}.

The following result follows using the definition of nuclearity for Banach spaces. 
\begin{lem}\label{nuclearhomequiv}
    If $V$ is a nuclear complete bornological space, then for any $W\in\textnormal{Ban}_k$, there is an equivalence
    \begin{equation*}
        W^\vee\hat{\otimes}V\simeq \texthom{Hom}_{\textnormal{CBorn}_\mathbb{R}}(W,V)
    \end{equation*}
\end{lem}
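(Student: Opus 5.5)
We prove the equivalence $W^\vee\hat{\otimes}V\simeq\texthom{Hom}_{\textnormal{CBorn}_\mathbb{R}}(W,V)$ by reducing to Banach spaces and then passing to the colimit. Write the nuclear space as $V\simeq\varinjlim_i V_i$, a filtered colimit of Banach spaces whose transition maps $V_i\rightarrow V_j$ are nuclear monomorphisms. There is always a canonical comparison map
\begin{equation*}
    W^\vee\hat{\otimes}V\rightarrow\texthom{Hom}_{\textnormal{CBorn}_\mathbb{R}}(W,V),\qquad \phi\otimes v\mapsto \big(w\mapsto \phi(w)v\big),
\end{equation*}
and the plan is to show it is an isomorphism. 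The argument has three steps.

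\textbf{Step 1 (tensor side).} The completed projective tensor product $W^\vee\hat{\otimes}-$ commutes with filtered colimits in $\textnormal{CBorn}_\mathbb{R}$, because $\textnormal{CBorn}_\mathbb{R}$ is closed symmetric monoidal. Hence $W^\vee\hat{\otimes}V\simeq\varinjlim_i W^\vee\hat{\otimes}_\pi V_i$, where each $W^\vee\hat{\otimes}_\pi V_i$ is the Banach projective tensor product.

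\textbf{Step 2 (Hom side).} Since $W$ is Banach, its unit ball is a bounded disk generating its bornology. A bounded map $W\rightarrow V$ therefore sends the unit ball into a bounded subset of $V$. In a complete bornological space presented as a filtered colimit of Banach spaces, every bounded set lies in the image of some bounded set of some $V_i$. So every bounded map $W\rightarrow V$ factors through some $V_i$. Injectivity of the transition maps makes this factorisation unique up to the system maps, and the same argument applied to equibounded families gives $\texthom{Hom}(W,V)\simeq\varinjlim_i\texthom{Hom}(W,V_i)$ bornologically. This gives the \emph{compactness of Banach objects against monomorphic systems} that we need.

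\textbf{Step 3 (matching the two colimits).} In general $W^\vee\hat{\otimes}_\pi V_i\rightarrow\mathcal{B}(W,V_i)$ is not an isomorphism; its image is the nuclear operators. Nuclearity of the transitions is what rescues us. If $T_{ij}:V_i\rightarrow V_j$ is nuclear, then for any bounded $f:W\rightarrow V_i$ the composite $T_{ij}\circ f$ is a nuclear operator $W\rightarrow V_j$. So it lies in the image of $W^\vee\hat{\otimes}_\pi V_j$, with norm controlled by $\nu(T_{ij})\|f\|$. Explicitly, write $T_{ij}=\sum_n \psi_n\otimes y_n$ with $\sum\|\psi_n\|\|y_n\|<\infty$, and send $f\mapsto\sum_n(\psi_n\circ f)\otimes y_n$. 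This defines bounded maps $\mathcal{B}(W,V_i)\rightarrow W^\vee\hat{\otimes}_\pi V_j$ interleaving the two inductive systems. The composites on either side are the respective transition maps, so the systems are ind-isomorphic and their colimits agree. Combining with Steps 1 and 2 proves the claim.

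The main obstacle is Step 3. We have to check that the interleaving maps are well defined, since they must not depend on the chosen nuclear representation of $T_{ij}$ once pushed into the colimit. We also have to check compatibility with the canonical comparison map. For independence of the representation, I would first verify it after composing with the canonical map into $\mathcal{B}(W,V_k)$ for $k>j$. I would then use that $W^\vee\hat{\otimes}_\pi V_j\rightarrow W^\vee\hat{\otimes}_\pi V_k$ factors through a nuclear map, which kills the approximation-property issue in the colimit.
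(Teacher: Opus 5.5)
Your proof is correct and is essentially the argument the paper intends. The paper only remarks that the lemma follows from the definition of nuclearity for Banach spaces, and your interleaving of the systems $\{W^\vee\hat{\otimes}_\pi V_i\}$ and $\{\texthom{Hom}(W,V_i)\}$ via nuclear representations of the transition maps $T_{ij}$ is that argument made explicit.

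The well-definedness issue you flag as the main obstacle is in fact automatic. Let $\kappa_j$ denote the comparison map and $\rho_{jk}$ your map built from a nuclear representation of $T_{jk}$. Then $\textnormal{id}\otimes T_{jk}=\rho_{jk}\circ\kappa_j$, so any two choices of $\rho_{ij}$ (both satisfying $\kappa_j\circ\rho_{ij}=T_{ij}\circ-$) agree after one further transition. This is the standard fact that such interleaving maps give an isomorphism of Ind-objects, and hence of the colimits in $\textnormal{CBorn}_\mathbb{R}$.
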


The proof of the following corollary is due to Jack Kelly. 
\begin{cor}\label{strongdualisablenucref}
    If $V\in\textnormal{CBorn}_\mathbb{R}$ is nuclear and reflexive and $W$ is a Banach space, then we have an equivalence
    \begin{equation*}
        \texthom{Hom}_{\textnormal{CBorn}_\mathbb{R}}(V^\vee,W)\simeq W\hat{\otimes} V
    \end{equation*}
\end{cor}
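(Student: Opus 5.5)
The strategy is to derive Corollary \ref{strongdualisablenucref} from Lemma \ref{nuclearhomequiv}, using $V^\vee$ as the nuclear object. Reflexivity, $V\simeq (V^\vee)^\vee$, lets us rewrite the tensor product as an internal hom out of the dual. The plan has three steps.

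\textbf{Step 1: show that $V^\vee$ is nuclear.} We would start from a presentation $V\simeq\varinjlim_i V_i$ with nuclear Banach transition maps, as in the definition of nuclearity. Then $V^\vee\simeq\varprojlim_i V_i^\vee$ is a countable or cofiltered limit with nuclear transition maps. Turning this into a filtered colimit presentation in $\textnormal{CBorn}_\mathbb{R}$ is the delicate part. In the cases of interest, namely nuclear Fr\'echet spaces like $\mathcal{C}^\infty(\mathbb{R}^n)$ and its closed quotients, we would instead use that the strong dual of a nuclear Fr\'echet space is a nuclear (DF)-space. Its von Neumann bornology is then a nuclear complete bornology, via \cite[Lemma 3.60]{ben-bassat_frechet_2023} and its dual version. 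If generality is needed, we would assume $V$ is a nuclear Fr\'echet space, which covers all later uses of this corollary.

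\textbf{Step 2: apply Lemma \ref{nuclearhomequiv}.} We apply it to the Banach space $W^\vee$ (the dual of a Banach space is Banach) and the nuclear space $V^\vee$. This gives
\begin{equation*}
(W^\vee)^\vee\hat\otimes V^\vee\simeq\texthom{Hom}(W^\vee,V^\vee).
\end{equation*}
This is not quite the target, since $W$ need not be reflexive. So instead we work directly with nuclearity in the form: for a nuclear $N$ and a Banach $W$, the natural map $N\hat\otimes W\to\texthom{Hom}(N^\vee,W)$ is an isomorphism. We would take $N=V$ and check this by writing $V=\varinjlim V_i$ with nuclear transition maps. On each stage, the map $V_i\hat\otimes W\to\texthom{Hom}(V_i^\vee,W)$ is an isomorphism up to the nuclear (trace-class) transition maps. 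Passing to the colimit makes the ind-systems isomorphic, because nuclear maps factor through the projective tensor product, i.e. they lie in the image of $V_i^{\vee\vee}\hat\otimes W\to \texthom{Hom}(V_j^\vee, W)$.

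\textbf{Step 3: identify the hom side.} We need
\begin{equation*}
\texthom{Hom}(V^\vee,W)\simeq\varinjlim_i\texthom{Hom}(V_i^\vee,W).
\end{equation*}
This is where reflexivity of $V$ is used: bounded subsets of $V^\vee$ are equicontinuous, so every bounded map $V^\vee\to W$ factors through some $V_i^\vee$. Combining this with Step 2 gives the claimed equivalence $\texthom{Hom}_{\textnormal{CBorn}_\mathbb{R}}(V^\vee,W)\simeq W\hat\otimes V$, natural in $W$.

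The main obstacle is this factorization in Step 3, i.e. commuting the dual with the filtered colimit. I would handle it first in the nuclear Fr\'echet case, where the Mackey/equicontinuity arguments are standard (Grothendieck's theory of (DF)-spaces), and note that this suffices for the applications in Lemma \ref{reflexivereduced} and Proposition \ref{Rmonoidalstructure}.
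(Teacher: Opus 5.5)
There is a genuine gap, and it is exactly the step you flag as the main obstacle. Your Step 2 is fine: the stagewise comparison using nuclear transition maps is the mechanism behind Lemma \ref{nuclearhomequiv}. But it only identifies $W\hat\otimes V$ with $\varinjlim_i\texthom{Hom}(V_i^\vee,W)$. The whole content of the corollary is therefore pushed into Step 3, namely the claim that bounded maps out of the bornological limit $V^\vee\simeq\varprojlim_i V_i^\vee$ into a Banach space factor through some $V_i^\vee$, uniformly on equibounded families.

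The reason you give does not deliver this. Equicontinuity of bounded subsets of $V^\vee$ is a barrelledness property of $V$, and it does not even use reflexivity. It describes the bornology of $V^\vee$ as generated by the Banach disks $U^\circ$, which would express $\texthom{Hom}(V^\vee,W)$ as a \emph{limit} over those disks; it says nothing about maps \emph{out of} $V^\vee$. What you would actually need is that every bounded map $V^\vee\to W$ is continuous for the topology of uniform convergence on bounded subsets of $V$, i.e. that $V'_\beta$ is bornological. That is a nontrivial fact about distinguished Fr\'echet spaces. You would then still need an extension argument, since such a map is a priori only defined on the image of $V^\vee$ in some $V_i^\vee$. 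For a general nuclear reflexive $V\in\textnormal{CBorn}_\mathbb{R}$ no such property of cofiltered bornological limits is available. Retreating to nuclear Fr\'echet $V$ proves a weaker statement than the one claimed. Step 1 is never used.

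The missing idea is to use reflexivity to move $V$ into the target, instead of analysing maps out of $V^\vee$. Suppose first that $W\simeq U^\vee$ for a Banach space $U$. The tensor--hom adjunction and reflexivity give
$\texthom{Hom}(V^\vee,U^\vee)\simeq\texthom{Hom}(V^\vee\hat\otimes U,\mathbb{R})\simeq\texthom{Hom}(U,V^{\vee\vee})\simeq\texthom{Hom}(U,V)$.
Now the Banach space is the source, so Lemma \ref{nuclearhomequiv} applies directly and yields $U^\vee\hat\otimes V\simeq W\hat\otimes V$.

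You noticed that $W$ need not be a dual. The paper deals with this by a kernel presentation rather than a new analytic argument:
\begin{enumerate}
\item every Banach space is $W\simeq\ker(\ell^\infty(\kappa)\to\ell^\infty(\mu))$, with $\ell^\infty(\kappa)\simeq\ell^1(\kappa)^\vee$;
\item the functor $\texthom{Hom}(V^\vee,-)$ preserves kernels because it is a right adjoint;
\item $-\hat\otimes V$ preserves kernels because nuclear objects are flat \cite[Theorem 3.50]{bambozzi_stein_2018}.
\end{enumerate}
One should also check that the dual-case isomorphism is the canonical map $w\otimes v\mapsto(\phi\mapsto\phi(v)w)$. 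That map is natural in arbitrary bounded maps $W\to W'$, not only dual ones, so the comparison passes to kernels. This route needs no (DF)-space theory and works for arbitrary nuclear reflexive $V$.
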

\begin{proof}
We will first suppose that there exists some Banach space $U$ such that $U^\vee\simeq W$. Using our previous lemma, we see that
\begin{equation*}
    \texthom{Hom}_{\textnormal{CBorn}_\mathbb{R}}(V^\vee,W)\simeq \texthom{Hom}_{\textnormal{CBorn}_\mathbb{R}}(V^\vee,U^\vee)\simeq \texthom{Hom}_{\textnormal{CBorn}_\mathbb{R}}(U,V)\simeq U^\vee\hat{\otimes}V\simeq W\hat{\otimes}V
\end{equation*} We now note that since $V$ is nuclear, it is flat by \cite[Theorem 3.50]{bambozzi_stein_2018}, and hence we see that $-\hat{\otimes}V$ commutes with kernels. Now, since any Banach space $W$ can be written as a kernel of a map of Banach spaces $\ell^1(\kappa)^\vee\simeq\ell^\infty(\kappa)\rightarrow{\ell^\infty(\mu)=\ell^1(\mu)^\vee}$, our result easily follows
\end{proof}

\begin{cor}\label{nuclearhomequivCinfinity}
    Suppose that $V,W$ are Fr\'echet spaces with $V$ nuclear and reflexive. Then, there is an equivalence
    \begin{equation*}
        \texthom{Hom}_{\textnormal{CBorn}_\mathbb{R}}(V^\vee,W)\simeq W\hat{\otimes}V
    \end{equation*}
    
\end{cor}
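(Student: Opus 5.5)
The plan is to deduce Corollary \ref{nuclearhomequivCinfinity} from Corollary \ref{strongdualisablenucref} by replacing the Banach space $W$ with a Fréchet space. Write $W$ as a countable projective limit of Banach spaces. Concretely, choose an increasing sequence of continuous seminorms $p_n$ defining the topology of $W$, and let $W_n$ be the completion of $W/\ker p_n$. Then
\begin{equation*}
    W\simeq \varprojlim_n W_n
\end{equation*}
as a reduced projective limit in Fréchet spaces. I would then verify that this is also a limit in $\textnormal{CBorn}_\mathbb{R}$ once everything carries the von Neumann bornology. This holds because a subset of a Fréchet space is von Neumann bounded if and only if its image under every $p_n$ is bounded. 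Consequently, the inclusion of Fréchet spaces into $\textnormal{CBorn}_\mathbb{R}$ preserves countable projective limits.

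Next, I would use that $\texthom{Hom}_{\textnormal{CBorn}_\mathbb{R}}(V^\vee,-)$ is a right adjoint and therefore commutes with limits. This gives
\begin{equation*}
    \texthom{Hom}_{\textnormal{CBorn}_\mathbb{R}}(V^\vee,W)\simeq \varprojlim_n \texthom{Hom}_{\textnormal{CBorn}_\mathbb{R}}(V^\vee,W_n)\simeq \varprojlim_n W_n\hat{\otimes}V,
\end{equation*}
where the second equivalence applies Corollary \ref{strongdualisablenucref} levelwise. The naturality of that equivalence in $W$, which comes from its construction through Lemma \ref{nuclearhomequiv}, makes the transition maps compatible.

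The remaining step, and the one I expect to be the main obstacle, is commuting the completed projective tensor product with the projective limit:
\begin{equation*}
    \varprojlim_n (W_n\hat{\otimes}V)\simeq \Bigl(\varprojlim_n W_n\Bigr)\hat{\otimes}V = W\hat{\otimes}V.
\end{equation*}
In general $\hat\otimes$ does not commute with limits, so this is where the nuclearity of $V$ must be used. I would invoke \cite[Lemma 5.18]{ben-bassat_frechet_2023}, the same result used in the proof of Proposition \ref{Rmonoidalstructure}, which says that tensoring with a nuclear object commutes with such countable (Fréchet-type) limits of Banach spaces.

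If that citation does not apply directly, I would fall back on a topological argument. Since $V$ is a nuclear Fréchet space, $W\hat\otimes_\pi V\simeq W\hat\otimes_\epsilon V$. The injective tensor product commutes with reduced projective limits, and by \cite[Lemma 3.60]{ben-bassat_frechet_2023} the bornological and topological completed tensor products agree for nuclear Fréchet spaces. Finally, I would check that the composite equivalence coincides with the canonical map $W\hat\otimes V\to \texthom{Hom}(V^\vee,W)$, which sends $w\otimes v$ to $\phi\mapsto \phi(v)w$, using the reflexivity identification $V\simeq V^{\vee\vee}$.
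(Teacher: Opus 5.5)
Your proposal is correct and follows essentially the same route as the paper. Both write $W$ as a countable limit of Banach spaces, use \cite[Lemma 5.18]{ben-bassat_frechet_2023} to commute $-\hat{\otimes}V$ with that limit, and then conclude levelwise from Corollary \ref{strongdualisablenucref}. The paper leaves implicit the extra checks you supply: that the Fr\'echet projective limit is also a limit in $\textnormal{CBorn}_\mathbb{R}$, that the internal Hom commutes with limits in its second variable, and that the levelwise equivalences are compatible with the transition maps.
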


\begin{proof}
    We note that we may write $W$ as a countable limit of Banach spaces. Since $V$ is a nuclear Fr\'echet space, then by \cite[Lemma 5.18]{ben-bassat_frechet_2023}, we see that $-\hat{\otimes}V$ commutes with countable limits. Hence, we can conclude using the previous Lemma. 
\end{proof}

\section{Perfect, Compact, and Dualisable Objects}\phantomsection\label{perfectappendix}

Let $\mathcal{C}$ be an additive closed symmetric monoidal locally presentable $(\infty,1)$-category with monoidal product $\otimes^\mathbb{L}$ and unit $I$. Denote the internal mapping space by $\texthom{Map}_\mathcal{C}:\mathcal{C}\rightarrow{\mathcal{C}}$. We fix the following definitions.

\begin{defn}\phantomsection\label{differentduals} Suppose that $A\in\mathcal{C}$. Then its \textit{dual} object is $A^\vee:=\texthom{Map}_\mathcal{C}(A,I)$. An object $A\in\mathcal{C}$ is 
    \begin{enumerate}
            \item \textit{compact} if $\textnormal{Map}_\mathcal{C}(A,-)$ commutes with filtered colimits,
        \item \textit{projective} if $\textnormal{Map}_\mathcal{C}(A,-)$ commutes with geometric realisations,
        \item \textit{perfect} if it is a retract of a finite colimit of objects of the form $\coprod_E I$ for some finite set $E$, 
        \item \textit{dualisable} if the map $A^\vee\otimes^{\mathbb{L}}A\rightarrow{\texthom{Map}_\mathcal{C}(A,A)}$ is an equivalence,
        \item \textit{strongly dualisable} if the map $A^\vee\otimes^{\mathbb{L}}B\rightarrow{\texthom{Map}_\mathcal{C}(A,B)}$ is an equivalence for any $B\in\mathcal{C}$,
        \item \textit{reflexive} if $(A^\vee)^\vee\simeq A$.
    \end{enumerate}
\end{defn}

We note that perfect objects are strongly dualisable and projective. If $\mathcal{C}$ is additionally a stable $(\infty,1)$-category, then perfect objects are reflexive. We have the following result. 

\begin{lem}\phantomsection\label{fibreperfect}
    Suppose that $\mathcal{C}$ is a stable $(\infty,1)$-category. If $A\rightarrow{B}\rightarrow{C}$ is a fibre sequence in $\mathcal{C}$ with two of the objects strongly dualisable, then the third object is strongly dualisable.
\end{lem}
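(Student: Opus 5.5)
We prove the statement by characterising strong dualisability through the vanishing of a cofibre. For any $X\in\mathcal{C}$ and any $D\in\mathcal{C}$ there is a canonical comparison map
\begin{equation*}
\alpha_{X,D}:X^\vee\otimes^{\mathbb{L}}D\rightarrow{\texthom{Map}_\mathcal{C}(X,D)}.
\end{equation*}
By definition, $X$ is strongly dualisable exactly when $\alpha_{X,D}$ is an equivalence for every $D$. The plan is to show that, for fixed $D$, the maps $\alpha_{A,D},\alpha_{B,D},\alpha_{C,D}$ fit into a morphism of fibre sequences. The conclusion will then follow from the two-out-of-three property for equivalences in a map of fibre sequences in a stable $(\infty,1)$-category.

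First, we produce the fibre sequences. Since $\mathcal{C}$ is stable and $\texthom{Map}_\mathcal{C}(-,D)$ is an exact functor $\mathcal{C}^{op}\rightarrow\mathcal{C}$ (it sends colimits to limits, and in a stable category fibre sequences are also cofibre sequences), the sequence $A\rightarrow B\rightarrow C$ yields a fibre sequence
\begin{equation*}
\texthom{Map}_\mathcal{C}(C,D)\rightarrow\texthom{Map}_\mathcal{C}(B,D)\rightarrow\texthom{Map}_\mathcal{C}(A,D).
\end{equation*}
Taking $D=I$ gives the fibre sequence $C^\vee\rightarrow B^\vee\rightarrow A^\vee$. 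Because $\otimes^{\mathbb{L}}$ preserves colimits in each variable (the category is closed) and is additive, $-\otimes^{\mathbb{L}}D$ is exact. Hence we also obtain a fibre sequence
\begin{equation*}
C^\vee\otimes^{\mathbb{L}} D\rightarrow B^\vee\otimes^{\mathbb{L}} D\rightarrow A^\vee\otimes^{\mathbb{L}} D.
\end{equation*}

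Next, we check naturality. The comparison maps $\alpha_{X,D}$ are adjoint to the composite $X^\vee\otimes X\otimes D\rightarrow I\otimes D\simeq D$ built from evaluation. This composite is natural in $X$ in the contravariant sense, so the three maps $\alpha_{C,D},\alpha_{B,D},\alpha_{A,D}$ assemble into a map between the two fibre sequences above. The point requiring the most care is that this is genuinely a morphism of fibre sequences, that is, a natural transformation of exact functors $\mathcal{C}^{op}\rightarrow\mathcal{C}$ given by $X\mapsto X^\vee\otimes D$ and $X\mapsto\texthom{Map}(X,D)$, rather than only a homotopy commutative ladder. I would obtain this by constructing $\alpha_{-,D}$ functorially as an $(\infty,1)$-natural transformation via the adjunction. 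Once it is a natural transformation of exact functors, it automatically carries fibre sequences to morphisms of fibre sequences.

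Finally, suppose two of $\alpha_{A,D},\alpha_{B,D},\alpha_{C,D}$ are equivalences. In a stable category, a map of fibre sequences in which two components are equivalences has its third component an equivalence as well. This follows by rotating the sequences and using that the (co)fibre of an equivalence is zero; equivalently, apply the five lemma to the long exact sequences of homotopy groups $\pi_*\textnormal{Map}(E,-)$ for all $E$. Since $D$ was arbitrary, the third object is strongly dualisable.
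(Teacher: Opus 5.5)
Your proposal is correct and follows the same route as the paper: fix $D$, use the natural comparison map to get a morphism between the fibre sequences obtained by applying $(-)^\vee\otimes^{\mathbb{L}}D$ and $\texthom{Map}_\mathcal{C}(-,D)$ to $A\rightarrow B\rightarrow C$, then conclude by the five lemma (two-out-of-three). You also spell out why both functors are exact and why the naturality must be coherent, points the paper's one-line proof leaves implicit.
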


\begin{proof}
Suppose that $D\in\mathcal{C}$. The result follows from considering the morphism of fibre-cofibre sequences induced by $(-)^\vee\otimes^\mathbb{L}D$ and $\texthom{Map}_\mathcal{C}(-,D)$ and then applying the five lemma. 
\end{proof}

\section{Ind and Sind Objects}\phantomsection\label{indobjectappendix}

Suppose that $\textnormal{C}$ is a small category with finite colimits. Then, 
\begin{defn}
    The \textit{free filtered cocompletion of $\textnormal{C}$}, denoted $\textnormal{Ind}(\textnormal{C})$, is defined to be the subcategory of presheaves consisting of those which preserve small limits
    \begin{equation*}
        \textnormal{Ind}(\textnormal{C}):=\textnormal{Fun}^{lex}(\textnormal{C}^{op},\textnormal{Set})
    \end{equation*}We will occasionally refer to this category as \textit{the category of Ind-objects in $\textnormal{C}$}.
\end{defn}

\begin{defn}
    An object $X\in\textnormal{Ind}(\textnormal{C})$ is \textit{essentially monomorphic} if it is isomorphic to an object in $\textnormal{Ind}(\textnormal{C})$ of the form $X':I\rightarrow{\textnormal{C}}$ where, for every morphism $i\rightarrow{j}$ in $I$, the corresponding morphism $X'(i)\rightarrow{X'(j)}$ in $\textnormal{C}$ is a monomorphism. 
\end{defn}

In this paper, we will be concerned with the free sifted cocompletion of categories. Suppose that $\textnormal{C}$ is a category with finite coproducts. We recall that a sifted colimit is a colimit which commutes with finite products.
\begin{defn}
    The \textit{free sifted cocompletion of $\textnormal{C}$}, denoted $\textnormal{SInd}(\textnormal{C})$, is defined to be the subcategory of presheaves consisting of those which preserve small products,
    \begin{equation*}
        \textnormal{SInd}(\textnormal{C}):=\textnormal{Fun}^{\times}(\textnormal{C}^{op},\textnormal{Set})
    \end{equation*}
\end{defn}

\begin{lem}\phantomsection\label{fullyfaithfulsind}
    Suppose that we have a fully faithful functor $i:\textnormal{C}\rightarrow{\textnormal{D}}$ for $C,D$ with finite coproducts. Then, there is a fully faithful functor $i:\textnormal{SInd}(\textnormal{C})\rightarrow{\textnormal{SInd}(\textnormal{D})}$. 
\end{lem}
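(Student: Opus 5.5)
The statement to prove is Lemma \ref{fullyfaithfulsind}. My plan is to realise the extension of $i$ as left Kan extension along the Yoneda embedding. I will then reduce full faithfulness to the case of representables, using that every object of $\textnormal{SInd}(\textnormal{C})$ is a sifted colimit of representables.

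First I construct the functor. Let $y_\textnormal{C}:\textnormal{C}\rightarrow\textnormal{SInd}(\textnormal{C})$ and $y_\textnormal{D}$ be the Yoneda embeddings. Since $i$ is fully faithful, the composite $y_\textnormal{D}\circ i$ lands in $\textnormal{SInd}(\textnormal{D})$. I define $\tilde{i}:=\textnormal{Lan}_{y_\textnormal{C}}(y_\textnormal{D}\circ i)$. Concretely, write $F\in\textnormal{SInd}(\textnormal{C})$ as the canonical colimit over its category of elements $\textnormal{C}_{/F}$, and set $\tilde{i}(F)=\varinjlim_{(c,x)\in\textnormal{C}_{/F}}y_\textnormal{D}(i(c))$.

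The key point I need is that $\textnormal{C}_{/F}$ is sifted when $F$ preserves finite products. This is the standard argument: a product-preserving presheaf on a category with finite coproducts has sifted category of elements, because $F(c\sqcup c')\simeq F(c)\times F(c')$ supplies cones. It follows that the colimit is a sifted colimit of representables. Sifted colimits of product-preserving functors to $\textnormal{Set}$ are computed pointwise and stay product-preserving, so $\tilde{i}(F)$ lies in $\textnormal{SInd}(\textnormal{D})$. The same argument shows $\tilde{i}$ preserves sifted colimits and agrees with $i$ on representables.

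For full faithfulness, take $F=\varinjlim_\alpha y(c_\alpha)$ and $G=\varinjlim_\beta y(c'_\beta)$, both sifted colimits. On the domain side,
\begin{equation*}
\textnormal{Hom}(F,G)\simeq\varprojlim_\alpha G(c_\alpha)\simeq\varprojlim_\alpha\varinjlim_\beta\textnormal{Hom}_\textnormal{C}(c_\alpha,c'_\beta).
\end{equation*}
On the target side,
\begin{equation*}
\textnormal{Hom}(\tilde{i}F,\tilde{i}G)\simeq\varprojlim_\alpha\tilde{i}(G)(i(c_\alpha))\simeq\varprojlim_\alpha\varinjlim_\beta\textnormal{Hom}_\textnormal{D}(i(c_\alpha),i(c'_\beta)),
\end{equation*}
using that sifted colimits in $\textnormal{SInd}(\textnormal{D})$ are computed pointwise. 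Full faithfulness of $i$ then identifies the two expressions, compatibly with the map induced by $\tilde{i}$.

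I expect the main obstacle to be the siftedness and pointwise-colimit claim, since it is the only input that makes both Hom computations valid. I would either verify directly that finite products commute with the colimit, or quote the standard fact (e.g. Ad\'amek--Rosick\'y--Vitale) that $\textnormal{SInd}(\textnormal{C})$ is the free sifted cocompletion of $\textnormal{C}$ with representables compact projective.
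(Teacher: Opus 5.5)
Your proof is correct, but it takes a different route from the paper. The paper's proof is a single citation of a general result in Kelly's \emph{Basic Concepts of Enriched Category Theory} (Theorem 4.99), with no construction given.

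You instead build the functor explicitly as the restriction to $\textnormal{SInd}(\textnormal{C})$ of left Kan extension of presheaves $i_!$. You check that it lands in $\textnormal{SInd}(\textnormal{D})$ because the category of elements of a product-preserving presheaf is sifted. You then verify full faithfulness by the $\varprojlim_\alpha\varinjlim_\beta$ computation, which uses only Yoneda, pointwise computation of colimits, and full faithfulness of $i$.

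The comparison is as follows. The citation buys brevity, plus an enriched generality the paper never needs. Your argument buys a self-contained proof, and it isolates the one point specific to $\textnormal{SInd}$: the extension preserves the product-preserving subcategory. That point holds whether or not $i$ preserves finite coproducts.

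Three small remarks.
\begin{enumerate}
\item The siftedness you flag as the main obstacle is immediate. $\textnormal{C}_{/F}$ actually has finite coproducts: $(c\sqcup c',(x,x'))$ is the coproduct of $(c,x)$ and $(c',x')$, because $F(c\sqcup c')\simeq F(c)\times F(c')$, and $(\emptyset,*)$ is initial.
\item Full faithfulness of $i$ plays no role in $y_\textnormal{D}\circ i$ landing in $\textnormal{SInd}(\textnormal{D})$. That holds for any functor $i$.
\item There is a shorter variant of your argument. The functor $i_!:\textnormal{Fun}(\textnormal{C}^{op},\textnormal{Set})\to\textnormal{Fun}(\textnormal{D}^{op},\textnormal{Set})$ is already fully faithful when $i$ is, since the unit $F\to i^*i_!F$ is an isomorphism: it is one on representables, and both $i_!$ and $i^*$ preserve colimits. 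So once you have checked that $i_!$ preserves product-preserving presheaves, nothing further needs computing.
\end{enumerate}
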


\begin{proof}
    This follows from \cite[Theorem 4.99]{kelly_basic_1982}. 
\end{proof}

\begin{lem}\phantomsection\label{siftedcocompletionproperties} Suppose that we have a functor $F:\textnormal{C}\rightarrow\textnormal{D}$ between two categories $\textnormal{C}$ and $\textnormal{D}$. Suppose that $\textnormal{C}$ has finite coproducts. 
    \begin{enumerate}
        \item If $\textnormal{D}$ has sifted colimits, then $F$ extends to a sifted colimit-preserving functor $\tilde{F}:\textnormal{SInd}(\textnormal{C})\rightarrow{\textnormal{D}}$, 
        \item If $\textnormal{D}$ has all colimits and $\textnormal{F}$ preserves finite coproducts, then $F$ extends to a colimit-preserving functor $\tilde{F}:\textnormal{SInd}(\textnormal{C})\rightarrow{\textnormal{D}}$. 
    \end{enumerate}
\end{lem}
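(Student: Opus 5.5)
The plan is to construct $\tilde{F}$ as the left Kan extension of $F$ along the Yoneda embedding $y:\textnormal{C}\rightarrow{\textnormal{SInd}(\textnormal{C})}=\textnormal{Fun}^\times(\textnormal{C}^{op},\textnormal{Set})$. Concretely, for $X\in\textnormal{SInd}(\textnormal{C})$ I would set
\begin{equation*}
\tilde{F}(X):=\varinjlim_{(c,x)\in \textnormal{el}(X)}F(c),
\end{equation*}
where $\textnormal{el}(X)$ is the category of elements, with objects pairs $(c,x)$, $x\in X(c)$.

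The first step is to check that this colimit exists when $\textnormal{D}$ only has sifted colimits, so I need $\textnormal{el}(X)$ to be sifted. Since $X$ sends finite coproducts in $\textnormal{C}$ to products, $X(c\sqcup d)\simeq X(c)\times X(d)$. Hence $(c\sqcup d,(x,x'))$ is a coproduct of $(c,x)$ and $(d,x')$ in $\textnormal{el}(X)$, and $(\emptyset,*)$ is initial. A category with finite coproducts is sifted, since the diagonal is cofinal, being left adjoint to the coproduct functor. So $\tilde{F}$ is well defined. The same argument gives the density statement $X\simeq\varinjlim_{\textnormal{el}(X)}y(c)$ in $\textnormal{SInd}(\textnormal{C})$, and it shows that $\tilde{F}\circ y\simeq F$, because $\textnormal{el}(y(c))$ has the terminal object $(c,\textnormal{id})$.

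Next I would show that $\tilde{F}$ preserves sifted colimits. Sifted colimits in $\textnormal{Fun}^\times(\textnormal{C}^{op},\textnormal{Set})$ are computed pointwise, since they commute with finite products in $\textnormal{Set}$. Consequently each representable $y(c)$ is compact projective, because $\textnormal{Hom}(y(c),-)$ is evaluation at $c$. For a sifted diagram $i\mapsto X_i$ with colimit $X$, I would compare $\varinjlim_i\varinjlim_{\textnormal{el}(X_i)}F(c)$ with $\varinjlim_{\textnormal{el}(X)}F(c)$. The left side is the colimit of $F$ over the Grothendieck construction $\int_i\textnormal{el}(X_i)$. The key claim is that the induced functor $\int_i\textnormal{el}(X_i)\rightarrow{\textnormal{el}(X)}$ is cofinal. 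Its comma categories over an object $(c,x)$ have as connected components exactly the elements of $\varinjlim_iX_i(c)=X(c)$ mapping to $x$, and siftedness of the index category makes each one connected. I expect this cofinality check to be the main obstacle. If it becomes messy, I would fall back on the universal property of $\mathcal{P}_\Sigma$ in the form of \cite[Proposition 5.5.8.15]{lurie_higher_2009}, restricted to $1$-categories, which gives exactly this extension. Uniqueness of $\tilde{F}$ follows from density.

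For part (2), assume $\textnormal{D}$ is cocomplete and $F$ preserves finite coproducts. Since every colimit is built from sifted colimits and finite coproducts (indeed coequalisers are reflexive, hence sifted, after adding a section), it suffices to show that $\tilde{F}$ preserves finite coproducts. On representables, $y(c\sqcup d)\simeq y(c)\sqcup y(d)$ in $\textnormal{SInd}(\textnormal{C})$, because $\textnormal{Hom}(y(c\sqcup d),Z)=Z(c)\times Z(d)$. Therefore $\tilde{F}(y(c)\sqcup y(d))\simeq F(c\sqcup d)\simeq F(c)\sqcup F(d)$, and the initial object is handled similarly. For general $X,X'$, I would write $X\sqcup X'\simeq\varinjlim_{\textnormal{el}(X)\times\textnormal{el}(X')}\bigl(y(c)\sqcup y(c')\bigr)$, using that a product of sifted categories is sifted and that coproducts commute with colimits. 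Then I would apply the preservation of sifted colimits established above to conclude.
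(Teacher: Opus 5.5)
Your proof is correct, and it is more than the paper gives. The paper's proof is a single sentence calling these standard and citing \cite{adamek_sifted_2001}. There, $\textnormal{Fun}^\times(\textnormal{C}^{op},\textnormal{Set})$ is identified with the free completion of $\textnormal{C}$ under sifted colimits, so (1) is essentially that universal property and (2) its refinement for finite-coproduct-preserving $F$. You instead build $\tilde F$ by the explicit formula $\tilde F(X)=\varinjlim_{\textnormal{el}(X)}F(c)$ and argue as follows:
\begin{enumerate}
\item finite coproducts in $\textnormal{C}$ make each $\textnormal{el}(X)$ sifted;
\item sifted colimits in $\textnormal{SInd}(\textnormal{C})$ are computed pointwise;
\item a cofinality argument then gives preservation of sifted colimits;
\item part (2) reduces to preservation of finite coproducts, which you check on representables and propagate by density.
\end{enumerate}
Your route buys an explicit description of $\tilde F$. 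It shows that the two parts produce the same functor, and it makes visible exactly where the finite coproducts of $\textnormal{C}$ are used. The citation buys brevity. Both tacitly need $\textnormal{C}$ small so that $\textnormal{el}(X)$ is small.

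Three small corrections.

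First, the diagonal $\textnormal{el}(X)\rightarrow\textnormal{el}(X)\times\textnormal{el}(X)$ is \emph{right} adjoint to $\sqcup$, not left adjoint. It is right adjoints that are final: the comma category $(a,b)\downarrow\Delta$ has initial object $a\sqcup b$ with its two inclusions. Your conclusion stands, but the stated reason has the direction reversed.

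Second, in the cofinality step, connectedness of $(c,x)\downarrow\Phi$ does not come directly from siftedness of $I$.
\begin{itemize}
\item Every object $(i,d,y,f)$ of $(c,x)\downarrow\Phi$ receives the morphism $(\textnormal{id}_i,f)$ from $(i,c,X_i(f)(y),\textnormal{id}_c)$.
\item Objects of the form $(i,c,z,\textnormal{id}_c)$ with $\lambda_i(z)=x$ are connected to one another because $\varinjlim_iX_i(c)$ is the set of connected components of the category of elements of $i\mapsto X_i(c)$.
\end{itemize}
Siftedness enters earlier, in knowing that $X(c)=\varinjlim_iX_i(c)$ at all.

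Third, \cite[Proposition 5.5.8.15]{lurie_higher_2009} is not a drop-in fallback. $\mathcal{P}_\Sigma$ of a $1$-category takes values in $\infty$-groupoids, and $\textnormal{SInd}(\textnormal{C})$ is only its full subcategory of $0$-truncated objects. The correct $1$-categorical reference is exactly \cite{adamek_sifted_2001}.
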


\begin{proof}
    These are standard results and can be found in \cite{adamek_sifted_2001}. 
\end{proof}

By \cite[Corollary 2.7]{adamek_sifted_2001}, we can see that $\textnormal{SInd}(\textnormal{C})$ is equivalent to the category whose objects are functors $X:I\rightarrow{\textnormal{C}}$, where $I$ is a small sifted category and morphisms are defined appropriately. Hence, we can write the objects of $\textnormal{SInd}(\textnormal{C})$ as \textit{formal sifted colimits of objects} in $\textnormal{C}$ as \begin{equation*}
    ``\varinjlim_{i\in I}"X_i:=\varinjlim_{i\in I} h_{X(i)}
\end{equation*}

We note that, in the context of $(\infty,1)$-categories, it is customary to use the notation $\mathcal{P}_\Sigma(\mathcal{C})$ to denote the free sifted cocompletion. By definition, this is the following category of product preserving $(\infty,1)$-functors
\begin{equation*}
    \mathcal{P}_\Sigma(\mathcal{C}):=\textcat{Fun}^\times(\mathcal{C}^{op},\infty\textcat{Grpd})
\end{equation*}

We have the following important results analogous to the above. 
\begin{prop}\phantomsection\label{siftedcocompletion}\cite[Proposition 5.5.8.15]{lurie_higher_2009} Let $\mathcal{C}$ be a small $(\infty,1)$-category with finite coproducts and let $\mathcal{D}$ be an $(\infty,1)$-category with filtered colimits and geometric realisations. Then, there is an equivalence of categories
\begin{equation*}
    \textnormal{Fun}_\Sigma(\mathcal{P}_\Sigma(\mathcal{C}),\mathcal{D})\rightarrow{\textnormal{Fun}(\mathcal{C},\mathcal{D})}
\end{equation*}where $\textnormal{Fun}_\Sigma(\mathcal{P}_\Sigma(\mathcal{C}),\mathcal{D})$ denotes the subcategory of functors preserving filtered colimits and geometric realisations. Moreover, any functor in $\textnormal{Fun}_\Sigma(\mathcal{P}_\Sigma(\mathcal{C}),\mathcal{D})$ preserves sifted colimits. If $\mathcal{D}$ has finite coproducts then any finite coproduct preserving functor $\mathcal{C}\rightarrow{\mathcal{D}}$ can be extended to a small colimit-preserving functor $\mathcal{P}_\Sigma(\mathcal{C})\rightarrow{\mathcal{D}}$.
\end{prop}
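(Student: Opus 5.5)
The plan is to view $\mathcal{P}_\Sigma(\mathcal{C})=\textcat{Fun}^\times(\mathcal{C}^{op},\infty\textcat{Grpd})$ as a full subcategory of the presheaf category $\mathcal{P}(\mathcal{C})$. I will characterise it as the closure of the Yoneda image under filtered colimits and geometric realisations. The universal property then follows by left Kan extension along the Yoneda embedding $j:\mathcal{C}\rightarrow{\mathcal{P}_\Sigma(\mathcal{C})}$.

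\textbf{Closure under sifted colimits.} Sifted colimits in $\infty\textcat{Grpd}$ commute with finite products, and colimits in $\mathcal{P}(\mathcal{C})$ are computed pointwise. Hence a sifted colimit of product-preserving presheaves is again product-preserving, so $\mathcal{P}_\Sigma(\mathcal{C})\subseteq\mathcal{P}(\mathcal{C})$ is closed under sifted colimits. In particular it is closed under filtered colimits and geometric realisations, and these are computed pointwise. By Yoneda, $\textnormal{Map}(j(c),-)$ is evaluation at $c$. Therefore each $j(c)$ is compact and projective in $\mathcal{P}_\Sigma(\mathcal{C})$ in the sense of Definition \ref{differentduals}. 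Moreover $j$ preserves finite coproducts, because a product-preserving presheaf sends $c\sqcup c'$ to $X(c)\times X(c')$.

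\textbf{Sifted colimits of representables.} Every $X\in\mathcal{P}_\Sigma(\mathcal{C})$ is the colimit of $j$ over $\mathcal{C}_{/X}$. Since $\mathcal{C}$ has finite coproducts and $X$ turns them into products, $\mathcal{C}_{/X}$ has finite coproducts, so it is sifted. Thus every object is a sifted colimit of representables.

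\textbf{Reduction to filtered colimits and geometric realisations.} Here I need that sifted colimits can be built from filtered colimits and geometric realisations. I would invoke the standard decomposition (Lurie's Lemma 5.5.8.13 and Corollary 5.5.8.17): a functor preserving filtered colimits and geometric realisations preserves all sifted colimits. It follows that $\mathcal{P}_\Sigma(\mathcal{C})$ is the smallest full subcategory of $\mathcal{P}(\mathcal{C})$ containing $j(\mathcal{C})$ and closed under filtered colimits and geometric realisations. This reduction is the step I expect to be the main obstacle. The target $\mathcal{D}$ is only assumed to have filtered colimits and geometric realisations, not arbitrary sifted colimits, so the left Kan extension cannot be written directly as a colimit over the sifted category $\mathcal{C}_{/X}$ in $\mathcal{D}$.

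My first approach is a two-step extension. First extend $F$ to the Ind-completion of the closure of $\mathcal{C}$ under finite geometric-realisation-type colimits. Then check, using the compact projective generators, that the resulting functor agrees with the pointwise left Kan extension wherever the latter exists.

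\textbf{The equivalence.} Given $F:\mathcal{C}\rightarrow{\mathcal{D}}$, define $\tilde F$ by building $X$ from representables through filtered colimits and geometric realisations, and let $\tilde F$ preserve these. Well-definedness and functoriality come from the Kan-extension criterion (Lurie's Proposition 4.3.2.15 style): a functor $G\in\textnormal{Fun}_\Sigma(\mathcal{P}_\Sigma(\mathcal{C}),\mathcal{D})$ is a left Kan extension of $G\circ j$. The proof tests $G$ on the generators, which are compact projective, and uses that $G$ commutes with the colimits generating $\mathcal{P}_\Sigma(\mathcal{C})$. This shows the restriction functor $G\mapsto G\circ j$ is fully faithful, and the existence of $\tilde F$ shows it is essentially surjective. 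Preservation of all sifted colimits then follows from the decomposition above.

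\textbf{The final claim.} Suppose $\mathcal{D}$ has finite coproducts and $F$ preserves them. Write $X=\varinjlim_I j(c_i)$ and $Y=\varinjlim_J j(d_k)$ as sifted colimits. Then $X\sqcup Y$ is the colimit over $I\times J$ of $j(c_i\sqcup d_k)$, because the diagonal of a sifted category is cofinal. Hence $\tilde F$ preserves finite coproducts. Every small colimit is a sifted colimit of finite coproducts, so $\tilde F$ preserves all small colimits.
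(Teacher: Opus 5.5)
Your proposal has a genuine gap, located exactly at the step you flag as the main obstacle, and neither workaround you sketch closes it. The equivalence needs two things. First, every $f:\mathcal{C}\rightarrow\mathcal{D}$ must admit a left Kan extension along $j$ lying in $\textnormal{Fun}_\Sigma(\mathcal{P}_\Sigma(\mathcal{C}),\mathcal{D})$; in particular, for each $X$ the diagram $\mathcal{C}_{/X}\rightarrow\mathcal{C}\rightarrow\mathcal{D}$ must have a colimit in $\mathcal{D}$. Second, every $G$ in $\textnormal{Fun}_\Sigma$ must be a left Kan extension of $G\circ j$. Since $\mathcal{C}_{/X}$ is sifted but in general neither filtered nor of the form $\Delta^{op}$, nothing in your argument produces these colimits.

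Defining $\tilde F$ ``by building $X$ from representables through filtered colimits and geometric realisations'' is not a construction, because such presentations of $X$ are neither unique nor functorial in $X$. The Kan-extension criterion you quote is about a functor that already exists. Even for such a functor, testing on compact projective generators does not show that the colimit over $\mathcal{C}_{/X}$ exists in $\mathcal{D}$ and is computed by $G(X)$. The two-step Ind-extension does not help either. Geometric realisations are not finite colimits, the compact objects of $\mathcal{P}_\Sigma(\mathcal{C})$ are in general not closed under them, and $\mathcal{D}$ need not have pushouts. Extending $f$ to any intermediate category runs into the same non-filtered, non-simplicial comma categories.

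The missing idea is to enlarge the target.
\begin{itemize}
\item Take a fully faithful $i:\mathcal{D}\rightarrow\mathcal{D}'$ into an $\infty$-category with all sifted colimits, such that $i$ preserves filtered colimits and geometric realisations. For instance, use \cite[Proposition 5.3.6.2]{lurie_higher_2009}, or the large presheaves on $\mathcal{D}$ sending these colimits to limits.
\item Extend $i\circ f$ to a sifted-colimit-preserving $F':\mathcal{P}_\Sigma(\mathcal{C})\rightarrow\mathcal{D}'$, using the universal property for sifted-cocomplete targets. Your first two paragraphs supply what this needs: $\mathcal{P}_\Sigma(\mathcal{C})$ is the closure of $j(\mathcal{C})$ under sifted colimits in $\mathcal{P}(\mathcal{C})$.
\item The objects $X$ with $F'(X)$ in the essential image of $i$ contain $j(\mathcal{C})$ and are closed under filtered colimits and geometric realisations. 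By the closure lemma they exhaust $\mathcal{P}_\Sigma(\mathcal{C})$. This gives $F$ with $i\circ F\simeq F'$. Since $i$ reflects colimits, $F$ lies in $\textnormal{Fun}_\Sigma$ and is a left Kan extension of $f$.
\item For $G\in\textnormal{Fun}_\Sigma$, consider the canonical map from the extension of $i\circ G\circ j$ to $i\circ G$. The locus where it is an equivalence is likewise closed under both colimit types. This gives full faithfulness and preservation of all sifted colimits at once.
\end{itemize}

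Two further points. The closure lemma does not follow from a statement about functors preserving colimits, as your reduction paragraph suggests. It needs its own argument, for example resolving each $X$ as the geometric realisation of a simplicial object whose terms are coproducts of representables. Also, citing ``preserving filtered colimits and geometric realisations implies preserving sifted colimits'' for the second claim is circular for functors out of $\mathcal{P}_\Sigma(\mathcal{C})$, and unavailable for general domains (its $1$-categorical analogue with reflexive coequalisers fails).

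Your argument for the final claim about finite coproducts is fine once the first two parts are in place.
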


\bibliographystyle{plain}
\bibliography{references} 

@misc{pardon_representability_2023,
	title = {Representability in non-linear elliptic {Fredholm} analysis},
	url = {http://arxiv.org/abs/2401.00184},
	urldate = {2024-03-20},
	publisher = {arXiv},
	author = {Pardon, John},
	year = {2023},
	note = {arXiv E-Prints [2401.00184]},
}

@misc{ben-bassat_perspective_2024,
	title = {A perspective on the foundations of derived analytic geometry},
	url = {http://arxiv.org/abs/2405.07936},
	doi = {10.48550/arXiv.2405.07936},
	urldate = {2025-12-10},
	publisher = {arXiv},
	author = {Ben-Bassat, Oren and Kelly, Jack and Kremnizer, Kobi},
	year = {2024},
	note = {arXiv E-Prints [2405.07936]},
}

@book{meyer_local_2007,
	address = {Switzerland},
	series = {{EMS} {Tracts} in {Mathematics}},
	title = {Local and {Analytic} {Cyclic} {Homology}},
	url = {http://undefined/books/etm/39},
	language = {en},
	number = {3},
	urldate = {2022-11-14},
	publisher = {European Mathematical Society},
	author = {Meyer, Ralf},
	year = {2007},
}

@article{prosmans_topological_2000,
	title = {A topological reconstruction theorem for {D}-infinity modules},
	volume = {102},
	language = {en},
	number = {1},
	journal = {Duke Mathematical Journal},
	author = {Prosmans, Fabienne and Schneiders, Jean-Pierre},
	year = {2000},
}

@article{bambozzi_analytic_2019,
	title = {Analytic geometry over {F1} and the {Fargues}-{Fontaine} curve},
	volume = {356},
	issn = {00018708},
	url = {https://linkinghub.elsevier.com/retrieve/pii/S0001870819304323},
	doi = {10.1016/j.aim.2019.106815},
	language = {en},
	urldate = {2025-06-04},
	journal = {Advances in Mathematics},
	author = {Bambozzi, Federico and Ben-Bassat, Oren and Kremnizer, Kobi},
	year = {2019},
	pages = {106815},
}

@book{lee_introduction_2003,
	address = {New York, NY},
	series = {Graduate {Texts} in {Mathematics}},
	title = {Introduction to {Smooth} {Manifolds}},
	volume = {218},
	copyright = {http://www.springer.com/tdm},
	isbn = {978-0-387-95448-6 978-0-387-21752-9},
	url = {http://link.springer.com/10.1007/978-0-387-21752-9},
	doi = {10.1007/978-0-387-21752-9},
	language = {en},
	urldate = {2026-04-08},
	publisher = {Springer New York},
	author = {Lee, John M.},
	year = {2003},
}

@book{schaefer_topological_1999,
	address = {New York, NY},
	series = {Graduate {Texts} in {Mathematics}},
	title = {Topological {Vector} {Spaces}},
	volume = {3},
	copyright = {http://www.springer.com/tdm},
	isbn = {978-1-4612-7155-0 978-1-4612-1468-7},
	url = {http://link.springer.com/10.1007/978-1-4612-1468-7},
	doi = {10.1007/978-1-4612-1468-7},
	language = {en},
	urldate = {2026-04-03},
	publisher = {Springer New York},
	author = {Schaefer, H. H. and Wolff, M. P.},
	year = {1999},
}

@book{lurie_higher_2009,
	address = {Princeton, N.J},
	series = {Annals of mathematics studies},
	title = {Higher topos theory},
	isbn = {978-0-691-14048-3},
	language = {en},
	number = {n° 170},
	publisher = {Princeton university press},
	author = {Lurie, Jacob},
	year = {2009},
}

@phdthesis{savage_stacks_2025,
	title = {Stacks in derived bornological geometry},
	url = {https://ora.ox.ac.uk/objects/uuid:92d32e4e-5dcf-4775-87fc-92d524414b8a},
	urldate = {2025-10-31},
	school = {Oxford University Research Archive},
	author = {Savage, Rhiannon},
	year = {2025},
}

@book{treves_topological_1967,
	address = {New York},
	series = {Pure and {Applied} {Mathematics}},
	title = {Topological vector spaces, distributions and kernels},
	volume = {25},
	isbn = {978-0-12-699450-6},
	language = {en},
	publisher = {Academic Press},
	author = {Treves, François},
	year = {1967},
}

@book{toen_homotopical_2008,
	address = {Providence, Rhode Island},
	series = {Memoirs of the {American} {Mathematical} {Society}},
	title = {Homotopical {Algebraic} {Geometry}. {II}. {Geometric} {Stacks} and {Applications}},
	volume = {193},
	url = {https://hal.archives-ouvertes.fr/hal-00772955},
	number = {902},
	urldate = {2022-06-04},
	publisher = {American Mathematical Society},
	author = {Toën, Bertrand and Vezzosi, Gabriele},
	year = {2008},
}

@misc{steffens_representability_2024,
	title = {Representability of elliptic moduli problems in derived {C}-infinity geometry},
	url = {http://arxiv.org/abs/2404.07931},
	doi = {10.48550/arXiv.2404.07931},
	urldate = {2024-05-27},
	publisher = {arXiv},
	author = {Steffens, Pelle},
	year = {2024},
	note = {arXiv E-Prints [2404.07931]},
}

@misc{steffens_derived_2023,
	title = {Derived {C}-infinity geometry {I}: foundations},
	shorttitle = {Derived \${C}{\textasciicircum}\{{\textbackslash}infty\}\$-{Geometry} {I}},
	url = {http://arxiv.org/abs/2304.08671},
	doi = {10.48550/arXiv.2304.08671},
	urldate = {2024-05-27},
	publisher = {arXiv},
	author = {Steffens, Pelle},
	year = {2023},
	note = {arXiv E-Prints [2304.08671]},
}

@article{spivak_derived_2010,
	title = {Derived smooth manifolds},
	volume = {153},
	issn = {0012-7094},
	url = {http://arxiv.org/abs/0810.5174},
	doi = {10.1215/00127094-2010-021},
	number = {1},
	urldate = {2024-03-20},
	journal = {Duke Mathematical Journal},
	author = {Spivak, David I.},
	year = {2010},
}

@misc{soor_six-functor_2024,
	title = {A six-functor formalism for quasi-coherent sheaves and crystals on rigid-analytic varieties},
	url = {http://arxiv.org/abs/2409.07592},
	doi = {10.48550/arXiv.2409.07592},
	urldate = {2025-06-02},
	publisher = {arXiv},
	author = {Soor, Arun},
	year = {2024},
	note = {arXiv E-Prints [2409.07592]},
}

@article{savage_representability_2024,
	title = {A representability theorem for stacks in derived geometry contexts},
	language = {en},
	urldate = {2024-05-15},
	author = {Savage, Rhiannon},
	year = {2024},
	note = {arXiv E-Prints [2405.08361]},
}

@article{savage_koszul_2023,
	title = {Koszul monoids in quasi-abelian categories},
	volume = {31},
	issn = {1572-9095},
	url = {https://doi.org/10.1007/s10485-023-09756-7},
	doi = {10.1007/s10485-023-09756-7},
	language = {en},
	number = {6},
	urldate = {2024-06-27},
	journal = {Applied Categorical Structures},
	author = {Savage, Rhiannon},
	year = {2023},
	pages = {50},
}

@incollection{lurie_survey_2009,
	address = {Berlin, Heidelberg},
	series = {Abel {Symposia}},
	title = {A survey of elliptic cohomology},
	isbn = {978-3-642-01200-6},
	url = {https://doi.org/10.1007/978-3-642-01200-6_9},
	doi = {10.1007/978-3-642-01200-6_9},
	language = {en},
	urldate = {2023-12-18},
	booktitle = {Algebraic {Topology}: {The} {Abel} {Symposium} 2007},
	publisher = {Springer},
	author = {Lurie, Jacob},
	year = {2009},
	pages = {219--277},
}

@article{lurie_spectral_2018,
	title = {Spectral algebraic geometry},
	url = {https://www.math.ias.edu/~lurie/papers/SAG-rootfile.pdf#page=1214},
	urldate = {2022-08-17},
	author = {Lurie, Jacob},
	year = {2018},
	note = {Available at: https://www.math.ias.edu/{\textasciitilde}lurie/papers/SAG-rootfile.pdf},
}

@inproceedings{kuranishi_new_1965,
	address = {Berlin, Heidelberg},
	title = {New proof for the existence of locally complete families of complex structures},
	isbn = {978-3-642-48016-4},
	doi = {10.1007/978-3-642-48016-4_13},
	language = {en},
	booktitle = {Proceedings of the {Conference} on {Complex} {Analysis}},
	publisher = {Springer},
	author = {Kuranishi, M.},
	year = {1965},
	pages = {142--154},
}

@misc{kryczka_derived_2024,
	title = {Derived moduli spaces of nonlinear {PDEs} {I}: singular propagations},
	shorttitle = {Derived {Moduli} {Spaces} of {Nonlinear} {PDEs} {I}},
	url = {http://arxiv.org/abs/2312.05226},
	language = {en},
	urldate = {2024-07-24},
	publisher = {arXiv},
	author = {Kryczka, Jacob and Sheshmani, Artan and Yau, Shing-Tung},
	year = {2024},
	note = {arXiv E-Prints [2312.05226]},
}

@book{kriegl_convenient_1997,
	address = {Providence, Rhode Island},
	series = {Mathematical {Surveys} and {Monographs}},
	title = {The {Convenient} {Setting} of {Global} {Analysis}},
	volume = {53},
	isbn = {978-0-8218-0780-4 978-0-8218-3396-4},
	url = {http://www.ams.org/surv/053},
	doi = {10.1090/surv/053},
	language = {en},
	urldate = {2025-02-25},
	publisher = {American Mathematical Society},
	author = {Kriegl, Andreas and Michor, Peter},
	year = {1997},
}

@book{coutinho_primer_1995,
	address = {Cambridge},
	series = {London {Mathematical} {Society} {Student} {Texts}},
	title = {A {Primer} of {Algebraic} {D}-{Modules}},
	isbn = {978-0-521-55119-9},
	url = {https://www.cambridge.org/core/books/primer-of-algebraic-dmodules/87B8F8AB3B53DBA8A8BD33A058E54473},
	doi = {10.1017/CBO9780511623653},
	urldate = {2024-06-25},
	publisher = {Cambridge University Press},
	author = {Coutinho, Severino C.},
	year = {1995},
}

@article{edgar_measurability_1979,
	title = {Measurability in a {Banach} space {II}},
	volume = {28},
	issn = {0022-2518},
	url = {https://www.jstor.org/stable/24892249},
	number = {4},
	urldate = {2025-05-19},
	journal = {Indiana University Mathematics Journal},
	author = {Edgar, G. A.},
	year = {1979},
	pages = {559--579},
}

@misc{carchedi_universal_2019,
	title = {On the universal property of derived manifolds},
	url = {http://arxiv.org/abs/1905.06195},
	language = {en},
	urldate = {2024-05-30},
	publisher = {arXiv},
	author = {Carchedi, David and Steffens, Pelle},
	year = {2019},
	note = {arXiv E-Prints [1905.06195]},
}

@article{krasilshchik_geometry_2011,
	title = {Geometry of jet spaces and integrable systems},
	volume = {61},
	issn = {03930440},
	url = {http://arxiv.org/abs/1002.0077},
	doi = {10.1016/j.geomphys.2010.10.012},
	language = {en},
	number = {9},
	urldate = {2024-06-20},
	journal = {Journal of Geometry and Physics},
	author = {Krasil'shchik, Joseph and Verbovetsky, Alexander},
	year = {2011},
	pages = {1633--1674},
}

@book{kelly_basic_1982,
	address = {Cambridge},
	series = {Lecture {Notes} in {Mathematics}},
	title = {Basic {Concepts} of {Enriched} {Category} {Theory}},
	volume = {64},
	language = {en},
	publisher = {Cambridge University Press},
	author = {Kelly, Gregory  Maxwell},
	year = {1982},
}

@article{henrard_left_2023,
	title = {The left heart and exact hull of an additive regular category},
	volume = {39},
	issn = {0213-2230},
	url = {https://ems.press/doi/10.4171/RMI/1388},
	doi = {10.4171/RMI/1388},
	language = {en},
	number = {2},
	urldate = {2023-08-16},
	journal = {Revista Matemática Iberoamericana},
	author = {Henrard, Ruben and Kvamme, Sondre and Van Roosmalen, Adam-Christiaan and Wegner, Sven-Ake},
	year = {2023},
	pages = {439--494},
}

@article{fukaya_arnold_1999,
	title = {Arnold conjecture and {Gromov}-{Witten} invariant},
	volume = {38},
	issn = {0040-9383},
	url = {https://www.sciencedirect.com/science/article/pii/S0040938398000421},
	doi = {10.1016/S0040-9383(98)00042-1},
	number = {5},
	urldate = {2025-06-04},
	journal = {Topology},
	author = {Fukaya, Kenji and Ono, Kaoru},
	year = {1999},
	pages = {933--1048},
}

@misc{borisov_quasi-coherent_2017,
	title = {Quasi-coherent sheaves in differential geometry},
	url = {http://arxiv.org/abs/1707.01145},
	language = {en},
	urldate = {2024-05-30},
	publisher = {arXiv},
	author = {Borisov, Dennis and Kremnizer, Kobi},
	year = {2017},
	note = {arXiv E-Prints [1707.01145]},
}

@misc{ben-bassat_blow-ups_2023,
	title = {Blow-ups and normal bundles in connective and nonconnective derived geometries},
	url = {http://arxiv.org/abs/2303.11990},
	urldate = {2024-01-24},
	publisher = {arXiv},
	author = {Ben-Bassat, Oren and Hekking, Jeroen},
	year = {2023},
	note = {arXiv E-Prints [2303.11990]},
}

@article{adam_countably_1999,
	title = {Countably evaluating homomorphisms on real function algebras},
	volume = {35},
	language = {en},
	number = {2},
	journal = {Archivum Mathematicum},
	author = {Adam, Eva and Biström, Peter and Kriegl, Andreas},
	year = {1999},
	pages = {165--192},
}

@article{adamek_sifted_2001,
	title = {On sifted colimits and generalized varieties},
	volume = {8},
	language = {en},
	number = {3},
	journal = {Theory and Applications of Categories},
	author = {Adámek, Jiří and Rosický, Jiří},
	year = {2001},
	pages = {33--53},
}

@article{blute_convenient_2012,
	title = {A convenient differential category},
	volume = {LIII-3},
	language = {en},
	journal = {Cahiers de topologie et geometrie differentielle categoriques},
	author = {Blute, Richard and Ehrhard, Thomas and Tasson, Christine},
	year = {2012},
}

@article{borisov_beyond_2018,
	title = {Beyond perturbation 1: {De} {Rham} spaces},
	volume = {124},
	issn = {0393-0440},
	shorttitle = {Beyond perturbation 1},
	url = {https://www.sciencedirect.com/science/article/pii/S0393044017302723},
	doi = {10.1016/j.geomphys.2017.11.001},
	urldate = {2024-06-26},
	journal = {Journal of Geometry and Physics},
	author = {Borisov, Dennis and Kremnizer, Kobi},
	year = {2018},
	pages = {208--224},
}

@misc{raksit_hochschild_2020,
	title = {Hochschild homology and the derived de {Rham} complex revisited},
	url = {http://arxiv.org/abs/2007.02576},
	urldate = {2022-07-08},
	publisher = {arXiv},
	author = {Raksit, Arpon},
	year = {2020},
	note = {arXiv E-Prints [2007.02576]},
}

@article{kelly_analytic_2022,
	title = {An analytic {Hochschild}-{Kostant}-{Rosenberg} theorem},
	volume = {410},
	issn = {0001-8708},
	url = {https://www.sciencedirect.com/science/article/pii/S0001870822005114},
	doi = {10.1016/j.aim.2022.108694},
	urldate = {2024-03-29},
	journal = {Advances in Mathematics},
	author = {Kelly, Jack and Kremnizer, Kobi and Mukherjee, Devarshi},
	year = {2022},
}

@article{ben-bassat_frechet_2023,
	title = {Fréchet modules and descent},
	volume = {39},
	language = {en},
	number = {9},
	journal = {Theory and Applications of Categories},
	author = {Ben-Bassat, Oren and Kremnizer, Kobi},
	year = {2023},
	pages = {207--266},
}

@article{bambozzi_sheafyness_2024,
	title = {On the sheafyness property of spectra of {Banach} rings},
	volume = {109},
	number = {1},
	urldate = {2024-04-14},
	journal = {Journal of the London Mathematical Society},
	author = {Bambozzi, Federico and Kremnizer, Kobi},
	year = {2024},
}

@article{bambozzi_stein_2018,
	title = {Stein domains in {Banach} algebraic geometry},
	volume = {274},
	issn = {00221236},
	url = {http://arxiv.org/abs/1511.09045},
	doi = {10.1016/j.jfa.2018.01.003},
	number = {7},
	urldate = {2022-08-16},
	journal = {Journal of Functional Analysis},
	author = {Bambozzi, Federico and Ben-Bassat, Oren and Kremnizer, Kobi},
	year = {2018},
	pages = {1865--1927},
}

@article{bambozzi_dagger_2016,
	title = {Dagger geometry as {Banach} algebraic geometry},
	volume = {162},
	issn = {0022-314X},
	url = {https://www.sciencedirect.com/science/article/pii/S0022314X15003674},
	doi = {10.1016/j.jnt.2015.10.023},
	language = {en},
	urldate = {2022-08-08},
	journal = {Journal of Number Theory},
	author = {Bambozzi, Federico and Ben-Bassat, Oren},
	year = {2016},
	pages = {391--462},
}

@article{schneiders_quasi-abelian_1999,
	title = {Quasi-abelian categories and sheaves},
	volume = {1},
	issn = {0249-633X, 2275-3230},
	url = {http://www.numdam.org/item?id=MSMF_1999_2_76__R3_0},
	doi = {10.24033/msmf.389},
	language = {fr},
	urldate = {2022-08-08},
	journal = {Memoires de la Société Mathématique de France},
	author = {Schneiders, Jean-Pierre},
	year = {1999},
	pages = {1--140},
}

@article{ben-bassat_non-archimedean_2017,
	title = {Non-{Archimedean} analytic geometry as relative algebraic geometry},
	volume = {26},
	issn = {2258-7519},
	url = {https://afst.centre-mersenne.org/articles/10.5802/afst.1526/},
	doi = {10.5802/afst.1526},
	language = {en},
	number = {1},
	urldate = {2022-08-17},
	journal = {Annales de la Faculté des sciences de Toulouse : Mathématiques},
	author = {Ben-Bassat, Oren and Kremnizer, Kobi},
	year = {2017},
	pages = {49--126},
}

\end{document}